 \definecolor{pAlgae}{RGB}{87,115,135}
\definecolor{airforceblue}{rgb}{0.36, 0.54, 0.66}
	\definecolor{bondiblue}{rgb}{0.0, 0.58, 0.71}
\definecolor{britishracinggreen}{rgb}{0.0, 0.26, 0.15}
\definecolor{camouflagegreen}{rgb}{0.47, 0.53, 0.42}
\definecolor{darkcyan}{rgb}{0.0, 0.55, 0.55}
	\definecolor{ao}{rgb}{0.0, 0.0, 1.0}
	\definecolor{azure}{rgb}{0.0, 0.5, 1.0}
	\definecolor{babyblueeyes}{rgb}{0.63, 0.79, 0.95}
	\definecolor{ballblue}{rgb}{0.13, 0.67, 0.8}
	\definecolor{bleudefrance}{rgb}{0.19, 0.55, 0.91}
\newcommand{\mylabel}[2]{#2\def\@currentlabel{#2}\label{#1}}
\theoremstyle{plain}
\newtheorem{theorem}{Theorem}[section]
\newtheorem{proposition}[theorem]{Proposition}
\newtheorem{lemma}[theorem]{Lemma}
\newtheorem{corollary}[theorem]{Corollary}
\newtheorem{conjecture}[theorem]{Conjecture}
\theoremstyle{remark}
\newtheorem{remark}[theorem]{Remark}
\theoremstyle{definition}
\newtheorem{definition}[theorem]{Definition}
\newtheorem*{acknowledgements}{Acknowledgements}
\DeclareMathOperator{\Ext}{Ext}
\DeclareMathOperator{\Gal}{Gal}
\DeclareMathOperator{\Hom}{Hom}
\DeclareMathOperator{\End}{End}
\DeclareMathOperator{\im}{im}
\newcommand{\bC}{\mathbb{C}}
\newcommand{\bF}{\mathbb{F}}
\newcommand{\bG}{\mathbb{G}}
\newcommand{\bQ}{\mathbb{Q}}
\newcommand{\bL}{\mathbb{L}}
\newcommand{\bT}{\mathbb{T}}
\newcommand{\bZ}{\mathbb{Z}}
\newcommand{\QQ}{\mathbb{Q}}
\newcommand{\cA}{\mathcal{A}}
\newcommand{\cG}{\mathcal{G}}
\newcommand{\cL}{\mathcal{L}}
\newcommand{\cO}{\mathcal{O}}
\newcommand{\cU}{\mathcal{U}}
\newcommand{\ff}{\mathfrak{f}}
\newcommand{\fp}{\mathfrak{p}}
\newcommand{\fm}{\mathfrak{m}}
\newcommand{\ZZ}{\mathbb{Z}}
\newcommand{\Z}{\mathbb{Z}}
\newcommand{\rec}{\mathrm{rec}}
\definecolor{pinegreen}{rgb}{0.0, 0.47, 0.44}
\begin{document}

\title[]{O\lowercase{n the non-critical exceptional zeros of} \\ K\lowercase{atz $p$-adic} $L$-\lowercase{functions for} CM \lowercase{fields}}

\author[K. B\"uy\"ukboduk]{K\^az\i m B\"uy\"ukboduk}
\address[B\"uy\"ukboduk]{UCD School of Mathematics and Statistics\\ University College Dublin\\ Ireland}
\author[R. Sakamoto]{Ryotaro Sakamoto}
\address[Sakamoto]{Department of Mathematics\\University of Tsukuba\\1-1-1 Tennodai\\Tsukuba\\Ibaraki 305-8571\\Japan}

\email{kazim.buyukboduk@ucd.ie}
\email{rsakamoto@math.tsukuba.ac.jp}

\begin{abstract}
The primary goal of this article is to study $p$-adic Beilinson conjectures in the presence of exceptional zeros for Artin motives over CM fields. In more precise terms, we address a question raised by Hida and Tilouine on the order of vanishing of Katz $p$-adic $L$-functions associated to CM fields, by means of a leading term formula we prove in terms of Rubin--Stark elements. In the particular case when the CM field in question is imaginary quadratic, our leading term formula and its consequences we record herein are unconditional.
\end{abstract}

\maketitle

\section{Introduction}

Fix forever a prime $p{>2}$. Let us fix an algebraic closure $\overline{\QQ}$ of $\QQ$ and fix embeddings $\iota_\infty: \overline{\QQ} \hookrightarrow \mathbb{C}$ and $\iota_p: \overline{\QQ} \hookrightarrow \mathbb{C}_p$ as well as an isomorphism $j:\mathbb{C}\stackrel{\sim}{\longrightarrow} \mathbb{C}_p$ in a way that the diagram
$$\xymatrix@R=.1cm{&\mathbb{C}\ar[dd]^{j}\\
\overline{\QQ}\ar[ur]^{\iota_\infty}\ar[rd]_{\iota_p} &\\
&\mathbb{C}_p
}$$
commutes. 

Perrin-Riou's $p$-adic variant of Beilinson's conjectures expresses the special values of $p$-adic $L$-functions in terms of motivic cohomology, so long as one avoids exceptional zeros. The key feature of these conjectures is that they involve  the special values of $p$-adic $L$-functions outside their interpolation range, where they have no direct link to complex $L$-values (which Beilinson's conjectures explain in terms of motivic cohomology). The main contribution in the current article is a formulation and a proof of an extension\footnote{The extension of the conjectures of Perrin-Riou we formulate and prove here not only allow treatment of exceptional zeros, but also to study the leading terms of multivariate $p$-adic $L$-functions.} of these conjectures for certain Artin motives (where the corresponding $p$-adic $L$-function is that constructed by Katz), in a manner to allow a treatment exceptional zeros. 

To be able to give a slightly more precise account of Perrin-Riou's conjectures, we let $\mathscr{M}$ be a pure motive over $\QQ$ which is ordinary and crystalline at $p$, with motivic weight $w$. Let $L(\iota_\infty\mathscr{M},s)$ denote the $L$-series associated to $\iota_\infty\mathscr{M}$ and let $\mathscr{L}_p(\iota_p\mathscr{M}) \in \mathbb{C}_p[[\Gamma]]$ (where $\Gamma\cong \ZZ_p^\times$ is the Galois group of the cyclotomic $\ZZ_p^\times$-extension of $\QQ$) denote the conjectural $p$-adic $L$-function whose existence is predicted by Coates and Perrin-Riou~\cite{CoatesPerrinRiou}. This $p$-adic $L$-function is characterised by a conjectural interpolation property, which in very rough form requires that 
$$\chi_{\rm cyc}^{n-1}\eta^{-1}\left(\mathscr{L}_p(\iota_p\mathscr{M})\right)= \mathscr{E}_p(\mathscr{M},\eta,n)\cdot j\left(\frac{L(\iota_\infty\mathscr{M}\otimes\eta,n)}{\Omega(\mathscr{M},\eta,n)}\right)$$
for all integers $n$ which are critical for $L(\iota_\infty\mathscr{M},s)$ in the sense of Deligne, where $\chi_{\rm cyc}:\Gamma\stackrel{\sim}{\rightarrow}\ZZ_p^\times$ is the cyclotomic character and $\eta: \Gamma\rightarrow \mathbb{C}_p^\times$ is a character of finite order. Here, $\mathscr{E}_p(\mathscr{M},\eta,n)$ is the Euler-like interpolation factor that corresponds to $\Phi_p(\mathscr{M}_p(\eta), n)$ in~\cite[(4.11)]{CoatesPerrinRiou}. Perrin-Riou~\cite{PR94} conjecturally describes the values $\chi_{\rm cyc}^{n-1}\eta^{-1}\left(\mathscr{L}_p(\iota_p\mathscr{M})\right)$ in terms of the motivic cohomology, \emph{for all integers $n$} that lie to the left of $\frac{w+1}{2}$, so long as we have
$$\mathscr{E}_p(\mathscr{M},\eta,n)\neq 0.$$ 
This is akin to Beilinson's conjectural formula for the  leading term of $L(\iota_\infty\mathscr{M}\otimes\eta,s)$ at non-critical integers $s=n$, in terms of his regulators on motivic cohomology groups. We say that the $p$-adic $L$-function $\mathscr{L}_p(\iota_p\mathscr{M})$ has an exceptional zero at $\chi_{\rm cyc}^{n-1}\eta\in {\rm Spec}\,\ZZ_p[[\Gamma]](\mathbb{C}_p)$ whenever $\mathscr{E}_p(\mathscr{M},\eta,n)=0$. In case $\mathscr{E}_p(\mathscr{M},\eta,n)=0$ and $n$ lies left of $\frac{w+1}{2}$ but it is non-critical, we say that $\mathscr{L}_p(\iota_p\mathscr{M})$ has a \emph{non-critical exceptional zero} at $\chi_{\rm cyc}^{n-1}\eta$. Exceptional zeros of $p$-adic $L$-functions in the cyclotomic variable alone have been extensively studied (c.f. \cite{Benois1, Benois2, Benois3} for the current state of art, including the treatment of the non-critical exceptional zeros), but to best of our knowledge, non-critical exceptional zeros along $\Z_p$-extensions other than the cyclotomic tower\footnote{Note in particular that the work of Benois is built on the theory of $(\varphi,\Gamma)$-modules. For many of the $\ZZ_p$-towers (with Galois group $\Gamma$) we consider in this work, one does not expect to have a reasonable theory of $(\varphi,\Gamma)$-modules; c.f. \cite{berger2014JEP}.} have never been explored. The main objective of the current article is to initiate this study for Katz' $p$-adic $L$-functions associated to CM fields. 

\subsection{Non-critical exceptional zeros of Katz $p$-adic $L$-functions of imaginary quadratic fields} \label{subsec_intro_imag_quad_case}
We shall first explain our results in the particular case when the CM field $k$ in question is imaginary quadratic. We defer the exposition of our results for general CM fields to Section~\ref{sec_results_general_new}.

We assume that $p=\fp\fp^c$ splits in the quadratic extension $k/\QQ$ and let $S_p(k)=\{\fp,\fp^c\}$ denote the set of primes of $k$ above $p$, where $c\in {\rm Gal}(k/\QQ)$ is a generator. Let us assume that $\fp$ is the prime induced from the embedding $\iota_p$. Let $k(p^{\infty})$ denote the compositum of all $\bZ_{p}$-extensions of $k$ and set $\Gamma_\infty={\rm Gal}(k(p^\infty)/k)$. In the setting we have placed ourselves where $k$ is an imaginary quadratic field, we have $\Gamma_\infty\simeq \ZZ_p^2$. 

Let $\chi \colon G_{k} \to \overline{\bQ}^{\times}$ be a non-trivial character with prime-to-$p$ order and let $L/k$ denote the extension cut by $\chi$. 

We let 
\[ 
L_{\fp}^{\chi} \in \bZ_{p}^{\rm ur}[[\Gamma_{\infty}]]
\]  
denote the Katz $p$-adic $L$-function introduced in \cite{katz_imaginary} (associated to the branch character $\chi^{-1}$, in the sense of Hida); see also Section~\ref{section_main_conjANDExceptionalZeros} below for its defining interpolative property when $k$ is a general CM field (whose construction and properties in this more general set-up are the subject of \cite{Katz78, HT93}). Here, $\bZ_{p}^{\rm ur}$ stands for the ring of integers of the completion of the maximal unramified extension of $\bQ_{p}$. Let us note for the sake of clarity that $\Sigma$ (resp. $\Sigma^c$) that appears in the interpolation property \eqref{Eqn_Katz_padic_interpolation} is the set $\{\fp\}$ (resp. $\{\fp^c\}$) when $k$ is imaginary quadratic and $L_{\fp}^{\chi}=L_{p,\Sigma}^\chi$ in this particular setting. We remark that the trivial character $\mathds{1}$ is not in the interpolation range for $L_{\fp}^{\chi}$, in particular, its defining property \eqref{Eqn_Katz_padic_interpolation} allows no conclusion concerning the triviality (or not) of the value $\mathds{1}(L_{\fp}^{\chi})$. It is easy to see that 
$$e:=\#\{v\in \Sigma^c: \mathscr{E}_v(\chi,\mathds{1})=0\}=\begin{cases}
1\,&  \hbox{ if }\chi(G_{k_{\fp^c}})=\{1\}\,,\\
0\,& \hbox{otherwise.}
\end{cases}$$
In view of this observation, expanding on the discussion Hida and Tilouine in \cite[\S1.5]{HT94}, one is lead to predict that 
\begin{equation}
\label{eqn_weak_exceptional_zero_conj_imaginary_quadratic}
L_{\fp}^{\chi} \stackrel{?}{\in} \mathcal{A}_{\Gamma_\infty}^{e}\setminus \mathcal{A}_{\Gamma_\infty}^{e+1}\,\,,
\end{equation}
where $ \mathcal{A}_{\Gamma_\infty}:=\ker\left(\bZ_{p}^{\rm ur}[[\Gamma_{\infty}]]\to \bZ_p^{\rm ur}\right)$ is the augmentation ideal. We shall call this guessed containment the \emph{exceptional zero conjecture for Katz' $p$-adic $L$-function (over imaginary quadratic fields) at the trivial character}.

Our main results concerning the particular case when $k$ is imaginary quadratic (Theorem~\ref{thm_intro_imaginary_quadratic_full}(ii) below) asserts that the question \eqref{eqn_weak_exceptional_zero_conj_imaginary_quadratic} of Hida and Tilouine has an affirmative answer. This result follows from a non-critical exceptional zero formula (recorded as Theorem~\ref{thm_intro_imaginary_quadratic_full}(i) below), which one may think of an instance of $p$-adic Beilinson conjecture in the presence of exceptional zeros. Before stating Theorem~\ref{thm_intro_imaginary_quadratic_full}, we need to introduce more notation.

Let $k_{\Gamma}/k$ be any $\bZ_{p}$-extension with Galois group $\Gamma$ and set $\mathcal{A}_\Gamma:=\ker\left(\bZ_{p}^{\rm ur}[[\Gamma]]\to \bZ_p^{\rm ur}\right)$. We denote by $L_{\fp}^{\chi}\vert_{\Gamma} \in \bZ_{p}^{\rm ur}[[\Gamma]]$  the image of $L_{\fp}^{\chi}$ under the natural map $\bZ_{p}^{\rm ur}[[\Gamma_\infty]]\to \bZ_{p}^{\rm ur}[[\Gamma]]$. Let us assume that $e>0$. Note that this is equivalent to requirement that $\chi(\fp^c)=1$ in the present setting. We define the $\cL$-invariant along $\Gamma$ as the unique element $\cL_{\frak{p},\Gamma} \in \QQ_p\otimes_{\Z_p}\Gamma$ that validates the equality 
\begin{equation}
\label{eqn_L_invariant_imaginary_quadratic}
    {\rm rec}_{\Gamma,\frak{p}^c}=\cL_{\frak{p},\Gamma}\cdot\,{\rm ord}_{\frak{p}^c}\,.
\end{equation}
Here, 
$${\rm rec}_{\Gamma,\frak{p}^c} \in {\rm Hom}\left((L\otimes_{k,\iota_p^c}\QQ_p)^{\times,\chi},\Gamma\right)={\rm Hom}\left(k_{\frak{p}^c}^\times,\Gamma\right)={\rm Hom}\left(\QQ_p^\times,\Gamma\right)$$ 
 is induced from the local Artin reciprocity map (and the first equality is because $\chi(\frak{p}^c)=1$); whereas
$${\rm ord}_{\frak{p}^c} \in {\rm Hom}\left((L\otimes_{k,\iota_p^c}\QQ_p)^{\times,\chi},\Z_p\right)={\rm Hom}\left(k_{\frak{p}^c}^\times,\bZ_{p}\right)$$ 
is the $\frak{p}^c$-adic valuation. 

We are now ready to state our main results in the particular case of imaginary quadratic fields.
\begin{theorem}
\label{thm_intro_imaginary_quadratic_full} Suppose $k$ is an imaginary quadratic field where the prime $p=\frak{p}\frak{p}^c$ splits and whose class number is prime-to-$p$. Let $\frak{p}$ be the prime induced from the embedding $\iota_p$. Suppose that $\chi(\frak{p}^c)=1$ and let $\{u\} \in \cO_L^{\times,\chi}$ denote a basis.
\item[i)] We have $L_{\frak{p}}^\chi(\mathds{1})=0$ and 
\begin{equation}\label{eq_ref_main_intro_imag_quad}
(L_{\frak{p}}^{\chi}\vert_{\Gamma})^{\prime}(\mathds{1}) = \cL_{\frak{p}, \Gamma} \left(1 - \frac{\chi(\frak{p})}{p}\right)\cdot\frac{\log_{\frak{p}} |\iota_p(u)|}{\log |\iota_\infty(u)|^2}\cdot
L'(\chi^{-1},0),
\end{equation}
 where the equality takes place in $ \cA_{\Gamma}/\cA_{\Gamma}^{2}$. 
 
\item[ii)] In particular, $L_{\frak{p}}^\chi \in \cA_{\Gamma_\infty}\setminus  \cA_{\Gamma_\infty}^2$.
\end{theorem}

This theorem is a special case of our much more general results, recorded as Theorem~\ref{thm_intro_leading_term} and Corollary~\ref{cor_Hida_Tilouine_Conj_strongform} below. We refer the reader to Remark~\ref{remark_how_to_deduce_the_case_of_imaginary_fields_from_the_general_case} where we explain how to deduce Theorem~\ref{thm_intro_imaginary_quadratic_full} from these two more general assertions.

\begin{remark} 
\label{remark_Kronecker_Katz_Gross_Ferrero_Greenberg_Koblitz}
We shall highlight two important special cases of Theorem~\ref{thm_intro_imaginary_quadratic_full} (which are already known to experts). We refer the reader to Section~\ref{subsubsec_new_expand_on_remark_1_7_new_remark_1_3} for additional details pertaining to the discussion in this remark.
\item[i)] Suppose $\Gamma=\Gamma_{\frak{p}}$ is the Galois group of the $\Z_p$-extension $k_{\Gamma_{\frak{p}}}$ of $k$ unramified outside $\{\frak{p}\}$. In this situation, we have $\cL_{\Gamma_{\frak{p}}}=1$ (see Remark~\ref{remark_L_invariant_vanishnonvanish}(iv) for a justification of this observation in full generality) and the leading term formula in Theorem~\ref{thm_intro_imaginary_quadratic_full}(i) takes the following form:
\begin{equation}
\label{eqn_Katz_Kronecker_limit_formula_imag_quad_new}
    (L_{\frak{p}}^{\chi}\vert_{\Gamma})^{\prime}(\mathds{1}) = \left(1 - \frac{\chi(\frak{p})}{p}\right)\cdot\frac{\log_{\frak{p}} |\iota_p(u)|}{ \log |\iota_\infty(u)|^2}\cdot
L'(\chi^{-1},0).
\end{equation}
We note that Theorem~\ref{thm_intro_imaginary_quadratic_full}(ii) can be deduced from Theorem~\ref{thm_intro_imaginary_quadratic_full}(i) relying on the fact that $\cL_{\Gamma_{\frak{p}}}\neq 0$ (since it equals $1$). We also remark that the equality \eqref{eqn_Katz_Kronecker_limit_formula_imag_quad_new} can be deduced directly as a consequence of Katz' $p$-adic Kronecker limit formula. 
\item[ii)] Let us assume now that $\chi$ is the restriction to $G_k$ of an even character $\chi_{\QQ} \colon G_{\bQ} \to \overline{\bQ}^{\times}$ of $G_{\bQ}$. In the situation when $\Gamma=\Gamma_{\rm cyc}$,  our leading term formula \eqref{eq_ref_main_intro_imag_quad} can be obtained on combining Gross' factorisation of Katz $p$-adic $L$-function in~\cite[Corollary 3.9]{GroosFactorisation} with results of Ferrero--Greenberg in~\cite{FerrreroGreenberg} and Gross--Koblitz in~\cite{GrossKoblitz} alongside Remark~\ref{remark_L_invariant_for_Gamma_cyc_imaginary_quadratic}; see also \cite[Theorem 3.3.2]{Benois3}.
\end{remark}

\begin{remark} 
\label{remark_L_invariant_for_Gamma_cyc_imaginary_quadratic} 
In this paragraph, we shall compare the $\cL$-invariant we have introduced in \eqref{eqn_L_invariant_imaginary_quadratic} to their classical counterparts, which were defined by Greenberg when $\Gamma=\Gamma_{\rm cyc}$ is the Galois group of the cyclotomic $\Z_p$-extension of $k$. Let {$\chi_{\rm cyc} \colon \Gamma_{\rm cyc} \xrightarrow{\sim} p\bZ_{p}$ denote the composite map of the cyclotomic character and the $p$-adic logarithm. Then, 
$$
\chi_{\rm cyc}(\cL_{\frak{p},\Gamma_{\rm cyc}})=-\frac{\log_{p}(u)}{{\rm ord}_{\frak{p}^c}(u)}\,,
$$
where $u\in X=\ker\left(\cO_L[1/p]^{\times,\chi}\to L_{\frak{p}}^\times\right)$ is an arbitrary non-trivial element and 
\[
\log_p \in  {\rm Hom}\left((L\otimes_{k,\iota_p^c}\QQ_p)^{\times,\chi},p\bZ_{p}\right)={\rm Hom}\left(\QQ_p^\times,p\bZ_{p}\right)
\] 
is Iwasawa's $p$-adic logarithm, which is normalized so that $\log_p(p)=0$. 

Let us assume further that $\chi$ is the restriction of a character $\chi_{\QQ} \colon G_{\bQ} \to \overline{\bQ}^{\times}$ of $G_{\bQ}$.
Put $F := \overline{\bQ}^{\ker(\chi)}$ and let $\epsilon_{k} \colon \Gal(k/\bQ) \to \{\pm 1\}$ denote the non-trivial character.
Then,
\[
\chi_{\rm cyc}(\cL_{\frak{p},\Gamma_{\rm cyc}})=-\frac{\log_{p}(u_{\rm Gr})}{{\rm ord}_{p}(u_{\rm Gr})}\,,
\]
where 
\[
u_{\rm Gr}:=e_\chi \cdot x \in 
\begin{cases}
\cO_{F}[1/p]^{\times,\chi} & \text{ if $\chi$ is odd},
\\
\cO_{L}[1/p]^{\times,\chi\epsilon_{k}} & \text{ if $\chi$ is even}. 
\end{cases}
\]
and $x\in \cO_L[1/p]^{\times,\pm}$ is such that $\cO_L\cdot x=\wp^h(\wp^{c})^{\pm h}$ for any prime $\wp$ of $L$ above $p$ and such that $\langle c \rangle=\Gal(k/\QQ)$ acts on it by $\pm 1$.  

In particular, Brumer's $p$-adic version of Baker's theorem implies that $\cL_{\frak{p},\Gamma_{\rm cyc}}$ is non-zero in the special case when $\chi$ arises as the restriction of a character of $G_{\QQ}$.} 
\end{remark}

\subsection{Notation}
Before we describe our main results in full generality, we need to introduce more notation.

Let $p$ be an odd prime and let $\mu_{p^{n}}$ denote the set of $p^{n}$th roots of unity. 
For any field $K$, we write $G_{K}$ for the absolute Galois group of $K$.  

Let $\bZ_{p}^{\rm ur}$ denote the ring of integers of the completion of the maximal unramified extension of $\bQ_{p}$. 
Note that $\bZ_{p}^{\rm ur}$ equals the Witt ring of an algebraic closure of $\bF_{p}$ and hence $\bZ_{p}^{\rm ur}$ is a complete discrete valuation ring  with maximal ideal $p\bZ_{p}^{\rm ur}$. 

For any number field $K$, we denote by $S_{p}(K)$ and $S_{\infty}(K)$ the set of places of $K$ above $p$ and $\infty$, respectively. 
For a finite set $S \supseteq S_{\infty}(K)$, we denote by $K_{S}$ the maximal extension of $K$ which is unramified outside $S$. 
Put 
\[
G_{K,S} := \Gal(K_{S}/K).
\] 
For a prime $v$ of $K$, we denote by $K_{v}$ the completion of $K$ at $v$ and by $\cO_{K_{v}}$ the ring of integers of $K_{v}$. 

Henceforth, we shall denote by $k$ a general CM field and let $k^{+}$ denote its maximal totally real subfield. We let $c$ denote a generator of $\Gal(k/k^+)$ and set 
\[
g := [k^{+} \colon \bQ].
\] 
We assume that the following $p$-ordinary condition of Katz holds true:
\begin{itemize}
\item[\bf{(}\mylabel{item_ord}{\bf ord}\bf{)}] Every prime of $k^{+}$ above $p$ splits in $k$\,.
\end{itemize}
We fix a subset $\Sigma \subseteq S_{p}(k)$ such that  $\Sigma \cup \Sigma^{c} = S_{p}(k)$ and $\Sigma \cap \Sigma^{c} = \emptyset$.

Throughout this paper, for each prime $v$ of $k$, we fix a prime $w$ in $\overline{\bQ}$ above $v$.  
For any set $S$ of places of $k$ and algebraic extension $K/k$, we denote by $S_{K}$ the set of places of $K/k$ lying above $S$ and by $\cO_{K,S}$ the ring of $S_{K}$-integers in $K$. 
We also write $S_{\rm ram}(K/k)$ for the set of primes at which $K/k$ is ramified.

Let $\chi \colon G_{k} \to \overline{\bQ}^{\times}$ be a non-trivial character with prime-to-$p$ order. Set $L := \overline{\bQ}^{\ker(\chi)}$. 
Throughout this paper, we assume that the prime $p$ is unramified in $L$. We put  
\[
e_{\chi} := \frac{1}{[L \colon k]} \sum_{\sigma \in \Gal(L/k)}\chi(\sigma)\sigma^{-1}  
\]
and, for any $\bZ_{(p)}[{\rm im}(\chi)][\Gal(L/k)]$-module $M$, define $M^{\chi} := e_{\chi}M$.

\begin{remark} Let us briefly comment on our assumptions on the choice of $p$.
\item[i)] The assumption that $p \nmid [L \colon k]$ is used only to guarantee that the functor $M \mapsto M^\chi$ is exact in the category of $\ZZ_p[{\rm im}(\chi)][\Gal(L/k)]$-modules. We note that the formulation of the hypothesis \eqref{item_H5} below (which we need in our proof the main result of this article) requires this as an input. We expect that a leading term formula akin to the one proved in our main result (Theorem~\ref{thm_intro_leading_term} below) should hold (up to powers of $p$) also in the more general case when $p$ is allowed to divide the order of $\chi$. Note that when $\chi$ has order divisible by $p$, passing to $\chi$-parts of Rubin--Stark elements (or elliptic units in the special case when $k$ is imaginary quadratic) will require to invert $p$.

\item[ii)] The assumption that $p$ be unramified in $L$ is used in the construction of Coleman power series and the computation of their constant terms in Section~\ref{section_Coleman_map}. 
\end{remark}

Let $T := \bZ_{p}^{\rm ur}(1) \otimes \chi^{-1}$, that is, $T$ is equal to $\bZ_{p}^{\rm ur}$ as a $\bZ_{p}^{\rm ur}$-module and $G_{k}$ acts on $T$ via the character 
$\chi_{\rm cyc}\chi^{-1}$, where $\chi_{\rm cyc}$ denotes the cyclotomic character of $k$. 
Since the prime $p$ is unramified in $L$, we note that 
\[
H^{0}(G_{k,S},T/pT) = 0
\]  
for any finite set $S$ of places of $k$ containing $S_{\infty}(k) \cup S_{p}(k) \cup S_{\rm ram}(L/k)$. 

For an abelian extension $K/k$, we let $\iota: \Z_p[[\Gal(K/k)]] \longrightarrow \Z_p[[\Gal(K/k)]]$ denote the involution given by $g\mapsto g^{-1}$ on the group-like elements. For a $\Z_p[[\Gal(K/k)]]$-module $M$, we set $M^\iota:=M\otimes_{\Z_p[[\Gal(K/k)]],\iota} \Z_p[[\Gal(K/k)]]$. 
We also set 
\[
T_{K} := T \otimes_{\bZ_{p}} \bZ_{p}[[\Gal(K/k)]]^{\iota}. 
\]
We note that for any set $S \supseteq S_{\infty}(k) \cup S_{p}(k) \cup S_{\rm ram}(KL/k)$, the Galois group $G_{k,S}$ acts on $T_{K}$. 
By Hilbert's Theorem 90,  there is a canonical identification  
\begin{align}\label{eq:hilb1}
\varprojlim_{K'}(\bZ_{p}^{\rm ur} \otimes_{\bZ_{p}} (\cO_{LK', S})^{\times, \wedge})^{\chi} = H^{1}(G_{k,S},T_{K}). 
\end{align}
Here $K'$ runs over all finite extension of $k$ contained in $K$ and we denote by 
\[
M^{\wedge} := \varprojlim_{n}M/p^{n}M
\] 
the pro-$p$-completion  of a $\bZ$-module $M$. 

For any prime $v$ of $k$, we define  
\begin{align}\label{finite condition}
H^{1}_{f}(G_{k_{v}},T_{K}) := \varprojlim_{K'}(\bZ_{p}^{\rm ur} \otimes_{\bZ_{p}} (\cO_{KL}\otimes_{\cO_{k}} \cO_{k_{v}})^{\times,\wedge})^{\chi}.
\end{align}
 Let $k(p^{\infty})$ denote the compositum of all $\bZ_{p}$-extensions of $k$ and set $\Gamma_\infty={\rm Gal}(k(p^\infty)/k)$. We put 
\[
\bT_{\infty} := T_{k(p^{\infty})} \ \text{ and } \ \Gamma_{\infty} := \Gal(k(p^{\infty})/k). 
\]
If $K/k$ is a $\bZ_{p}^{s}$-extension with Galois group $\Gamma$ for some $s \geq 1$, then 
we also define 
\[
\bT_{\Gamma} := T_{K} \ \text{ and } \ \cA_{\Gamma} := \ker\left(\bZ_{p}^{\rm ur}[[\Gamma]] \longrightarrow \bZ_{p}^{\rm ur}\right). 
\]

\subsection{Results in the general case}
\label{sec_results_general_new}
We will close our introduction explaining our results in full generality. let 
\[ L_{p,\Sigma}^{\chi} \in \bZ_{p}^{\rm ur}[[\Gamma_{\infty}]]\]  
denote  Katz' $p$-adic $L$-function introduced in \cite{Katz78, HT93}; see also  \eqref{Eqn_Katz_padic_interpolation} below for its defining interpolative property. We remark that the trivial character $\mathds{1}$ is not in the interpolation range for $L_{p,\Sigma}^{\chi}$, in particular, its defining property \eqref{Eqn_Katz_padic_interpolation} allows no conclusion concerning the triviality (or not) of the value $\mathds{1}(L_{p,\Sigma}^{\chi})$. Notice that 
\begin{equation}
\label{eqn_define_e_in_two_ways}
    \#\{v\in \Sigma^c: \mathscr{E}_v(\chi,\mathds{1})=0\}=\#\{v\in \Sigma^c: \chi(G_{k_{v}})=1\}=:e.
\end{equation}
In view of this observation, and expanding on the discussion Hida and Tilouine in \cite[\S1.5]{HT94}, one is lead to predict that 
\begin{equation}
\label{eqn_weak_exceptional_zero_conj}
L_{p,\Sigma}^{\chi} \stackrel{?}{\in} \mathcal{A}_{\Gamma_\infty}^{e}\setminus \mathcal{A}_{\Gamma_\infty}^{e+1}\,\,.
\end{equation}

We shall call this guessed containment the \emph{exceptional zero conjecture for Katz' $p$-adic $L$-function at the trivial character}.
Our Corollary~\ref{cor_Hida_Tilouine_Conj} below asserts that \eqref{eqn_weak_exceptional_zero_conj} indeed holds true if one assumes  a number of folklore conjectures (and as we have noted as part of Theorem~\ref{thm_intro_imaginary_quadratic_full}, it holds true unconditionally in the situation when $k/\QQ$ is imaginary quadratic). Corollary~\ref{cor_Hida_Tilouine_Conj}  follows from a non-critical exceptional zero formula (Theorem~\ref{thm_intro_leading_term}), which one may think of an instance of $p$-adic Beilinson conjecture in the presence of exceptional zeros, and which involves the $\mathcal{L}$-invariant we introduce also in the present article.

Our main objectives in the current article can be summarized as in the list below of items \eqref{item_ObjA}--\eqref{item_ObjC}. Let $k_{\Gamma}/k$ be any $\bZ_{p}$-extension with Galois group $\Gamma$ and recall that $\mathcal{A}_\Gamma:=\ker\left(\bZ_{p}^{\rm ur}[[\Gamma]]\to \bZ_p^{\rm ur}\right)$ is the augmentation ideal. 
\begin{itemize}
 \item[\mylabel{item_ObjA}{\bf A}\bf{)}] Introduce an explicit $\cL$-invariant $\cL_{\Sigma, \Gamma} \in \bC_{p} \otimes_{\bZ_{p}^{\rm ur}}\cA_{\Gamma}^{e}/\cA_{\Gamma}^{e+1}$ along the $\ZZ_p$-extension $k_\Gamma/k$ (see Definition~\ref{defn_L_invariant}).
 \item[\mylabel{item_ObjB}{\bf B}\bf{)}]  Prove a leading term formula for the the restriction $L_{p,\Sigma}^{\chi}\vert_{\Gamma} \in \bZ_{p}^{\rm ur}[[\Gamma]]$  of Katz' $p$-adic $L$-function to $\Gamma$ {(see Theorem~\ref{thm_intro_leading_term} for a precise statement of our result and Theorem~\ref{thm:main} in the main text)} under the following hypotheses:
\begin{itemize}
 \item[(\mylabel{item_H1}{H.1})] No prime in $S_{p}(k)$ splits completely in $k_{\Gamma}/k$.
\item[(\mylabel{item_H2}{H.2})] $p$ splits completely in $k$.
\item[(\mylabel{item_H3}{H.3})] $\Sigma$--Leopoldt Conjecture~\ref{conj:leop} for the number field $L$ is true. 
\item[(\mylabel{item_H4}{H.4})] Rubin--Stark conjecture for each finite subextension of $Lk(p^\infty)/k$ holds true. 
\item[(\mylabel{item_H5}{H.5})] Iwasawa theoretic Mazur--Rubin--Sano Conjecture~\ref{MRS} and the Reciprocity Conjecture~\ref{conj:rec} hold.
\end{itemize}
 \item[\mylabel{item_ObjC}{\bf C}\bf{)}]  Conclude that the guessed containment \eqref{eqn_weak_exceptional_zero_conj} holds true under the hypotheses \eqref{item_H1}--\eqref{item_H5} (c.f. Corollary~\ref{cor_Hida_Tilouine_Conj}; see also the final portion of Theorem~\ref{thm:main} in the main text).
\end{itemize}

\begin{theorem}[$p$-adic Beilinson conjecture with exceptional zeros]
\label{thm_intro_leading_term}
Let $e\in \ZZ_{\geq 0}$ be as in \eqref{eqn_define_e_in_two_ways} and suppose that the hypotheses \eqref{item_H1}--\eqref{item_H5} hold true.

Then ${\rm ord}_{\cA_{\Gamma}}\left(L_{p,\Sigma}^\chi\vert_{\Gamma}\right) \geq e$ and  
\begin{equation}\label{Eqn_leading_term_formula}
L_{p,\Sigma}^\chi\vert_{\Gamma} =  (-1)^{e} \cL_{\Sigma, \Gamma} \cdot 
\prod_{v \in \Sigma}\left(1 - \frac{\chi(v)}{p}\right) \prod_{v \in \Sigma^{c} \setminus V_{p}}(1-\chi(v)^{-1})\cdot 
\frac{{\rm Reg}_p^{\Sigma,\chi}}{{\rm Reg}_\infty^{\chi}}\cdot
L^{*}(\chi^{-1},0)
\end{equation}
inside $ \cA_{\Gamma}^{e}/\cA_{\Gamma}^{e+1}$, where 
\begin{itemize}
\item $V_p$ is the set of places of $k$ above $p$ which split completely in $L$, \item $L^{*}(\chi^{-1},0) := {\displaystyle \lim_{s\to 0}s^{-g}L(\chi^{-1},s)}$ is the leading term of the $L$-function associated to $\chi^{-1}$ at $s=0$, 
\item  ${\rm Reg}_p^{\Sigma,\chi}/{\rm Reg}_\infty^{\chi}$ is the determinant of the compositum of the $\bC_p$-linear map 
\[
\bC_{p}^{g}\xrightarrow{\left(\lambda_{L,S_{\infty}(k)}^{\chi}\right)^{-1}}\bC_{p} \otimes_{\bZ} \cO_{L}^{\times,\chi} \xrightarrow{\oplus_{v \in \Sigma}\log_{v}} \bC_{p}^{g}\,. 
\]
Here, $\lambda_{L,S_{\infty}(k)}^{\chi}$ is the $\chi$-part of the Dirichlet regulator isomorphism $($given as in  Equation \eqref{isom:reg}$)$ and $\log_{v}$ is the $v$-adic logarithm $($see Equation \eqref{def:log} for its definition$)$. 
\end{itemize}
\end{theorem}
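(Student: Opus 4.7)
The plan is to reduce the leading-term formula to the combination of three structural inputs encoded in the hypotheses (H.4) and (H.5), namely the Rubin--Stark conjecture, the Iwasawa-theoretic Mazur--Rubin--Sano Conjecture~\ref{MRS}, and the Reciprocity Conjecture~\ref{conj:rec}. The guiding principle is that, after restriction to $\Gamma$, Katz' $p$-adic $L$-function should be realised as the image of a suitable Iwasawa-theoretic Rubin--Stark element $\epsilon_\infty$ under a Coleman-type map along $k_\Gamma/k$, so that a leading-term formula for $L_{p,\Sigma}^\chi\vert_{\Gamma}$ reduces to a leading-term formula for $\epsilon_\infty\vert_{\Gamma}$ combined with the local behaviour of the reciprocity map at the $\Sigma$-primes.

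The steps would be as follows. First, by Conjecture~\ref{conj:rec}, I would identify $L_{p,\Sigma}^\chi\vert_{\Gamma}$ with the image of $\epsilon_\infty\vert_{\Gamma}$ under this Perrin-Riou-style Coleman map, up to the Euler factor $\prod_{v\in\Sigma}(1-\chi(v)/p)$ that accounts for the interpolation defect at the primes in $\Sigma$. Second, invoking Conjecture~\ref{MRS}, I would deduce that $\epsilon_\infty\vert_{\Gamma}\in\cA_\Gamma^{e}$ and compute its class modulo $\cA_\Gamma^{e+1}$ as an iterated Bockstein-type derivative of the finite-level Rubin--Stark element $\epsilon_L$, with the derivative taken via the local reciprocity maps at the $e$ exceptional primes $v\in\Sigma^{c}$ satisfying $\chi(G_{k_v})=1$. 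Hypothesis (H.1) guarantees that this derivative is non-degenerate, since no place of $k$ above $p$ splits completely in the tower, so that the corresponding local units survive to all layers.

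Third, I would evaluate $\epsilon_L$ through the Dirichlet regulator. By (H.4), the image $\lambda_{L,S_{\infty}(k)}^{\chi}(\epsilon_L)$ is essentially $L^{*}(\chi^{-1},0)$ in the appropriate normalisation; composing $(\lambda_{L,S_{\infty}(k)}^{\chi})^{-1}$ with $\oplus_{v\in\Sigma}\log_v$ then extracts exactly the ratio ${\rm Reg}_p^{\Sigma,\chi}/{\rm Reg}_\infty^{\chi}$ appearing in \eqref{Eqn_leading_term_formula}. The correction factor $\prod_{v\in\Sigma^{c}\setminus V_p}(1-\chi(v)^{-1})$ arises from passing between the Rubin--Stark element attached to its natural set of ramification data and the element compatible with the reciprocity map; the excluded places in $V_p$ are precisely those at which this factor would vanish and whose contribution is instead absorbed into the derivative operation of the second step, so accounting for the count $e$. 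The sign $(-1)^{e}$ is the canonical sign attached to the iterated Bockstein in the Mazur--Rubin--Sano formalism, which I would track as in the analogous cyclotomic calculations of Burns--Kurihara--Sano.

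The main obstacle I anticipate is the precise identification in the second step: matching the iterated Bockstein applied to $\epsilon_L$ with the $\cL$-invariant $\cL_{\Sigma,\Gamma}$ of Definition~\ref{defn_L_invariant}. Both quantities lie in $\bC_p\otimes_{\bZ_p^{\rm ur}}\cA_\Gamma^{e}/\cA_\Gamma^{e+1}$, and one must show that the interaction of the local reciprocity maps at the $e$ exceptional primes with the Coleman map along $\Gamma$ factors through $\cL_{\Sigma,\Gamma}$ exactly as defined. Hypotheses (H.2) and (H.3) play a supporting but essential role here: (H.2) normalises the local setup at $p$ so that the Coleman maps at each $p$-adic place are unambiguous and the split-CM structure can be used cleanly, while (H.3) guarantees that the $\Sigma$-Leopoldt kernel vanishes, so that the $p$-adic regulators are injective and the extracted leading terms are genuinely non-zero.
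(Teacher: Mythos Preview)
Your overall strategy is correct and matches the paper's approach: reciprocity law to link $L_{p,\Sigma}^\chi\vert_\Gamma$ with the Iwasawa-theoretic Rubin--Stark element via Coleman maps, then Mazur--Rubin--Sano to compute the leading term, then unwind using the definition of $\cL_{\Sigma,\Gamma}$ and the Dirichlet regulator. However, there is a genuine gap in your second step which would cause the argument to fail as written.

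You claim that $\epsilon_\infty\vert_\Gamma\in\cA_\Gamma^e$ and that MRS expresses its class mod $\cA_\Gamma^{e+1}$ via the reciprocity maps at the $e$ primes of $V\cap\Sigma^c$. This is not what MRS says. The Rubin--Stark element in fact lies in $\cA_\Gamma^r$ with $r=\#V_p=\#(V\cap S_p(k))=e+\#(V\cap\Sigma)$, and MRS~\ref{MRS} computes its image in $\cA_\Gamma^r/\cA_\Gamma^{r+1}$ via the reciprocity maps at \emph{all} $r$ primes of $V_p$, those in $\Sigma$ as well as those in $\Sigma^c$. If you only take $e$ derivatives you get zero. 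The passage from order $r$ on $\epsilon_\infty$ to order $e$ on $L_{p,\Sigma}^\chi\vert_\Gamma$ is not automatic: it comes from the fact that for each split prime $v\in V\cap\Sigma$, the image of the restriction map $H^1(G_{k_v},\bT_\infty)\to H^1(G_{k_v},\bT_\Gamma)$ is $\cA_{\Gamma_v}\cdot H^1(G_{k_v},\bT_\Gamma)$ (Proposition~\ref{prop:image}), so that the descended Coleman map ${\rm Col}_{\Gamma,v}$ is naturally defined on $\cA_{\Gamma_v}\otimes H^1(G_{k_v},\bT_\Gamma)$ and absorbs one factor of $\cA_\Gamma$ each. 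The crucial local computation you are missing is Proposition~\ref{prop:const2}, which shows that at split $v\in\Sigma$ one has ${\rm rec}_{\Gamma,v}\wedge\widetilde{\rm Col}_{\Gamma,v}=(1-1/p)\,{\rm ord}_v\wedge\log_v$; this converts the $r-e$ extra reciprocity maps into order maps, after which the $\cL$-invariant (which by definition compares $\bigwedge\psi_v$ against $\bigwedge{\rm ord}_v$) enters.

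Two smaller points. First, the Euler factors $\prod_{v\in\Sigma}(1-\chi(v)/p)$ do not appear in the Reciprocity Conjecture~\ref{conj:rec}; that identity is exact. These factors arise from the constant terms of the Coleman maps: Proposition~\ref{prop:const1} at non-split $v\in\Sigma$ gives $(1-\chi(v)/p)(1-\chi(v)^{-1})^{-1}\log_v$, and Proposition~\ref{prop:const2} at split $v\in\Sigma$ gives the factor $1-1/p$. The $(1-\chi(v)^{-1})$-factors for $v\in\Sigma^c\setminus V_p$ then come from Proposition~\ref{prop:stark-reln} when passing from $\epsilon^{V,\chi}_{L/k,S}$ to $\xi$. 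Second, the sign $(-1)^e$ is not a Bockstein sign: the reciprocity law produces $L_{p,\Sigma}^{\chi,\iota}\vert_\Gamma$ rather than $L_{p,\Sigma}^{\chi}\vert_\Gamma$, and $(-1)^e$ is the effect of $\iota$ on $\cA_\Gamma^e/\cA_\Gamma^{e+1}$ (the $(-1)^{gr}$ from ${\rm sgn}(V,S_\infty(k))$ in MRS appears twice and cancels).
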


We note that we present in Section~\ref{section_main_conjANDExceptionalZeros} (c.f. Proposition~\ref{imp2} and Corollary~\ref{corollary_imp2}) an alternative proof of the assertion that ${\rm ord}_{\cA_{\Gamma}}(L_{p,\Sigma}^\chi\vert_{\Gamma}) \geq e$ (a weak form of the exceptional zero conjecture) under less restrictive hypotheses, based on our discussion in Appendix~\ref{appendix_main_conjecture}.

\begin{remark}\label{remark_L_invariant_vanishnonvanish}
\item[i)] The leading term formula \eqref{Eqn_leading_term_formula} follows on combining Theorem~\ref{thm:main} with the discussion in Remark~\ref{rem_arch_reg_rubin_stark}.
\item[ii)] We need to assume \eqref{item_H2} solely because the Coleman maps that we recall in Section~\ref{section_Coleman_map} are currently available only under this hypothesis.
\item[iii)] If a prime in $\Sigma^{c}$ splits completely in $k_{\Gamma}/k$, then $\cL_{\Sigma,\Gamma} = 0$ (see Remark~\ref{remark_L_invariant_properties}(ii)) and Iwasawa Main Conjecture implies (see Corollary~\ref{lem:vanish-line} below) that $L_{p,\Sigma}^\chi\vert_{\Gamma}$ is identically zero. 
\item[iv)] Let $k_{\Gamma_\Sigma}/k$ be a $\Z_p$-extension which is unramified outside $\Sigma$; under the validity of the $\Sigma$--Leopoldt conjecture there is a unique such extension. It follows from Definition~\ref{defn_L_invariant} of the $\mathcal{L}_{\Sigma,\Gamma_\Sigma}$  (and the fact that ${\rm rec}_{\Gamma_{\Sigma},v}={\rm ord}_v$ whenever $v\in \Sigma^c$) that $\mathcal{L}_{\Sigma,\Gamma_\Sigma}=1$.
\item[v)] The term $\frac{{\rm Reg}_p^{\Sigma,\chi}}{{\rm Reg}_\infty^{\chi}}$ is non-zero if and only if the $\Sigma$--Leopoldt conjecture holds true.
\item[iv)] It would be interesting to deduce the leading term formula \eqref{Eqn_leading_term_formula} in the particular case $\Gamma=\Gamma_{\rm cyc}$, extending the methods of \cite{DarmonDasguptaPollack, DasguptaKakdeVentullo}, without relying on the conjectural Rubin--Stark elements and the conjectural reciprocity law\footnote{During the review process of the present article, the authors learned that this approach is being employed in the forthcoming work of Betina and Hsieh~\cite{BetinaHsieh}. We are grateful to Ming-Lun Hsieh for informing us about this development and for sharing an initial draft of their paper with us.}. It seems to the authors that the methods in op. cit. are not likely to allow treatment beyond the case $\Gamma=\Gamma_{\rm cyc}$. Assuming $e=1$, one may hope that an explicit construction of a test vector $u\in X$ (where $X$ is given in Lemma~\ref{lemma_X}) in a manner that parallels \cite{DarmonDasguptaPollack} would allow us to conclude that ${\cL}_{\Sigma,\Gamma_{\rm cyc}}\neq 0$. The case $e>1$ seems to require new ideas. We finally remark that the argument of Remark~\ref{remark_L_invariant_for_Gamma_cyc_imaginary_quadratic} applies for a general CM field and shows that, if $\chi$ arises as the restriction of a character of $G_{k^+}$ to $G_k$ and $e=1$, then ${\cL}_{\Sigma,\Gamma_{\rm cyc}}\neq 0$. 
\end{remark}

\begin{corollary}
\label{cor_Hida_Tilouine_Conj}
In the notation and hypotheses of Theorem~\ref{thm_intro_leading_term}, we have 
$${\rm ord}_{\cA_{\Gamma}}\left(L_{p,\Sigma}\vert_{\Gamma}\right)= e 
 \quad \iff{}\quad  \cL_{\Sigma, \Gamma} \neq 0.\,. $$
\end{corollary}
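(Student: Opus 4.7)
The plan is to deduce the corollary immediately from the leading term formula \eqref{Eqn_leading_term_formula} in Theorem~\ref{thm_intro_leading_term}. Since that theorem already provides the lower bound ${\rm ord}_{\cA_{\Gamma}}(L_{p,\Sigma}^{\chi}\vert_{\Gamma}) \geq e$, the equality ${\rm ord}_{\cA_{\Gamma}}(L_{p,\Sigma}^{\chi}\vert_{\Gamma})=e$ is equivalent to the non-vanishing of the image of $L_{p,\Sigma}^{\chi}\vert_{\Gamma}$ in the quotient $\cA_{\Gamma}^{e}/\cA_{\Gamma}^{e+1}$. By \eqref{Eqn_leading_term_formula}, this image is, up to sign, the product of $\cL_{\Sigma, \Gamma}$ with four auxiliary factors, so the task reduces to checking that each of these four factors is non-zero under the standing hypotheses $\rm{(H.1)}$--$\rm{(H.5)}$.

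Each factor is then handled in turn. For $v\in \Sigma$, the value $\chi(v)$ (where defined) is a root of unity of order prime to $p$, so $1-\chi(v)/p\neq 0$ in $\bC_p$. For $v\in \Sigma^{c}\setminus V_{p}$, the very definition of $V_{p}$ as the set of places above $p$ splitting completely in $L$ ensures that $\chi$ is non-trivial on the decomposition group at $v$; in particular $\chi(v)^{-1}\neq 1$, and the corresponding Euler-like factor is non-zero. The leading coefficient $L^{*}(\chi^{-1},0)$ is non-zero by definition as a leading term (non-vanishing at $s=0$ after dividing out the zero of the expected order $g$, which is the dimension of the $\chi$-isotypic component of $\cO_{L}^{\times}\otimes\bC$ by Dirichlet's theorem). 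Finally, the ratio $\Reg_{p}^{\Sigma,\chi}/\Reg_{\infty}^{\chi}$ is the determinant of the $\bC_{p}$-linear composite displayed in the theorem, and it is non-zero precisely under hypothesis $\rm{(H.3)}$ ($\Sigma$--Leopoldt for $L$), as recorded in Remark~\ref{remark_L_invariant_vanishnonvanish}(v).

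Combining these observations, the image of $L_{p,\Sigma}^{\chi}\vert_{\Gamma}$ in $\cA_{\Gamma}^{e}/\cA_{\Gamma}^{e+1}$ is a non-zero $\bC_{p}$-multiple of $\cL_{\Sigma, \Gamma}$, which proves the desired equivalence. There is no genuine obstacle here: the entire corollary is essentially a reading of the leading term formula, and the only mildly delicate point is to justify the non-vanishing of $L^{*}(\chi^{-1},0)$ and of the regulator ratio in a coordinated manner. Both are already present (explicitly or implicitly) in the setup surrounding Theorem~\ref{thm_intro_leading_term} and Remark~\ref{remark_L_invariant_vanishnonvanish}, so the proof will be very short.
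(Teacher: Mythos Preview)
Your proposal is correct and matches the paper's approach: the paper does not give a separate proof of this corollary, treating it as immediate from the leading term formula (indeed, the ``if and only if'' statement is simply appended to Theorem~\ref{thm:main} as an ``in particular'' with no further justification). Your explicit verification that each of the four auxiliary factors is nonzero under $\rm{(H.1)}$--$\rm{(H.5)}$ is a correct and slightly more detailed unpacking of what the paper leaves implicit.
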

\begin{corollary}
\label{cor_Hida_Tilouine_Conj_strongform}
Under the hypotheses of Theorem~\ref{thm_intro_leading_term}, prediction \eqref{eqn_weak_exceptional_zero_conj} holds true:
$$L_{p,\Sigma}^{\chi}\,{\in}\, \mathcal{A}_{\Gamma_\infty}^{e}\setminus \mathcal{A}_{\Gamma_\infty}^{e+1}\,.$$
\end{corollary}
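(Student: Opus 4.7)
The plan is to extract Corollary~\ref{cor_Hida_Tilouine_Conj_strongform} from Theorem~\ref{thm_intro_leading_term} together with Corollary~\ref{cor_Hida_Tilouine_Conj}, by testing the Iwasawa function $L_{p,\Sigma}^{\chi} \in \bZ_p^{\rm ur}[[\Gamma_\infty]]$ on sufficiently generic one-parameter specialisations. I would split the desired conclusion into (a) the containment $L_{p,\Sigma}^{\chi} \in \cA_{\Gamma_\infty}^{e}$ and (b) the non-containment $L_{p,\Sigma}^{\chi} \notin \cA_{\Gamma_\infty}^{e+1}$, treating them separately.

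For the easier half (b), I would apply Corollary~\ref{cor_Hida_Tilouine_Conj} to the distinguished $\bZ_p$-extension $k_{\Gamma_\Sigma}/k$ unramified outside $\Sigma$, whose existence and uniqueness is ensured by (H.3) (see Remark~\ref{remark_L_invariant_vanishnonvanish}(iv)). The same remark records $\cL_{\Sigma, \Gamma_\Sigma} = 1 \neq 0$, so Corollary~\ref{cor_Hida_Tilouine_Conj} yields $\ord_{\cA_{\Gamma_\Sigma}}(L_{p,\Sigma}^{\chi}|_{\Gamma_\Sigma}) = e$. Since the surjection $\Gamma_\infty \twoheadrightarrow \Gamma_\Sigma$ induces a ring homomorphism $\bZ_p^{\rm ur}[[\Gamma_\infty]] \to \bZ_p^{\rm ur}[[\Gamma_\Sigma]]$ sending $\cA_{\Gamma_\infty}^{e+1}$ into $\cA_{\Gamma_\Sigma}^{e+1}$, the assertion (b) is immediate.

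For (a), I would fix an isomorphism $\Gamma_\infty \cong \bZ_p^d$ (always valid, as $\Gamma_\infty$ is a free $\bZ_p$-module) and identify $\bZ_p^{\rm ur}[[\Gamma_\infty]]$ with $\bZ_p^{\rm ur}[[T_1,\ldots,T_d]]$ so that $\cA_{\Gamma_\infty}^n = (T_1,\ldots,T_d)^n$. Writing $L_{p,\Sigma}^{\chi} = \sum_{m \geq m_0} f_m$ for the decomposition into homogeneous pieces with $f_{m_0} \neq 0$, I would argue by contradiction that $m_0 < e$ cannot hold. Each continuous surjection $\Gamma_\infty \twoheadrightarrow \Gamma \cong \bZ_p$ corresponds to a vector $\mathbf{a} = (a_1, \ldots, a_d) \in \bZ_p^d$ with $\gcd(a_i) = 1$, inducing $T_i \mapsto (1+T)^{a_i} - 1 \equiv a_i T \pmod{T^2}$ on Iwasawa algebras; by homogeneity, $L_{p,\Sigma}^{\chi}|_\Gamma \equiv f_{m_0}(\mathbf{a}) T^{m_0} \pmod{T^{m_0+1}}$. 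For each $v \in S_p(k)$, the condition that $v$ split completely in $k_\Gamma$ is a non-trivial $\bZ_p$-linear constraint on $\mathbf{a}$ (prescribed by the decomposition subgroup of $v$ in $\Gamma_\infty$, which is non-zero because $v$ ramifies in the cyclotomic $\bZ_p$-tower), so the locus of $\mathbf{a}$ violating (H.1) is a finite union of proper linear subspaces of $\bZ_p^d$; the vanishing locus of the non-zero polynomial $f_{m_0}$ on $\bZ_p^d$ is likewise a proper Zariski-closed subset. As $\bZ_p^d$ is not exhausted by finitely many proper closed subsets, I may select $\mathbf{a}$ outside both bad loci with $\gcd(a_i) = 1$, so that the corresponding $\Gamma$ satisfies (H.1)--(H.5). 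Theorem~\ref{thm_intro_leading_term} then forces $\ord_{\cA_\Gamma}(L_{p,\Sigma}^{\chi}|_\Gamma) \geq e$, while the explicit leading term above gives $\ord = m_0 < e$, the desired contradiction.

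The principal obstacle is the density step in (a): confirming that the family of $\bZ_p$-quotients of $\Gamma_\infty$ satisfying (H.1) is rich enough to detect every non-zero homogeneous component of $L_{p,\Sigma}^{\chi}$ of degree below~$e$. This reduces to the elementary algebraic fact that finitely many proper linear subspaces cannot cover $\bZ_p^d$. The remaining items --- compatibility of augmentation filtrations under surjection, and the $\Gamma$-independence of hypotheses (H.2)--(H.5) --- are routine.
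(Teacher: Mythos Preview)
Your argument is correct and in fact supplies more detail than the paper's one-line proof. For part (b), you specialise to $\Gamma_\Sigma$ and invoke Remark~\ref{remark_L_invariant_vanishnonvanish}(iv) together with Corollary~\ref{cor_Hida_Tilouine_Conj}, which is exactly the paper's route (its citation of Remark~\ref{remark_L_invariant_vanishnonvanish}(iii) is almost certainly a slip for (iv), since (iii) concerns the vanishing of $\cL_{\Sigma,\Gamma}$ rather than its non-vanishing). For part (a), the paper's proof does not indicate how the single-$\Gamma$ inequality $\ord_{\cA_\Gamma}(L_{p,\Sigma}^\chi\vert_\Gamma)\ge e$ from Theorem~\ref{thm_intro_leading_term} is promoted to the $\Gamma_\infty$-level containment $L_{p,\Sigma}^\chi\in\cA_{\Gamma_\infty}^e$; your density argument over the family of $\bZ_p$-quotients of $\Gamma_\infty$ is the natural way to bridge this, and the computation of the leading $T$-coefficient under $T_i\mapsto(1+T)^{a_i}-1$ together with the avoidance-of-bad-loci step is carried out correctly.

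One small point deserves a flag: your assertion that (H.2)--(H.5) are $\Gamma$-independent is not quite accurate as written, since Conjecture~\ref{MRS} (part of (H.5)) is formulated for a fixed $\bZ_p$-extension $k_\Gamma$. Your density step for (a) therefore implicitly assumes that MRS holds for the generically chosen $\Gamma$, not just for one $\Gamma$ handed to you; this is surely the intended interpretation of ``the hypotheses of Theorem~\ref{thm_intro_leading_term}'' in the Corollary, but it is worth making explicit. A parallel remark applies to (b): invoking Corollary~\ref{cor_Hida_Tilouine_Conj} for $\Gamma_\Sigma$ presupposes that $\Gamma_\Sigma$ itself satisfies (H.1), which both you and the paper take as implicit in the claim $\cL_{\Sigma,\Gamma_\Sigma}=1$.
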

\begin{proof}
This is an immediate consequence of Theorem~\ref{thm_intro_leading_term} and Remark~\ref{remark_L_invariant_vanishnonvanish}(iv).
\end{proof}

\begin{remark}
\label{remark_how_to_deduce_the_case_of_imaginary_fields_from_the_general_case}
In this remark, we shall explain how to deduce Theorem~\ref{thm_intro_imaginary_quadratic_full} (our main results in the particular case when $k/\QQ$ is an imaginary quadratic extension) from Theorem~\ref{thm_intro_leading_term} and Corollary~\ref{cor_Hida_Tilouine_Conj_strongform}. To do so, we need to check that
\begin{itemize}
    \item [i)] the hypotheses \eqref{item_H1}--\eqref{item_H5} hold true,
    \item [ii)] the identity \eqref{Eqn_leading_term_formula} agrees with \eqref{eq_ref_main_intro_imag_quad}
\end{itemize}
 in the setting of Theorem~\ref{thm_intro_imaginary_quadratic_full}.

 The hypothesis \eqref{item_H1} trivially holds true when $k/\QQ$ is imaginary quadratic. Since we have assumed in Theorem~\ref{thm_intro_imaginary_quadratic_full} that the prime $p$ splits in $k/\QQ$, the condition \eqref{item_H2} is also valid. The hypotheses \eqref{item_H3}--\eqref{item_H5} hold true since we assume in Theorem~\ref{thm_intro_imaginary_quadratic_full} that the class number of $k$ is coprime to $p$ (c.f. Remark~\ref{rem:MRS}(ii) and Remark~\ref{rem:rec1} below).  

Note that $e=1$ in the setting of Theorem~\ref{thm_intro_imaginary_quadratic_full}. The definition of $\cL_{\Sigma,\Gamma}$ reduces to that of $\cL_{\fp,\Gamma}$ given as in \eqref{eqn_L_invariant_imaginary_quadratic} with $\Sigma=\{\fp\}$. Since $V_p\supset \{\fp^c\}=\Sigma^c$, the Euler-like factors also agree. Finally, observe that
\[
\frac{{\rm Reg}_p^{\{\fp\},\chi}}{{\rm Reg}_\infty^{\chi}}= - \frac{\log_{\frak{p}} |\iota_p(u)|}{\log |\iota_\infty(u)|^2}\,,
\]
where $\{u\} \subset \cO_L^{\times,\chi}$ is a basis.
\end{remark}

\begin{remark}
In the follow-up paper \cite{BS_IMRN}, the authors proved that the $\cL$-invariants $\cL_{\Sigma,\Gamma}$ (which appear in our leading term formula \eqref{Eqn_leading_term_formula} and given as in Definition~\ref{defn_L_invariant} below) can be interpolated as $\Gamma$ varies to a universal (multivariate) $\cL$-invariant $\cL_{\Sigma,\Gamma_\infty}\in \cA_{\Gamma_\infty}^e/\cA_{\Gamma_\infty}^{e+1}$ (c.f. Section 3.4 in op. cit.), by interpreting $\cL_{\Sigma,\Gamma}$ in terms of $p$-adic height pairings. 
\end{remark}
\begin{acknowledgements}
{K.B. thanks Ming-Lun Hsieh for inquiring about K.B.'s prior work on exceptional zeros of $p$-adic $L$-functions~\cite{kbbCMH,kbbMTT} (as well as for his comments and suggestions on an earlier draft), which started the train of thought that lead to this project. He also thanks Academica Sinica for their hospitality. The authors would like to thank the anonymous referee, Mladen Dimitrov and Takamichi Sano for their very helpful comments and suggestions.

The authors' research is supported by: European Union's Horizon 2020 research and innovation programme under the Marie Sk{\l}odowska-Curie Grant Agreement No. 745691 (B\"uy\"ukboduk);  the Program for Leading Graduate Schools, MEXT, Japan and JSPS KAKENHI Grant Number 17J02456 (Sakamoto).} 
\end{acknowledgements}

$\,$

\section{Rubin--Stark elements and Conjecture of Mazur--Rubin--Sano}\label{subsec:Rubin--Stark}

In this section, we first recall (in Section~\ref{subsec_RS_elements_and_conj}) the definition of the $(p,\chi)$-part of the Rubin--Stark element for our fixed prime $p$ and character $\chi$ and recall the Rubin--Stark conjecture. 
In Section~\ref{subsec_conj_MRS_02_02_2022}, we then recall a certain special case of a conjecture of Mazur--Rubin--Sano that was formulated in~\cite{MRselmer,sano}.

\subsection{Rubin--Stark elements}
\label{subsec_RS_elements_and_conj}$\,$
The contents of this subsection, where we review the definition of Rubin--Stark elements and their properties, dwells largely on \cite{rubinstark,sano,bks2}.

Let $K/k$ be a finite abelian extension satisfying $K \cap L = k$.  We then have a canonical identification $\Gal(KL/k) = \Gal(K/k) \times \Gal(L/k)$. Fix a finite set $S$ of places of $k$ such that $S_{\infty}(k) \cup S_{\rm ram}(KL/k) \subseteq S$. 
We also fix a labeling $S = \{v_{1}, \ldots ,v_{n}\}$ and a positive integer $r \leq n$. 
Assume that the places $v_{1}, \ldots, v_{r}$ split completely in $KL$ and then set $V := \{v_{1}, \ldots, v_{r}\}$.

Given a set $\Sigma$ of places in $k$, we define the $\bZ[\Gal(KL/k)]$-module $X_{KL,\Sigma}$ to be the augmentation kernel of the divisor group on $\Sigma_{KL}$: 
\[
X_{KL, \Sigma} := \ker\left(\bigoplus_{w \in \Sigma_{KL}}\bZ w \longrightarrow \bZ; w \longmapsto 1 \right). 
\] 
Here, we denote by $\Sigma_{KL}$ the set of places of $KL$ lying above the places in $\Sigma$. In the particular case when $\Sigma=S$, we shall write $\cO^{\times}_{KL, S}$ for the ring of $S_{KL}$-integers in $KL$. 
We put 
\[
\bC \cO^{\times}_{KL, S} := \bC \otimes_{\bZ} \cO^{\times}_{KL, S} \ \text{ and } \ 
\bC X_{KL, S} := \bC \otimes_{\bZ} X_{KL, S}. 
\]
It is well-known that there is the regulator isomorphism 
\begin{align}\label{isom:reg}
\lambda_{KL,S} \colon \bC \cO^{\times}_{KL, S} \stackrel{\sim}{\longrightarrow} \bC X_{KL, S}
\end{align}
of $\bC[\Gal(KL/k)]$-modules, which is given for $a \in \cO^{\times}_{KL, S}$ by 
$$\lambda_{KL,S}(a) = - \sum_{w \in S_{KL}}\log |a|_{w} w\,.$$

For a non-trivial character $\psi \in \Hom(\Gal(KL/k), \bC^{\times})$, let $L_{S}(\psi,s)$ denote the $S$-truncated Dirichlet $L$-function for $\psi$ and 
\[
e_{\psi} := \frac{1}{\#\Gal(KL/k)}\sum_{\sigma \in \Gal(KL/k)}\psi(\sigma)\sigma^{-1}. 
\]
We write $r_{\psi, S}$ for the order of vanishing of $L_{S}(\psi, s)$ at $s = 0$. 
Since $\psi$ is non-trivial, the class number formula shows that 
\[
r_{\psi, S} = \dim_{\bC}(e_{\psi}\bC \cO_{KL,S}^{\times}) = \dim_{\bC}(e_{\psi}\bC X_{KL,S}) = 
\# \{v \in S \mid \psi(G_{k_{v}}) = 1\}\,. 
\] 
Therefore, by the assumption of $V$, we have $r \leq r_{\psi, S}$ and hence 
${\displaystyle \lim_{s \to 0}s^{-r}L_{S}(\psi, s)} $ is well-defined. 
Set 
\begin{align*}
e_{\chi} \theta^{(r)}_{KL/k,S} := e_{\chi} \left(\sum_{\psi}{\displaystyle \lim_{s \to 0}s^{-r}L_{S}(\psi^{-1}, s)}\cdot e_{\psi} \right) \in e_{\chi} \bC[\Gal(KL/k)]. 
\end{align*}

Recall that, throughout this paper, we shall fix a prime $w_{i}$ of $KL$ lying above $v_{i} \in V$. 
We also note that 
\[
e_{\chi}(\bC X_{KL,S}) = e_{\chi}\biggl(\bigoplus_{w \in \Sigma_{KL}}\bC w\biggr)
\]
since $\chi$ is a non-trivial character. 
\begin{definition}
The $\chi$-part of the ($r^{\rm th}$ order) Rubin--Stark element 
\[
\epsilon^{V,\chi}_{KL/k,S} \in e_{\chi}\left({\bigwedge}_{\bC_{p}[\Gal(KL/k)]}^{r}\bC\cO^{\times}_{KL, S}\right) = 
{\bigwedge}_{\bC[\Gal(K/k)]}^{r}(\bC\cO^{\times}_{KL, S})^{\chi}
\]
is defined to be the element which corresponds to
\[
e_{\chi} \theta^{(r)}_{KL/k,S} \cdot w_{1} \wedge \cdots \wedge w_{r}  \in e_{\chi} \left({\bigwedge}_{\bC[\Gal(KL/k)]}^{r}\bC X_{LK, S} \right)
\]
under the isomorphism 
\[
{\bigwedge}_{\bC[\Gal(KL/k)]}^{r}\bC \cO^{\times}_{KL, S} \cong {\bigwedge}_{\bC[\Gal(KL/k)]}^{r}\bC X_{LK, S} 
\]
induced by the regulator isomorphism~(\ref{isom:reg}). 
By using the fixed isomorphism $j \colon \bC \cong \bC_{p}$, we regard 
\[
\epsilon^{V,\chi}_{KL/k,S} \in {\bigwedge}_{\bC_{p}[\Gal(K/k)]}^{r}(\bC_{p}\cO^{\times}_{KL, S})^{\chi}. 
\]
Here $\bC_{p}\cO^{\times}_{KL, S} := \bC_{p} \otimes_{\bZ} \cO^{\times}_{KL, S}$. 
\end{definition}

\begin{remark}
\begin{itemize}
\item[(i)] The Rubin--Stark element depends on the choice of the fixed prime $w_{i}$ above $v_{i}$ for $1 \leq i \leq r$. 
For example, if we take another prime $w_{1}'$ in $KL$ satisfying $w_{1}' = \sigma w_{1}$ for some $\sigma \in \Gal(KL/k)$, then we get another element $\epsilon'$ and we have 
\[
\epsilon' = \sigma \cdot \epsilon^{V,\chi}_{KL/k,S}. 
\]
\item[(ii)] The Rubin--Stark element depends on the labeling of the elements $v_{1},\ldots,v_{r}$ in $V$. 
\end{itemize}
\end{remark}

By using the canonical isomorphism 
\begin{align*}
\Hom_{\bZ}(\cO_{KL,S}^{\times},\bZ) &\stackrel{\sim}{\longrightarrow} 
\Hom_{\bZ[\Gal(KL/k)]}(\cO_{KL,S}^{\times},\bZ[\Gal(KL/k)]); 
\\
f &\longmapsto \left(x \longmapsto \sum_{\sigma \in \Gal(KL/k)}f(\sigma x)\sigma^{-1}\right), 
\end{align*}
the order map $\cO_{KL,S}^{\times} \hookrightarrow (KL)_{w}^{\times} \twoheadrightarrow \bZ$ at $w$ induces a $\bZ[\Gal(KL/k)]$-homomorphism
\begin{equation}\label{eqn_define_ord_map}
{\rm ord}_{v} \colon \cO_{KL,S}^{\times} \longrightarrow \bZ[\Gal(KL/k)]. 
\end{equation}

\begin{remark}\label{rem:ord}
This notation is lightly abusive in that the map ${\rm ord}_{v}$ depends on the choice of the fixed prime $w$ above $v$. 
Namely, if we take another prime $w'$ in $KL$ satisfying $w' = \sigma w$ for some $\sigma \in \Gal(KL/k)$, then there is the map ${\rm ord}_{v}'$ associated with $w'$ and we have 
\[
{\rm ord}_{v}' = \sigma^{-1} \cdot {\rm ord}_{v}. 
\]
\end{remark}

For any subset $T \subseteq S \setminus S_{\infty}(k)$ and non-negative integer $s$, 
the maps $\{{\rm ord}_{v}\}_{v \in T}$ induce a map 
\[
\bigwedge_{v \in T}{\rm ord}_{v} \colon 
{\bigwedge}_{\bC_{p}[\Gal(K/k)]}^{s + \#T}(\bC_{p}\cO^{\times}_{KL, S})^{\chi} \longrightarrow {\bigwedge}_{\bC_{p}[\Gal(K/k)]}^{s}(\bC_{p}\cO^{\times}_{KL, S})^{\chi}
\]
(c.f. Equation \eqref{eq:Phi} in Appendix~\ref{sec:bi-dual}). 

\begin{proposition}\label{prop:stark-reln}
Let $K' \subseteq K$ be a subfield containing $k$ and $S_{\infty}(k) \cup S_{\rm ram}(K'L/k) \subseteq S'$. 
\begin{itemize}
\item[(i)] If $V \subseteq S'$, then 
\[
{\rm N}_{K/K'}(\epsilon^{V, \chi}_{KL/k, S}) = 
\prod_{v \in S \setminus S'}(1-{\rm Frob}_{v}^{-1})\epsilon^{V, \chi}_{K'L/k, S'}. 
\]
Here, ${\rm Frob}_{v}$ denotes the arithmetically normalized Frobenius element at $v$ and 
${\rm N}_{K/K'} \colon (LK)^{\times} \longrightarrow (LK')^{\times}$ denotes the norm map. 
In particular, if $S = S'$, then 
\[
{\rm N}_{K'/K}(\epsilon^{V, \chi}_{K'L/k, S}) = 
\epsilon^{V, \chi}_{KL/k, S}.
\] 
\item[(ii)] Let $V' \subseteq V$ such that $S \setminus V = S' \setminus V'$. Then we have 
\[
\bigwedge_{v \in V\setminus V'}{\rm ord}_{v}(\epsilon^{V, \chi}_{KL/k, S}) = {\rm sgn}(V,V') \epsilon^{V', \chi}_{K'L/k, S'}. 
\]
Here the sign ${\rm sgn}(V,V') \in \{\pm 1\}$ is defined by the relation 
\[
{\rm sgn}(V,V') \bigwedge_{v \in V} =  \bigwedge_{v \in V'} \wedge  \bigwedge_{v \in V \setminus V'}. 
\]
\end{itemize}
\end{proposition}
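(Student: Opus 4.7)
The plan is to prove both parts by transporting the statements through the Dirichlet regulator $\lambda_{KL,S}$, reducing them to elementary statements about partial Stickelberger elements and the exterior algebra. By definition, $\epsilon^{V,\chi}_{KL/k,S}$ corresponds under $\bigwedge^{r}\lambda_{KL,S}$ to $e_{\chi}\theta^{(r)}_{KL/k,S} \cdot w_{1} \wedge \cdots \wedge w_{r}$, and I would carry out all computations on this wedge representative.

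For (i), first observe that the norm map $\mathrm{N}_{K/K'}$ on units corresponds, via $\lambda$, to the natural push-forward of divisors $X_{KL,S} \to X_{K'L,S}$. Since each $v_{i} \in V \subseteq S'$ splits completely in $KL$, hence also in $K'L$, the image of $w_{i}$ under this push-forward is the prime $w_{i}|_{K'L}$ with residue degree one, so that $w_{1} \wedge \cdots \wedge w_{r}$ passes to the corresponding wedge over $K'L$. On the Stickelberger side, the induced projection $\bC[\Gal(KL/k)] \twoheadrightarrow \bC[\Gal(K'L/k)]$ annihilates every character of $\Gal(KL/k)$ that does not factor through $\Gal(K'L/k)$, while each surviving character, arising as the inflation $\tilde\psi$ of some $\psi$ on $\Gal(K'L/k)$, satisfies
\[
L_{S}(\tilde\psi^{-1}, s) = L_{S'}(\psi^{-1}, s)\prod_{v \in S \setminus S'}\bigl(1 - \psi(\mathrm{Frob}_{v})^{-1}q_{v}^{-s}\bigr),
\]
a relation that is well-defined because $S_{\mathrm{ram}}(K'L/k) \subseteq S'$. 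At $s=0$ the Euler factors are non-zero on the characters that contribute to the $r$-th leading coefficient, and reassembling them in the group algebra reproduces the factor $\prod_{v \in S\setminus S'}(1 - \mathrm{Frob}_{v}^{-1})$ acting on $\epsilon^{V,\chi}_{K'L/k,S'}$.

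For (ii), the key observation is that for $v \in V$ the choice of $w_{v}$ identifies $\bigoplus_{w \mid v}\bC w \cong \bC[\Gal(KL/k)]$ (sending $w_{v} \leftrightarrow 1$), and under this identification $\mathrm{ord}_{v}$ corresponds to $(\log q_{v})^{-1}\cdot \mathrm{pr}_{w_{v}}\circ \lambda_{KL,S}$. Applying $\bigwedge_{v \in V \setminus V'}\mathrm{ord}_{v}$ to $\epsilon^{V,\chi}_{KL/k,S}$ therefore amounts to contracting $w_{1}\wedge \cdots \wedge w_{r}$ against the $w_{v}$'s for $v \in V \setminus V'$, producing the sign $\sgn(V,V')$ (exactly the combinatorial sign appearing in the statement), a scalar $\prod_{v \in V \setminus V'}(\log q_{v})^{-1}$, and the residual wedge $\bigwedge_{v \in V'}w_{v}$. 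Since $V \setminus V' = S \setminus S'$ and each such $v$ splits completely in $KL$, one has $\tilde\psi(\mathrm{Frob}_{v})=1$, so each Euler factor $(1 - q_{v}^{-s})$ vanishes simply at $s=0$ with derivative $\log q_{v}$. Thus
\[
L_{S}^{(r)}(\tilde\psi^{-1},0) = L_{S'}^{(r')}(\psi^{-1},0)\prod_{v \in V \setminus V'}\log q_{v},
\]
and the $\log q_{v}$ factors cancel the scalars from the contraction, leaving exactly $\epsilon^{V',\chi}_{K'L/k,S'}$ on the right.

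The main obstacle I expect is the combinatorial sign bookkeeping in (ii), together with the implicit identification of the two ambient spaces — the contraction naturally sits in $\bigwedge^{r'}(\bC_{p}\cO^{\times}_{KL,S})^{\chi}$, whereas the stated right-hand side lies in $\bigwedge^{r'}(\bC_{p}\cO^{\times}_{K'L,S'})^{\chi}$; both issues reduce to a careful but routine Galois-equivariance check analogous to the norm computation in (i). Non-generic characters (with $r_{\tilde\psi,S} > r$) contribute only higher-order vanishing and are automatically discarded by the leading-term definition of $\theta^{(r)}$.
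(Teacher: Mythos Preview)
Your sketch is essentially correct and follows the standard argument; note, however, that the paper itself does not give a proof at all --- it simply cites \cite[Proposition~6.1]{rubinstark} and \cite[Proposition~3.5]{sano} for~(i), and \cite[Lemma~5.1(iv), Proposition~5.2]{rubinstark} and \cite[Proposition~3.6]{sano} for~(ii). What you have written is a faithful outline of the proofs found in those references: transport through the regulator, the Euler-factor relation $L_S(\psi^{-1},s)=L_{S'}(\psi^{-1},s)\prod_{v\in S\setminus S'}(1-\psi(\mathrm{Frob}_v)^{-1}q_v^{-s})$ for~(i), and the contraction computation together with the cancellation of $\log q_v$ factors for~(ii).

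One remark on the ``obstacle'' you flag at the end: you correctly observe that the contraction in~(ii) lands in $\bigwedge^{r'}(\bC_p\cO^\times_{KL,S})^\chi$ rather than over $K'L$. This is not a gap in your argument but a slip in the paper's statement --- Rubin's Proposition~5.2 (and Sano's Proposition~3.6) are both stated for the \emph{same} field $K$, and indeed the only use of~(ii) in the paper (in the proof of Theorem~\ref{thm:main}) is with $K=K'=k$. So your instinct that no genuine norm step is needed here is right; there is nothing further to check beyond the sign bookkeeping, which you have identified correctly.
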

\begin{proof}
Claim~(i) is \cite[Proposition~6.1]{rubinstark}; see also \cite[Proposition~3.5]{sano}. 
Claim~(ii) is \cite[Lemma~5.1(iv) and Proposition~5.2]{rubinstark}; see also \cite[Proposition~3.6]{sano}. 
\end{proof}

Let $\overline{S} := S \cup S_{p}(k)$. By (\ref{eq:hilb1}), we have 
\[
e_{\chi}(\bZ_{p}^{\rm ur} \otimes_{\bZ} \cO^{\times}_{KL, \overline{S}}) = H^{1}(G_{k,\overline{S}},T_{K}). 
\]
Let $G := \Gal(K/k)$ and put $(-)^{*} := \Hom_{\bZ_{p}^{\rm ur}[G]}(-,\bZ_{p}^{\rm ur}[G])$. 
Then by Remark~\ref{rem:rubin-lattice}, there is a canonical injection 
\[
{\bigcap}^{r}_{\bZ_{p}^{\rm ur}[G]}H^{1}(G_{k,\overline{S}},T_{K}) := \left({\bigwedge}^{r}_{\bZ_{p}^{\rm ur}[G]}H^{1}(G_{k,\overline{S}},T_{K})^{*}\right)^{*} \hookrightarrow {\bigwedge}_{\bC_{p}[\Gal(K/k)]}^{r}(\bC_{p}\cO^{\times}_{KL, \overline{S}})^{\chi}. 
\]
Then the following is the $(p,\chi)$-part of the Rubin--Stark conjecture. 

\begin{conjecture}[Rubin--Stark Conjecture]
The Rubin--Stark element  $\epsilon^{V,\chi}_{KL/k,S}$ is contained in the Rubin-lattice 
${\bigcap}^{r}_{\bZ_{p}^{\rm ur}[G]}H^{1}(G_{k,\overline{S}},T_{K})$. 
\end{conjecture}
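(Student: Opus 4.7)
The assertion in this conjecture is a well-known open problem in general; it formalises an integrality refinement of Stark's conjecture extracted from the complex-analytic definition of $\epsilon^{V,\chi}_{KL/k,S}$. My proof plan proceeds in three stages, and acknowledges that the general case cannot be settled by the techniques of this paper (indeed it is retained as Hypothesis (H.4) precisely because no unconditional proof is available in the generality needed).

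First, I would unravel the definitions to reduce the claim to checking that a certain $\bC_p$-linear $r$-form on the $\bZ_p^{\rm ur}[G]$-linear dual of $H^{1}(G_{k,\overline{S}}, T_{K})$ takes values in $\bZ_p^{\rm ur}[G]$ rather than merely in $\bC_p$. Because $\chi$ has order prime to $p$ and $p$ is unramified in $L$, the idempotent $e_\chi$ is already integral over $\bZ_p^{\rm ur}$, and the $H^{0}$-vanishing recorded in the notation section guarantees that $H^{1}(G_{k,\overline{S}}, T_{K})$ is torsion-free with no small pathologies obstructing the bi-dual formalism of Appendix~\ref{sec:bi-dual}. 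Integrality of $\epsilon^{V,\chi}_{KL/k,S}$ can therefore be tested character-by-character after a faithfully flat base change to an unramified extension.

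Secondly, I would exploit the norm and order-map compatibilities of Proposition~\ref{prop:stark-reln}. Taking inverse limits over the finite subextensions of a suitable $\bZ_p$-tower inside $KL/k$ produces a compatible system whose integrality propagates between layers up to Euler-like factors $1 - \mathrm{Frob}_{v}^{-1}$, which are units in $\bZ_p^{\rm ur}[G]$ away from the primes in $\overline{S}$. This step reduces the problem to a convenient finite base layer and, in favourable situations, recasts the Rubin--Stark element as the bottom of an Iwasawa-theoretic compatible system; the integrality of the latter can sometimes be attacked via equivariant Tamagawa number conjecture inputs or via Kolyvagin-type descent.

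Thirdly, for the cases where the conjecture is actually settled: when $k=\bQ$ one uses cyclotomic units and the original argument of Rubin \cite{rubinstark}, while when $k$ is imaginary quadratic (as in Theorem~\ref{thm_intro_imaginary_quadratic_full}) one uses elliptic units, which is precisely why (H.4) becomes unconditional in that setting. The main obstacle to a proof in full generality is the absence of an explicit construction of global $S$-units in abelian extensions of a general CM field whose archimedean valuations match the leading terms of the relevant $L$-functions---this is the original content of Stark's problem, and I do not expect to circumvent it here. The conjecture is therefore invoked as an input to the leading term formula \eqref{Eqn_leading_term_formula} rather than proved.
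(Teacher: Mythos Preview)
Your proposal correctly identifies the essential point: this statement is recorded in the paper as a \emph{conjecture}, not as a theorem, and the paper offers no proof of it. It is assumed throughout as hypothesis (H.4), and in the imaginary quadratic case its validity is imported from the literature (Bley's proof of ETNC combined with \cite{bks1}, as in Remark~\ref{rem:MRS}(ii)). So there is no ``paper's own proof'' to compare against, and your final paragraph is the honest conclusion: the conjecture is an input, not an output.

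That said, a couple of small corrections to your commentary. First, in this paper $k$ is always a CM field, so the case ``$k=\bQ$'' you invoke does not arise here; Rubin's cyclotomic-unit argument is for the Rubin--Stark conjecture over $\bQ$ as base field, which is a different setting from the one under discussion. Second, your second stage slightly overstates what norm compatibility buys: Proposition~\ref{prop:stark-reln} lets one pass integrality \emph{downward} through a tower (if you know it at the top you get it below, up to the Euler factors), but it does not by itself reduce the conjecture at an arbitrary finite layer to a ``convenient base layer''. The genuine reductions to known cases in the literature go through ETNC-type statements, exactly as you indicate at the end of that paragraph.
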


\begin{remark}
In the articles \cite{rubinstark,sano,bks2}, the authors choose an auxiliary finite set $T$ of places of $k$ which is disjoint from $S$ and such that the $(S,T)$-unit group of $KL$ is torsion-free, in order to state the Rubin--Stark conjecture. However, since $\chi$ cannot be the Teichm\"uller character thanks to our running hypothesis that $L/\bQ$ be unramified at $p$, the group $H^{0}(G_{k,\overline{S}},T/pT)$ vanishes and therefore, $H^{1}(G_{k,{\overline{S}}},T_{K})$ is torsion-free. For this reason, we can (and will) take $T = \emptyset$. 
\end{remark}

\subsection{Conjecture of Mazur--Rubin--Sano}
\label{subsec_conj_MRS_02_02_2022}
Our goal in this subsection is to formulate an Iwasawa-theoretic variant of a conjecture of Mazur--Rubin--Sano (c.f. Conjecture~\ref{MRS}).

Let $k_{\Gamma}$ be a $\bZ_{p}$-extension of $k$ with Galois group $\Gamma$.  
We write $k_{n}$ for the $n^{\rm th}$ layer of $k_{\Gamma}/k$ and put $\Gamma_{n} := \Gal(k_{n}/k)$. 
We also set $L_{n} := k_{n}L$ and $L_{\Gamma} := k_{\Gamma}L$. 
Note that $\Gal(L_{n}/k) = \Gamma_{n} \times \Gal(L/k)$ since $p \nmid [L \colon k]$. 
Let 
\[
S:=S_{\infty}(k) \cup S_{p}(k) \cup S_{\rm ram}(L/k) 
\ \text{ and } \ 
V := \{v \in S \mid \chi(G_{k_{v}}) = 1\}. 
\]
Note that $S_{\infty}(k) \subseteq V$ since $k$ is a CM field.  
Put $r := \# V - g$. 
We fix a labeling $S = \{v_{1}, \ldots ,v_{n}\}$ such that 
\begin{align*}
S_{\infty}(k) &= \{v_{1},\ldots, v_{g}\}, 
\\
V \setminus S_{\infty}(k) &= \{v_{{g}+1},\ldots, v_{g+r}\}. 
\end{align*}
We then have the $(p,\chi)$-components of the Rubin--Stark elements 
\[
\epsilon^{S_{\infty}(k), \chi}_{L_{n}/k, S} \in {\bigwedge}^{g}_{\bC_{p}[\Gamma_{n}]}(\bC_{p}\cO^{\times}_{L_{n},S})^{\chi} \ \text{ and } \ 
\epsilon^{V, \chi}_{L/k, S} \in {\bigwedge}^{g+r}_{\bC_{p}}(\bC_{p}\cO^{\times}_{L,S})^{\chi}. 
\]
Assume the $(p,\chi)$-part of the Rubin--Stark conjecture, namely that, 
\[
\epsilon^{S_{\infty}(k), \chi}_{L_{n}/k, S} \in {\bigcap}^{g}_{\bZ_{p}^{\rm ur}[\Gamma_{n}]}H^{1}(G_{k,S},T_{k_{n}}) \ \text{ and } \ 
\epsilon^{V, \chi}_{L/k, S} \in {\bigcap}^{g+r}_{\bZ_{p}^{\rm ur}}
H^{1}(G_{k,S},T). 
\]
As $H^{0}(G_{k,S},T/pT)$ vanishes, we have a canonical isomorphism 
\[
{\bigcap}^{g}_{\bZ_{p}^{\rm ur}[[\Gamma]]}H^{1}(G_{k,S},\bT_{\Gamma}) 
\cong
\varprojlim_{n}{\bigcap}^{g}_{\bZ_{p}^{\rm ur}[\Gamma_{n}]}H^{1}(G_{k,S},T_{k_{n}}) 
\]
by Corollary~\ref{cor:inv-bidual}. Here, $\bT_{\Gamma} := T \otimes_{\bZ_{p}} \bZ_{p}[[\Gamma]]$. 
Furthermore, the Rubin--Stark elements $\{\epsilon^{S_{\infty}(k), \chi}_{L_{n}/k, S}\}_{n}$ are norm-compatible by Proposition~\ref{prop:stark-reln}. 
One can therefore define an element 
\begin{equation}\label{eqn_rubin_stark_infty}
\epsilon^{S_{\infty}(k), \chi}_{L_{\Gamma}/k, S} := \varprojlim_{n}\epsilon^{S_{\infty}(k), \chi}_{L_{n}/k, S} \in {\bigcap}^{g}_{\bZ_{p}^{\rm ur}[[\Gamma]]}H^{1}(G_{k,S},\bT_{\Gamma}).
\end{equation}

Let $\cA_{\Gamma} := \ker(\bZ_{p}^{\rm ur}[[\Gamma]] \longrightarrow \bZ_{p}^{\rm ur})$ denote the augmentation ideal of $\bZ_{p}^{\rm ur}[[\Gamma]]$. 
\begin{conjecture}[Exceptional Zero Conjecture for Rubin--Stark elements]\label{MRS1} 
$$\displaystyle{
\epsilon^{S_{\infty}(k), \chi}_{L_{\Gamma}/k, S} \in \cA_{\Gamma}^{r} \cdot {\bigcap}^{g}_{\bZ_{p}^{\rm ur}[[\Gamma]]}H^{1}(G_{k,S},\bT_{\Gamma}).}$$
\end{conjecture}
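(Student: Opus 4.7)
The plan is to derive the claim from the factorisation of the $S$-truncated $L$-function into its Euler factors at the primes in $V \setminus S_\infty(k)$ times a truncated $L$-function of exact order of vanishing $g$ at $s = 0$, and to promote this factorisation to the Rubin--Stark element via the integrality granted by the Rubin--Stark conjecture (H.4). The key observation is that for a character $\psi = \eta\chi$ of $\Gal(L_n/k)$ with $\eta \in \widehat{\Gamma_n}$, the hypothesis $\chi(G_{k_v}) = 1$ at $v \in V \setminus S_\infty(k)$ forces $\psi(\mathrm{Frob}_v) = \eta(\gamma_v)$, where $\gamma_v \in \Gamma_n$ is the image of $\mathrm{Frob}_v$ (well-defined because $\chi$ is unramified at $v$). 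Consequently, writing $S' := S \setminus (V \setminus S_\infty(k))$, one has the identity
$L_S(\psi^{-1}, s) = \prod_{v \in V \setminus S_\infty(k)}\bigl(1 - \eta(\gamma_v)^{-1}(Nv)^{-s}\bigr)\cdot L_{S'}(\psi^{-1}, s),$
and $L_{S'}(\psi^{-1}, s)$ has order of vanishing exactly $g$ at $s = 0$ for $\psi$ in a Zariski-dense set of characters.

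First, I would take the $g$-th Taylor coefficient at $s = 0$ of this identity and feed it through the regulator isomorphism; in the inverse limit over $n$, this leads to an integral identity of the form
$\epsilon^{S_\infty(k),\chi}_{L_\Gamma/k, S} \,=\, \pm\!\prod_{v \in V \setminus S_\infty(k)}\bigl(1 - \gamma_v^{-1}\bigr)\cdot \widetilde{\epsilon}$
in the Rubin lattice, where $\widetilde{\epsilon} \in \bigcap^g_{\bZ_p^{\rm ur}[[\Gamma]]} H^1(G_{k,S},\bT_\Gamma)$ encodes the $g$-th leading term of $L_{S'}$ assembled along the tower. Granting this, Conjecture~\ref{MRS1} is immediate: each factor $(1 - \gamma_v^{-1})$ lies in $\cA_\Gamma$, so the right-hand side lies in $\cA_\Gamma^r \cdot \bigcap^g_{\bZ_p^{\rm ur}[[\Gamma]]} H^1(G_{k,S},\bT_\Gamma)$, as required.

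The main obstacle is producing such an integral factorisation. Rationally, the decomposition is essentially tautological, but the Rubin lattice is not free over $\bZ_p^{\rm ur}[[\Gamma]]$, and the $r$ factors $(1 - \gamma_v^{-1})$ are not mutually coprime (when $\Gamma \cong \bZ_p$ they are typically all associate up to units in $\cA_\Gamma$), so even a hypothetical individual divisibility of $\epsilon^{S_\infty(k),\chi}_{L_\Gamma/k,S}$ by each factor separately would not yield divisibility by their product. Overcoming this requires a careful descent along the augmentation filtration using the bi-dual formalism recalled in Appendix~\ref{sec:bi-dual}, combined with the norm-compatibility of the Rubin--Stark elements and the distribution relations of Proposition~\ref{prop:stark-reln}. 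This is essentially the content of the full Iwasawa-theoretic Mazur--Rubin--Sano conjecture~\ref{MRS} listed in (H.5); the cleanest route to Conjecture~\ref{MRS1} is to pass through MRS~\ref{MRS} itself, of which Conjecture~\ref{MRS1} is the clean order-of-vanishing shadow.
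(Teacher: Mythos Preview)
The statement is a \emph{conjecture} in the paper, not a theorem; the paper does not prove it unconditionally. What the paper does establish is Lemma~\ref{lem:imp1}: the Iwasawa Main Conjecture~\ref{conj:iwasawa} together with the Explicit Reciprocity Conjecture~\ref{conj:rec} imply Conjecture~\ref{MRS1}. That argument runs through Coleman maps and the Selmer-complex formalism: via \eqref{eq:gamma} the Rubin--Stark element maps to $L_{p,\Sigma}^{\chi,\iota}\vert_\Gamma$ under $\bigwedge_{v\in\Sigma}{\rm Col}_{\Gamma,v}\circ{\rm loc}_\Sigma$; the Main Conjecture identifies this with a generator of ${\rm Fitt}(\widetilde{H}^2_f)$; and the exact sequence~\eqref{exact4} together with the structure of $H^2(G_{k_v},\bT_\Gamma)$ forces the required power of $\cA_\Gamma$. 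Your Euler-factor route is entirely different from this.

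Your direct argument has a genuine gap. The primes in $V\setminus S_\infty(k)=V_p$ lie above $p$, and nothing prevents them from being \emph{ramified} in $k_\Gamma/k$ (hypothesis (H.1) only forbids complete splitting). If $v\in V_p$ is ramified in $k_n$, then $\mathrm{Frob}_v$ is not well-defined in $\Gamma_n$ --- your parenthetical ``well-defined because $\chi$ is unramified at $v$'' conflates unramifiedness in $L/k$ with unramifiedness in $k_n/k$ --- and the Euler factor you want to peel off is simply absent from $L_S(\psi^{-1},s)$. Consequently your factorisation does not produce $r$ factors in $\cA_\Gamma$, and the set $S'=S\setminus V_p$ need not contain $S_{\rm ram}(L_n/k)$, so the auxiliary Rubin--Stark system $\widetilde{\epsilon}$ you posit along the tower is not even defined (Proposition~\ref{prop:stark-reln}(i) requires $S_{\rm ram}\subseteq S'$). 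This is not a technicality that the bi-dual formalism can repair; the factorisation itself fails.

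Your fallback --- deduce Conjecture~\ref{MRS1} from Conjecture~\ref{MRS} --- is circular: the statement of Conjecture~\ref{MRS} \emph{begins} with ``Conjecture~\ref{MRS1} holds true and\ldots'', so MRS1 is an explicit hypothesis of MRS, not a consequence of its second clause.
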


Recall that $w$ denotes the fixed prime of $L_{\Gamma}$ above $v \in V$. 
We have the reciprocity map 
\[
{\rm rec}_{w} \colon L^{\times}_{w} \longrightarrow \Gal(L_{\Gamma,w}/L_{w})  \hookrightarrow \Gal(L_{\Gamma}/L) \cong \Gamma. 
\]
Since $\chi(G_{k_{v}}) = 1$, so that the prime $v$ splits completely in $L$, 
the fixed prime $w$ induces an isomorphism of $\bZ[\Gal(L/k)]$-modules
\[
\bigoplus_{w' \mid v}L^{\times}_{w'} \cong L_{w}^{\times} \otimes_{\bZ} \bZ[\Gal(L/k)], 
\]
and the reciprocity map ${\rm rec}_{w}$ induces a $\bZ[\Gal(L/k)]$-homomorphism 
\[
\bigoplus_{w' \mid v}L^{\times}_{w'} \longrightarrow \Gamma \otimes \bZ[\Gal(L/k)]. 
\]
On taking the $\chi$-part, we obtain a $\bZ^{\rm ur}_{p}$-homomorphism 
\begin{equation}
\label{eqn_the_map_rec}
{\rm rec}_{\Gamma,v} \colon H^{1}(G_{k_{v}},T) \longrightarrow \bZ^{\rm ur}_{p} \otimes_{\bZ_{p}} \Gamma \cong \cA_{\Gamma}/\cA_{\Gamma}^{2}. 
\end{equation}

\begin{remark}\label{rem:rec}
The map ${\rm rec}_{\Gamma,v}$ depends on the choice of the prime $w$ above $v$, as does ${\rm ord}_v$ defined in \eqref{eqn_define_ord_map}; see Remark~\ref{rem:ord}. 
\end{remark}

By abuse of notation, we also write ${\rm rec}_{\Gamma, v}$ for the composite map 
\[
H^{1}(G_{k,S},T) \xrightarrow{{\rm loc}_{v}} H^{1}(G_{k_{v}},T) \xrightarrow{{\rm rec}_{v}}  \cA_{\Gamma}/\cA_{\Gamma}^{2}. 
\]
We then have an induced $\bZ_{p}^{\rm ur}$-morphism 
\[
\bigwedge_{v \in V \setminus S_{\infty}(k)} {\rm rec}_{\Gamma, v} \colon {\bigcap}^{g+r}_{\bZ_{p}^{\rm ur}}H^{1}(G_{k,S},T) \longrightarrow   
\cA_{\Gamma}^{r}/\cA_{\Gamma}^{r+1} \otimes_{\bZ_{p}^{\rm ur}} 
{\bigcap}^{g}_{\bZ_{p}^{\rm ur}}H^{1}(G_{k,S},T)  
\]
(see Appendix~\ref{sec:bi-dual}). 
Assume that Conjecture~\ref{MRS1} holds true. 
Fix a topological generator $\gamma$ of $\Gamma$. 
Since $H^{1}(G_{k,S},\bT_{\Gamma})$ is a free $\bZ_{p}^{\rm ur}[[\Gamma]]$-module by Corollary~\ref{cor:free}, there is a unique element $\kappa_{\infty, \gamma} \in 
{\bigcap}^{g}_{\bZ_{p}^{\rm ur}[[\Gamma]]}H^{1}(G_{k,S},\bT_{\Gamma})$  such that 
\[
(\gamma -1)^{r} \cdot \kappa_{\infty,\gamma} = \epsilon^{S_{\infty}(k), \chi}_{L_{\Gamma}/k, S}. 
\]
Let $\kappa_{\gamma}$ denote the image of $\kappa_{\infty,\gamma}$ in ${\bigcap}^{g}_{\bZ_{p}^{\rm ur}}H^{1}(G_{k,S},T)$. 
The following is the Iwasawa theoretic version of the conjecture of Mazur--Rubin--Sano (see \cite[Conjecture~4.2]{bks2}). 
\begin{conjecture}[Iwasawa theoretic Mazur--Rubin--Sano Conjecture]\label{MRS}
Suppose that no prime in $S$ splits completely in $k_{\Gamma}/k$. 
Conjecture~\ref{MRS1} holds true and 
\[
\bigwedge_{v \in V \setminus S_{\infty}(k)} {\rm rec}_{\Gamma,w}(\epsilon^{V, \chi}_{L/k, S}) = (-1)^{gr} (\gamma-1)^{r} \otimes \kappa_{\gamma}.
\] 
Note that ${\rm sgn}(V, S_{\infty}(k)) = (-1)^{gr}$. 
\end{conjecture}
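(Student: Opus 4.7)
The plan is to prove both assertions of Conjecture~\ref{MRS} by a careful comparison of Rubin--Stark elements at various levels of $L_\Gamma/k$ and across the two families $\epsilon^{V,\chi}_\bullet$ and $\epsilon^{S_\infty(k),\chi}_\bullet$, combined with an explicit description of the local reciprocity maps $\mathrm{rec}_{\Gamma,v}$. Set $S' := (S\setminus V)\cup S_\infty(k)$, so that $V \cap S' = S_\infty(k)$ and $\#(V\setminus S_\infty(k)) = r$. First, provided that every $v \in V\setminus S_\infty(k)$ is unramified in $k_\Gamma/k$, Proposition~\ref{prop:stark-reln}(i) at level $K=K'=k_n$ with the pair $(S,S')$ yields
\[
\epsilon^{S_\infty(k),\chi}_{L_n/k, S} = \prod_{v\in V\setminus S_\infty(k)}(1-\mathrm{Frob}_v^{-1})\cdot\epsilon^{S_\infty(k),\chi}_{L_n/k, S'}.
\]
Since $\chi(v)=1$, the Frobenius $\mathrm{Frob}_v$ acts on the $\chi$-component as its image $\phi_v \in \Gamma$, and the hypothesis that no $v \in S$ splits completely in $k_\Gamma/k$ forces $1-\phi_v^{-1}\in \mathcal{A}_\Gamma \setminus \mathcal{A}_\Gamma^2$. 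Taking the projective limit in $n$ gives Conjecture~\ref{MRS1}.

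For the leading-term identification, the next step is to apply Proposition~\ref{prop:stark-reln}(ii) at the base level with $(V, V') = (V, S_\infty(k))$, producing
\[
\bigwedge_{v\in V\setminus S_\infty(k)}\mathrm{ord}_v\bigl(\epsilon^{V,\chi}_{L/k,S}\bigr) = (-1)^{gr}\cdot \epsilon^{S_\infty(k),\chi}_{L/k,S'}.
\]
The key local computation is that at each place $v\in V\setminus S_\infty(k)$ which is unramified in $k_\Gamma/k$, the reciprocity map satisfies $\mathrm{rec}_{\Gamma,v} \equiv (\phi_v - 1)\cdot \mathrm{ord}_v \pmod{\mathcal{A}_\Gamma^2}$, because local units land in the trivial inertia subgroup and a uniformizer is sent to $\phi_v$. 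Combined with the congruence $1 - \phi_v^{-1} \equiv \phi_v - 1 \pmod{\mathcal{A}_\Gamma^2}$ and the identity of the first step reduced modulo $\mathcal{A}_\Gamma^{r+1}$, one obtains
\[
\bigwedge_{v\in V\setminus S_\infty(k)}\mathrm{rec}_{\Gamma,v}\bigl(\epsilon^{V,\chi}_{L/k,S}\bigr) \equiv (-1)^{gr} \cdot \epsilon^{S_\infty(k),\chi}_{L_\Gamma/k,S} \pmod{\mathcal{A}_\Gamma^{r+1}},
\]
which becomes the claimed identity after the substitution $\epsilon^{S_\infty(k),\chi}_{L_\Gamma/k,S} = (\gamma-1)^r \kappa_{\infty,\gamma}$.

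The main obstacle is the case when a prime $v \in V\setminus S_\infty(k)$ is ramified in $k_\Gamma/k$, a situation unavoidable along the cyclotomic $\bZ_p$-extension at $p$-adic places. Two complications arise simultaneously: Proposition~\ref{prop:stark-reln}(i) cannot be applied with an $S'$ that omits such $v$, since $v \in S_{\mathrm{ram}}(k_n/k)$ must lie in the defining set of any Rubin--Stark element at level $L_n$; and the explicit formula $\mathrm{rec}_{\Gamma,v} = (\phi_v - 1)\cdot \mathrm{ord}_v$ fails because $\mathrm{rec}_{\Gamma,v}$ acts nontrivially on $\mathcal{O}_{k_v}^\times$ when $v$ is ramified in $k_\Gamma/k$. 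The expected route around this obstacle is to invoke an explicit reciprocity law in the spirit of Coleman, Kato, or Perrin-Riou to describe $\mathrm{rec}_{\Gamma,v}$ on local units via $p$-adic logarithms, while simultaneously replacing the Euler factor $(1-\phi_v^{-1})$ by a generator of the decomposition subgroup $\Gamma_v \subseteq \Gamma$ that encodes the ramification. Reconciling these two descriptions so that both sides of the identity continue to agree modulo $\mathcal{A}_\Gamma^{r+1}$ is the principal technical challenge of the argument.
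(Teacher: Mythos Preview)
The statement you are attempting to prove is a \emph{conjecture} in the paper, not a theorem. The paper does not prove it; it is assumed as hypothesis (H.5) for the main results, and Remark~\ref{rem:MRS}(ii) records that it is known only when $k$ is imaginary quadratic, as a consequence of Bley's proof of the ETNC together with \cite[Theorem~1.1]{bks1}. There is therefore no ``paper's own proof'' to compare against.

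Your argument in the case where every $v \in V\setminus S_\infty(k)$ is unramified in $k_\Gamma/k$ is essentially correct and is a standard reduction: one strips the Euler factors at such primes via Proposition~\ref{prop:stark-reln}(i), and the identity $\mathrm{rec}_{\Gamma,v} = (\phi_v-1)\cdot\mathrm{ord}_v$ holds on the nose for unramified $v$, so Proposition~\ref{prop:stark-reln}(ii) closes the loop. But the obstacle you correctly identify in the ramified case is precisely why this remains a conjecture. When $v\in V\cap S_p(k)$ is ramified in $k_\Gamma/k$, the map $\mathrm{rec}_{\Gamma,v}$ is nontrivial on local units and there is no elementary substitute for the missing Euler factor. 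Your proposed route via Coleman-type reciprocity laws is speculative and does not close the gap; indeed, the paper's Explicit Reciprocity Conjecture~\ref{conj:rec} is a \emph{separate} hypothesis alongside Conjecture~\ref{MRS}, and even combined with the Iwasawa Main Conjecture, what those two yield (Lemma~\ref{lem:imp1}) is only Conjecture~\ref{MRS1}, the order-of-vanishing assertion, not the full leading-term identity you are after. The general case is open.
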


\begin{remark}\label{rem:MRS}
\item[(i)] Conjecture~\ref{MRS} is slightly stronger than the conjecture of Mazur--Rubin--Sano in that their original conjecture does not require the validity of Conjecture~\ref{MRS1}. 
However, under Conjecture~\ref{MRS1}, 
Conjecture~\ref{MRS} is equivalent to \cite[Conjecture~4.2]{bks2}. 
\item[(ii)] Assume that $k$ is an imaginary quadratic field, the prime $p$ splits in $k/\bQ$, and the class number of $k$ is coprime to $p$. ETNC for abelian extensions of $k$ in this set up is proved by Bley in \cite{Bley06}. 
Furthermore, Conjecture~\ref{MRS1} holds true by Lemma~\ref{lem:imp1} below (see also Remarks~\ref{rem:iwasawa} and \ref{rem:rec1}). 
It therefore follows from \cite[Theorem~1.1]{bks1} that Conjecture~\ref{MRS} holds true in this situation.
\end{remark}

\section{Coleman Maps}
\label{section_Coleman_map}
In this section, we assume that the prime $p$ splits completely in $k$. Following the discussion in \cite{deshalit}, we recall here the construction of the Coleman maps for ${\bG_{\bf m}}/\QQ_p$.

\subsection{Construction of the Coleman Maps}\label{sec:coleman constr} Fix a prime $v \in S_{p}(k)$ and a $\bZ_{p}$-extension $k_{\Gamma}$ of $k$ in which $v$ is ramified. We also take a finite $p$-abelian extension $K$ of $k$ which is unramified at $v$. 
Recall that $w$ denotes the fixed prime in $k_{\Gamma}KL(\mu_{p})$ above $v$. 
For notational simplicity, we set 
\[
E_{\infty} := (k_{\Gamma}KL(\mu_{p}))_{w} 
\]
and we denote by $H$ the maximal unramified extension of $k_{v}$ contained in $E_{\infty}$. 
We also set 
\[
E_{1} := H(\mu_{p}) 
\]
and $E_{n}$ denotes the Galois extension of $E_{1}$ contained in $E_{\infty}$ such that 
$\Gal(E_{n}/E_{1}) \cong \bZ/(p^{n-1})$. 

Since $k_{v} = \bQ_{p}$ and $E_{\infty}/H$ is a totally ramified $\bZ_{p}^{\times}$-extension, 
there is an element $\pi \in \cO_{k_{v}}$ such that 
\[
{\rm N}_{E_{n}/k_{v}}(E_{n}^{\times}) = \pi^{\bZ} \times (1 + p^{n}\bZ_{p}).
\] 
Furthermore, by the definition of $H$, we can take a uniformizer $\varpi \in \cO_{H}$ such that 
\[
N_{H/k_{v}}(\varpi) = \pi. 
\]
Fix a power series $f(X) = \varpi X + \cdots \in X\cO_{H}[[X]]$ such that 
\[
f(X) \equiv X^{p} \bmod \fm_{H}. 
\]
Here $\fm_{H}$ denotes the maximal ideal of $\cO_{H}$. 
Then there exists a unique relative Lubin-Tate group $F_{f} \in \cO_{H}[[X,Y]]$ defined over $\cO_{H}$ with 
\[
F^{\varphi}_{f}(f(X),f(Y)) = f(F_{f}(X,Y)),
\] 
where $\varphi \in \Gal(H/k_{v})$ denotes the $p^{\rm th}$ Frobenius element (see \cite[Theorem~1.3]{deshalit}). 
Let $\fm_{\bC_{p}}$ denote the maximal ideal of the ring of integers in $\bC_{p}$ and put 
\[
x+_{f}y:=F_{f}(x,y) 
\] 
for $x,y \in \fm_{\bC_{p}}$. 
Then  there is a canonical ring isomorphism 
\[
\cO_{k_{v}} \cong \End(F_{f}) \subseteq \cO_{k_{v}}[[X]]; a \mapsto [a]_{f} = aX + \cdots
\] 
and we define an $\cO_{k_{v}}$-module structure on $(\fm_{\bC_{p}},+_{f})$ by 
$a \cdot x := [a]_{f}(x)$ for $a \in \cO_{k_{v}}$ and $x \in \fm_{\bC_{p}}$. 
Define 
\[
f_{n} := f^{\varphi^{n-1}} \circ \cdots \circ f 
\]
and set 
\[
W_{f}^{n} := \{x \in \fm_{\bC_{p}} \mid f_{n}(x) = 0\}. 
\]
Then it is well-known that 
\[
{\rm N}_{H(W_{f}^{n})/k_{v}}(H(W_{f}^{n})^{\times}) = \pi^{\bZ} \times (1+p^{n}\bZ_{p}) = 
{\rm N}_{E_{n}/k_{v}}(E_{n}^{\times})
\]
(c.f. \cite{yoshida}, Proposition~5.10). 
It follows by local class field theory that 
\[
H(W_{f}^{n}) = E_{n}. 
\]
For each positive integer $n$, we fix a generator $\omega_{n}$ of the $\cO_{k_{v}}$-module $(W^{n}_{f^{\varphi^{-n}}},+_{f^{\varphi^{-n}}})$ such that $f^{\varphi^{-n}}(\omega_{n}) = \omega_{n-1}$.


\begin{proposition}[Theorem~2.2 and Corollary~2.3 in {\cite{deshalit}}]\label{prop:coleman map}
Let 
\[
\beta = (\beta_{n+1})_{n} \in \varprojlim_{n} E_{n}^{\times}
\] 
be a norm-coherent sequence. 
Set $\nu(\beta) = {\rm ord}(\beta_{1})$. 

Then there is a unique power series $g_{\beta}(X) \in X^{\nu(\beta)} \cdot \cO_{H}[[X]]^{\times}$ (called by Coleman power series of $\beta$) such that, for any positive integer $n$,
\[
g_{\beta}^{\varphi^{-n}}(\omega_{n}) = \beta_{n}. 
\]
Furthermore, Coleman power series have the following properties: 
\begin{itemize}
\item[(i)] For any $\beta'  \in \varprojlim_{n} E_{n}^{\times}$, we have $g_{\beta\beta'}(X) = g_{\beta}(X)g_{\beta'}(X)$. 
\item[(ii)] We have $\prod_{\omega \in W_{f}^{1}}g_{\beta}(\omega) = g_{\beta}^{\varphi}(0)$. 
\item[(iii)] If $\beta \in \varprojlim_{n}\cO_{E_{n}}^{\times}$, then we have $(1-\varphi^{-1}) \cdot g_{\beta}(0) = \beta_{0}$. 
\item[(iv)] For any $u \in \cO_{k_{v}}^{\times}$, we have $g_{{\rm rec}(u^{-1})\beta}(X) = g_{\beta}(X) \circ [u]_{f}(X)$. Here 
$${\rm rec} \colon \cO_{k_{v}}^{\times} \longrightarrow \Gal(E_{\infty}/H)$$ 
stands for the reciprocity map. 
\end{itemize}
\end{proposition}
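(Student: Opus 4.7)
My plan follows Coleman's original approach, adapted to the relative Lubin--Tate setting as in de Shalit's monograph. The central object is the \emph{Coleman norm operator} $\cN$, a semilinear operator on $\cO_{H}[[X]]$ characterised by the identity
$$(\cN g)^{\varphi}(f(X)) = \prod_{\omega \in W_{f}^{1}} g(X +_{f} \omega).$$
The first step is to verify that the right-hand side, a priori merely a power series in $X$, in fact factors through $f(X)$ and has coefficients in $\cO_{H}$. This is the critical integrality statement, typically proved by reducing modulo $p$ and exploiting that $f(X) \equiv X^{p} \pmod{\fm_{H}}$; once it is established, it follows that $\cN$ is multiplicative, preserves $\cO_{H}[[X]]^{\times}$ as well as $X^{\nu}\cO_{H}[[X]]^{\times}$, and contracts $p$-adically on units congruent to $1$ modulo $p$, so that the iterative construction below converges.

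Existence of $g_{\beta}$ proceeds by successive approximation. For each $n$, I would choose any $h_{n} \in \cO_{H}[[X]]$ of $X$-adic order $\nu(\beta)$ satisfying $h_{n}^{\varphi^{-n}}(\omega_{n}) = \beta_{n}$; this is possible because $\omega_{n}$ is a uniformiser of $E_{n}/H$, so polynomial interpolation at each finite level suffices. The norm-coherence of $\beta$ combined with the defining property of $\cN$, specialised at $\omega_{n+1}$, forces $\cN h_{n+1}$ and $h_{n}$ to agree modulo an increasing power of $p$; iterating produces a Cauchy sequence whose $p$-adic limit $g_{\beta}$ satisfies the interpolation formula exactly for every $n$. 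Uniqueness reduces to showing that a unit $h \in \cO_{H}[[X]]^{\times}$ with $h^{\varphi^{-n}}(\omega_{n}) = 1$ for all $n$ must equal $1$. Writing $h = c(1 + X^{k} u(X))$ with $c \in \cO_{H}^{\times}$ and $u(0) \in \cO_{H}^{\times}$, reduction modulo $\fm_{E_{n}}$ first forces $c \in 1 + \fm_{H}$; then, using that the $\omega_{n}$ are non-zero elements of $\fm_{\bC_{p}}$ with $v$-adic valuations tending to $0$, one forces each coefficient of $h - 1$ to vanish by a Weierstrass-preparation argument.

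The four enumerated properties follow essentially mechanically. Part (i) is immediate on applying uniqueness to $g_{\beta} g_{\beta'}$, which interpolates $\beta\beta'$. Part (ii) is obtained by specialising the fixed-point identity $\cN g_{\beta} = g_{\beta}^{\varphi}$ — which is equivalent to the norm-coherence of $\beta$ — at $X = 0$ using the defining formula of $\cN$. Part (iii) drops out from the same fixed-point relation at the base stage, once rearranged to isolate $(1 - \varphi^{-1}) g_{\beta}(0)$; the hypothesis that $\beta$ is a unit sequence guarantees $g_{\beta}(0) \in \cO_{H}^{\times}$. Part (iv) reflects Lubin--Tate local class field theory, which dictates that $\rec(u^{-1})$ acts on the tower $(\omega_{n})_{n}$ by $\omega_{n} \mapsto [u]_{f}(\omega_{n})$; the identity $g_{\rec(u^{-1})\beta}(X) = g_{\beta}(X) \circ [u]_{f}(X)$ then follows because both sides interpolate the translated sequence and therefore agree by uniqueness. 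The principal technical obstacle throughout is precisely the integrality of $\cN$ asserted at the outset — once past that, the remainder is careful but elementary bookkeeping with the $\varphi$-twists specific to the relative Lubin--Tate setting.
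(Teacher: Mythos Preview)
The paper does not supply its own proof of this proposition; it simply cites \cite[Theorem~2.2 and Corollary~2.3]{deshalit}. Your sketch faithfully reconstructs the Coleman argument as presented in that reference --- the norm operator $\cN$, the successive-approximation construction of $g_\beta$, uniqueness via vanishing at all $\omega_n$, and the deduction of (i)--(iv) from the fixed-point relation together with Lubin--Tate reciprocity --- so there is nothing in the paper itself to compare against beyond the citation, and your approach is the one the authors defer to.
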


\begin{lemma}\label{lem:const1}
Let $u \in \cO_{k_{v}}^{\times}$ and let $\widetilde{\pi} = (\pi_{n}) \in \varprojlim_{n} E_{n}^{\times}$ be a norm-coherent sequence of uniformizers. 
Put $\beta := ({\rm rec}(u^{-1})-1)\widetilde{\pi} \in \varprojlim_{n}\cO_{E_{n}}^{\times}$ 
Then $g_{\beta}(0) = u$. 
\end{lemma}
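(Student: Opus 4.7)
The plan is to exploit the multiplicative and functorial properties of Coleman power series recorded in Proposition~\ref{prop:coleman map}, together with the defining congruences for the endomorphism $[u]_f$.

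First, I would verify that $\beta$ genuinely lies in $\varprojlim_{n}\cO_{E_{n}}^{\times}$. Both $\widetilde{\pi}$ and ${\rm rec}(u^{-1})\widetilde{\pi}$ are norm-coherent sequences of uniformizers (the latter because ${\rm rec}(u^{-1})$ acts by a Galois automorphism, which preserves valuations and commutes with the norms), so $\beta$ is a norm-coherent sequence of units. Hence its Coleman power series $g_{\beta}(X)$ has $\nu(\beta)=0$, i.e.\ $g_{\beta}(X) \in \cO_{H}[[X]]^{\times}$, and in particular $g_{\beta}(0)$ makes sense as an element of $\cO_{H}^{\times}$.

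Next, I would factor $g_{\widetilde{\pi}}(X) = X\,\phi(X)$ with $\phi(X) \in \cO_{H}[[X]]^{\times}$ (possible because $\nu(\widetilde{\pi})=1$, by Proposition~\ref{prop:coleman map}). Applying part (iv) of the proposition with $\beta$ replaced by $\widetilde{\pi}$ gives
\[
g_{{\rm rec}(u^{-1})\widetilde{\pi}}(X) \;=\; g_{\widetilde{\pi}}\bigl([u]_{f}(X)\bigr) \;=\; [u]_{f}(X)\,\phi\!\bigl([u]_{f}(X)\bigr),
\]
and the multiplicativity in part (i) then yields
\[
g_{\beta}(X) \;=\; \frac{g_{{\rm rec}(u^{-1})\widetilde{\pi}}(X)}{g_{\widetilde{\pi}}(X)} \;=\; \frac{[u]_{f}(X)}{X}\cdot\frac{\phi\!\bigl([u]_{f}(X)\bigr)}{\phi(X)}.
\]

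Finally, I would evaluate at $X=0$. Since $[u]_{f}(X) = uX + \cdots \in \cO_{k_v}[[X]]$, the quotient $[u]_{f}(X)/X$ lies in $\cO_{k_v}[[X]]$ with constant term $u$. Because $[u]_{f}(0)=0$, the second factor equals $\phi(0)/\phi(0)=1$ at $X=0$. Therefore $g_{\beta}(0) = u$, as required.

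The argument is essentially a two-line manipulation once one has (i) and (iv) of Proposition~\ref{prop:coleman map}; the only subtlety I anticipate is making sure that the ratio in the display above is well-defined in $\cO_{H}[[X]]^{\times}$ rather than merely in the fraction field, which is guaranteed by the observation that $[u]_{f}(X)/X$ is a unit in $\cO_{k_v}[[X]]$ (its constant term $u$ is a unit).
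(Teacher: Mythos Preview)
Your proof is correct and follows exactly the same approach as the paper: apply parts (i) and (iv) of Proposition~\ref{prop:coleman map} to write $g_{\beta}(X) = g_{\widetilde{\pi}}([u]_f(X))/g_{\widetilde{\pi}}(X)$, then use $g_{\widetilde{\pi}}(X) \in X\cdot\cO_H[[X]]^{\times}$ together with $[u]_f(X) = uX + \cdots$ to read off the constant term. Your write-up is simply more explicit about the factorization $g_{\widetilde{\pi}}(X) = X\phi(X)$ and the well-definedness of the quotient.
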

\begin{proof}
By Proposition~\ref{prop:coleman map}, we have 
\[
g_{\beta}(X) = g_{\widetilde{\pi}}(X) \circ [u]_{f}(X)/g_{\widetilde{\pi}}(X). 
\]
Since $g_{\widetilde{\pi}}(X) \in X \cdot \cO_{H}[[X]]^{\times}$ and 
$[u]_{f}(X) = uX + \cdots$, the proof that $g_{\beta}(0) = u$ follows. 
\end{proof}

By \cite[Proposition~1.6]{deshalit}, there is an element $\theta(S) \in \bZ_{p}^{\rm ur}[[S]]$ such that 
$\theta$ induces an isomorphism of Lubin-Tate groups: 
\[
\theta \colon \hat{\bG}_{m} \cong F_{f}. 
\]
For each non-negative integer $n$, we fix primitive $p^{n}$th roots of unity $\zeta_{n}$ such that $\zeta_{n+1}^{p} = \zeta_{n}$ 
and we put
\[
\omega_{n} := \theta^{\varphi^{-n}}(\zeta_{n}-1). 
\]
We note that $\omega_{n}$ is a generator of $W^{n}_{f^{\varphi^{-n}}}$ and $f^{\varphi^{-n}}(\omega_{n}) = \omega_{n-1}$.

Take a norm-coherent sequence $\beta = (\beta_{n})_{n} \in \varprojlim_{n}\cO_{E_{n}}^{\times, \wedge}$. 
Since $\beta_{i}$ is a principal unit for each $i$, we see that 
\[
g_{\beta}(X) \equiv 1 \bmod (\varpi, X).
\] 
We may therefore define a power series $\log g_{\beta}(X) \in \cO_{H}[[X]]$. 
By \cite[p.18, Lemma]{deshalit}, the power series 
\[
\widetilde{\log g_{\beta}}(X) := \log g_{\beta}(X) - \frac{1}{p}\sum_{\omega \in W_{f}^{1}}\log g_{\beta}(X+_{f}\omega) 
\]
has integral coefficients. 
The following lemma is a direct consequence of Proposition~\ref{prop:coleman map}. 

\begin{lemma}\label{lem:const}
For a norm-coherent sequence $\beta = (\beta_{n})_{n} \in \varprojlim_{n}\cO_{E_{n}}^{\times, \wedge}$, we have 
\begin{align*}
\widetilde{\log g_{\beta}}(0) = \left(1-\frac{\varphi}{p}\right)\log g_{\beta}(0). 
\end{align*}
\end{lemma}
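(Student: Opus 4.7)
The proof is essentially a direct computation combining the definition of $\widetilde{\log g_\beta}$ with Proposition~\ref{prop:coleman map}(ii). The plan is to substitute $X = 0$ into the defining identity
\[
\widetilde{\log g_{\beta}}(X) = \log g_{\beta}(X) - \frac{1}{p}\sum_{\omega \in W_{f}^{1}}\log g_{\beta}(X+_{f}\omega)
\]
and simplify the right-hand side. Since the formal group law satisfies $F_f(0,Y) = Y$, we have $0 +_f \omega = \omega$ for every $\omega \in W_f^1$, so
\[
\widetilde{\log g_{\beta}}(0) = \log g_{\beta}(0) - \frac{1}{p}\sum_{\omega \in W_{f}^{1}}\log g_{\beta}(\omega).
\]
Each $g_\beta(\omega)$ is a principal unit (since the coefficients of $g_\beta(X)-1$ lie in the maximal ideal by the remark preceding the lemma), so the logarithms converge and $\log$ carries products to sums.

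Applying this additive-to-multiplicative property together with Proposition~\ref{prop:coleman map}(ii) yields
\[
\sum_{\omega \in W_{f}^{1}}\log g_{\beta}(\omega) = \log\!\left(\prod_{\omega \in W_{f}^{1}} g_{\beta}(\omega)\right) = \log g_{\beta}^{\varphi}(0).
\]
Because $g_\beta^\varphi$ is obtained by applying $\varphi$ to the coefficients of $g_\beta$, evaluation at $0$ gives $g_\beta^\varphi(0) = \varphi\bigl(g_\beta(0)\bigr)$, and $\varphi$ is a continuous ring automorphism of $\cO_H$ that therefore commutes with the formal series $\log$ on principal units. Hence $\log g_\beta^\varphi(0) = \varphi\bigl(\log g_\beta(0)\bigr)$, and substituting back gives
\[
\widetilde{\log g_{\beta}}(0) = \log g_{\beta}(0) - \frac{1}{p}\,\varphi\bigl(\log g_{\beta}(0)\bigr) = \left(1-\frac{\varphi}{p}\right)\log g_{\beta}(0),
\]
which is the claim. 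No real obstacle arises here: the only point requiring mild care is the verification that the logarithms are defined and that $\varphi$ commutes with $\log$, both of which are immediate from the fact that $g_\beta(X) \equiv 1 \pmod{(\varpi, X)}$ observed in the excerpt.
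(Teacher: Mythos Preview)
Your proof is correct and is precisely the direct computation the paper has in mind: the paper merely states that the lemma ``is a direct consequence of Proposition~\ref{prop:coleman map}'' without writing out the details, and your argument unpacks exactly that consequence (substitute $X=0$, use $0+_f\omega=\omega$, apply part (ii), and commute $\varphi$ through $\log$).
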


The reciprocity map defines a map 
\[
\kappa \colon \Gal(E_{\infty}/H) \xrightarrow{{\rm rec}_{w}^{-1}}
\bZ_{p}^{\times} \xrightarrow{x \mapsto x^{-1}} \bZ_{p}^{\times}. 
\]
We then choose an element $\mu_{\beta} \in \bZ_{p}^{\rm ur}[[\Gal(E_{\infty}/H)]]$ such that 
\[
\widetilde{\log g_{\beta}}(X) \circ \theta(S) = \int_{\Gal(E_{\infty}/H)}(1+S)^{\kappa(g)}\, d\mu_{\beta}(g). 
\] 
This element canonically extends to an element $\widetilde{\mu}_{\beta} \in \bZ_{p}^{\rm ur}[[\Gal(E_{\infty}/k_{v})]]$ (see \cite[p.~20]{deshalit}). 

\begin{remark}\label{rem:exten}
Note that 
\[
\int_{\Gal(E_{\infty}/k_{v})}\, d\widetilde{\mu}_{\beta}(g) = [H \colon k_{v}] \cdot \widetilde{\log g_{\beta}}(0) = [H \colon k_{v}] \left(1-\frac{\varphi}{p}\right)\log g_{\beta}(0). 
\]
\end{remark}

As a result of the discussion above, one has the following homomorphism of $\bZ^{\rm ur}_{p}[[\Gal(E_{\infty}/k_{v})]]$-modules: 
\begin{align}\label{eqn_coleman_map_crude_form}
\varprojlim_{n}(\bZ_{p}^{\rm ur} \otimes_{\bZ_{p}} \cO_{E_{n}}^{\times, \wedge}) &\longrightarrow \bZ^{\rm ur}_{p}[[\Gal(E_{\infty}/k_{v})]]\\
\notag\beta &\longmapsto d\widetilde{\mu}_{\beta}\,. 
\end{align}
Since $k_{v} = \bQ_{p}$, we have $\mu_{p^{\infty}} \cap (k_{\Gamma}KL)_{w} = 1$. 
Hence, by \cite[page~21, (13)]{deshalit}, the map \eqref{eqn_coleman_map_crude_form}  induces an isomorphism of $\bZ^{\rm ur}_{p}[[\Gal(E_{\infty}/k_{v})]]$-modules
\[
\varprojlim_{n}(\bZ_{p}^{\rm ur} \otimes_{\bZ_{p}} \cO_{(k_{n}KL)_{w}}^{\times,\wedge}) \stackrel{\sim}{\longrightarrow} \bZ^{\rm ur}_{p}[[\Gal((k_{\Gamma}KL)_{w}/k_{v})]]. 
\]
We recall here that $k_{n}$ denotes the $n^{\rm th}$ layer of the $\bZ_{p}$-extension $k_{\Gamma}/k$. Using the fixed prime $w$ and on passing to $\chi$-parts, we obtain the isomorphism
\[
{\rm Col}_{\Gamma,K,v} \colon 
H^{1}_{f}(G_{k_{v}},T_{k_{\Gamma}K})
\stackrel{\sim}{\longrightarrow}
\bZ^{\rm ur}_{p}[[\Gamma]][\Gal(K/k)] 
\]
of $\bZ^{\rm ur}_{p}[[\Gamma]][\Gal(K/k)]$-modules. Here, $T_{k_{\Gamma}K} := T \otimes_{\bZ_{p}}\bZ_{p}[[\Gamma]][\Gal(K/k)]$. 
For an extension $K'/k$  contained in $K$, by construction, we see that the diagram 
\[
\xymatrix@C=50pt{
H^{1}_{f}(G_{k_{v}},T_{k_{\Gamma}K}) \ar[r]^-{{\rm Col}_{\Gamma,K,v}} \ar[d] &
\bZ^{\rm ur}_{p}[[\Gamma]][\Gal(K/k)] \ar[d] 
\\
H^{1}_{f}(G_{k_{v}},T_{k_{\Gamma}K'}) \ar[r]^-{{\rm Col}_{\Gamma,K',v}} & 
\bZ^{\rm ur}_{p}[[\Gamma]][\Gal(K'/k)]
}
\]
commutes. 
This in turn induces an isomorphism 
\begin{equation}
\label{eqn_define_Coleman_map_02_02_2022}
    {\rm Col}_{v} \colon H^{1}_{f}(G_{k_{v}},\bT_{\infty}) \stackrel{\sim}{\longrightarrow} \bZ^{\rm ur}_{p}[[\Gamma_{\infty}]] 
\end{equation}
of $\bZ^{\rm ur}_{p}[[\Gamma_{\infty}]]$-modules.

\begin{remark}
We note that, by \cite[Chapter 1, Proposition~3.9]{deshalit}, 
the isomorphism ${\rm Col}_{v}$ is independent of the choice of $f$, $\theta$, and $\zeta_{i}$. 
We also note that ${\rm Col}_{v}$ depends on the choice of the prime $w$ above $v$; compare with the discussion in Remark~\ref{rem:ord}. 
\end{remark}

The order map $(KL)_{w}^{\times} \twoheadrightarrow \bZ$ induces a surjective homomorphism 
\[
{\rm ord}_{v} \colon (KL \otimes_{k} k_{v})^{\times,\wedge} \twoheadrightarrow \bZ_{p}[\Gal(KL/k)/D_{KL,v}], 
\]
where $D_{KL,v}$ denotes the decomposition group at $v$ in $\Gal(KL/k)$. 
Furthermore, ${\rm ord}_{v}$ induces an isomorphism 
\begin{align}\label{exact1}
H^{1}(G_{k_{v}},T_{K})/H^{1}_{f}(G_{k_{v}},T_{K}) \stackrel{\sim}{\longrightarrow} e_{\chi}\bZ_{p}^{\rm ur}[\Gal(LK/k)/D_{KL,v}]
\end{align}
of $\bZ_{p}^{\rm ur}[\Gal(K/k)]$-modules. 
We therefore infer that 
\[
H^{1}_{f}(G_{k_{v}},\bT_{\infty}) = H^{1}(G_{k_{v}},\bT_{\infty})
\]
and we have 
\begin{align*}
{\rm Col}_{v} \colon H^{1}(G_{k_{v}},\bT_{\infty}) = H^{1}_{f}(G_{k_{v}},\bT_{\infty}) \stackrel{\sim}{\longrightarrow} \bZ^{\rm ur}_{p}[[\Gamma_{\infty}]]. 
\end{align*}

\subsection{``Constant terms'' of Coleman maps} 
\label{subsec_constant_terms_Coleman_maps} Our objective in the present subsection is to calculate the images of the multivariate Coleman maps we have introduced in \S\ref{eqn_define_Coleman_map_02_02_2022} under certain augmentation maps. We separately treat the cases when the place $v$ of $k$ splits completely in $L$ and when not. In the latter case, the main calculation is Proposition~\ref{prop:const1} and it is well-known to the experts. The former case is more involved and it occupies \S\ref{subsec:split}, where our main calculation is recorded as Proposition~\ref{prop:const2}.

Let $v \in S_{p}(k)$ and $K/k$ an abelian $p$-extension which is unramified at $v$. 
Since $L_{w}$ is an unramified extension of $\bQ_{p}$ thanks to our running assumptions on $\chi$, the $p$-adic logarithm defines a $\bZ_{p}$-isomorphism 
\[
\cO_{(KL)_{w}}^{\times, \wedge} \stackrel{\sim}{\longrightarrow} p\cO_{(KL)_{w}}\,;\,\,\,\, x \mapsto \sum_{n=1}^{\infty}(-1)^{n}\frac{(x-1)^{n}}{n}. 
\]
The fixed prime $w$ induces a $\bZ_{p}[\Gal(K/k)]$-homomorphism 
\[
 (\cO_{KL} \otimes_{\cO_{k}}\cO_{k_{v}})^{\chi} = \biggl( \bigoplus_{w' \mid v} \cO_{(KL)_{w'}}\biggr)^{\chi} \longrightarrow \cO_{k_{v}} \otimes_{\cO_{k}} \cO_{K}.  
\]
Thence, the logarithm and the fixed prime $w$  together induce a $\bZ_{p}^{\rm ur}[\Gal(K/k)]$-homomorphism 
\[
\log_{K,v} \colon H^{1}_{f}(G_{k_{v}},T_{K}) \longrightarrow p(\bZ_{p}^{\rm ur} \otimes_{\bZ_{p}} \cO_{k_{v}} \otimes_{\cO_{k}} \cO_{K}). 
\]
In particular, if $K=k$, since $k_{v} = \bQ_{p}$ and $p \nmid [L \colon k]$, we have a $\bZ_{p}^{\rm ur}$-isomorphism 
\begin{align}\label{def:log}
\log_{v} \colon H^{1}_{f}(G_{k_{v}},T) \stackrel{\sim}{\longrightarrow} p\bZ_{p}^{\rm ur}. 
\end{align}

\begin{remark}\label{rem:log}
The map $\log_{K,v}$  depends on the choice of the prime $w$ above $v$; compare with the discussion in Remark~\ref{rem:ord}. 
\end{remark}

\subsubsection{Non-Split Case}\label{subsub_non_split}

Suppose throughout \S\ref{subsub_non_split} that $v$ does not split completely in $L$ (namely that, $\chi(G_{k_{v}}) \neq 1$). 

\begin{lemma}\label{lem:str}
Let $K$ be a subfield of $k(p^{\infty})/k$. 
\begin{itemize}
\item[(i)] We have 
$H^{1}_{f}(G_{k_{v}},T_{K}) = H^{1}(G_{k_{v}},T_{K})$. 
\item[(ii)] The norm map 
\[
H^{1}(G_{k_{v}},\bT_{\infty}) \otimes_{\bZ_{p}^{\rm ur}[[\Gamma_{\infty}]]} \bZ_{p}^{\rm ur}[[\Gal(K/k)]]\longrightarrow H^{1}(G_{k_{v}},T_{K})
\]
is an isomorphism. 
\end{itemize}
\end{lemma}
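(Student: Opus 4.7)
My plan is to settle (i) by a direct representation-theoretic vanishing extracted from the exact sequence \eqref{exact1}, and then to deduce (ii) from (i) using the Coleman isomorphism ${\rm Col}_v$ and its functoriality in the tower.

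For (i), I fix a finite subextension $K'/k$ of $K/k$ and invoke \eqref{exact1}, which identifies
\[
H^{1}(G_{k_{v}},T_{K'})\big/H^{1}_{f}(G_{k_{v}},T_{K'}) \;\xrightarrow{\sim}\; e_{\chi}\bZ_{p}^{\rm ur}[\Gal(LK'/k)/D_{LK',v}].
\]
As a $\Gal(LK'/k)$-module, the right-hand side is the induced representation ${\rm Ind}_{D_{LK',v}}^{\Gal(LK'/k)}\bZ_{p}^{\rm ur}$, so Frobenius reciprocity shows that its $\chi$-isotypic component vanishes unless $\chi|_{D_{L,v}} = 1$, where $D_{L,v}$ denotes the image of $D_{LK',v}$ in $\Gal(L/k)$. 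But the standing hypothesis $\chi(G_{k_v})\neq 1$ of \S\ref{subsub_non_split} is exactly the assertion that $\chi|_{D_{L,v}}\neq 1$. Hence the cokernel vanishes at every finite level, and passing to the inverse limit over finite $K'\subseteq K$ yields $H^{1}_{f}(G_{k_{v}},T_{K}) = H^{1}(G_{k_{v}},T_{K})$.

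For (ii), part (i) applied on both sides reduces the claim to showing that the natural norm map
\[
H^{1}_{f}(G_{k_{v}},\bT_{\infty}) \otimes_{\bZ_{p}^{\rm ur}[[\Gamma_{\infty}]]} \bZ_{p}^{\rm ur}[[\Gal(K/k)]] \longrightarrow H^{1}_{f}(G_{k_{v}},T_{K})
\]
is an isomorphism. The Coleman map ${\rm Col}_v$ realizes $H^{1}_{f}(G_{k_{v}},\bT_{\infty})$ as a free rank-one $\bZ_{p}^{\rm ur}[[\Gamma_\infty]]$-module, so the left-hand side above is free of rank one over $\bZ_{p}^{\rm ur}[[\Gal(K/k)]]$. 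The target is also a free rank-one $\bZ_{p}^{\rm ur}[[\Gal(K/k)]]$-module, obtained by applying the same de Shalit-type construction of Section~\ref{sec:coleman constr} at the $K$-level (equivalently, by passing to the inverse limit of the Coleman maps ${\rm Col}_{\Gamma, K', v}$ over finite $K'\subset K$). The compatibility square for Coleman maps recalled in Section~\ref{sec:coleman constr} then shows that the norm map corresponds, under these identifications, to the natural quotient $\bZ_{p}^{\rm ur}[[\Gamma_{\infty}]] \twoheadrightarrow \bZ_{p}^{\rm ur}[[\Gal(K/k)]]$, and hence to the identity on the base-changed module; it is therefore an isomorphism.

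The main obstacle, to my mind, is the functoriality bookkeeping in part (ii): one needs to verify that after trivializing $H^{1}_{f}(G_{k_{v}},\bT_{\infty})$ and $H^{1}_{f}(G_{k_{v}},T_{K})$ via their respective Coleman isomorphisms, the norm map on cohomology corresponds to the natural projection on Iwasawa algebras. This amounts to tracking the choices of uniformizers, the Lubin--Tate isomorphism $\theta$, and the compatible systems $(\zeta_i)_i$ across levels, but it is the substantive point underpinning the comparison of the two rank-one descriptions.
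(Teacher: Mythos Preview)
Your argument for (i) is correct and matches the paper's: the isomorphism~\eqref{exact1} identifies the quotient with the $\chi$-component of an induced module, which vanishes precisely because $\chi(G_{k_v})\neq 1$.

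For (ii) your route via Coleman maps diverges from the paper's and has a genuine gap. The construction in Section~\ref{sec:coleman constr} produces ${\rm Col}_v$ at the $\Gamma_\infty$-level and, more generally, maps ${\rm Col}_{\Gamma,K',v}$ for fields of the special shape $k_\Gamma K'$ where $k_\Gamma/k$ is a fixed $\bZ_p$-extension \emph{ramified at $v$} and $K'/k$ is a finite $p$-abelian extension \emph{unramified at $v$}. It does not furnish a Coleman isomorphism for $H^1_f(G_{k_v},T_K)$ when $K$ is an arbitrary subfield of $k(p^\infty)$---for instance, if $K$ is unramified at $v$ it contains no such ramified $\bZ_p$-subextension, and your parenthetical ``inverse limit of the ${\rm Col}_{\Gamma,K',v}$ over finite $K'\subset K$'' then lands in $H^1_f(G_{k_v},T_{k_\Gamma K})$, not in $H^1_f(G_{k_v},T_K)$. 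So the assertion that the target is free of rank one with a Coleman trivialisation compatible with ${\rm Col}_v$ is not justified by what is available at this point; indeed, the Coleman maps ${\rm Col}_{\Gamma,v}$ for general $\bZ_p$-extensions are only introduced in Definition~\ref{def:col1}, \emph{using} Lemma~\ref{lem:str} as input.

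The paper bypasses all of this with a two-line cohomological argument: since $\chi(G_{k_v})\neq 1$, local Tate duality gives $H^2(G_{k_v},T)=0$, and then Corollary~\ref{cor:H2vanish} (with $R=\bZ_p^{\rm ur}[[\Gamma_\infty]]$ and $I=\cA_{\Gamma_\infty}$) yields both the freeness of $H^1(G_{k_v},\bT_\infty)$ and the base-change isomorphism $H^1(G_{k_v},\bT_\infty)\otimes_R R/J\cong H^1(G_{k_v},\bT_\infty/J\bT_\infty)$ for every ideal $J$; taking $J$ to be the kernel of $\bZ_p^{\rm ur}[[\Gamma_\infty]]\to\bZ_p^{\rm ur}[[\Gal(K/k)]]$ gives (ii) directly and uniformly in $K$.
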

\begin{proof}
Since  $\chi(G_{k_{v}}) \neq 1$, (i) follows from the isomorphism~(\ref{exact1}). 
Still based on the fact that $\chi(G_{k_{v}}) \neq 1$, we see that $H^{2}(G_{k_{v}}, T)=0$ by local duality, and (ii) follows from Corollary~\ref{cor:H2vanish}. 
\end{proof}


\begin{definition}\label{def:col1}
Let $k_{\Gamma}/k$ be a $\bZ_{p}$-extension {(which is not necessarily ramified)}. It follows from {Lemma~\ref{lem:str}} that we have a unique isomorphism 
\[
{\rm Col}_{\Gamma,v} \colon H^{1}(G_{k_{v}},\bT_{\Gamma}) \stackrel{\sim}{\longrightarrow} \bZ_{p}^{\rm ur}[[\Gamma]]
\] 
given by the commutativity of the following diagram 
\[
\xymatrix@C=60pt{
H^{1}(G_{k_{v}},\bT_{\infty}) \ar@{->>}[d] \ar[r]^-{{\rm Col}_{v}} & \bZ^{\rm ur}_{p}[[\Gamma_{\infty}]] \ar@{->>}[d]
\\
H^{1}(G_{k_{v}},\bT_{\Gamma}) \ar[r]^-{{\rm Col}_{\Gamma,v}} & \bZ_{p}^{\rm ur}[[\Gamma]]
}
\]
Notice that if $k_{\Gamma}/k$ is a ramified $\bZ_{p}$-extension, then  
${\rm Col}_{\Gamma,v} = {\rm Col}_{\Gamma,k,v}$ {(where the latter is given as in Section~\ref{sec:coleman constr} above)}. 
\end{definition}

Note in addition that, we have by Lemma~\ref{lem:str}(i) the following logarithm map:
\[
\log_{v} \colon H^{1}(G_{k_{v}},T) = H^{1}_{f}(G_{k_{v}},T) \stackrel{\sim}{\longrightarrow} p\bZ_{p}. 
\]

\begin{proposition}\label{prop:const1}
The diagram 
\[
\xymatrix@C=100pt{
H^{1}(G_{k_{v}},\bT_{\Gamma}) \ar@{->>}[d] \ar[r]^-{{\rm Col}_{\Gamma,v}} & \bZ^{\rm ur}_{p}[[\Gamma]] \ar@{->>}[d]
\\
H^{1}(G_{k_{v}},T) \ar[r]^-{\left(1-\frac{\chi(v)}{p}\right)(1-\chi(v)^{-1})^{-1} {\rm log}_{v}} & \bZ^{\rm ur}_{p}
}
\]
commutes. Here $\chi(v)$ denotes  the value of $\chi$ at the Frobenius element at $v$. 
\end{proposition}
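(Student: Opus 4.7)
The plan is to unravel the construction of the Coleman map and compute its augmentation directly. By Definition~\ref{def:col1}, the map ${\rm Col}_{\Gamma, v}$ is defined via the projection of ${\rm Col}_v$ through the surjection $\bZ_p^{\rm ur}[[\Gamma_\infty]] \twoheadrightarrow \bZ_p^{\rm ur}[[\Gamma]]$; since this projection is compatible with augmentation to $\bZ_p^{\rm ur}$, it suffices to prove the commutativity of the analogous diagram for ${\rm Col}_v$ with $\Gamma_\infty$ in place of $\Gamma$.

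Given $\beta = (\beta_n) \in \varprojlim_n \cO_{E_n}^{\times,\wedge}$ representing an element of $H^1_f(G_{k_v}, \bT_\infty)$, the element ${\rm Col}_v(\beta)$ arises from the measure $\widetilde{\mu}_\beta \in \bZ_p^{\rm ur}[[\Gal(E_\infty/k_v)]]$ via its $\chi$-component, and the augmentation corresponds to the integration of $\widetilde{\mu}_\beta$. By Remark~\ref{rem:exten} together with Lemma~\ref{lem:const},
\[
\int_{\Gal(E_\infty/k_v)} d\widetilde{\mu}_\beta = [H:k_v]\,(1-\varphi/p)\log g_\beta(0).
\]
Proposition~\ref{prop:coleman map}(iii) gives $(1-\varphi^{-1}) g_\beta(0) = \beta_0$, and since $\beta_0$ is a principal unit, applying the $p$-adic logarithm produces $(1-\varphi^{-1}) \log g_\beta(0) = \log \beta_0$, so that $\log g_\beta(0) = (1-\varphi^{-1})^{-1} \log \beta_0$. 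Substituting, the augmentation becomes
\[
[H:k_v]\,(1-\varphi/p)(1-\varphi^{-1})^{-1} \log \beta_0,
\]
up to the normalization coming from the idempotent $e_\chi$.

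The last step is to reinterpret this formula on the $\chi$-isotypic component. Since $p$ is unramified in $L$, the Frobenius $\varphi$ of $H/k_v$ acts as multiplication by $\chi(v) = \chi({\rm Frob}_v)$ on the $\chi$-part of the local unit group (so that $(1-\varphi/p)$ becomes $(1-\chi(v)/p)$ and $(1-\varphi^{-1})^{-1}$ becomes $(1-\chi(v)^{-1})^{-1}$), while the factor $[H:k_v]$ is exactly absorbed by the normalization built into $e_\chi$ when one passes from $\bZ_p^{\rm ur}[[\Gal(E_\infty/k_v)]]$ to $\bZ_p^{\rm ur}[[\Gamma_\infty]]$. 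Moreover, under the identification~\eqref{def:log}, $\log \beta_0$ is precisely $\log_v$ applied to the image of $\beta$ in $H^1(G_{k_v}, T)$. Combining these identifications, the augmentation of ${\rm Col}_v(\beta)$ equals $(1-\chi(v)/p)(1-\chi(v)^{-1})^{-1}\log_v$ applied to the image of $\beta$, proving the claim.

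The main obstacle lies in the careful bookkeeping of the $\chi$-component normalizations: namely, verifying that $\varphi$ acts as $\chi(v)$ (with the correct sign/inverse) on the appropriate isotypic piece, and that the factor $[H:k_v]$ cancels exactly when restricting the full measure $\widetilde{\mu}_\beta$ to its $\chi$-component. Once these compatibilities are pinned down via local class field theory, the remainder is a direct manipulation of the formulas of Section~\ref{sec:coleman constr}.
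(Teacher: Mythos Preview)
Your proof is correct and follows essentially the same approach as the paper, whose proof consists of the single sentence ``This proposition follows from Proposition~\ref{prop:coleman map} and Lemma~\ref{lem:const}.'' You have unpacked exactly these ingredients: Lemma~\ref{lem:const} (together with Remark~\ref{rem:exten}) gives the constant term $(1-\varphi/p)\log g_\beta(0)$, and Proposition~\ref{prop:coleman map}(iii) yields $(1-\varphi^{-1})\log g_\beta(0)=\log\beta_0$, which on the $\chi$-isotypic piece (where $\chi(v)\neq 1$, so $1-\chi(v)^{-1}$ is a unit) produces the asserted factor. One small point of precision: the inversion of $(1-\varphi^{-1})$ should be performed only after passing to the $\chi$-component, not before, since $1-\varphi^{-1}$ has a kernel on $\cO_H$; you effectively do this, but the order of operations in your write-up is slightly misleading. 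Your closing remark that the residual content is bookkeeping of the $\chi$-normalization (in particular the fate of the factor $[H:k_v]$ under the identification $e_\chi\bZ_p^{\rm ur}[[\Gal(E_\infty/k_v)]]\cong\bZ_p^{\rm ur}[[\Gamma_\infty]]$) is accurate and is precisely what the paper leaves implicit.
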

\begin{proof}
This proposition follows from Proposition~\ref{prop:coleman map} and Lemma~\ref{lem:const}. 
\end{proof}

\subsubsection{Split Case}\label{subsec:split}

Suppose throughout \S\ref{subsec:split} that $v$ splits completely in $L$. 
Let $k_{\Gamma}/k$ be a $\bZ_{p}$-extension with Galois group $\Gamma$ and 
let $k_{n}$ denote the $n^{\rm th}$ layer of $k_{\Gamma}/k$. 
Set $L_{n} := k_{n}L$ and $L_{\Gamma} := k_{\Gamma}L$. 
We denote by $\Gamma_{v}\subset \Gamma$ the decomposition group at $v$. 
Assume that $v$ does not split completely in $k_{\Gamma}/k$, so that $\Gamma/\Gamma_{v}$ is a finite group. 

\begin{lemma}\label{lem:free1}
The $\bZ_{p}^{\rm ur}[[\Gamma]]$-module 
$H^{1}(G_{k_{v}},\bT_{\Gamma})$ is free of rank $1$.  
\end{lemma}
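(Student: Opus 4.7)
My plan is to reduce the claim to a standard computation in local Iwasawa theory via Shapiro's lemma. Set $F := k_v$ (so $F = \bQ_p$ by (H.2)), $F_\infty := (k_\Gamma)_w$, and $\Gamma_v := \Gal(F_\infty/F)$; the latter is also the decomposition subgroup of $v$ in $\Gamma$. The hypothesis that $v$ does not split completely in $k_\Gamma/k$ forces $\Gamma_v \subset \Gamma \cong \bZ_p$ to be open, whence $\Gamma_v \cong \bZ_p$ and $\bZ_p^{\rm ur}[[\Gamma]]$ is free of rank $[\Gamma:\Gamma_v]$ over $\bZ_p^{\rm ur}[[\Gamma_v]]$. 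Applying Shapiro's lemma to $T_{k_n} = T\otimes_{\bZ_p}\bZ_p[\Gamma_n]^{\iota}$ at each finite layer and passing to the inverse limit yields a canonical isomorphism
\[
H^1(G_{k_v},\bT_\Gamma)\;\cong\;\bZ_p^{\rm ur}[[\Gamma]]\otimes_{\bZ_p^{\rm ur}[[\Gamma_v]]} H^1_{\rm Iw}(F_\infty/F,T),
\]
where $H^1_{\rm Iw}(F_\infty/F,T):=\varprojlim_{n} H^1(G_{F_n},T)$ and $F_n := (k_n)_{w_n}$ is the completion at the prime below $w$. It therefore suffices to show that $H^1_{\rm Iw}(F_\infty/F,T)$ is free of rank one over $\bZ_p^{\rm ur}[[\Gamma_v]]$.

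Since $v$ splits completely in $L$, one has $\chi|_{G_F}=1$ and $T|_{G_F}\cong \bZ_p^{\rm ur}(1)$. Kummer theory (Hilbert's Theorem~90) then identifies
\[
H^1_{\rm Iw}(F_\infty/F,T)\;\cong\; \bZ_p^{\rm ur}\otimes_{\bZ_p}\varprojlim_{n} F_n^{\times,\wedge},
\]
with transition maps given by norms. Because $F=\bQ_p$, no $\bZ_p$-extension of $F$ contains $\mu_p$, so $\varprojlim_{n}\mu_{p^\infty}(F_n) = 0$; combined with the local Iwasawa Euler characteristic formula, this implies that $\varprojlim_n F_n^{\times,\wedge}$ is a torsion-free $\bZ_p[[\Gamma_v]]$-module of generic rank $[F:\bQ_p]=1$. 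Local Tate duality in its Iwasawa-theoretic formulation moreover ensures that $\varprojlim_n F_n^{\times,\wedge}$ has no non-zero pseudo-null submodule over $\bZ_p[[\Gamma_v]]$, hence is reflexive; since $\bZ_p[[\Gamma_v]] \cong \bZ_p[[T]]$ is a two-dimensional regular local ring (in particular, a unique factorization domain), any rank-one reflexive module over it is automatically free.

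The principal obstacle is the freeness (rather than mere torsion-freeness of the correct generic rank) of $\varprojlim_n F_n^{\times,\wedge}$ in the last step. An alternative approach, more in keeping with \S\ref{sec:coleman constr}, would be to adapt the Coleman power series formalism directly to the $\bZ_p$-extension $F_\infty/F$ (separately handling the ramified and unramified cases) and exhibit an explicit Coleman-type isomorphism with $\bZ_p^{\rm ur}[[\Gamma_v]]$, from which freeness is immediate.
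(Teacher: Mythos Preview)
Your approach differs from the paper's and contains a genuine gap. The paper's two-line proof applies Corollary~\ref{cor:free} directly to $R=\bZ_p^{\rm ur}[[\Gamma]]$, $G=G_{k_v}$, and $\bT_\Gamma$: since $H^0(G_{k_v},\bT_\Gamma/\fm\bT_\Gamma)\cong H^0(G_{\bQ_p},\overline{\bF}_p(1))=0$ (as $\chi|_{G_{k_v}}=1$ and $p>2$), the complex ${\bf R}\Gamma(G_{k_v},\bT_\Gamma)$ lies in $D^{[1,2]}_{\rm parf}$, so $H^1$ is the kernel of a map of finite free modules and therefore reflexive, hence free over the two-dimensional regular local ring $\bZ_p^{\rm ur}[[\Gamma]]$. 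The rank is then read off from the local Euler characteristic once one observes that $H^2(G_{k_v},\bT_\Gamma)$ is torsion (because $v$ does not split completely in $k_\Gamma/k$). No Shapiro reduction is needed.

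The gap in your argument is the implication ``torsion-free with no nonzero pseudo-null submodule $\Rightarrow$ reflexive'' over $\bZ_p[[\Gamma_v]]\cong\bZ_p[[X]]$. This is false: the maximal ideal $\fm=(p,X)$ is torsion-free of generic rank one and, being a nonzero ideal in a domain, has no nonzero pseudo-null (i.e.\ finite) submodule; yet $\fm^{**}=R$ so $\fm$ is not reflexive, and it is certainly not free. What you actually need is that $H^1_{\rm Iw}(F_\infty/F,T)$ arises as a \emph{second syzygy} (kernel of a map between finite free modules), which is exactly the content of Corollary~\ref{cor:amp}/\ref{cor:free}; local Tate duality by itself does not supply this. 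Your Shapiro/Kummer reduction is correct and harmless, but once you reach the reflexivity step you are forced back to the perfect-complex argument the paper uses from the outset. The alternative Coleman-map route you mention would work in principle, but note that in the paper Lemma~\ref{lem:free1} is a prerequisite for defining ${\rm Col}_{\Gamma,v}$ in the split case (Definition~\ref{def:col2}), so invoking Coleman maps here would be circular.
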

\begin{proof}
Note that $H^{2}(G_{k_{v}},\bT_{\Gamma})$ is a torsion $\bZ_{p}^{\rm ur}[[\Gamma]]$-module since $v$ does not split completely in $k_{\Gamma}/k$. 
Our lemma therefore follows from Corollary~~\ref{cor:free}.  
\end{proof}

Put $\cA_{\Gamma_{v}} := {\rm Ann}_{\bZ_{p}^{\rm ur}[[\Gamma]]}(\bZ_{p}^{\rm ur}[\Gamma/\Gamma_{v}])$.

\begin{proposition}\label{prop:image}
${\rm im}\left(H^{1}(G_{k_{v}},\bT_{\infty})  \longrightarrow 
H^{1}(G_{k_{v}},\bT_{\Gamma})\right)=\cA_{\Gamma_{v}} \cdot H^{1}(G_{k_{v}},\bT_{\Gamma})$. 
\end{proposition}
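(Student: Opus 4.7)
The plan is to identify both $\cA_{\Gamma_v}\cdot H^1(G_{k_v},\bT_\Gamma)$ and the image of the descent map with the subgroup $H^1_f(G_{k_v},\bT_\Gamma)\subseteq H^1(G_{k_v},\bT_\Gamma)$.

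First, I take the inverse limit of \eqref{exact1} over the layers $k_n\subseteq k_\Gamma$. Since $v$ splits completely in $L$, the decomposition subgroup of $v$ in $\Gal(L_n/k)=\Gamma_n\times\Gal(L/k)$ equals $(\Gamma_n)_v\times\{1\}$, so the $\chi$-isotypic component of $\bZ_p^{\rm ur}[\Gal(L_n/k)/D_{L_n,v}]$ identifies with $\bZ_p^{\rm ur}[\Gamma_n/(\Gamma_n)_v]$. The hypothesis that $\Gamma/\Gamma_v$ is finite implies these stabilize in $n$, yielding
\[
0\longrightarrow H^1_f(G_{k_v},\bT_\Gamma)\longrightarrow H^1(G_{k_v},\bT_\Gamma)\xrightarrow{{\rm ord}_v}\bZ_p^{\rm ur}[\Gamma/\Gamma_v]\longrightarrow 0.
\]
Because $H^1(G_{k_v},\bT_\Gamma)$ is free of rank one over $\bZ_p^{\rm ur}[[\Gamma]]$ by Lemma~\ref{lem:free1}, and because $\cA_{\Gamma_v}$ is precisely the kernel of the surjection $\bZ_p^{\rm ur}[[\Gamma]]\twoheadrightarrow\bZ_p^{\rm ur}[\Gamma/\Gamma_v]$, I conclude that $H^1_f(G_{k_v},\bT_\Gamma)=\cA_{\Gamma_v}\cdot H^1(G_{k_v},\bT_\Gamma)$.

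Next, the containment ``image $\subseteq H^1_f$'' follows from the equality $H^1(G_{k_v},\bT_\infty)=H^1_f(G_{k_v},\bT_\infty)$ recorded in Section~\ref{section_Coleman_map} together with preservation of the local-unit condition under corestriction. For the reverse containment ``image $\supseteq H^1_f$'' I plan to invoke a flat base-change argument: since $\bT_\infty$ is free over $\bZ_p^{\rm ur}[[\Gamma_\infty]]$ and $\bZ_p[[\Gamma]]$ has trivial $G_{k_v}$-action, one has $R\Gamma(G_{k_v},\bT_\Gamma)\simeq R\Gamma(G_{k_v},\bT_\infty)\otimes^L_{\bZ_p[[\Gamma_\infty]]}\bZ_p[[\Gamma]]$, which in low degrees produces the short exact sequence
\[
0\to H^1(G_{k_v},\bT_\infty)\otimes_{\bZ_p[[\Gamma_\infty]]}\bZ_p[[\Gamma]]\to H^1(G_{k_v},\bT_\Gamma)\to\mathrm{Tor}_1^{\bZ_p[[\Gamma_\infty]]}(H^2(G_{k_v},\bT_\infty),\bZ_p[[\Gamma]])\to 0.
\]
Under the Coleman isomorphism $H^1(G_{k_v},\bT_\infty)\cong\bZ_p^{\rm ur}[[\Gamma_\infty]]$, the first term equals $\bZ_p^{\rm ur}[[\Gamma]]$ and coincides with the image of the descent map; comparing with the first paragraph's exact sequence then identifies this image with $\cA_{\Gamma_v}\cdot H^1(G_{k_v},\bT_\Gamma)$.

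The main obstacle I expect is the explicit identification $\mathrm{Tor}_1^{\bZ_p[[\Gamma_\infty]]}(H^2(G_{k_v},\bT_\infty),\bZ_p[[\Gamma]])\cong\bZ_p^{\rm ur}[\Gamma/\Gamma_v]$, needed to ensure the cokernels of the two exact sequences agree. I intend to do this via the identification $H^2(G_{k_v},\bT_\infty)\cong\bZ_p^{\rm ur}[[\Gamma_\infty/\Gamma_{\infty,v}]]$ (obtained levelwise from Tate local duality and the structure of local $H^2$) together with a Koszul resolution of $\bZ_p^{\rm ur}[[\Gamma_\infty/\Gamma_{\infty,v}]]$ over $\bZ_p[[\Gamma_\infty]]$ given by topological generators of $\Gamma_{\infty,v}$; after tensoring with $\bZ_p[[\Gamma]]$ these generators reduce to elements of $\cA_{\Gamma_v}$, and the $H_1$ of the resulting complex should match $\bZ_p^{\rm ur}[\Gamma/\Gamma_v]$ by a direct computation using that $\Gamma_v\cong\bZ_p$ (so $\cA_{\Gamma_v}$ is principal and the generators share a common factor $\gamma_0-1$ for a topological generator $\gamma_0$ of $\Gamma_v$).
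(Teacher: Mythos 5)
Your proof plan is correct and rests on the same core computation as the paper -- derived base change of the perfect complex for $\mathbf{R}\Gamma(G_{k_v},\bT_\infty)$, the local-duality identification $H^2(G_{k_v},\bT_\infty)\cong\bZ_p^{\rm ur}[[\Gamma_\infty/\Gamma_{\infty,v}]]$, and a Koszul/regular-sequence calculation giving ${\rm Tor}_1^{\bZ_p^{\rm ur}[[\Gamma_\infty]]}(\bZ_p^{\rm ur}[[\Gamma]],\,\cdot\,)\cong\bZ_p^{\rm ur}[\Gamma/\Gamma_v]$ -- but it repackages the argument around $H^1_f(G_{k_v},\bT_\Gamma)$. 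Where the paper runs the snake lemma directly on the two-term free resolution provided by Corollary~\ref{cor:amp} to identify the \emph{cokernel} of the descent map with that ${\rm Tor}_1$ and then finishes by rank-one freeness (Lemma~\ref{lem:free1}) plus an annihilator argument, you first prove $H^1_f(\bT_\Gamma)=\cA_{\Gamma_v}\cdot H^1(\bT_\Gamma)$ by passing \eqref{exact1} to the inverse limit, then squeeze the image between the two. Your first step is strictly speaking redundant -- once the cokernel is computed to be $\bZ_p^{\rm ur}[\Gamma/\Gamma_v]$ and $H^1(\bT_\Gamma)$ is known to be free of rank one, the annihilator of the quotient already forces the image to equal $\cA_{\Gamma_v}\cdot H^1(\bT_\Gamma)$ -- but it does make the geometric content (the descent image is exactly the local-unit subspace) more visible. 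Two points worth tightening: the inverse-limit step needs a Mittag--Leffler remark (it goes through because the transition maps on the right-hand quotients in \eqref{exact1} are surjective); and for the ${\rm Tor}_1$ computation the paper's choice of Koszul generators is cleaner than your ``common factor $\gamma_0-1$'' sketch: take a $\bZ_p$-basis $\{\delta,\epsilon\}$ of $\Gamma_{\infty,v}\cong\bZ_p^2$ with $\delta\in\ker(\Gamma_\infty\to\Gamma)$ and $\epsilon$ mapping to a topological generator of $\Gamma_v$, so that $y=\delta-1$ dies in $\bZ_p^{\rm ur}[[\Gamma]]$ and $x=\epsilon-1$ reduces to a generator of $\cA_{\Gamma_v}$; with generic generators your plan requires an extra gcd-type argument that this adapted basis sidesteps.
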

\begin{proof}
By Corollary~\ref{cor:amp}, there is an $\bZ_{p}^{\rm ur}[[\Gamma_{\infty}]]$-homomorphism 
$P^{1} \to P^{2}$ of finitely generated free $\bZ_{p}^{\rm ur}[[\Gamma_{\infty}]]$-modules such that 
\[
{\bf R}\Gamma(G_{k_{v}},\bT_{\infty}) = [\ \cdots\longrightarrow 0 \longrightarrow P^{1} \longrightarrow P^{2} \longrightarrow \cdots \ ]. 
\]
Set $I := {\rm im}(P^{1} \to P^{2})$ and 
\[
I_{\Gamma} := {\rm im}(P^{1} \otimes_{\bZ_{p}^{\rm ur}[[\Gamma_{\infty}]]} \bZ_{p}^{\rm ur}[[\Gamma]] \longrightarrow P^{2} \otimes_{\bZ_{p}^{\rm ur}[[\Gamma_{\infty}]]} \bZ_{p}^{\rm ur}[[\Gamma]]).
\] 
Since ${\bf R}\Gamma(G_{k_{v}},\bT_{\infty}) \otimes_{\bZ_{p}^{\rm ur}[[\Gamma_{\infty}]]}^{\bL} 
\bZ_{p}^{\rm ur}[[\Gamma]] \cong {\bf R}\Gamma(G_{k_{v}},\bT_{\Gamma})$ by Lemma~\ref{lem:descent}, the snake lemma shows that there is a natural isomorphism 
\begin{align*}
{\rm coker}\left(H^{1}(G_{k_{v}},\bT_{\infty})  \longrightarrow H^{1}(G_{k_{v}},\bT_{\Gamma})\right) &\cong \ker\left(I \otimes_{\bZ_{p}^{\rm ur}[[\Gamma_{\infty}]]} \bZ_{p}^{\rm ur}[[\Gamma]] \longrightarrow I_{\Gamma}\right) 
\\
&= \ker\left(I \otimes_{\bZ_{p}^{\rm ur}[[\Gamma_{\infty}]]} \bZ_{p}^{\rm ur}[[\Gamma]] \longrightarrow P^{2} \otimes_{\bZ_{p}^{\rm ur}[[\Gamma_{\infty}]]} \bZ_{p}^{\rm ur}[[\Gamma]]\right) 
\\
&\cong {\rm Tor}^{\bZ_{p}^{\rm ur}[[\Gamma_{\infty}]]}_{1}(\bZ_{p}^{\rm ur}[[\Gamma]],H^{2}(G_{k_{v}},\bT_{\infty})). 
\end{align*}
Since $v$ splits completely in $L$, we have an isomorphism 
\[
H^{2}(G_{k_{v}},\bT_{\infty}) \cong \bZ_{p}^{\rm ur}[[\Gamma_{\infty}/\Gamma_{\infty,v}]] 
\]
by local duality. Here, $\Gamma_{\infty,v} \subseteq \Gamma_{\infty}$ denotes the decomposition group at $v$. 
We claim that there is a canonical isomorphism 
\begin{equation}
\label{eqn_claim_in_prop_3_11}
{\rm Tor}^{\bZ_{p}^{\rm ur}[[\Gamma_{\infty}]]}_{1}(\bZ_{p}^{\rm ur}[[\Gamma]], \bZ_{p}^{\rm ur}[[\Gamma_{\infty}/\Gamma_{\infty,v}]]) \stackrel{?}{\cong} \bZ_{p}^{\rm ur}[\Gamma/\Gamma_{v}]. 
\end{equation}
Since $H^{1}(G_{k_{v}},\bT_{\Gamma})$ is free of rank $1$, 
the claimed isomorphism \eqref{eqn_claim_in_prop_3_11} concludes the proof of Proposition~\ref{prop:image}.

We now verify \eqref{eqn_claim_in_prop_3_11}. First, we note that $\Gamma_{\infty,v} \cong \bZ_{p}^{2}$ since $k_{v} = \bQ_{p}$. 
As the canonical map $\Gamma_{\infty,v} \longrightarrow \Gamma_{v}$ is surjective and $\Gamma_{v} \cong \bZ_{p}$, there is a regular sequence $\{x,y\}\subset\bZ_{p}^{\rm ur}[[\Gamma_{\infty}]]$ such that 
\begin{itemize}
\item $\bZ_{p}^{\rm ur}[[\Gamma_{\infty}]]/(x,y) \cong \bZ_{p}^{\rm ur}[[\Gamma_{\infty}/\Gamma_{\infty,v}]]$,
\item the image of $y$ in $\bZ_{p}^{\rm ur}[[\Gamma]]$ is zero,  
\item $\bZ_{p}^{\rm ur}[[\Gamma]]/x \bZ_{p}^{\rm ur}[[\Gamma]] \cong \bZ_{p}^{\rm ur}[\Gamma_{\infty}/\Gamma_{v}]$.
\end{itemize}
Since $y$ maps to zero in $\bZ_{p}^{\rm ur}[[\Gamma]]$, 
on applying $-\otimes_{\bZ_{p}^{\rm ur}[[\Gamma_{\infty}]]} \bZ_{p}^{\rm ur}[[\Gamma]]$  on the exact sequence 
\[
0 \longrightarrow \bZ_{p}^{\rm ur}[[\Gamma_{\infty}]] \xrightarrow{y} \bZ_{p}^{\rm ur}[[\Gamma_{\infty}]] \longrightarrow 
\bZ_{p}^{\rm ur}[[\Gamma_{\infty}]]/(y) \longrightarrow 0, 
\]
we have 
\[
{\rm Tor}^{\bZ_{p}^{\rm ur}[[\Gamma_{\infty}]]}_{1}(\bZ_{p}^{\rm ur}[[\Gamma]], \bZ_{p}^{\rm ur}[[\Gamma_{\infty}]]/(y)) = \bZ_{p}^{\rm ur}[[\Gamma]]. 
\]
Since $x$ is a regular element in $\bZ_{p}^{\rm ur}[[\Gamma]]$, again on applying $-\otimes_{\bZ_{p}^{\rm ur}[[\Gamma_{\infty}]]} \bZ_{p}^{\rm ur}[[\Gamma]]$  on the exact sequence 
\[
0 \longrightarrow \bZ_{p}^{\rm ur}[[\Gamma_{\infty}]]/(y) \xrightarrow{x} \bZ_{p}^{\rm ur}[[\Gamma_{\infty}]]/(y) \longrightarrow 
\bZ_{p}^{\rm ur}[[\Gamma_{\infty}/\Gamma_{\infty,v}]] \longrightarrow 0, 
\]
the equality ${\rm Tor}^{\bZ_{p}^{\rm ur}[[\Gamma_{\infty}]]}_{1}(\bZ_{p}^{\rm ur}[[\Gamma]], \bZ_{p}^{\rm ur}[[\Gamma_{\infty}]]/(y)) = \bZ_{p}^{\rm ur}[[\Gamma]]$ shows that 
\[
{\rm Tor}^{\bZ_{p}^{\rm ur}[[\Gamma_{\infty}]]}_{1}(\bZ_{p}^{\rm ur}[[\Gamma]], \bZ_{p}^{\rm ur}[[\Gamma_{\infty}/\Gamma_{\infty,v}]]) =  {\rm coker}\left(\bZ_{p}^{\rm ur}[[\Gamma]] \xrightarrow{x} \bZ_{p}^{\rm ur}[[\Gamma]] \right)
\cong \bZ_{p}^{\rm ur}[\Gamma/\Gamma_{v}],
\]
as required.
\end{proof}

Recall that $\cA_{\Gamma} = \ker\left(\bZ_{p}^{\rm ur}[[\Gamma]] \longrightarrow \bZ_{p}^{\rm ur}\right)$. 
\begin{definition}\label{def:col2}
\item[(i)] We let
\[
{\rm Col}_{\Gamma,v} \colon \cA_{\Gamma_{v}} \otimes_{\bZ_{p}^{\rm ur}[[\Gamma]]} H^{1}(G_{k_{v}},\bT_{\Gamma}) \stackrel{\sim}{\longrightarrow} \bZ_{p}^{\rm ur}[[\Gamma]]\,;\,\,\,\, \alpha \otimes x \mapsto {\rm Col}_{v}(\widetilde{\alpha x})\vert_{\Gamma}, 
\]
denote the isomorphism of   $\bZ_{p}^{\rm ur}[[\Gamma]]$-modules induced by the Coleman map ${\rm Col}_{v}$, using Proposition~\ref{prop:image}. Here, $\widetilde{\alpha x}\in H^{1}(G_{k_{v}},\bT_{\infty})$ is a lift of $\alpha x$ and 
${\rm Col}_{v}(\widetilde{\alpha x})\vert_{\Gamma}$ denotes the image of ${\rm Col}_{v}(\widetilde{\alpha x})$ in $\bZ_{p}^{\rm ur}[[\Gamma]]$. 
\item[(ii)] We set 
\[
\cU_{\Gamma,v}^{\chi} := {\rm im}\left(H^{1}(G_{k_{v}},\bT_{\Gamma})  \longrightarrow H^{1}(G_{k_{v}},T)\right). 
\]
\item[(iii)] 
Let $\widetilde{u}\in H^{1}(G_{k_{v}},\bT_{\Gamma})$ be any lift of $u\in \cU_{\Gamma,v}^{\chi}$. We let
\begin{align*}
\widetilde{\rm Col}_{\Gamma, v} \colon \cA_{\Gamma_{v}}/\cA_{\Gamma}\cA_{\Gamma_{v}} \otimes_{\bZ_{p}^{\rm ur}} \cU_{\Gamma,v}^{\chi} &\stackrel{\sim}{\longrightarrow} \bZ_{p}^{\rm ur}\,;\,\,\, \,
(\gamma-1) \otimes u \mapsto 
{\rm Col}_{\Gamma,v}((\gamma-1)\widetilde{u}) \bmod \cA_{\Gamma}, 
\end{align*}
denote the isomorphism induced by the map ${\rm Col}_{\Gamma,v}$.
\end{definition}

Recall the map
\[
{\rm rec}_{\Gamma,v} \colon H^{1}(G_{k_{v}},T) \longrightarrow \cA_{\Gamma}/\cA_{\Gamma}^{2}
\] 
induced from the reciprocity map
\[
{\rm rec}_{w} \colon L_{w}^{\times} \longrightarrow \Gal(L_{w}^{\rm ab}/L_{w}) \twoheadrightarrow \Gamma_{v} \hookrightarrow \Gamma. 
\]
Since the image of $\bZ_{p}^{\rm ur} \otimes_{\bZ_{p}}\Gamma_{v} \hookrightarrow \bZ_{p}^{\rm ur} \otimes_{\bZ_{p}}\Gamma \stackrel{\sim}{\longrightarrow} \cA_{\Gamma}/\cA_{\Gamma}^{2}$ is canonically isomorphic to $\cA_{\Gamma_{v}}/\cA_{\Gamma}\cA_{\Gamma_{v}}$, 
we have a surjection 
\[
{\rm rec}_{\Gamma,v} \colon H^{1}(G_{k_{v}},T) \twoheadrightarrow  \cA_{\Gamma_{v}}/\cA_{\Gamma}\cA_{\Gamma_{v}}. 
\]
Local class field theory shows
\[
\cU_{\Gamma,v}^{\chi} = \ker\left( {\rm rec}_{\Gamma,v} \colon H^{1}(G_{k_{v}},T) \twoheadrightarrow   \cA_{\Gamma_{v}}/\cA_{\Gamma}\cA_{\Gamma_{v}} \right). 
\]
We therefore have an isomorphism
\[
{\rm rec}_{\Gamma,v} \wedge \widetilde{\rm Col}_{\Gamma, v} \colon {{\bigcap}}^{2}_{\bZ_{p}^{\rm ur}} 
H^{1}(G_{k_{v}},T) \stackrel{\sim}{\longrightarrow} \bZ_{p}^{\rm ur}. 
\]
Recall the ``$v$-adic valuation'' map 
\[
{\rm ord}_{v} \colon H^{1}(G_{k_{v}},T) \twoheadrightarrow \bZ_{p}^{\rm ur}
\]
and note that $\ker({\rm ord}_{v})=H^{1}_{f}(G_{k_{v}},T)$.  
\begin{proposition}\label{prop:const2}
The map 
\[
{\rm rec}_{\Gamma, v} \wedge \widetilde{\rm Col}_{\Gamma, v} \colon {{\bigcap}}^{2}_{\bZ_{p}^{\rm ur}} 
H^{1}(G_{k_{v}},T)  \stackrel{\sim}{\longrightarrow} \bZ_{p}^{\rm ur}
\]
is $\left(1-1/p\right) {\rm ord}_{v} \wedge \log_{v}$. 
\end{proposition}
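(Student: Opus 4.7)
I would verify the stated identity by evaluating both sides on an explicit basis element of the rank-one $\bZ_p^{\rm ur}$-module $\bigcap^2_{\bZ_p^{\rm ur}} H^1(G_{k_v},T)$. Since $v$ splits completely in $L$ by hypothesis, the restriction of $\chi$ to $G_{k_v}$ is trivial, so $T|_{G_{k_v}}\cong \bZ_p^{\rm ur}(1)$. Kummer theory then identifies $H^1(G_{k_v},T)\cong \bZ_p^{\rm ur}\otimes_{\bZ_p}\widehat{k_v^{\times}}$, which is a free $\bZ_p^{\rm ur}$-module of rank $2$ (since $k_v=\bQ_p$). I would take as a basis the Kummer class $x$ of a uniformizer $\pi\in k_v$ and the Kummer class $y$ of a principal unit $u\in 1+p\bZ_p$. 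Extending $\log_v$ to all of $H^1(G_{k_v},T)$ via the convention $\log_p(\pi)=0$, a direct computation yields $({\rm ord}_v\wedge \log_v)(x\wedge y)=\log_p(u)$; multiplying by $(1-1/p)$ produces the target value on the right-hand side of the asserted identity.

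The heart of the argument is the evaluation of $({\rm rec}_{\Gamma,v}\wedge \widetilde{\rm Col}_{\Gamma,v})(x\wedge y)$. Fix a norm-coherent sequence of uniformizers $\widetilde{\pi}=(\pi_n)\in \varprojlim_n E_n^{\times}$ lifting $\pi$, which exists because $E_{\infty}/H$ is totally ramified. Setting $\beta:=({\rm rec}(u^{-1})-1)\widetilde{\pi}\in \varprojlim_n \cO_{E_n}^{\times,\wedge}$, Lemma~\ref{lem:const1} gives $g_{\beta}(0)=u$, so $\log g_\beta(0)=\log_p u$. Because $\log_p u\in p\bZ_p$ is fixed by Frobenius, Remark~\ref{rem:exten} calculates the augmentation of $\widetilde{\mu}_\beta$ to be $[H:k_v](1-\varphi/p)\log_p u=[H:k_v](1-1/p)\log_p u$. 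Projecting to the $\chi$-part and then to $\Gamma$, the class of $\beta$ represents (up to sign) ${\rm rec}_{\Gamma,v}(y)\otimes x$ inside $\cA_{\Gamma_v}/\cA_\Gamma\cA_{\Gamma_v}\otimes \cU_{\Gamma,v}^{\chi}$, using the congruence ${\rm rec}(u^{-1})-1\equiv -({\rm rec}(u)-1)\pmod{\cA_{\Gamma}^{2}}$; unwinding Definition~\ref{def:col2} and the Rubin wedge expansion $x\wedge y\mapsto {\rm rec}(x)\otimes y-{\rm rec}(y)\otimes x$ then produces $({\rm rec}_{\Gamma,v}\wedge \widetilde{\rm Col}_{\Gamma,v})(x\wedge y)=(1-1/p)\log_p u$, matching the first paragraph.

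The main obstacle is the careful bookkeeping of normalization constants. The factor $[H:k_v]$ produced by Remark~\ref{rem:exten} must cancel against the combinatorics of the $\chi$-part projection (as encoded in the passage from the full Galois group to its $\chi$-component at the fixed prime $w$); the signs from ${\rm rec}(u^{-1})$ versus ${\rm rec}(u)$ must be tracked through the Rubin formalism; and one may need to treat ramified and unramified behaviour of $v$ in $k_\Gamma/k$ separately, although in both cases local class field theory together with Proposition~\ref{prop:image} should supply the required compatibility between ${\rm Col}_v$ and $\widetilde{\rm Col}_{\Gamma,v}$. Any remaining ambiguity in the choice of lift of $y$ to $H^1(G_{k_v},\bT_\Gamma)$ is absorbed by the fact that $\widetilde{\rm Col}_{\Gamma,v}$ is defined only modulo $\cA_\Gamma$, as discussed in Definition~\ref{def:col2}.
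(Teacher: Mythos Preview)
Your proposal is essentially the paper's argument in the case where $k_\Gamma/k$ is \emph{ramified} at $v$: the paper likewise chooses an element $\pi\in\cU_{\Gamma,v}^\chi$ together with a unit $u$, forms $\beta=({\rm rec}(u^{-1})-1)\widetilde{\pi}$, and then invokes Lemma~\ref{lem:const1} and Remark~\ref{rem:exten}. However, the unramified case is genuinely different and your sketch does not cover it. When $k_\Gamma/k$ is unramified at $v$, local class field theory gives $\cU_{\Gamma,v}^\chi=H^1_f(G_{k_v},T)$ (the local units), so your uniformizer $x$ is \emph{not} in $\cU_{\Gamma,v}^\chi$ and the expression ${\rm rec}_{\Gamma,v}(y)\otimes x$ does not land in the domain of $\widetilde{\rm Col}_{\Gamma,v}$. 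Worse, in this case ${\rm rec}_{\Gamma,v}(y)=0$ for any unit $y$, so the projection of $({\rm rec}(u^{-1})-1)\widetilde{\pi}$ to $H^1(G_{k_v},\bT_\Gamma)$ is zero and Lemma~\ref{lem:const1} yields nothing. The paper handles the unramified case by a separate computation: it lifts the unit $u$ to $\widetilde{u}\in H^1_f(G_{k_v},\bT_\Gamma)$, uses the integral logarithm map ${\rm Log}_v$ on $H^1_f(G_{k_v},\bT_\Gamma)$, and appeals to Lemma~\ref{lem:const} (not Lemma~\ref{lem:const1}) to obtain ${\rm Col}_v(\beta)\vert_\Gamma=(1-\varphi/p)\,{\rm Log}_v(\widetilde{u})$ for the lift $\beta$ of $(1-\varphi^{-1})\widetilde{u}$.

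A secondary correction concerns the factor $[H:k_v]$. Since $v$ splits completely in $L$ we have $L_w=k_v$, so there is no nontrivial ``$\chi$-part combinatorics'' to absorb anything. In the ramified case one actually has $[H:k_v]=q:=[k_{\Gamma,w}\cap k_v^{\rm ur}:k_v]$, and this $q$ matches ${\rm ord}_v$ of a generator of $\cU_{\Gamma,v}^\chi$ on the other side of the identity. Accordingly the paper does not take $\pi$ to be a uniformizer but rather a basis element of $\cU_{\Gamma,v}^\chi$, which has ${\rm ord}_v(\pi)=q$; your choice of an honest uniformizer need not lie in $\cU_{\Gamma,v}^\chi$ when $q>1$.
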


\begin{proof}
Let us first suppose that $k_{\Gamma}/k$ is unramified at $v$. 
We take elements $\pi, u$ of $H^{1}(G_{k_{v}},T)$ such that ${\rm ord}_{v}(\pi) = 1$ and ${\rm ord}_{v}(u) = 0$, so that we have
\[
{\rm ord}_{v} \wedge \log_{v}(\pi \wedge u) = \log_{v}(u)
\]
by definition; it is also helpful to notice that we have ${\bigwedge}^{2}_{\bZ_{p}^{\rm ur}} 
H^{1}(G_{k_{v}},T) = {\bigcap}^{2}_{\bZ_{p}^{\rm ur}} 
H^{1}(G_{k_{v}},T)$. Since $k_{\Gamma}/k$ is unramified at $v$, we conclude by local class field theory and the isomorphism~(\ref{exact1}) that
\[
H^{1}_{f}(G_{k_{v}},\bT_{\Gamma}) = H^{1}(G_{k_{v}},\bT_{\Gamma}) \ \text{ and } \ 
\cU_{\Gamma,v}^{\chi} = H^{1}_{f}(G_{k_{v}},T).  
\]
Since $k_{\Gamma}/k$ is unramified at $v$, we have 
\[
{\rm rec}_{\Gamma,v}(\pi \wedge u) = (\varphi-1) \otimes u = (1 - \varphi^{-1}) \otimes u 
\]
in $\cA_{\Gamma_{v}}/\cA_{\Gamma}\cA_{\Gamma_{v}} \otimes_{\bZ_{p}^{\rm ur}} H^{1}_{f}(G_{k_{v}},T)$. 
Here, $\varphi \in \Gal(k_{\Gamma}/k)$ stands for the Frobenius element at $v$. 
Let $\widetilde{u} \in H^{1}(G_{k_{v}},\bT_{\Gamma})$ be a lift of $u$ and 
$\beta \in H^{1}(G_{k_{v}},\bT_{\infty})$ a lift of $(1-\varphi^{-1})\widetilde{u}$. 
We have by construction
\[
\rec_{\Gamma,v} \wedge \widetilde{\rm Col}_{\Gamma, v}(\pi \wedge u) = 
{\rm Col}_{v}(\beta) \bmod \cA_{\Gamma_{\infty}}. 
\]
Since $k_{\Gamma}/k$ is unramified at $v$, one may use the $p$-adic logarithm to define the map 
\[
{\rm Log}_{v} \colon H^{1}(G_{k_{v}},\bT_{\Gamma}) \longrightarrow \bZ_{p}^{\rm ur}[[\Gamma]]. 
\]
Since $\beta$ is a lift of $(1-\varphi^{-1})\widetilde{u}$, we have 
\[
(1-\varphi^{-1}){\rm Col}_{v}(\beta)\vert_{\Gamma} = \left(1-\frac{\varphi}{p}\right)(1-\varphi^{-1}){\rm Log}_{w}(\widetilde{u})
\]
by the construction of ${\rm Col}_{w}$ and Lemma~\ref{lem:const}. 
Since $1-\varphi^{-1}$ is a regular element of $\bZ_{p}^{\rm ur}[[\Gamma]]$, we conclude that 
\[
{\rm Col}_{v}(\beta)\vert_{\Gamma} = \left(1-\frac{\varphi}{p}\right){\rm Log}_{v}(\widetilde{u}), 
\]
which in turn shows that 
\[
{\rm rec}_{\Gamma,v} \wedge \widetilde{\rm Col}_{\Gamma, v}(\pi \wedge u) = 
\left(1-\frac{1}{p}\right){\rm log}_{v}(u) = \left(1-\frac{1}{p}\right) {\rm ord}_{v} \wedge \log_{v}(\pi \wedge u)
\]
and concludes the proof when  $k_{\Gamma}/k$ is unramified at $v$. 

Next, we assume that $k_{\Gamma}/k$ is ramified at $v$. 
Let $\{\pi\} \subset \cU_{\Gamma,v}^{\chi}$ be a basis and set $q := [k_{\Gamma,w} \cap k^{\rm ur}_{v} \colon k_{v}]$. 
Since ${\rm ord}_{v}(\cU_{\Gamma,v}^{\chi}) = q\bZ_{p}^{\rm ur}$,  there is an element $u \in H^{1}_{f}(G_{k_{v}},T)$ such that 
\[
{\rm ord}_{v} \wedge \log_{v}(\pi \wedge u) = q\log_{v}(u). 
\]
On the other hand, since ${\rm rec}_{\Gamma,v}(\cU_{\Gamma,v}^{\chi}) = 0$, we have 
\[
{\rm rec}_{\Gamma,v}(\pi \wedge u) = -({\rm rec}_{v}(u)-1) \otimes \pi = 
({\rm rec}_{v}(u^{-1})-1) \otimes \pi. 
\]
Let $\widetilde{\pi} \in H^{1}(G_{k_{v}},\bT_{\Gamma})$ be a lift of $\pi$ and 
put $\beta := ({\rm rec}_{v}(u^{-1})-1)\widetilde{\pi} \in H^{1}_{f}(G_{k_{v}},\bT_{\Gamma})$. 
Using Lemma~\ref{lem:const1} and Remark~\ref{rem:exten}, we compute 
\begin{align*}
{\rm rec}_{\Gamma,v} \wedge \widetilde{\rm Col}_{\Gamma, v}(\pi \wedge u) &= {\rm Col}_{\Gamma,k,v}(\beta) \bmod \cA_{\Gamma}
\\
&= \left(1-\frac{1}{p}\right) q {\rm log}_{v}(u)
\\
&= \left(1-\frac{1}{p}\right) {\rm ord}_{v} \wedge \log_{v}(\pi \wedge u).
\end{align*}
\end{proof}

\section{Non-critical exceptional zeros of the Katz $p$-adic $L$-function}
\label{section_main_conjANDExceptionalZeros}
Our goal in this section is to recall the definition of the Katz' $p$-adic $L$-function and investigate its exceptional zeros at the trivial character (which lies outside its range of interpolation). We will also formulate the Explicit Reciprocity Conjecture~\ref{conj:rec} for Rubin--Stark elements, which is akin to that formulated in \cite[Conjecture 4.18]{buyukbodukleiPLMS} (see also \cite{BBS21} for a refinement of \cite[Conjecture 4.18]{buyukbodukleiPLMS} in certain aspects).

\emph{Until the end of this article,  the following $p$-ordinary hypothesis of Katz will be in effect:}
\\
\eqref{item_ord} \qquad \qquad Every prime of $k^{+}$ above $p$ splits in $k$. 

Let $c \in \Gal(k/k^{+})$ denote the generator. 
Then we can take a subset $\Sigma \subseteq S_{p}(k)$ such that 
$\Sigma \cup \Sigma^{c} = S_{p}(k)$ and $\Sigma \cap \Sigma^{c} = \emptyset$. 
Until the end, we fix such a subset $\Sigma\subset S_{p}(k)$. 

Recall that $k(p^{\infty})$ denotes the compositum of all $\bZ_{p}$-extensions of $k$ and $\Gamma_\infty=\Gal(k(p^{\infty})/k)$.  Thanks to \eqref{item_ord}, we have Katz' $p$-adic $L$-function (see \cite{Katz78} and \cite[Theorem~II]{HT93})
\[
L_{p,\Sigma}^{\chi} \in \bZ_{p}^{\rm ur}[[\Gamma_{\infty}]]
\] 
that interpolates $p$-adically the algebraic part of critical $\Sigma^{c}$-truncated Hecke $L$-values for $\chi^{-1}$ twisted by characters of $\Gamma_{\infty}$. {In more precise terms, it is characterized by the following interpolative property: For the $p$-adic avatar $\widehat \Xi$ of each Hecke character  $\Xi$ of infinity type $m\Sigma_\infty$ with $m>4$, we have 
\begin{equation}
\label{Eqn_Katz_padic_interpolation}
\frac{\widehat \Xi (L_{p,\Sigma}^\chi)}{\Omega_p^{m\Sigma_\infty}} = 
 \frac{\Gamma(m)^g}{\sqrt{|D_{k^+}|_{\mathbb{R}}}}\cdot 
\prod_{v\in \Sigma} \dfrac{1-\frac{\psi_v^{-1}(\varpi_v)}{\mathbb{N}v}}{\epsilon(0,\psi_v)}\cdot
\prod_{v\in \Sigma^c}(1-\psi_v(\varpi_v))\cdot 
\frac{L(\psi,0)}{\Omega_\infty^{m\Sigma_\infty}}
\end{equation}}
where \begin{itemize}
\item $\Sigma_\infty:=\{j^{-1}\circ\iota_v: k\hookrightarrow \mathbb{C}\mid  v\in \Sigma\}$ is the CM type corresponding to $\Sigma$,
\item $D_{k^+}$ is the absolute discriminant of $k^+$,
\item $\psi$ is the Hecke character $\chi^{-1}{\Xi} $ and $\psi_v$ its $v$-component,
\item $\varpi_v\in k_v$ is a uniformizer, 
\item $\epsilon(s,\psi_v)$ is Tate's local epsilon factor,
\item $\Omega_p$ and $\Omega_\infty$ are the $p$-adic and archimedean CM periods (which we shall not define here).
\end{itemize}

\begin{remark}
We note that the interpolation formula \eqref{Eqn_Katz_padic_interpolation} differs from that in \cite[Theorem~II]{HT93} with a factor of $[\cO_k^\times:\cO_{k^+}^\times]$. This alteration is to ensure the coherence with the corresponding formulae in \cite{deshalit} in the special case when $k$ is imaginary quadratic. With this modification, the Reciprocity Conjecture~\ref{conj:rec} below reduces to a theorem of Yager (in the form presented in \cite[Equation (49)]{deshalit}) in this special case.
\end{remark}

\begin{definition}
Set $e := \#\{v \in \Sigma^{c} \mid \chi(G_{k_{v}}) = 1\}$. 
\end{definition}
\begin{remark}\label{remark_Katz_interpolation_trivial_char}
\item[i)] The trivial character $\mathds{1}$ is not in the range of interpolation for Katz' $p$-adic $L$-function.
\item[ii)] When $\Xi =\mathds{1}$, precisely $e$ of the factors $\{1-\psi_v(\varpi_v): {v\in \Sigma^c}\}$ that appear in \eqref{Eqn_Katz_padic_interpolation} vanish. 
\end{remark}
One is led to the following conjecture, based on Remark~\ref{remark_Katz_interpolation_trivial_char}(ii):
\begin{conjecture}[Weak Exceptional Zero Conjecture]\label{conj:ex zero}
$L_{p,\Sigma}^{\chi} \in \cA_{\Gamma_{\infty}}^{e}.$
\end{conjecture}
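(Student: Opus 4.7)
The plan is to translate the inclusion $L_{p,\Sigma}^{\chi} \in \cA_{\Gamma_{\infty}}^{e}$ into assertions about each $\bZ_{p}$-quotient $\Gamma$ of $\Gamma_{\infty}$ separately, and then invoke Theorem~\ref{thm_intro_leading_term} for enough of them. Writing $d$ for the $\bZ_{p}$-rank of $\Gamma_{\infty}$ and fixing an isomorphism $\bZ_{p}^{\rm ur}[[\Gamma_{\infty}]] \cong \bZ_{p}^{\rm ur}[[T_{1}, \ldots, T_{d}]]$, each graded piece $\cA_{\Gamma_{\infty}}^{j}/\cA_{\Gamma_{\infty}}^{j+1}$ identifies with the space of homogeneous polynomials of degree $j$ in $T_{1}, \ldots, T_{d}$. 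A $\bZ_{p}$-quotient $\pi_{\Gamma}\colon \Gamma_{\infty} \twoheadrightarrow \Gamma$ corresponds to a primitive direction vector $v_{\Gamma} \in \bZ_{p}^{d}$, and the induced map on the $j$-th graded pieces is (up to the resulting $\bZ_{p}^{\rm ur}$-isomorphism $\cA_{\Gamma}^{j}/\cA_{\Gamma}^{j+1} \cong \bZ_{p}^{\rm ur}$) evaluation of the polynomial at $v_{\Gamma}$.

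I would then argue by contradiction. Suppose $L_{p,\Sigma}^{\chi} \notin \cA_{\Gamma_{\infty}}^{e}$, so its image $P$ in $\cA_{\Gamma_{\infty}}^{j}/\cA_{\Gamma_{\infty}}^{j+1}$ is a non-zero homogeneous polynomial of some degree $j < e$. Since $\bZ_{p}^{\rm ur}$ is a characteristic-zero domain, the set $\{v : P(v) = 0\}$ is a proper Zariski-closed subset of the relevant projective space of lines through $\Gamma_{\infty}$. Separately, the set of $\Gamma$ failing hypothesis (H.1) is a finite union of proper closed subsets: for each $v_{0} \in S_{p}(k)$, the condition that $v_{0}$ split completely in $k_{\Gamma}/k$ is equivalent to demanding $\Gamma_{\infty, v_{0}} \subseteq \ker(\pi_{\Gamma})$, where $\Gamma_{\infty,v_{0}}$ is the decomposition subgroup; by local class field theory applied to $k_{v_{0}} = \bQ_{p}$ this subgroup has positive $\bZ_{p}$-rank, so the condition is a proper linear constraint on $v_{\Gamma}$. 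Consequently one may choose a $\bZ_{p}$-quotient $\Gamma$ satisfying (H.1) with $P(v_{\Gamma}) \neq 0$. For such a $\Gamma$, the remaining hypotheses (H.2)--(H.5) are inherited unchanged (they do not depend on the choice of $\Gamma$), and Theorem~\ref{thm_intro_leading_term} yields ${\rm ord}_{\cA_{\Gamma}}\bigl(\pi_{\Gamma}(L_{p,\Sigma}^{\chi})\bigr) \geq e$. This contradicts $P(v_{\Gamma}) \neq 0$, which says that ${\rm ord}_{\cA_{\Gamma}}\bigl(\pi_{\Gamma}(L_{p,\Sigma}^{\chi})\bigr) = j < e$.

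I expect the principal obstacle to be the rigorous formulation of the Zariski-density step within the framework of completed group rings over $\bZ_{p}^{\rm ur}$, and a precise matching of the $\cA_{\Gamma_{\infty}}$-adic and $(T_{1},\ldots,T_{d})$-adic filtrations under the fixed isomorphism. One must also confirm that every decomposition subgroup $\Gamma_{\infty,v_{0}}$ ($v_{0} \in S_{p}(k)$) indeed has positive $\bZ_{p}$-rank, which follows because primes in $\Sigma$ ramify in some $\bZ_{p}$-extension while primes in $\Sigma^{c}$ have non-trivial Frobenius contributing to $\Gamma_{\infty,v_{0}}$. As a simplification, Remark~\ref{remark_L_invariant_vanishnonvanish}(iii) ensures that whenever a prime in $\Sigma^{c}$ splits completely in $k_{\Gamma}/k$ the restriction $\pi_{\Gamma}(L_{p,\Sigma}^{\chi})$ vanishes identically, so those exceptional $\Gamma$'s pose no obstruction and need not be avoided; only the (strictly smaller) locus of $\Gamma$'s in which a prime of $\Sigma$ splits completely needs to be sidestepped.
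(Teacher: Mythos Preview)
The statement is a conjecture; the paper does not prove it unconditionally. What it provides are two conditional routes. Route (a) is Proposition~\ref{imp2}, proved in Appendix~\ref{appendix_main_conjecture}: the Iwasawa Main Conjecture implies Conjecture~\ref{conj:ex zero}. That argument identifies $(L_{p,\Sigma}^{\chi,\iota})$ with ${\rm Fitt}(\widetilde{H}^{2}_{f}(G_{k,S},\bT_{\infty},\Delta_{\Sigma}))$ under IMC, and then (Lemma~\ref{lem:excep}) bounds this Fitting ideal by descending to each $\bZ_{p}$-quotient $\Gamma$ and using the exact sequence~(\ref{exact4}) together with local duality to exhibit $e$ factors of $\cA_{\Gamma}$ coming from $H^{2}(G_{k_{v}},\bT_{\Gamma})$ for $v \in \Sigma^{c}$ with $\chi(G_{k_v})=1$. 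No Rubin--Stark elements or Coleman maps appear. Route (b) is Corollary~\ref{cor_Hida_Tilouine_Conj_strongform}: under (H.1)--(H.5) one obtains the stronger $L_{p,\Sigma}^{\chi} \in \cA_{\Gamma_{\infty}}^{e} \setminus \cA_{\Gamma_{\infty}}^{e+1}$ via Theorem~\ref{thm_intro_leading_term}.

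Your proposal is route (b), fleshed out beyond the paper's one-line justification. The Zariski-density reduction to $\bZ_{p}$-quotients is correct and is in fact the same reduction the paper invokes tacitly in the proof of Lemma~\ref{lem:excep} (``Take a $\bZ_{p}$-extension \ldots It suffices to show \ldots''); you make it explicit, which is useful. So the \emph{reduction step} is shared between the two routes; the difference is what is applied once one is on a single $\Gamma$: the paper's route (a) uses Selmer-complex structure under IMC, whereas you use Theorem~\ref{thm_intro_leading_term} under (H.1)--(H.5).

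There is one slip. You assert that (H.2)--(H.5) ``do not depend on the choice of $\Gamma$''. This is false for the Mazur--Rubin--Sano Conjecture~\ref{MRS} (and its precursor Conjecture~\ref{MRS1}), which is formulated for a fixed $\bZ_{p}$-extension $k_{\Gamma}/k$. Your argument therefore requires MRS for a Zariski-dense set of $\Gamma$'s, not just one. Once this is acknowledged, the approach is sound.

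A smaller point: your final paragraph invokes Remark~\ref{remark_L_invariant_vanishnonvanish}(iii) to handle $\Gamma$'s in which a prime of $\Sigma^{c}$ splits completely. But that remark relies on IMC (via Corollary~\ref{lem:vanish-line}), which you are not otherwise assuming. Since your density argument already avoids those $\Gamma$'s, this ``simplification'' is unnecessary and imports an extra hypothesis.
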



We have the following evidence in favor of this conjecture. It is proved in Appendix~\ref{appendix_main_conjecture} as Proposition~\ref{prop_app_IMC_reformulated}:
\begin{proposition}\label{imp2}
If the Iwasawa Main Conjecture~\ref{conj:iwasawa} holds true, then so does the Weak Exceptional Zero Conjecture~\ref{conj:ex zero}. 
\end{proposition}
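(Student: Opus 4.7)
The plan is to convert the desired analytic containment $L_{p,\Sigma}^\chi \in \cA_{\Gamma_\infty}^e$ into an algebraic divisibility of characteristic ideals via the Iwasawa Main Conjecture, and then to manufacture the $e$ required augmentation factors from a local analysis at the $e$ trivial-zero primes in $\Sigma^c$.

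As a first step, I would apply Conjecture~\ref{conj:iwasawa} to obtain (up to a unit) the identification
\[
(L_{p,\Sigma}^\chi) \;=\; \mathrm{char}_{\Lambda}\!\bigl(X_\Sigma^\chi\bigr)
\]
inside $\Lambda := \bZ_p^{\rm ur}[[\Gamma_\infty]]$, where $X_\Sigma^\chi$ is the Pontryagin dual of the $\chi$-isotypic Greenberg Selmer group attached to $T$ over $k(p^\infty)$, with the strict local condition imposed at the primes in $\Sigma$. The weak exceptional zero conjecture is thereby reduced to the purely algebraic statement $\mathrm{ord}_{\cA_{\Gamma_\infty}}\!\bigl(\mathrm{char}_\Lambda(X_\Sigma^\chi)\bigr) \geq e$.

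To establish this, I would use Iwasawa-theoretic Poitou--Tate duality to fit $X_\Sigma^\chi$ into an exact sequence which separates the strict dual Selmer module from the local terms at $\Sigma^c$; this is where the $e$ primes $v \in \Sigma^c$ with $\chi(G_{k_v}) = 1$ enter. By the local analysis carried out in Section~\ref{subsec:split} (in particular Proposition~\ref{prop:image}), each such $v$ contributes to $X_\Sigma^\chi$ a quotient of $\Lambda$ by the ideal $\cA_{\Gamma_{\infty,v}}\cdot\Lambda$, whose characteristic contribution is a well-defined element of $\cA_{\Gamma_\infty}$ (namely, the generator of $\mathrm{rec}_{\Gamma_\infty,v}(H^1(G_{k_v},T))$ modulo $\cA_{\Gamma_\infty}^2$). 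Multiplicativity of characteristic ideals along the exact sequence then yields one power of $\cA_{\Gamma_\infty}$ per trivial-zero prime, and hence $\mathrm{ord}_{\cA_{\Gamma_\infty}}(\mathrm{char}_\Lambda(X_\Sigma^\chi)) \geq e$, as required.

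The principal obstacle is the independence step: one must verify that the $e$ local augmentation factors arising from distinct $v \in \Sigma^c$ with $\chi(G_{k_v}) = 1$ are genuinely independent contributions to the characteristic ideal (equivalently, that the reciprocity maps $\mathrm{rec}_{\Gamma_\infty,v}$ produce linearly independent elements of $\cA_{\Gamma_\infty}/\cA_{\Gamma_\infty}^2$ over $\bZ_p^{\rm ur}$), so that their $\cA_{\Gamma_\infty}$-orders add up to $e$ rather than collapse, and also that the local quotients are not rendered pseudo-null by the passage to the full multivariate Iwasawa algebra. Both follow from the fact that $\Gamma_\infty$ is a free $\bZ_p$-module of rank $g+1$ (under the Leopoldt-type hypotheses in force) combined with the observation that $k_v = \bQ_p$ forces each decomposition subgroup $\Gamma_{\infty,v}$ to be of $\bZ_p$-rank two, so that the local terms sit in height one and their defining ideals can be separated in $\cA_{\Gamma_\infty}/\cA_{\Gamma_\infty}^2$.
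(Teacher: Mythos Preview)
Your overall strategy is right in spirit, but there is a genuine gap in the local analysis over $\Gamma_\infty$. You observe correctly that $k_v=\bQ_p$ forces the decomposition group $\Gamma_{\infty,v}\subset\Gamma_\infty$ to have $\bZ_p$-rank two. The conclusion you draw from this, however, is exactly backwards: by local duality $H^2(G_{k_v},\bT_\infty)\cong \bZ_p^{\rm ur}[[\Gamma_\infty/\Gamma_{\infty,v}]]$, and since $\Gamma_{\infty,v}$ has rank $\geq 2$ this module is supported in codimension $\geq 2$, i.e.\ it is \emph{pseudo-null}, not height one (this is precisely Lemma~\ref{lem:null}). Consequently its characteristic ideal is all of $\Lambda$, and multiplicativity of characteristic ideals in your Poitou--Tate sequence produces no augmentation factor whatsoever at the $\Gamma_\infty$ level. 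The ``independence'' issue you flag is therefore moot: there is nothing to be independent, because the local contributions to $\mathrm{char}_\Lambda$ are trivial.

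The paper circumvents this by a descent argument which you are missing. One first passes from characteristic ideals to \emph{Fitting ideals}: the Selmer complex $\widetilde{{\bf R}\Gamma}_f(G_{k,S},\bT_\infty,\Delta_\Sigma)$ is perfect in degrees $[1,2]$ with Euler characteristic zero, so $\widetilde{H}^2_f$ admits a square presentation and $\Fitt(\widetilde{H}^2_f)=\mathrm{char}(\widetilde{H}^2_f)=\mathrm{char}(X_\Sigma^\chi)$ is principal (Lemma~\ref{lem:fitt}). Fitting ideals, unlike characteristic ideals, commute with base change. One then restricts to an arbitrary $\bZ_p$-quotient $\Gamma$ of $\Gamma_\infty$: over the two-dimensional ring $\bZ_p^{\rm ur}[[\Gamma]]$ the local terms $H^2(G_{k_v},\bT_\Gamma)\cong\bZ_p^{\rm ur}[\Gamma/\Gamma_v]$ are no longer pseudo-null, and each of the $e$ trivial-zero primes genuinely contributes a factor in $\cA_\Gamma$ via multiplicativity (Lemma~\ref{lem:excep}). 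Since the image of $L_{p,\Sigma}^\chi$ lands in $\cA_\Gamma^e$ for \emph{every} such $\Gamma$, one concludes $L_{p,\Sigma}^\chi\in\cA_{\Gamma_\infty}^e$. The passage through Fitting ideals and the descent to a single $\bZ_p$-line are the two ingredients your proposal lacks.
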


In view of Theorem~\ref{HsiehTheorem}(iii) of Hsieh, one has the following:
\begin{corollary}[Hsieh]\label{corollary_imp2}
In addition to our running hypotheses, suppose $p>3$ is coprime to $h_{k}^{-}$. Assume $k=k^+M$ where $M$ is an imaginary quadratic field in which $p$ splits, and that $\Sigma_p$ is obtained by extending $\iota_p:M\hookrightarrow \mathbb{C}_p$, and finally that $L$ is abelian over $M$ and $p\nmid [L:M]$. 
Then the Weak Exceptional Zero Conjecture~\ref{conj:ex zero} holds true. 
\end{corollary}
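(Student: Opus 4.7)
The proof will be a short deduction that combines two ingredients already assembled in the paper. The plan is to invoke Proposition~\ref{imp2}, which reduces the Weak Exceptional Zero Conjecture~\ref{conj:ex zero} to the Iwasawa Main Conjecture~\ref{conj:iwasawa}, and then verify that Hsieh's theorem (Theorem~\ref{HsiehTheorem}(iii)) establishes the latter under the stated hypotheses.

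First I would unpack the hypotheses of the corollary and confirm they are exactly the ones feeding into Hsieh's Theorem~\ref{HsiehTheorem}(iii): the CM field is of the form $k = k^+ M$ with $M$ imaginary quadratic in which the odd prime $p > 3$ splits; the CM type $\Sigma_p$ is the one induced by extending $\iota_p \colon M \hookrightarrow \mathbb{C}_p$; the extension $L/M$ is abelian; and the orders satisfy $p \nmid h_k^-$ and $p \nmid [L:M]$. Under these hypotheses, Theorem~\ref{HsiehTheorem}(iii) asserts that the Iwasawa Main Conjecture~\ref{conj:iwasawa} for $L_{p,\Sigma}^{\chi}$ holds true.

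Next I would apply Proposition~\ref{imp2}, which says precisely that the validity of Iwasawa Main Conjecture~\ref{conj:iwasawa} implies the containment $L_{p,\Sigma}^{\chi} \in \cA_{\Gamma_\infty}^{e}$, i.e., Conjecture~\ref{conj:ex zero}. Combining the two, we obtain the claimed conclusion: $L_{p,\Sigma}^{\chi} \in \cA_{\Gamma_\infty}^{e}$, as desired.

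I do not anticipate a serious obstacle here, since all the heavy lifting is done in the two cited results. The only point that requires a touch of care is to make sure that the normalisations (choice of $\Sigma$ compatible with $\iota_p$, the conditions on $\chi$ having prime-to-$p$ order and being non-trivial, and $p$ being unramified in $L$) are consistent between our setup in Section~\ref{section_main_conjANDExceptionalZeros}, the statement of Proposition~\ref{imp2} proved in Appendix~\ref{appendix_main_conjecture}, and Hsieh's Theorem~\ref{HsiehTheorem}(iii). Once this bookkeeping is verified, the corollary follows immediately.
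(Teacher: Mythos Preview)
Your proposal is correct and matches the paper's own argument exactly: the corollary is stated immediately after Proposition~\ref{imp2} with the lead-in ``In view of Theorem~\ref{HsiehTheorem}(iii) of Hsieh, one has the following,'' so the intended proof is precisely the combination of those two results that you describe. The only bookkeeping point worth making explicit is that the running hypothesis that $\chi$ has prime-to-$p$ order ensures $p\nmid [L:k]$, so together with $p\nmid h_k^-$ the blanket hypothesis $p\nmid h_k^-[L:k]$ of Theorem~\ref{HsiehTheorem} is satisfied.
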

\begin{remark}
When $k$ is an imaginary quadratic field, a proof of Conjecture~\ref{conj:ex zero} is already available in \cite[Theorem 1.5.7]{HT94}. Their line of argument (which is based on a $p$-adic Kronecker limit formula) is different from the one presented in this article.
\end{remark}

\subsection{Explicit Reciprocity Conjecture for Rubin--Stark elements}
Fix an ordering of the places in $S = S_{\infty}(k) \cup S_{p}(k) \cup S_{\rm ram}(L/k)$ as in Section~\ref{subsec:Rubin--Stark}. 
We have a Rubin--Stark element 
\[
\epsilon^{S_{\infty}(k), \chi}_{KL/k, S} \in {\bigwedge}^{g}_{\bC_{p}[\Gal(K/k)]}(\bC_{p}\cO_{KL,S}^{\times})^{\chi} 
\]
for any finite extension $K/k$ with $S_{\rm ram}(K/k) \subseteq S$. 
Assume the truth of the Rubin--Stark conjecture for all  finite subextensions $Lk(p^\infty)/k$. Using Proposition~\ref{prop:stark-reln} and Corollary~\ref{cor:inv-bidual}, we define
\[
\epsilon^{S_{\infty}(k), \chi}_{L(p^{\infty})/k, S} := \varprojlim_{K \subseteq k(p^{\infty})}
\epsilon^{S_{\infty}(k), \chi}_{KL/k, S} \in 
{\bigcap}^{g}_{\bZ_{p}^{\rm ur}[[\Gamma_{\infty}]]}H^{1}(G_{k,S},\bT_{\infty}). 
\]

Suppose that $p$ splits completely in $k$. 
As explained in Section~\ref{section_Coleman_map}, we have a 
Coleman map ${\rm Col}_{v} \colon H^{1}(G_{k_{v}},\bT_{\infty}) \stackrel{\sim}{\longrightarrow} \bZ_{p}^{\rm ur}[[\Gamma_{\infty}]]$ for each place $v$ of $k$ above $p$. 
Put 
\[
H^{1}_{\Sigma}(\bT_{\infty}) := \bigoplus_{v \in \Sigma}H^{1}(G_{k_{v}},\bT_{\infty}). 
\]
The Coleman maps $\{{\rm Col}_{v}\}_{v\in \Sigma}$ in turn induce an isomorphism  
\[
\bigwedge_{v \in \Sigma}{\rm Col}_{v} \colon 
{\bigcap}^{g}_{\bZ_{p}^{\rm ur}[[\Gamma_{\infty}]]}
H^{1}_{\Sigma}(\bT_{\infty}) \stackrel{\sim}{\longrightarrow} \bZ_{p}^{\rm ur}[[\Gamma_{\infty}]]. 
\]
Let 
\[
{\rm loc}_{\Sigma} \colon {\bigcap}^{g}_{\bZ_{p}^{\rm ur}[[\Gamma_{\infty}]]}H^{1}(G_{k,S},\bT_{\infty}) \longrightarrow
{\bigcap}^{g}_{\bZ_{p}^{\rm ur}[[\Gamma_{\infty}]]}H^{1}_{\Sigma}(\bT_{\infty})
\]
denote the obvious map induced by the localization maps at primes above $p$. 

\begin{conjecture}[Explicit Reciprocity Conjecture for Rubin--Stark elements]\label{conj:rec}
\[
\bigwedge_{v \in \Sigma}{\rm Col}_{v}({\rm loc}_{\Sigma}(\epsilon^{S_{\infty}(k), \chi}_{L(p^{\infty})/k, S})) = L_{p,\Sigma}^{\chi,\iota}. 
\]
\end{conjecture}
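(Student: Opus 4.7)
The plan is to reduce the conjectural identity to its specialization at a Zariski-dense family of continuous characters of $\Gamma_{\infty}$, and then to match the two sides term-by-term against the interpolation formula \eqref{Eqn_Katz_padic_interpolation} defining Katz' $p$-adic $L$-function. Since $\bZ_{p}^{\rm ur}[[\Gamma_{\infty}]]$ is a Noetherian complete local domain, any element is determined by its values at the $p$-adic avatars $\widehat\epsilon$ of algebraic Hecke characters $\epsilon$ of $k$ of infinity type $m\Sigma_{\infty}$ for $m$ sufficiently large; these lie in the interpolation range on the right-hand side and form a dense family. I would check the desired equality $\widehat\epsilon$-by-$\widehat\epsilon$ on this family.

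Computing $\widehat\epsilon$ applied to the right-hand side is explicit from \eqref{Eqn_Katz_padic_interpolation}: one obtains a product of Euler-type factors at primes in $\Sigma$ and $\Sigma^{c}$, a $\Gamma$-factor, and the ratio $L(\chi^{-1}\epsilon,0)/\Omega_{\infty}^{m\Sigma_{\infty}}$ rescaled by $\Omega_{p}^{m\Sigma_{\infty}}$ (with the involution $\iota$ swapping $\epsilon$ with $\epsilon^{-1}$). For the left-hand side, one has two separate specialization problems. First, for each $v \in \Sigma$, the composition $\widehat\epsilon\circ \mathrm{Col}_{v}$ must be identified. Since $(\mathbf{H.2})$ gives $k_{v}=\bQ_{p}$, Coleman's classical explicit reciprocity law for $\bG_{m}$ — which is exactly the formalism built up in Section~\ref{section_Coleman_map} — identifies this specialization with an appropriate Bloch--Kato dual exponential (for $m$ in the critical range) or $p$-adic logarithm, up to an Euler factor at $v$ that matches the corresponding factor in \eqref{Eqn_Katz_padic_interpolation}, in parallel with Proposition~\ref{prop:const1}. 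Second, the norm-coherent Iwasawa-theoretic Rubin--Stark element $\epsilon^{S_{\infty}(k),\chi}_{L(p^{\infty})/k,S}$ must be specialized at $\widehat\epsilon$; its defining characterization via the Dirichlet regulator at each finite layer should produce, after localization at $\Sigma$ and application of the dual exponential, the algebraic part of the leading term of $L_{S}(\chi^{-1}\epsilon,s)$ at $s=0$ divided by Katz' $p$-adic period.

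The key computational point — and the main obstacle — is therefore a $p$-adic Kronecker-limit-type formula: one must prove that $\bigwedge_{v\in\Sigma}(\widehat\epsilon\circ \mathrm{Col}_{v})$ applied to $\mathrm{loc}_{\Sigma}(\epsilon^{S_{\infty}(k),\chi}_{L(p^{\infty})/k,S})$ equals Katz' period-normalized $L$-value at $\widehat\epsilon$, and that the $S$-truncation of the Rubin--Stark side correctly converts to the $\Sigma^{c}$-truncation on Katz' side via the Euler factors $\prod_{v\in \Sigma^{c}}(1-\psi_{v}(\varpi_{v}))$. In the imaginary quadratic setting, Katz' own $p$-adic Kronecker limit formula supplies exactly this comparison (and is the engine behind~\cite{HT94}); for a general CM field $k$, no unconditional analogue is known, which is precisely why Conjecture~\ref{conj:rec} is imposed as a hypothesis here. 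A promising but substantial route would be to upgrade Hsieh's proof of the main conjecture for CM fields (cited as Theorem~\ref{HsiehTheorem}) from a divisibility of characteristic ideals to an explicit identification of generators; combined with the local reciprocity laws of Section~\ref{section_Coleman_map}, this would in principle pin down the ratio of the two sides to a $p$-adic unit, and then a direct character-at-a-time check as above would force that ratio to be $1$.
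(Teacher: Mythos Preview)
The statement is labelled a \emph{conjecture} in the paper and is not proved there; it is one of the standing hypotheses (H.5), and the paper only records (Remark~\ref{rem:rec1}) that the imaginary quadratic case is a theorem of Yager. So there is no proof in the paper for your proposal to be compared against.

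That said, your outline has a structural gap worth naming. The Rubin--Stark element $\epsilon^{S_{\infty}(k),\chi}_{L(p^{\infty})/k,S}$ is characterized layer-by-layer through the \emph{archimedean} regulator and the leading terms of Artin $L$-functions at $s=0$ for finite-order characters. Katz' interpolation range, by contrast, consists of Hecke characters of infinity type $m\Sigma_{\infty}$ with $m>4$. Your density strategy asks you to evaluate the left-hand side at such higher-weight $\widehat\epsilon$, but the Rubin--Stark defining property gives you no direct handle on these specializations: there is no ``Rubin--Stark interpolation formula'' at non-finite-order characters to match against \eqref{Eqn_Katz_padic_interpolation}. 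The comparison you describe as ``a $p$-adic Kronecker-limit-type formula'' is thus not a technical lemma to be supplied but the entire content of the conjecture.

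In the one case where the conjecture is known, Yager and de~Shalit do \emph{not} proceed by a density-of-characters argument. Elliptic units are explicitly constructed, their Coleman power series are computed in closed form via Eisenstein--Kronecker series, and this is compared directly with Katz' own construction of the measure. That route relies on the Rubin--Stark elements being concrete objects, which fails in the general CM setting where they are themselves conjectural. Your alternative route via Hsieh's main conjecture also does not close the gap: Theorem~\ref{HsiehTheorem} yields an equality of characteristic ideals, hence at best equality of the two sides up to a unit in $\bZ_{p}^{\rm ur}[[\Gamma_{\infty}]]^{\times}$, and fixing that unit still requires one exact evaluation of the left-hand side, which runs into the same obstruction.
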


\begin{remark}\label{rem:rec1} When $k$ is an imaginary quadratic field, 
the Rubin--Stark units are constructed from elliptic units  (c.f. Section~\ref{subsubsec_new_expand_on_remark_1_7_new_remark_1_3} below for an elaboration on this point). In this particular scenario, Conjecture~\ref{conj:rec} is a theorem of Yager~\cite[Theorem 15]{Yag82};  see also Proposition in \cite[Chapter III, \S1.4]{deshalit} and Equation (49) in \cite[page 86]{deshalit}. 
\end{remark}

Let $k_{\Gamma}/k$ be a $\bZ_{p}$-extension with Galois group $\Gamma$. 
We set $V := \{v \in S \mid \chi(G_{k_{v}})=1\}$ and 
\begin{align*}
H^{1}_{\Sigma}(\bT_{\Gamma}) := \bigoplus_{v \in \Sigma}H^{1}(G_{k_{v}},\bT_{\Gamma}). 
\end{align*}
We also denote by  
\begin{align*}
{\rm loc}_{\Sigma} \colon {\bigcap}^{g}_{\bZ_{p}^{\rm ur}[[\Gamma]]}H^{1}(G_{k,S},\bT_{\Gamma}) \longrightarrow
{\bigcap}^{g}_{\bZ_{p}^{\rm ur}[[\Gamma]]}H^{1}_{\Sigma}(\bT_{\Gamma})
\end{align*}
the map induced by the localization maps. 
Suppose that no prime in $\Sigma$ splits completely in $k_{\Gamma}/k$. 
We then have the commutative diagram 
\begin{align}\label{diag1}
\xymatrix@C=1pt{
{\bigcap}_{\bZ_{p}^{\rm ur}[[\Gamma_{\infty}]]}^{g}H^{1}(G_{k,S},\bT_{\infty}) \ar[r]^{{\rm loc}_{\Sigma}}
 & {\bigcap}^{g}_{\bZ_{p}^{\rm ur}[[\Gamma_{\infty}]]}H^{1}_{\Sigma}(\bT_{\infty})\ar[d]\ar[rrrrrrr]^-{{\bigwedge}_{v \in \Sigma}{\rm Col}_{v}} &&&&&&& \bZ_{p}^{\rm ur}[[\Gamma_{\infty}]] \ar[d]
\\
& \bigotimes_{v \in V \cap \Sigma}\cA_{\Gamma_{v}} \otimes_{\bZ_{p}^{\rm ur}[[\Gamma]]}{\bigcap}^{g}_{\bZ_{p}^{\rm ur}[[\Gamma]]}H^{1}_{\Sigma}(\bT_{\Gamma})\ar[rrrrrrr]^-{{\bigwedge}_{v \in \Sigma}{\rm Col}_{\Gamma,v}} &&&&&&& \bZ_{p}^{\rm ur}[[\Gamma]]. 
}
\end{align}
Here, $\cA_{\Gamma_{v}} := {\rm Ann}_{\bZ_{p}^{\rm ur}[[\Gamma]]}(\bZ_{p}^{\rm ur}[\Gamma/\Gamma_{v}])$. 
Conjecture~\ref{conj:rec}  therefore implies that 
\begin{align}\label{eq:gamma}
\bigwedge_{v \in \Sigma}{\rm Col}_{\Gamma,v}({\rm loc}_{\Sigma}(\epsilon^{S_{\infty},\chi}_{L_{\Gamma}/k,S}) )= L_{p,\Sigma}^{\chi,\iota}\vert_{\Gamma} \,\,\,\,\,\,\,\,\,\,\,\,\,\,\,\,\,\,\,\,\,\,\,\, \hbox{(conditional on Conjecture~\ref{conj:rec})}. 
\end{align}

\begin{lemma}\label{lem:imp1}
The Iwasawa Main Conjecture~\ref{conj:iwasawa} and the Explicit Reciprocity Conjecture~\ref{conj:rec} together imply Exceptional Zero Conjecture~\ref{MRS1} for Rubin--Stark elements. 
\end{lemma}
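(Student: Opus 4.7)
The plan is to combine two distinct ingredients: the order of vanishing of the Katz $p$-adic $L$-function $L_{p,\Sigma}^{\chi}$ at the trivial character coming from IMC via Proposition~\ref{imp2}, and the identification of the Coleman image of the Iwasawa-theoretic Rubin--Stark element with $L_{p,\Sigma}^{\chi,\iota}$ supplied by the Explicit Reciprocity Conjecture~\ref{conj:rec}. The required $r = \#(V \cap \Sigma) + e$ factors of $\cA_\Gamma$ in Conjecture~\ref{MRS1} will split as $e$ factors contributed by IMC (reflecting the exceptional zeros at the split primes in $\Sigma^c$) and $\#(V \cap \Sigma)$ factors contributed by the local structure of the Coleman maps at the split primes inside $\Sigma$ (through Proposition~\ref{prop:image}).

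First, I would apply Proposition~\ref{imp2} to IMC to obtain $L_{p,\Sigma}^{\chi} \in \cA_{\Gamma_\infty}^{e}$, and then restrict along the natural surjection $\bZ_{p}^{\rm ur}[[\Gamma_\infty]] \twoheadrightarrow \bZ_{p}^{\rm ur}[[\Gamma]]$ to deduce $L_{p,\Sigma}^{\chi,\iota}|_{\Gamma} \in \cA_{\Gamma}^{e}$. Next, I would invoke the commutative diagram~\eqref{diag1} together with Conjecture~\ref{conj:rec} to obtain~\eqref{eq:gamma}, namely the identity $\bigwedge_{v \in \Sigma} {\rm Col}_{\Gamma,v}({\rm loc}_\Sigma(\epsilon^{S_{\infty}(k),\chi}_{L_\Gamma/k,S})) = L_{p,\Sigma}^{\chi,\iota}|_{\Gamma}$, which bridges the $L$-function side (whose vanishing is now controlled) and the Rubin--Stark side (whose vanishing we aim to establish).

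Then, for each split prime $v \in V \cap \Sigma$, I would use that $\epsilon^{S_{\infty}(k),\chi}_{L_\Gamma/k,S}$ is obtained from $\epsilon^{S_{\infty}(k),\chi}_{L(p^{\infty})/k,S}$ by projection (see~\eqref{eqn_rubin_stark_infty}), so that $\mathrm{loc}_v(\epsilon^{S_{\infty}(k),\chi}_{L_\Gamma/k,S})$ lies in the image of the canonical map $H^{1}(G_{k_v}, \bT_\infty) \to H^{1}(G_{k_v}, \bT_\Gamma)$; by Proposition~\ref{prop:image} this image equals $\cA_{\Gamma_v} \cdot H^{1}(G_{k_v},\bT_\Gamma)$. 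Wedging over $v \in \Sigma$ and unwinding through the isomorphism $\bigotimes_{v \in V \cap \Sigma} \cA_{\Gamma_v} \otimes_{\bZ_{p}^{\rm ur}[[\Gamma]]} \bigcap^{g} H^{1}_{\Sigma}(\bT_\Gamma) \xrightarrow{\sim} \bZ_{p}^{\rm ur}[[\Gamma]]$ implicit in Definition~\ref{def:col2} and the bottom row of~\eqref{diag1} yields the additional $\#(V\cap \Sigma)$ factors of $\cA_\Gamma$. Combined with the $e$ factors from the first step, this gives a total vanishing order of $r$ on the Coleman side, which then transfers back---using the freeness from Corollary~\ref{cor:free} of the relevant bidual modules---to the global statement $\epsilon^{S_{\infty}(k),\chi}_{L_\Gamma/k,S} \in \cA_\Gamma^{r} \cdot \bigcap^{g}_{\bZ_p^{\rm ur}[[\Gamma]]}H^{1}(G_{k,S}, \bT_\Gamma)$.

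The main obstacle is the final assembly: one must carefully track how the local $\cA_{\Gamma_v}$-contributions from the split primes $v \in V \cap \Sigma$ combine with the global $\cA_\Gamma^{e}$-vanishing of $L_{p,\Sigma}^{\chi,\iota}|_{\Gamma}$ to produce exactly the $r$-fold vanishing in the augmentation filtration on $\bZ_{p}^{\rm ur}[[\Gamma]]$, and then leverage the bidual freeness to transport this filtration information from the Coleman-localization side back to the global Rubin--Stark element inside $\bigcap^{g} H^{1}(G_{k,S},\bT_\Gamma)$. This bookkeeping is delicate precisely because the ideals $\cA_{\Gamma_v}$ sit inside $\cA_\Gamma$ with specific orders that depend on whether $v$ ramifies in $k_\Gamma/k$ and on the index $[\Gamma : \Gamma_v]$, so the identification must be made at the level of generators of the relevant principal ideals rather than just at the level of filtration orders.
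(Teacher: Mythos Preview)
Your approach matches the paper's: both combine the explicit reciprocity identity~\eqref{eq:gamma}, the IMC-based vanishing of $L_{p,\Sigma}^{\chi}\vert_\Gamma$, and the local structure of the Coleman maps at the split primes in $V\cap\Sigma$ via Proposition~\ref{prop:image}. The paper's one-line proof cites~\eqref{eq:gamma},~\eqref{exact4}, and Corollary~\ref{Cor_App_IMC_reformulated}; your route through Proposition~\ref{imp2} is essentially equivalent, since that proposition is itself derived from the same ingredients in the appendix.

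The one place your argument is too quick is exactly the step you flag as the main obstacle. Freeness of the bidual modules alone does \emph{not} let you pass from $\mathrm{loc}_\Sigma(\epsilon^{S_\infty(k),\chi}_{L_\Gamma/k,S})\in\cA_\Gamma^{r}\cdot\bigcap^{g}H^{1}_{\Sigma}(\bT_\Gamma)$ to the global statement: writing $\epsilon^{S_\infty(k),\chi}_{L_\Gamma/k,S}=\nu\cdot\xi$ and $\mathrm{loc}_\Sigma(\xi)=d\cdot\eta$ for generators $\xi,\eta$ of the two rank-one modules, you only obtain $\nu d\in\cA_\Gamma^{r}$, and any $\cA_\Gamma$-order in $d$ could a priori absorb part of the vanishing. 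This is precisely why the paper invokes the \emph{full} Fitting-ideal identification $(L_{p,\Sigma}^{\chi,\iota}\vert_\Gamma)=\mathrm{Fitt}\bigl(\widetilde{H}^{2}_{f}(G_{k,S},\bT_\Gamma,\Delta_\Sigma)\bigr)$ from Corollary~\ref{Cor_App_IMC_reformulated} rather than only the weaker consequence $L_{p,\Sigma}^{\chi}\vert_\Gamma\in\cA_\Gamma^{e}$ of Proposition~\ref{imp2}: the exact sequence~\eqref{exact2} exhibits $\coker(\mathrm{loc}_\Sigma)$ as a submodule of $\widetilde{H}^{2}_{f}(\bT_\Gamma)$ with quotient $K$, whence multiplicativity of characteristic ideals gives $(L_{p,\Sigma}^{\chi,\iota}\vert_\Gamma)=(d)\cdot\mathrm{char}(K)$, so $d\mid L_{p,\Sigma}^{\chi,\iota}\vert_\Gamma$ and one can cancel; the remaining factor $\mathrm{char}(K)$ is then seen to lie in $\cA_\Gamma^{e}$ via~\eqref{exact4}. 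Your instinct that this step is delicate is correct, but its resolution needs the sharper Selmer-complex input, not just freeness.
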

\begin{proof}
This lemma follows from (\ref{eq:gamma}), (\ref{exact4}), and Corollary~\ref{Cor_App_IMC_reformulated}. 
\end{proof}

\section{$\cL$-invariant}
\label{Section_L_invariant}
In this section, we shall introduce the $\cL$-invariants associated to the Katz $p$-adic $L$-function $L_{p,\Sigma}^\chi$, along any $\bZ_p$-extension of $k$. As we see shall see in Section~\ref{sec_leading_term_Katz_03_02}, these $\cL$-invariants naturally appear in the leading term formulae for the Katz $p$-adic $L$-function at the trivial character (which lies outside the interpolation range of $L_{p,\Sigma}^\chi$) in the presence of exceptional zeros. We note that in \cite[Section 3.4]{BS_IMRN}, the authors proved that these $\cL$-invariants can be interpolated, as we vary the choice of the $\bZ_p$-extension of $k$, to a universal (multivariate) $\cL$-invariant.
 
In this section, we assume the truth of the $\Sigma$--Leopoldt Conjecture (for the field $L$), which reads as follows:

\begin{conjecture}[$\Sigma$--Leopoldt Conjecture]\label{conj:leop}
The canonical map 
\[
e_{\chi}(\bZ_{p} \otimes_{\bZ} \cO_{L}^{\times}) \longrightarrow \bigoplus_{v \in \Sigma}H^{1}_{f}(G_{k_{v}},T)
\]
is injective. 
\end{conjecture}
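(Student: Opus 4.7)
The plan is to reduce the injectivity claim to a regulator non-vanishing statement, and then to appeal to $p$-adic transcendence theory.

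\emph{Step 1: rank matching.} First I would verify that both sides of the map are free $\bZ_{p}^{\rm ur}$-modules of the same rank $g$. For the source: since $k$ is a CM field, every infinite place $v$ of $k$ is complex and $G_{k_{v}}$ is trivial, so $\chi$ restricts trivially at every infinite place. Dirichlet's theorem then yields $\dim_{\bQ(\chi)}e_{\chi}(\bQ(\chi)\otimes_{\bZ}\cO_{L}^{\times})=\#\{v\in S_{\infty}(k):\chi(G_{k_{v}})=1\}=g$. For the target: under hypothesis (H.2) we have $k_{v}=\bQ_{p}$ and $L_{w}/k_{v}$ unramified for each $v\in \Sigma$, so the logarithm~\eqref{def:log} provides an isomorphism $H^{1}_{f}(G_{k_{v}},T)\stackrel{\sim}{\longrightarrow}p\bZ_{p}^{\rm ur}$, and summing over the $|\Sigma|=g$ places in $\Sigma$ gives total $\bZ_{p}^{\rm ur}$-rank $g$.

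\emph{Step 2: reformulation as regulator non-vanishing.} With matching ranks in hand, pick a $\bZ_{p}^{\rm ur}$-basis $u_{1},\dots,u_{g}$ of $e_{\chi}(\bZ_{p}^{\rm ur}\otimes_{\bZ}\cO_{L}^{\times})$ and an ordering $v_{1},\dots,v_{g}$ of $\Sigma$. Post-composing with $\bigoplus_{j}\log_{v_{j}}$ from~\eqref{def:log}, the map of the conjecture is encoded by the $g\times g$ matrix $R_{\Sigma}^{\chi}:=\bigl(\log_{v_{j}}(u_{i})\bigr)_{i,j}$. Its injectivity is equivalent to $\det R_{\Sigma}^{\chi}\neq 0$, i.e.\ to the non-vanishing of the $\chi$-twisted $\Sigma$-adic regulator $\mathrm{Reg}_{p}^{\Sigma,\chi}$ appearing in Theorem~\ref{thm_intro_leading_term} (compare with Remark~\ref{remark_L_invariant_vanishnonvanish}(v)).

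\emph{Step 3: known cases and main obstacle.} In the imaginary quadratic setting $g=1$ and $\Sigma=\{\mathfrak{p}\}$, the conjecture becomes the statement that $\log_{\mathfrak{p}}(u)\neq 0$ for a generator $u$ of $e_{\chi}(\bZ_{p}\otimes_{\bZ}\cO_{L}^{\times})$. Since $u$ is a bona fide algebraic unit of $L$ which is not a root of unity, Brumer's $p$-adic analogue of Baker's theorem on linear forms in $p$-adic logarithms of algebraic numbers delivers the non-vanishing unconditionally. For a general CM field $k$ with $g\geq 2$, I would attempt to combine the classical Leopoldt conjecture for $L$ (which provides injectivity into the product of local units at all primes above $p$) with a $p$-adic linear independence argument preventing any non-trivial $\chi$-unit from being supported entirely on $\Sigma^{c}$.

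\emph{Main obstacle.} The principal difficulty is precisely this last half-step: $\Sigma$-Leopoldt is strictly stronger than classical Leopoldt for $L$, because it asks for injectivity into only one half of the primes above $p$ rather than into all of them. Even granting Leopoldt for $L$, one needs a genuine $p$-adic transcendence input to rule out units whose $\Sigma$-logarithms all vanish, and beyond $g=1$ no input of the required strength is presently available. This is the structural reason why the statement has to be imposed as hypothesis (H.3) in Theorem~\ref{thm_intro_leading_term} rather than proved in the bulk of the text.
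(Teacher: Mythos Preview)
The paper does not prove this statement: it is stated as a conjecture and imposed as hypothesis~(H.3) in the main results. There is no ``paper's own proof'' to compare against. You correctly recognise this in your final paragraph, and your Steps~1 and~2 are accurate reformulations that match the paper's own discussion (Remark~\ref{remark_L_invariant_vanishnonvanish}(v) records exactly the equivalence between $\Sigma$--Leopoldt and non-vanishing of the $p$-adic regulator, and the rank computation in Step~1 agrees with the comment that ${\rm rank}_{\bZ_{p}^{\rm ur}}H^{1}(G_{k,S},T)=g+r$ in Section~\ref{Section_L_invariant}).

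There is, however, a genuine gap in your Step~3 treatment of the case $g=1$. The generator $u$ of $e_{\chi}(\bZ_{p}\otimes_{\bZ}\cO_{L}^{\times})$ is \emph{not} ``a bona fide algebraic unit of $L$'': it is an element of $\bZ_{p}[{\rm im}(\chi)]\otimes_{\bZ}\cO_{L}^{\times}$, a $p$-adic linear combination $u=e_{\chi}\cdot\epsilon=[L:k]^{-1}\sum_{\sigma}\chi(\sigma)^{-1}\sigma(\epsilon)$ for some genuine unit $\epsilon$. So the step ``$u$ is not a root of unity $\Rightarrow \log_{\mathfrak p}(u)\neq 0$'' does not apply directly. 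What Baker--Brumer actually gives is that a $\overline{\bQ}$-linear dependence among the $\log_{\mathfrak p}(\sigma\epsilon)$ forces a $\bQ$-linear dependence among them; converting this into the vanishing of $e_{\chi}\cdot\epsilon$ in $\bZ_{p}\otimes\cO_{L}^{\times}$ requires an additional representation-theoretic argument (of the Minkowski/Artin-unit type that Brumer uses for classical Leopoldt), and one must further rule out the possibility that the resulting $\bQ$-relation is supported on the $\Sigma^{c}$-side. The paper itself does not supply this argument either; it simply asserts before Theorem~\ref{thm_intro_imaginary_quadratic_full} that (H.3)--(H.5) hold in the imaginary quadratic case, with the supporting references addressing only (H.4) and (H.5).
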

We refer to \cite[Lemma 1.2.1]{HT94} for a further discussion on this conjecture, where the authors explain how to deduce Conjecture~\ref{conj:leop} from the $p$-adic Schanuel conjecture.

Let $k_{\Gamma}$ be a $\bZ_{p}$-extension of $k$ with Galois group $\Gamma$. 
Recall that the logarithm and the fixed prime $w$ induce an isomorphism of $\bZ_{p}^{\rm ur}$-modules: 
\[
{\rm log}_{v} \colon H^{1}_{f}(G_{k_{v}},T) \stackrel{\sim}{\longrightarrow} p (\cO_{L} \otimes_{\cO_{k}} \cO_{k_{v}} \otimes_{\bZ_{p}} \bZ_{p}^{\rm ur})^{\chi} \stackrel{\sim}{\longrightarrow} p (\cO_{k_{v}} \otimes_{\bZ_{p}} \bZ_{p}^{\rm ur}).  
\]
Furthermore, on fixing a basis of $\bigoplus_{v \in \Sigma}\cO_{k_{v}} \otimes_{\bZ_{p}} \bZ_{p}^{\rm ur}$ we have 
\[
\bigwedge_{v \in \Sigma}\log_{v} \colon {\bigcap}^{g}_{\bZ_{p}^{\rm ur}}
H^{1}_{\Sigma^{c}}(G_{k,S},T) \longrightarrow {\bigcap}^{g}_{\bZ_{p}^{\rm ur}} \biggl(\bigoplus_{v \in \Sigma}
p(\cO_{k_{v}} \otimes_{\bZ_{p}} \bZ_{p}^{\rm ur})\biggr) \cong p^{g}\bZ_{p}^{\rm ur}, 
\]
(where the isomorphism at the end is induced via the fixed basis).

Let us set $S = S_{\infty}(k) \cup S_{p}(k) \cup S_{\rm ram}(L/k)$ and $V := \{v \in S \mid \chi(G_{k_{v}}) = 1\}$. 
Put $r:=\#V - g$. 
Recall that $\cA_{\Gamma} := \ker\left(\bZ_{p}^{\rm ur}[[\Gamma]] \longrightarrow \bZ_{p}^{\rm ur}\right)$ and 
that we have the maps
\[
{\rm ord}_{v} \colon H^{1}(G_{k_{v}},T) \twoheadrightarrow \bZ_{p}^{\rm ur},  
\]
\[
{\rm rec}_{\Gamma,v} \colon H^{1}(G_{k_{v}},T) \longrightarrow \cA_{\Gamma}/\cA_{\Gamma}^{2}. 
\] 
Note that both maps depend on the choice of the prime $w$ above $v$ 
(see Remarks~\ref{rem:ord} and \ref{rem:rec}). 
We therefore have an element 
\begin{equation}\label{eqn_this_element_1}
\bigwedge_{v \in V \setminus S_{\infty}(k)}{\rm ord}_{v} \wedge \bigwedge_{v \in \Sigma }{\rm log}_{v} 
\in {\bigwedge}^{g+r}_{\bZ_{p}^{\rm ur}}\Hom_{\bZ_{p}^{\rm ur}}(H^{1}(G_{k,S},T),\bZ_{p}^{\rm ur}). 
\end{equation}
We also note that 
\[
{\rm rank}_{\bZ_{p}^{\rm ur}}H^{1}(G_{k,S},T) = g+r
\]
and that $\Sigma$--Leopoldt Conjecture holds true if and only if the element given in \eqref{eqn_this_element_1} does not vanish. 

\begin{definition}\label{defn_L_invariant}
Assume the $\Sigma$--Leopoldt conjecture. Set $e := \# (\Sigma^{c} \cap V)$. 
We define the $\cL$-invariant $\cL_{\Sigma, \Gamma}$ along $\Gamma$ to be the element 
$\cL_{\Sigma,\Gamma} \in \bC_{p} \otimes_{\bZ_{p}^{\rm ur}} \cA_{\Gamma}^{e}/\cA_{\Gamma}^{e+1}$ which verifies
\[
\bigwedge_{v \in V \setminus S_{\infty}(k)}\psi_{v} \wedge \bigwedge_{v \in \Sigma}\log_{v} = \cL_{\Sigma, \Gamma} \otimes \bigwedge_{v \in V \setminus S_{\infty(k)}}{\rm ord}_{v} \wedge \bigwedge_{v \in \Sigma }{\rm log}_{v}\,.
\]
Here, the equality takes place in
\[
\bC_{p} \otimes_{\bZ_{p}^{\rm ur}} \cA_{\Gamma}^{e}/\cA_{\Gamma}^{e+1} 
\otimes_{\bZ_{p}^{\rm ur}} {\bigwedge}^{g+r}_{\bZ_{p}^{\rm ur}}\Hom_{\bZ_{p}^{\rm ur}}(H^{1}(G_{k,S},T),\bZ_{p}^{\rm ur})
\] 
and we have set 
\[
\psi_{v} := 
\begin{cases}
{\rm rec}_{\Gamma,v} & \text{ if } v \in V \cap \Sigma^{c}, 
\\
{\rm ord}_{v} & \text{ if } v \in V \cap \Sigma\,. 
\end{cases}
\]

\end{definition}

\begin{remark}\label{remark_L_invariant_properties}
\item[(i)] Note that $\cL_{\Sigma,\Gamma}$ depends only on $\Sigma$ and $\Gamma$ and is independent of the choices
\begin{itemize} 
\item of the prime $w$ above $v \in S_{p}(k)$ (see Remark~\ref{rem:ord} and Remark~\ref{rem:rec}) and, 
\item of a basis of the module $\bigoplus_{v \in \Sigma}\cO_{k_{v}} \otimes_{\bZ_{p}} \bZ_{p}^{\rm ur}$. 
\end{itemize} 
\item[(ii)]  If there is a prime $v \in \Sigma^{c}$ which splits completely in $k_{\Gamma}/k$, then ${\rm rec}_{\Gamma,v}$ is trivial and in turn, we have $\cL_{\Sigma, \Gamma} = 0$. 
\end{remark}

\begin{lemma}\label{lemma_X}
Assume the validity of $\Sigma$--Leopoldt conjecture. 
Then the rank of free $\bZ_{p}^{\rm ur}$-module 
\[
X := \ker\left(H^{1}(G_{k,S},T) 
\longrightarrow \bigoplus_{v \in\Sigma}H^{1}(G_{k_{v}},T)
\right)
\]
is $e = \# (V \cap \Sigma^{c})$.
Moreover, 
\[
\bigwedge_{v \in V \cap \Sigma^{c}}{\rm rec}_{\Gamma,v} = \cL_{\Sigma, \Gamma} \otimes \bigwedge_{v \in V \cap \Sigma^{c}}{\rm ord}_{v}
\]
where the equality takes place in 
\[
\bC_{p} \otimes_{\bZ_{p}^{\rm ur}} \cA_{\Gamma}^{e}/\cA_{\Gamma}^{e+1} 
\otimes_{\bZ_{p}^{\rm ur}} {\bigwedge}^{e}_{\bZ_{p}^{\rm ur}}\Hom_{\bZ_{p}^{\rm ur}}(X,\bZ_{p}^{\rm ur}).
\] 
\end{lemma}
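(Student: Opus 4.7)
The plan proceeds in two stages, matching the two assertions.

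\textbf{Rank of $X$.} I would set $H := H^1(G_{k,S},T)$ (free of rank $g+r$, being torsion-free by $H^0(G_{k,S},T/pT)=0$) and introduce the auxiliary module $H' := \ker\bigl(\bigoplus_{v\in V\cap\Sigma}{\rm ord}_v\colon H \to (\bZ_p^{\rm ur})^{\#(V\cap\Sigma)}\bigr)$. Standard unit-theory (the $\chi$-part of the ord map on $\cO_{L,S}^\times$) shows that this ord map is surjective up to finite index, so $H'$ has rank $g+r-\#(V\cap\Sigma)$. Next, since ${\rm ord}_v\colon H^1(G_{k_v},T)\twoheadrightarrow \bZ_p^{\rm ur}$ is an isomorphism on the unramified quotient $H^1/H^1_f$ for $v\in V\cap\Sigma$ by \eqref{exact1} and $H^1(G_{k_v},T)=H^1_f$ for $v\in\Sigma\setminus V$, the restriction of the localization map to $H'$ factors as $H' \to \bigoplus_{v\in\Sigma}H^1_f(G_{k_v},T)$, whose kernel is exactly $X$. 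The target has rank $g=\#\Sigma$, and $\Sigma$--Leopoldt asserts precisely that $e_\chi(\bZ_p^{\rm ur}\otimes\cO_L^\times)\hookrightarrow H'$ maps injectively into this target, contributing rank $g$; hence the image has rank $g$ exactly, yielding ${\rm rank}(X) = (g+r-\#(V\cap\Sigma))-g = e$ once we observe that $V\setminus S_\infty\subseteq S_p(k)$ (a prime $v\in V\cap S_{\rm ram}(L/k)$ would force $\chi(I_v)=1$, contradicting ramification in $L/k$), so $r = \#(V\cap\Sigma)+e$.

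\textbf{Wedge identity.} For the second assertion, I plan a block-triangular determinant computation. Choose a basis $x_1,\dots,x_e$ of $X$ and lift a basis of $H/X$ to $z_1,\dots,z_{g+r-e}$ in $H$. The defining equation of $\cL_{\Sigma,\Gamma}$ is an identity of ``top forms'' in $\bigwedge^{g+r}_{\bZ_p^{\rm ur}}\Hom(H,\bZ_p^{\rm ur})$ (tensored with the appropriate twist). Evaluate both sides on $x_1\wedge\cdots\wedge x_e\wedge z_1\wedge\cdots\wedge z_{g+r-e}$; after reordering the wedge of functionals so that the $e$ maps $\psi_v$ ($v\in V\cap\Sigma^c$) come first (common sign on both sides), the resulting $(g+r)\times(g+r)$ matrix is block upper-triangular because each of the remaining $g+r-e$ functionals --- namely ${\rm ord}_v$ ($v\in V\cap\Sigma$) and $\log_v$ ($v\in\Sigma$) --- factors through localization at $\Sigma$ and so vanishes identically on $X$. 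The $X$-block of size $e\times e$ therefore has determinant equal to $(\bigwedge_{v\in V\cap\Sigma^c}{\rm rec}_{\Gamma,v})|_X(x_1\wedge\cdots\wedge x_e)$ on the LHS and to $(\bigwedge_{v\in V\cap\Sigma^c}{\rm ord}_v)|_X(x_1\wedge\cdots\wedge x_e)$ on the RHS, while the complementary $(g+r-e)\times(g+r-e)$ block, call its determinant $D$, is literally the same expression on both sides. Dividing by $D$ and the common sign yields the desired identity in $\bC_p\otimes_{\bZ_p^{\rm ur}}\cA_\Gamma^e/\cA_\Gamma^{e+1}\otimes\bigwedge^e\Hom(X,\bZ_p^{\rm ur})$.

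\textbf{Main obstacle.} The crux is confirming $D\neq 0$, without which the division is illegitimate. I would argue as follows: by the rank computation above, $H/X\hookrightarrow \bigoplus_{v\in\Sigma}H^1(G_{k_v},T)$ is an injection of torsion-free $\bZ_p^{\rm ur}$-modules of equal rank $g+r-e=g+\#(V\cap\Sigma)$; composing with the $({\rm ord}_v,\log_v)$-map into $\bigoplus_{v\in V\cap\Sigma}(\bZ_p^{\rm ur}\oplus p\bZ_p^{\rm ur})\oplus\bigoplus_{v\in\Sigma\setminus V}p\bZ_p^{\rm ur}$ (which is itself an injection of free modules of matching rank by local torsion-freeness and the split/non-split case analysis of Section~\ref{section_Coleman_map}) shows that $D$ is the determinant of an injection between free modules of the same rank, hence nonzero. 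All remaining bookkeeping --- the signs arising from wedge reorderings, the tensor product with $\cA_\Gamma^e/\cA_\Gamma^{e+1}$, and the bidual interpretation underlying the $\bigcap$ notation --- is purely formal once this linear-algebraic skeleton is in place.
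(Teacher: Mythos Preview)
Your proposal is correct and follows essentially the same approach as the paper. For the rank computation the paper uses a two-row diagram with $e_\chi(\bZ_p^{\rm ur}\otimes\cO_L^\times)\hookrightarrow H^1(G_{k,S},T)\to\bigoplus_{S_p(k)}H^1_{/f}$ and the $\Sigma$-localization, while you route through the intermediate module $H'$; for the wedge identity the paper likewise splits $H^1(G_{k,S},T)=X\oplus Y$, observes that $\mathrm{ord}_v$ and $\log_v$ vanish on $X$, and divides by the common factor $\Psi(y)$ --- your $D\ne 0$ justification simply makes explicit what the paper leaves to the remark that $\Sigma$--Leopoldt is equivalent to the nonvanishing of $\bigwedge_{v\in V\setminus S_\infty}\mathrm{ord}_v\wedge\bigwedge_{v\in\Sigma}\log_v$.
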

\begin{proof}
First, we note that there is the commutative diagram with exact rows
\[
\xymatrix{
0 \ar[r] & e_{\chi}(\bZ_{p}^{\rm ur} \otimes_{\bZ} \cO_{L}^{\times}) \ar[r] \ar[d] & H^{1}(G_{k,S},T) \ar[r] \ar[d] & \bigoplus_{v \in S_{p}(k)}H^{1}_{/f}(G_{k_{v}},T) \ar[d]
\\
0 \ar[r] & \bigoplus_{v \in \Sigma}H^{1}_{f}(G_{k_v},T)
 \ar[r] & \bigoplus_{v \in \Sigma}H^{1}(G_{k_v},T) \ar[r]  & \bigoplus_{v \in \Sigma}H^{1}_{/f}(G_{k,S},T)\ar[r] & 0,  
}
\]
where $H^{1}_{/f}(G_{k_{v}},T) := H^{1}(G_{k_{v}},T)/H^{1}_{f}(G_{k_{v}},T)$. 
Since 
\[
{\rm rank}_{\bZ_{p}^{\rm ur}} e_{\chi}(\bZ_{p}^{\rm ur} \otimes_{\bZ} \cO_{L}^{\times}) = [k \colon \bQ] = \sum_{v \in \Sigma} {\rm rank}_{\bZ_{p}^{\rm ur}} H^{1}_{f}(G_{k_v},T), 
\]
the $\Sigma$--Leopoldt conjecture implies that 
\[
e_{\chi}(\bZ_{p}^{\rm ur} \otimes_{\bZ} \cO_{L}^{\times}) \otimes_{\bZ} \bQ \stackrel{\sim}{\longrightarrow} \bigoplus_{v \in \Sigma} H^{1}_{f}(G_{k_v},T) \otimes_{\bZ} \bQ. 
\]
Moreover, the cokernel of the map 
$$H^{1}(G_{k,S},T) \longrightarrow \bigoplus_{v \in S_{p}(k)}H^{1}_{/f}(G_{k_{v}},T)$$ 
is a torsion $\bZ_{p}^{\rm ur}$-module, and hence we obtain 
\[
{\rm rank}_{\bZ_{p}^{\rm ur}}X = \sum_{v \in \Sigma^{c}}{\rm rank}_{\bZ_{p}^{\rm ur}}H^{1}_{/f}(G_{k_{v}},T) = e. 
\]
As $H^{1}(G_{k_{v}},T)$ is a  $\bZ_{p}^{\rm ur}$-free module for each prime $v \in \Sigma$, one may take a $\bZ_{p}^{\rm ur}$-submodule $Y$ of $H^{1}(G_{k,S},T)$ such that $H^{1}(G_{k,S},T) = X \oplus Y$. 
We then have an identification
\[
{\bigwedge}^{g+r}_{\bZ_{p}^{\rm ur}}H^{1}(G_{k,S},T) = 
{\bigwedge}^{e}_{\bZ_{p}^{\rm ur}}X \otimes {\bigwedge}^{g+r-e}_{\bZ_{p}^{\rm ur}}Y\,. 
\]
We take non-zero elements $x \in {\bigwedge}^{e}_{\bZ_{p}^{\rm ur}}X$ and $y \in  {\bigwedge}^{g+r-e}_{\bZ_{p}^{\rm ur}}Y$, and set
\[
\Psi := \bigwedge_{v \in \Sigma \cap V}{\rm ord}_{v} \wedge \bigwedge_{v \in \Sigma}\log_{v} \in {\bigwedge}^{g+r-e}_{\bZ_{p}^{\rm ur}}\Hom_{\bZ_{p}^{\rm ur}}(Y, \bZ_{p}^{\rm ur})\,.
\]
Since the maps ${\rm ord}_{v}$ and $\log_{v}$ are identically zero on $X$ by definition, we compute that 
\begin{align*}
\bigwedge_{v \in V \cap \Sigma^{c}}{\rm rec}_{\Gamma,v}(x) &= 
\pm \Psi(y)^{-1} 
\biggl(\bigwedge_{v \in V \setminus S_{\infty}(k)}\psi_{v} \wedge \bigwedge_{v \in \Sigma}\log_{v}\biggr)(x \otimes y) 
\\
&= \pm \Psi(y)^{-1} \cL_{\Sigma, \Gamma} \otimes \biggl(\bigwedge_{v \in V \setminus S_{\infty(k)}}{\rm ord}_{v} \wedge \bigwedge_{v \in \Sigma }{\rm log}_{v}\biggr)(x \otimes y)
\\
&= \cL_{\Sigma, \Gamma} \otimes \bigwedge_{v \in V \cap \Sigma^{c}}{\rm ord}_{v}(x). 
\end{align*}
\end{proof}

\section{Leading Term of Katz $p$-adic $L$-function}
\label{sec_leading_term_Katz_03_02}
Our goal in this section is to prove $p$-adic Beilinson formulae for the Katz $p$-adic $L$-function $L_{p, \Sigma}^\chi$ at the trivial character (which lies outside its range of interpolation), in the presence of exceptional zeros. As an artefact of the exceptional zero phenomenon, the $\cL$-invariants we have introduced in Section~\ref{Section_L_invariant} make an appearance in our leading term formulae. This is recorded as Theorem~\ref{thm:main} below and it is the main result of this paper.

Let $k_{\Gamma}$ be a $\bZ_{p}$-extension of $k$ with Galois group $\Gamma$ as before. We recall that $\cA_{\Gamma} = \ker\left(\bZ_{p}^{\rm ur}[[\Gamma]] \to \bZ_{p}^{\rm ur}\right)$. 

\begin{definition}\label{def_remark_derivative}
For a non-negative integer $s$ and an element $f \in \cA_{\Gamma}^{s}$, we define 
\[
f^{(s)}(\mathbbm{1}) := \left(f +  \cA_{\Gamma}^{s+1}\right) \in    \cA_{\Gamma}^{s}/\cA_{\Gamma}^{s+1}. 
\]
We remark that $(f^{\iota})^{(s)}(\mathbbm{1}) = (-1)^{s}f^{(s)}(\mathbbm{1})$, where $\iota$ is the usual involution on $\Gamma$. 
\end{definition}

Let $S := S_{\infty}(k) \cup S_{p}(k) \cup S_{\rm ram}(L/k)$, $V := \{v \in S \mid \chi(G_{k_{v}}) = 1\}$, and $V_{p} := V \cap S_{p}(k)$. 
We fix a labelling of elements in $S$ as in Section~\ref{subsec:Rubin--Stark}. 
We then obtain the Rubin--Stark element 
\begin{equation}\label{eqn_tame_Rubin--Stark}
\xi := \epsilon^{S_{\infty}(k),\chi}_{L/k,S_{\infty}(k) \cup S_{\rm ram}(L/k)} \in {\bigwedge}_{\bC_{p}}^{g}(\bC_{p}\cO^{\times}_{L,S_{\infty} \cup S_{\rm ram}(L/k)})^{\chi} = {\bigwedge}_{\bC_{p}}^{g}(\bC_{p}\cO^{\times}_{L})^{\chi}. 
\end{equation}

\begin{remark}\label{rem_arch_reg_rubin_stark}
The isomorphism 
\[
{\bigwedge}^{g}_{\bC_{p}}e_{\chi}(\bC_{p}\cO^{\times}_{L}) \stackrel{\sim}{\longrightarrow} {\bigwedge}^{g}_{\bC_{p}}e_{\chi}(\bC_{p}X_{L,S_{\infty}(k)}) \cong \bC_{p}
\]
(first induced from the the regulator map $\bC_{p}\cO^{\times}_{L}  \stackrel{\sim}{\longrightarrow} \bC_{p}X_{L,S_{\infty}(k)}$ and the second on fixing places that lie above $S_{\infty}(k)$) maps the element $\xi$  to the leading term  ${\displaystyle \lim_{s \to 0}s^{-g}L(\chi^{-1},s)}$ of the Hecke $L$-function associated to the character $\chi^{-1}$ at $s=0$. 
\end{remark}

Recall that we  have the Katz $p$-adic $L$-function $L_{p,\Sigma}^{\chi} \in \bZ_{p}^{\rm ur}[[\Gamma_{\infty}]]$ and we denote by $L_{p,\Sigma}^{\chi}\vert_{\Gamma}$ 
the image of $L_{p,\Sigma}^{\chi}$ in $\bZ_{p}^{\rm ur}[[\Gamma]]$.

\begin{theorem}\label{thm:main}
Let us set $e := \#(V \cap \Sigma^{c})$ and suppose that the hypotheses \eqref{item_H1}--\eqref{item_H5} hold true.
Then ${\rm ord}_{\cA_{\Gamma}}\left(L_{p,\Sigma}^\chi|_{\Gamma}\right) \geq e$ and  
\[
(L_{p,\Sigma}^\chi\vert_{\Gamma})^{(e)}(\mathbbm{1}) = (-1)^{e}\cL_{\Sigma, \Gamma} \cdot 
\prod_{v \in \Sigma}\left(1 - \frac{\chi(v)}{p}\right) \prod_{v \in \Sigma^{c} \setminus V_{p}}(1-\chi(v)^{-1})
\bigwedge_{v \in \Sigma}{\rm log}_{v}(\xi)
\]
in $ \cA_{\Gamma}^{e}/\cA_{\Gamma}^{e+1}$. 
In particular, ${\rm ord}_{\cA_{\Gamma}}\left(L_{p,\Sigma}^\chi\vert_{\Gamma}\right) = e$ if and only if 
$\cL_{\Sigma, \Gamma} \neq 0$. 
\end{theorem}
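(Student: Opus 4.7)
The plan is to chain together four ingredients: the Explicit Reciprocity Conjecture (to rewrite $L_{p,\Sigma}^{\chi,\iota}\vert_\Gamma$ as a wedge of Coleman maps applied to the Iwasawa-theoretic Rubin--Stark element), the Iwasawa-theoretic Mazur--Rubin--Sano conjecture (to express the $r$-th derivative of the latter in terms of the higher-rank Rubin--Stark element $\epsilon^{V,\chi}_{L/k,S}$ via reciprocity maps), the constant-term computations in Propositions~\ref{prop:const1} and \ref{prop:const2}, and the distribution relations of Proposition~\ref{prop:stark-reln} (to reduce $\epsilon^{V,\chi}_{L/k,S}$ to $\xi$). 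The $\cL$-invariant of Definition~\ref{defn_L_invariant} then repackages the resulting mixed wedge into the form demanded by the statement.

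First, by Conjecture~\ref{conj:rec} in its $\Gamma$-specialisation~\eqref{eq:gamma}, we have $L_{p,\Sigma}^{\chi,\iota}\vert_\Gamma=\bigwedge_{v\in\Sigma}{\rm Col}_{\Gamma,v}({\rm loc}_\Sigma(\epsilon^{S_\infty(k),\chi}_{L_\Gamma/k,S}))$, and by Conjecture~\ref{MRS1} one may write $\epsilon^{S_\infty(k),\chi}_{L_\Gamma/k,S}=(\gamma-1)^r\kappa_{\infty,\gamma}$ with $r=\#V_\Sigma+e$, where $V_\Sigma:=V\cap\Sigma$. For each $v\in V_\Sigma$, the split Coleman map ${\rm Col}_{\Gamma,v}\colon\cA_{\Gamma_v}\otimes_{\bZ_p^{\rm ur}[[\Gamma]]}H^1(G_{k_v},\bT_\Gamma)\cong\bZ_p^{\rm ur}[[\Gamma]]$ from Definition~\ref{def:col2} consumes one $\cA_\Gamma$-factor as it strips the built-in $\cA_{\Gamma_v}$-factor (guaranteed by Proposition~\ref{prop:image}) to units. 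Consequently, the $\cA_\Gamma^r$-divisibility of the Rubin--Stark element descends to $\cA_\Gamma$-order at least $r-\#V_\Sigma=e$ of the image, establishing ${\rm ord}_{\cA_\Gamma}(L_{p,\Sigma}^\chi\vert_\Gamma)\geq e$; the outer sign $(-1)^e$ comes from $(f^\iota)^{(e)}(\mathbbm{1})=(-1)^ef^{(e)}(\mathbbm{1})$ (Definition~\ref{def_remark_derivative}).

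To compute the leading term modulo $\cA_\Gamma^{e+1}$, Proposition~\ref{prop:const1} evaluates the non-split Coleman maps ($v\in\Sigma\setminus V$) as $(1-\chi(v)/p)(1-\chi(v)^{-1})^{-1}\log_v$ and Proposition~\ref{prop:const2} gives $(1-\chi(v)/p)\,{\rm ord}_v\wedge\log_v$ in the split case ($v\in V_\Sigma$, with $\chi(v)=1$). Substituting Conjecture~\ref{MRS} in the form $(\gamma-1)^r\otimes\kappa_\gamma=(-1)^{gr}\bigwedge_{v\in V\setminus S_\infty(k)}{\rm rec}_{\Gamma,v}(\epsilon^{V,\chi}_{L/k,S})$, the leading term becomes a product of Euler factors times $\Phi(\epsilon^{V,\chi}_{L/k,S})$, where $\Phi:=\bigwedge_{v\in V\setminus S_\infty(k)}\psi_v\wedge\bigwedge_{v\in\Sigma}\log_v$ with $\psi_v={\rm rec}_{\Gamma,v}$ for $v\in V_{\Sigma^c}$ (from MRS) and $\psi_v={\rm ord}_v$ for $v\in V_\Sigma$ (from the split Coleman evaluation). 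This is precisely the shape in Definition~\ref{defn_L_invariant}, which converts $\Phi(\epsilon^{V,\chi}_{L/k,S})$ into $\cL_{\Sigma,\Gamma}\otimes(\bigwedge_{v\in V\setminus S_\infty(k)}{\rm ord}_v\wedge\bigwedge_{v\in\Sigma}\log_v)(\epsilon^{V,\chi}_{L/k,S})$. Applying Proposition~\ref{prop:stark-reln}(ii) (with $V'=S_\infty(k)$) followed by (i) collapses $\bigwedge_{v\in V\setminus S_\infty(k)}{\rm ord}_v(\epsilon^{V,\chi}_{L/k,S})$ to ${\rm sgn}(V,S_\infty(k))\prod_{v\in S_p(k)\setminus V_p}(1-\chi(v)^{-1})\,\xi$; the partition $S_p(k)\setminus V_p=(\Sigma\setminus V)\sqcup(\Sigma^c\setminus V_p)$ lets the first factors cancel the denominators $(1-\chi(v)^{-1})^{-1}$ from the non-split Coleman contributions, leaving precisely $\prod_{v\in\Sigma^c\setminus V_p}(1-\chi(v)^{-1})$. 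The two signs $(-1)^{gr}$ from MRS and ${\rm sgn}(V,S_\infty(k))=(-1)^{gr}$ multiply to $1$, and the equivalence between ${\rm ord}_{\cA_\Gamma}=e$ and $\cL_{\Sigma,\Gamma}\neq 0$ is then immediate since, under (H.3), both the Euler factors and $\bigwedge_{v\in\Sigma}\log_v(\xi)$ are non-zero (Remark~\ref{remark_L_invariant_vanishnonvanish}(v)).

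The main obstacle is aligning the $\cA_{\Gamma_v}$-filtration from diagram~\eqref{diag1} with the $(\gamma-1)^r$-filtration from MRS within the wedge algebra: one must verify that the ``${\rm ord}_v$'' at $v\in V_\Sigma$ arising from Proposition~\ref{prop:const2} and the ``${\rm rec}_{\Gamma,v}$'' at $v\in V_{\Sigma^c}$ arising from Conjecture~\ref{MRS} slot together with the correct multilinear coefficient into the mixed wedge defining $\cL_{\Sigma,\Gamma}$. Once this alignment is in place, the sign bookkeeping and the cancellation of Euler factors are routine but still require care.
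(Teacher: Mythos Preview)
Your proposal is correct and follows essentially the same route as the paper's proof: you invoke \eqref{eq:gamma} from Conjecture~\ref{conj:rec}, use Conjecture~\ref{MRS1} to extract the $(\gamma-1)^r$-factor (with $r=\#V_p$), count how many $\cA_\Gamma$-factors survive the split Coleman maps to obtain the lower bound $e$, apply Conjecture~\ref{MRS} together with Propositions~\ref{prop:const1} and~\ref{prop:const2} to reach the mixed wedge $\bigwedge\psi_v\wedge\bigwedge\log_v$, invoke Definition~\ref{defn_L_invariant} to introduce $\cL_{\Sigma,\Gamma}$, and then use Proposition~\ref{prop:stark-reln} to collapse to $\xi$ with the stated Euler factors and signs. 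The ``main obstacle'' you flag---aligning the $\cA_{\Gamma_v}$-filtration from the split Coleman maps with the $(\gamma-1)^r$-filtration from MRS inside the exterior bi-dual---is exactly the content of the intermediate equalities \eqref{eq:gamma'}--\eqref{Eqn_iota_katz_leading_1} in the paper, so your sketch is faithful to the actual argument.
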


\begin{proof}
By (\ref{eq:gamma}), we have 
\begin{align*}
\bigwedge_{v \in \Sigma}{\rm Col}_{\Gamma,v}({\rm loc}_{\Sigma}(\epsilon^{S_{\infty}(k),\chi}_{L_{\Gamma}/k,S})) = L_{p,\Sigma}^{\chi,\iota}\vert_{\Gamma}. 
\end{align*}
Put $r := \#V_{p} = \#(V \cap S_{p}(k))$ and fix a topological generator $\gamma$ of $\Gamma$. 
Then, by Conjecture~\ref{MRS1}, there is a unique element $\kappa_{\infty,\gamma} \in {\bigcap}^{g}_{\bZ_{p}^{\rm ur}[[\Gamma]]}H^{1}(G_{k,S},\bT_{\Gamma})$ such that 
\[
\epsilon^{S_{\infty}(k),\chi}_{L_{\Gamma}/k,S} = (\gamma-1)^{r}\kappa_{\infty,\gamma}. 
\]
We therefore have 
\begin{align}\label{eq:gamma'}
\bigwedge_{v \in \Sigma}{\rm Col}_{\Gamma,v}\left((\gamma-1)^{r} \cdot {\rm loc}_{\Sigma}(\kappa_{\infty,\gamma})\right) = L_{p,\Sigma}^{\chi,\iota}\vert_{\Gamma}. 
\end{align}
By the definition of ${\rm Col}_{\Gamma,v}$, we see that 
\[
{\rm ord}_{\cA_{\Gamma}}\left(L_{p,\Sigma}\vert_{\Gamma}\right) \geq r - \#(V \cap \Sigma) = e. 
\]
Furthermore, since the $\bZ_{p}^{\rm ur}[[\Gamma]]$-homomorphism 
\[
\bigwedge_{v \in \Sigma}{\rm Col}_{\Gamma,v} \colon 
\bigotimes_{v \in V \cap \Sigma}\cA_{\Gamma_{v}} \otimes_{\bZ_{p}^{\rm ur}[[\Gamma]]}
{\bigcap}^{g}_{\bZ_{p}^{\rm ur}[[\Gamma]]} H^{1}_{\Sigma}(\bT_{\Gamma}) \longrightarrow \bZ_{p}^{\rm ur}[[\Gamma]] 
\] 
is an isomorphism, we have 
\[
(\gamma-1)^{r-e} \otimes {\rm loc}_{\Sigma}(\kappa_{\infty,\gamma}) \in \bigotimes_{v \in V \cap \Sigma}\cA_{\Gamma_{v}} \otimes_{\bZ_{p}^{\rm ur}[[\Gamma]]}{\bigcap}^{g}_{\bZ_{p}^{\rm ur}[[\Gamma]]} H^{1}_{\Sigma}(\bT_{\Gamma}). 
\]
Let $\kappa_{\gamma}$ denote the image of $\kappa_{\infty,\gamma}$ in ${\bigcap}^{g}_{\bZ_{p}^{\rm ur}}H^{1}(G_{k,S},T)$. 
We then have 
\[
(\gamma-1)^{r-e} \otimes {\rm loc}_{\Sigma}(\kappa_{\gamma}) \in 
\bigotimes_{v \in V \cap \Sigma}\cA_{\Gamma_{v}}/\cA_{\Gamma}\cA_{\Gamma_{v}} \otimes_{\bZ_{p}^{\rm ur}}{\bigcap}^{g}_{\bZ_{p}^{\rm ur}[[\Gamma]]} H^{1}_{\Sigma}(\bT_{\Gamma}). 
\]
It therefore follows from (\ref{eq:gamma'}) that 
\begin{align}\label{eq:L-gamma'}
\bigwedge_{v \in \Sigma}\widetilde{{\rm Col}}_{\Gamma,v}\bigl((\gamma-1)^{r-e} \otimes {\rm loc}_{\Sigma}(\kappa_{\gamma})\bigr) \otimes (\gamma-1)^{e} = (L_{p,\Sigma}^{\chi,\iota}\vert_{\Gamma})^{(e)}(\mathbbm{1}). 
\end{align}
Moreover, we have  by Conjecture~\ref{MRS} (which we assume) that
\[
\bigwedge_{v \in V_{p}} {\rm rec}_{\Gamma,v}(\epsilon^{V, \chi}_{L/k, S}) = (-1)^{gr} (\gamma-1)^{r} \otimes \kappa_{\gamma}.
\] 
Combining with (\ref{eq:L-gamma'}), we conclude that 
\begin{align}\label{Eqn_iota_katz_leading_1}
\notag\bigwedge_{v \in V_{p}} {\rm rec}_{\Gamma,v} \wedge \bigwedge_{v \in \Sigma}\widetilde{{\rm Col}}_{\Gamma,v}({\rm loc}_{\Sigma}(\epsilon^{V, \chi}_{L/k, S})) 
&= \bigwedge_{v \in \Sigma}\widetilde{{\rm Col}}_{\Gamma,v}\bigl((\gamma-1)^{r-e} \otimes {\rm loc}_{\Sigma}(\kappa_{\gamma})\bigr) \otimes (\gamma-1)^{e}
\\
&= (-1)^{gr}(L_{p,\Sigma}^{\chi, \iota}\vert_{\Gamma})^{(e)}(\mathbbm{1}). 
\end{align}
By Proposition~\ref{prop:const1} and Proposition~\ref{prop:const2}, we have 
\begin{equation}\label{Eqn_iota_katz_leading_X}
\bigwedge_{v \in V_{p}} {\rm rec}_{\Gamma,v} \wedge \bigwedge_{v \in \Sigma}\widetilde{{\rm Col}}_{\Gamma,v} 
= 
\prod_{v\in \Sigma}\left(1-\frac{\chi(v)}{p}\right) \prod_{v \in \Sigma \setminus V_{p}}(1-\chi(v)^{-1})^{-1}\bigwedge_{v \in V_{p}} \psi_{v} \wedge \bigwedge_{v \in \Sigma}\log_{v}, 
\end{equation}
where we recall that 
\[
\psi_{v} := 
\begin{cases}
{\rm rec}_{\Gamma,v} & \text{ if } v \in V \cap \Sigma^{c}, 
\\
{\rm ord}_{v} & \text{ if } v \in V \cap \Sigma\,. 
\end{cases}
\]
Proposition~\ref{prop:stark-reln} implies that 
\begin{equation}\label{Eqn_iota_katz_leading_2}
\bigwedge_{v \in V_{p}}{\rm ord}_{v}(\epsilon^{V}_{L/k,S}) = (-1)^{gr}
\epsilon^{S_{\infty}(k)}_{L/k,S \setminus V_{p}}
= (-1)^{gr} \prod_{v \in S_{p}(k) \setminus V_{p}}(1-\chi(v)^{-1}) \xi. 
\end{equation}
We see by the very definition of $\cL_{\Gamma,\Sigma}$ that 
\begin{align}\label{Eqn_iota_katz_leading_3}
\notag\bigwedge_{v \in V } \psi_{v} \wedge \bigwedge_{v \in \Sigma}\log_{v}(\epsilon^{V, \chi}_{L/k, S}) &= 
\cL_{\Gamma,\Sigma} \bigwedge_{v \in V } {\rm ord}_{v} \wedge \bigwedge_{v \in \Sigma}\log_{v}(\epsilon^{V, \chi}_{L/k, S})
\\
&= (-1)^{gr}\cL_{\Gamma,\Sigma} \prod_{v \in S_{p}(k) \setminus V_{p}}(1-\chi(v)^{-1})\bigwedge_{v \in \Sigma}\log_{v}(\xi)
\end{align}
where the second equality follows from \eqref{Eqn_iota_katz_leading_2}. Thence,
\begin{align*}
(L_{p,\Sigma}^{\chi}\vert_{\Gamma})^{(e)}(\mathbbm{1}) &= (-1)^{e}(L_{p,\Sigma}^{\chi,\iota}\vert_{\Gamma})^{(e)}(\mathbbm{1})
\\
&= (-1)^{gr+e}
\bigwedge_{v \in V } {\rm rec}_{\Gamma,v} \wedge \bigwedge_{v \in \Sigma}\widetilde{{\rm Col}}_{\Gamma,v}(\epsilon^{V, \chi}_{L/k, S'}) 
\\
&=  (-1)^{e}\cL_{\Gamma,\Sigma} \prod_{v \in \Sigma}\left(1 - \frac{\chi(v)}{p}\right)\prod_{v \in \Sigma^{c} \setminus V_{p}}(1-\chi(v)^{-1}) \bigwedge_{v \in \Sigma}\log_{v}(\xi)
\end{align*}
where the first equality is the remark in Definition~\ref{def_remark_derivative}, second is \eqref{Eqn_iota_katz_leading_1} and the final equality is \eqref{Eqn_iota_katz_leading_X} combined with \eqref{Eqn_iota_katz_leading_3}.
\end{proof}

\subsection{Remarks in the special case when $k$ is imaginary quadratic}
\label{subsubsec_new_expand_on_remark_1_7_new_remark_1_3}  
We will conclude the main body of our article with an elaboration on Remark~\ref{remark_Kronecker_Katz_Gross_Ferrero_Greenberg_Koblitz}.

Suppose that $k$ is an imaginary quadratic field in which $p = \fp\fp^c$ splits and put $\Sigma = \{\fp\}$. We also assume that the class number of $k$ is coprime to $p$. In this scenario, the hypotheses \eqref{item_H1}--\eqref{item_H5} are satisfied. We assume in addition that $\chi(\fp^c) = 1$. 
For national simplicity, we put 
\[
L_{\fp}^\chi := L_{p,\Sigma}^\chi 
\]
 as in Section~\ref{subsec_intro_imag_quad_case}. Then by Theorem \ref{thm_intro_imaginary_quadratic_full} (which is a special case of Theorem~\ref{thm:main}, c.f. Remark~\ref{remark_how_to_deduce_the_case_of_imaginary_fields_from_the_general_case}), we have 
\begin{align}\label{eq:main-imaginary}
    (L_{\fp}^\chi\vert_{\Gamma})^{'}(\mathbbm{1}) = -\cL_{\{\fp\}, \Gamma} \cdot 
\left(1 - \frac{\chi(\fp)}{p}\right){\rm log}_{\fp}(\xi)
\end{align}
 where $\xi$ is the Rubin--Stark element in this setting (that we will identify below with a suitable elliptic unit). Our goal in Section~\ref{subsubsec_new_expand_on_remark_1_7_new_remark_1_3} is to explain how to recover \eqref{eq:main-imaginary} in two particular scenarios (treated separately in Sections \ref{subsubsec_611_06_02_2022} and \ref{subsubsec_612_06_02_2022}), relying on various earlier important results on relevant themes.

Before moving in this direction, we recall that the regulator isomorphism 
$\lambda_{L,S_{\infty}(k)} \colon \bC \cO^{\times}_{L} \stackrel{\sim}{\longrightarrow} \bC X_{L, S_{\infty}(k)} \cong \bC[\Gal(L/k)]$ is given  by 
\[
\lambda_{L,S_{\infty}(k)}(a) =  - \sum_{\sigma \in \Gal(L/k)}\log |\iota_{\infty}(\sigma a)|^2 \sigma^{-1}. 
\]
 Using  the isomorphism $\bC \stackrel{j}{\cong} \bC_p$ we have fixed at the beginning of our article and passing to $\chi$-isotypic components, we obtain the map ${\log}_\infty$ given as the compositum of the isomorphisms
\[
  {\log}_\infty\,:\quad e_\chi \bC_p \cO^{\times}_{L} \xrightarrow[-e_\chi \lambda_{L,S_{\infty}(k)}]{\sim} e_\chi \bC_p[\Gal(L/k)] \xrightarrow[\,\,\,\chi\,\,\,]{\sim} \bC_p\,. 
\]
As explained in \cite[Chapter II, \S5.1, Theorem]{deshalit}, the identity 
\begin{equation}
    \label{eqn_RS_element_for_k_imag_quadratic}
    -\mathrm{\log}_\infty(\xi) = L'(\chi^{-1}, 0)
\end{equation}
that the $\chi$-part of the Rubin--Stark element ought to satisfy (c.f. Equation \eqref{eqn_tame_Rubin--Stark}) is indeed verified in the present set-up by the $\chi$-part of an elliptic unit (defined in terms of ``Robert's invariants'', in the terminology of op.cit.). In other words, when $k$ is imaginary quadratic, the $\chi$-part of the Rubin--Stark element $\xi$ is given as this particular elliptic unit.



\subsubsection{The case $\Gamma = \Gamma_\fp$}
\label{subsubsec_611_06_02_2022}

Let $\Gamma_\fp = \Gal(k(\fp^\infty)/k)$ denote the Galois group of the $\bZ_p$-extension of $k$ which is unramified outside $\fp$ and let us denote by $\tilde{\fp}^c$ the prime of $k(\fp^\infty)$ that lies above $\fp^c$. 
We then have a pair of isomorphisms 
\[
\bZ_p \stackrel{\mathrm{ord}}{\longleftarrow} \bQ_p^{\times, \wedge}/(1+p\bZ_p) \stackrel{\mathrm{rec}}{\longrightarrow} \Gal(k(\fp^\infty)_{\tilde{\fp}^c}/k_{\fp^c}). 
\]
In particular, $\cL_{\{\fp\}, \Gamma_\fp}$  generates $\cA_{\Gamma_\fp}/\cA_{\Gamma_\fp}^2$ and  we fix an isomorphism $\Gamma_\fp \cong 1 + p\bZ_p$ such that 
\begin{equation}
    \label{eqn_L_invarinat_equals_one_Gammape}
    \cL_{\{\fp\}, \Gamma_\fp} = 1\,.
\end{equation}

On the other hand, the $p$-adic Kronecker limit formula (due to Katz) reads as follows: 
\begin{theorem}[\cite{deshalit}, Chapter II, \S5.2, Theorem]
Let us denote by $\tilde{L}_{\fp, \ff}(\chi^{-1})$ the value of the one-variable $p$-adic $L$-function $\tilde{L}_{\fp, \ff}$ (given as in \cite{deshalit}, Chapter II, \S4.16, Definition) at the character $\chi^{-1}$, where $\ff$ is the conductor of $\chi$. Then we have 
\begin{align}\label{eq:kronecker-limit-formula}
    \tilde{L}_{\fp, \ff}(\chi^{-1}) = -
    \left(1 - \frac{\chi(\fp)}{p}\right){\rm log}_{\fp}(\xi)\,. 
\end{align}
 Note that, as the character $\chi$ is unramified at $\fp$, the Gauss sums are absent in the formula \eqref{eq:kronecker-limit-formula}, c.f. \cite[Page 75, Remark (i)]{deshalit}. 
\end{theorem}

By the constructions of $L_{\fp}^{\chi}$ and $\tilde{L}_{\fp, \ff}$ (see \cite[Chapter II, \S4]{deshalit} for  details), we have 
\[
(L_{\fp}^\chi\vert_{\Gamma_\fp})^{'}(\mathbbm{1}) = \tilde{L}_{\fp, \ff}(\chi^{-1})
\]
(we remark that the branch character of our $p$-adic $L$-function $L_{\fp}^\chi$ is $\chi^{-1}$).  In particular, the formula \eqref{eq:main-imaginary} (Theorem~\ref{thm_intro_imaginary_quadratic_full} in the special case when $\Gamma=\Gamma_\fp$, combined with \eqref{eqn_L_invarinat_equals_one_Gammape}) and formula \eqref{eq:kronecker-limit-formula} (the $p$-adic Kronecker limit formula) are indeed identical. 


\subsubsection{The case $\Gamma = \Gamma_{\rm cyc}$ and $\chi$ is the restriction of a Dirichlet character $\chi_\QQ$ of $\QQ$.}
\label{subsubsec_612_06_02_2022}

Suppose that $\Gamma = \Gamma_{\rm cyc}$ is the Galois group of the cyclotomic $\bZ_p$-extension of $k$ and $\chi$ arises as the restriction of an even Dirichlet character $\chi_\QQ$ of $\QQ$.  In Section~\ref{subsubsec_612_06_02_2022}, we will explain using Gross' factorization of Katz $p$-adic $L$-function that the identity \eqref{eq:main-imaginary} is equivalent (in this particular set-up) to the Ferrero--Greenberg theorem.

Let us denote by $f := \mathrm{cond}(\chi_{\bQ})$ the conductor of $\chi_{\bQ}$. 
Let $\omega$ denote the Teichm\"uller character and $\epsilon_{k}$ denote the quadratic character associated with $k/\bQ$. 

Until the end of this article, we will regard
$L_{\fp}^\chi\vert_{\Gamma_{\rm cyc}}$ as an element of the formal power series ring $\bZ_p^{\rm ur}[[s]]$ via the Amice transform. Let $L_p(s, \chi^{-1}_{\bQ}\epsilon_{k}\omega)$ and $L_p(s, \chi_{\bQ})$ denote the Kubota--Leopoldt $p$-adic $L$-functions associated with Dirichlet characters $\chi^{-1}_{\bQ}\epsilon_{k}\omega$ and $\chi_{\bQ}$, respectively. 
Then the factorization formula of Gross (the main theorem of \cite{GroosFactorisation}) reads
\begin{equation}
\label{eqn_Gross_factorization_nontrivial_f}
    L_{\fp}^\chi\vert_{\Gamma_{\rm cyc}} = \frac{G(\chi_{\bQ}^{-1})}{4} L_p(s, \chi^{-1}_{\bQ}\epsilon_{k}\omega)L_p(1-s, \chi_{\bQ}).
\end{equation}
Here $G(\chi_{\bQ}^{-1}) = \sum_{a=1}^{f}\chi_{\bQ}^{-1}(a)\zeta_f^a$ is the Gauss sum attached to the Dirichlet character $\chi_{\bQ}^{-1}$, where $\zeta_f$ stands for a primitive $f^{\rm th}$ root of unity. 

\begin{remark}
In his paper \cite{GroosFactorisation}, Gross only considers the case when $f$ is a power of $p$. However, with suitable adjustments, his argument in op. cit. works for general $\chi_{\QQ}$. In the setting we have placed ourselves, the Guass sum $G(\chi_{\bQ}^{-1})$ in \eqref{eqn_Gross_factorization_nontrivial_f} comes from the functional equation for the Dirichlet $L$-series $L(s, \chi_{\bQ})$, namely, the identity 
$$2 L'(0, \chi_{\bQ}^{-1}) = G(\chi_{\bQ}^{-1})L(1, \chi_{\bQ}).$$ 
When the conductor of $\chi_\QQ$ is a power of $p$ (as in the case of Gross' paper), the Gauss sum also appears as a factor of the value of the left hand side of \eqref{eqn_Gross_factorization_nontrivial_f} at $s=0$ (c.f. \cite{deshalit}, Chapter II, Theorem 4.14), and these two Gauss sums cancel each other out. This is the reason why the Gauss sum does not appear in the main theorem of \cite{GroosFactorisation}, where the conductor of $\chi_\QQ$ is a power of $p$. 

In our case, the conductor of $\chi_\QQ$ is coprime to $p$, and therefore the Gauss sum $G(\chi_{\bQ}^{-1})$ does not appear in the special value formulae for $L_{\fp}^\chi$ (c.f. \cite{deshalit}, Page 75, Remark (i)) and there is no longer a cancellation of Gauss sums. This is the reason for the appearance of $G(\chi_{\bQ}^{-1})$ in \eqref{eqn_Gross_factorization_nontrivial_f}.  

We also note that our $p$-adic $L$-function $L_{\fp}^\chi$ and the normalization of Gross for the Katz $p$-adic $L$-function differ by a factor of $4$; 
c.f. \cite{GroosFactorisation} Theorem 2.3 and Equation (2.4) in comparison with \cite{deshalit} Page 86, Equation (50). 
\end{remark}

The interpolation formula for the Kubota--Leopoldt $p$-adic $L$-function $L_p(s, \chi^{-1}_{\bQ}\epsilon_{k}\omega)$ shows that $L_p(0, \chi^{-1}_{\bQ}\epsilon_{k}\omega) = 0$.  We therefore have, thanks to \eqref{eqn_Gross_factorization_nontrivial_f},
\begin{align}\label{eq:grossfactorisation}
    (L_{\fp}^\chi\vert_{\Gamma_{\rm cyc}})'(0) = \frac{G(\chi_{\bQ}^{-1})}{4} L_p'(0, \chi^{-1}_{\bQ}\epsilon_{k}\omega)L_p(1, \chi_{\bQ}). 
\end{align}
The value of $L_p(s, \chi_{\bQ})$ at $s=1$ is well-known (c.f. \cite{washington}, Theorem 5.18): 
\begin{align}\label{eq:L_p(1)}
    L_p(1, \chi_{\bQ}) 
    &= -\left(1 - \frac{1}{p}\right) \cdot \frac{G(\chi_{\bQ})}{f}\sum_{a=1}^{f}\chi_{\bQ}(a)^{-1}\log_p(1-\zeta^a_f). 
\end{align}
It is also well-known (c.f. \cite[Theorem 4.9]{washington} for the computation of $L(1, \chi_{\bQ})$, which can be translated to the formula below for $L'(0, \chi_{\bQ}^{-1})$ using the functional equation for Dirichlet $L$-functions and the identity  $G(\chi_\QQ^{-1})G(\chi_\QQ)=f$ for the Gauss sum of the even Dirichlet character $\chi_\QQ$) that 
\begin{align}
\label{eqn_LprimezerochiQ}
\begin{aligned}
    L'(0, \chi_{\bQ}^{-1}) &= - \frac{1}{2}\sum_{a=1}^{f}\chi_{\bQ}(a)^{-1}\log|1-\zeta^a_f| 
    \\
    &= - \frac{1}{4}\sum_{\sigma \in \Gal(\bQ(\zeta_f)/\bQ)}\chi_{\bQ}(\sigma)^{-1}\log|\iota_{\infty}(\sigma(1-\zeta_f))|^2
\end{aligned}
\end{align}
 Observe that 
\begin{align}
\label{eqn_aligned_compare_regulator_images_of_cyclo_vs_elliptic}
    \begin{aligned}
        -\log_\infty(\xi) &\stackrel{\eqref{eqn_RS_element_for_k_imag_quadratic}}{=}\,L'(0, \chi^{-1}) = L(0, \chi^{-1}_{\bQ}\epsilon_{k}) \cdot L'(0, \chi^{-1}_{\bQ})\\
        &\stackrel{\eqref{eqn_LprimezerochiQ}}{=} - \frac{1}{4}L(0, \chi^{-1}_{\bQ}\epsilon_{k})\cdot \sum_{\sigma \in \Gal(\bQ(\zeta_f)/\bQ)}\chi_{\bQ}(\sigma)^{-1}\log|\iota_{\infty}(\sigma(1-\zeta_f))|^2\,.
    \end{aligned}
\end{align}
Combining \eqref{eq:L_p(1)} with \eqref{eqn_aligned_compare_regulator_images_of_cyclo_vs_elliptic} and the fact that $e_{\chi}\mathbb{C}\cO_L^\times$ is a $\mathbb{C}$-vector space of dimension one, we deduce that
\begin{align}\label{eq:L_p(1,chi)}
    L_p(1, \chi_{\bQ}) = -\left(1 - \frac{1}{p}\right) \cdot \frac{4 }{L(0, \chi^{-1}_{\bQ}\epsilon_{k})} \frac{G(\chi_{\bQ})}{f} \log_\fp(\xi).  
\end{align}
Gross' formula (proved in \cite[Section 4]{grossconjecture} using the Gross--Koblitz formula \cite{GrossKoblitz} and the work of Ferrero and Greenberg \cite{FerrreroGreenberg}) reads
\begin{align}\label{eq:L_p'(0,chi-epsilon)}
    L_p'(0, \chi^{-1}_{\bQ}\epsilon_{k}\omega) = \cL(\chi^{-1}_{\bQ}\epsilon_{k}) \cdot L(0, \chi^{-1}\epsilon_{k})\,.
\end{align}
(See also \cite{DarmonDasguptaPollack}, Equation (8) and Remark 8). Here $\cL(\chi^{-1}_{\bQ}\epsilon_{k})$ is Greenberg's $\cL$-invariant given as in  \cite[Equation (7)]{DarmonDasguptaPollack} 
(we remark for the convenience of the reader that in \cite{DarmonDasguptaPollack}, the $\chi_{\bQ}\epsilon_{k}$-components are notated by $(-)_{\chi^{-1}_{\bQ}\epsilon_{k}}$). 
As we have explained in Remark \ref{remark_L_invariant_for_Gamma_cyc_imaginary_quadratic}, we have 
\begin{equation}
\label{eqn_compare_our_L_invariant_to_Greenberg_final}
 \cL(\chi^{-1}_{\bQ}\epsilon_{k}) = \chi_{\rm cyc}(\cL_{\{\fp\}, \Gamma_{\rm cyc}}). 
\end{equation}
We combine \eqref{eq:L_p(1,chi)}, \eqref{eq:L_p'(0,chi-epsilon)} and \eqref{eqn_compare_our_L_invariant_to_Greenberg_final} together with the fact that $G(\chi_{\bQ})G(\chi_{\bQ}^{-1}) = f$ to conclude that 
\[
(L_{\fp}^\chi\vert_{\Gamma_{\rm cyc}})'(0) = -  \chi_{\rm cyc}(\cL_{\{\fp\}, \Gamma_{\rm cyc}})\left(1 - \frac{1}{p}\right) \log_\fp(\xi), 
\]
which is a restatement of \eqref{eq:main-imaginary} in the present set-up.

\appendix

\section{Continuous Cochain Complex}
In this appendix, we record various properties of complexes of continuous cochains. We have made use of these properties in Section~\ref{subsec_constant_terms_Coleman_maps}.

Let $(R,\fm)$ be a complete noetherian local ring with residue characteristic $p$. 
Let $S$ be a finite set of places of $k$ with $S_{\infty}(k) \cup S_{p}(k) \subseteq S$. 
In this appendix, let $T$ be a free $R$-module of finite rank on which $G_{k,S}$ acts continuously; namely, the composite map 
\[
G_{k,S} \hookrightarrow R[G_{k,S}] \longrightarrow {\rm End}_{R}(T)
\]
is continuous. 
Let us set
\[
\cG := \{G_{k_{v}} \mid v \in S_{p}(k)\} \cup \{G_{k,S}\}. 
\]
We note that, for $G \in \cG$, the pair $(T,G)$ satisfies \cite[Hypotheses A]{Jonathan13}. 
For $G \in \cG$ and a topological $G$-module $M$, let $C^{\bullet}(G,M)$ denote the complex of continuous cochains. 
We also denote the object in the derived category corresponding to the complex $C^{\bullet}(G,M)$ by ${\bf R}\Gamma(G,M)$. 

\begin{definition}
Let $A$ be a noetherian ring and $M^{\bullet}$ a complex of $A$-modules. 
\begin{itemize}
\item[(i)] We say that $M^{\bullet}$ is a perfect complex if there exists a quasi-isomorphism 
\[
P^{\bullet} \longrightarrow M^{\bullet},
\] 
where $P^{\bullet}$ is a bounded complex of projective $A$-modules of finite type. 
Furthermore, if $P^{i} = 0$ for every $i < a$ and $i > b$, then we say that $M^{\bullet}$ has perfect amplitude contained in $[a,b]$. 
\item[(ii)] We denote by $D_{\rm parf}^{[a,b]}(A)$ the full subcategory consisting of perfect complexes having perfect amplitude contained in $[a,b]$ in the derived category of the abelian category of $A$-modules. 
\end{itemize}
\end{definition}

\begin{lemma}[{\cite[Theorem~1.4(1)]{Jonathan13}}]
\label{lem:descent}
For $G \in \cG$ and an ideal $I$ of $R$, we have 
\[
{\bf R}\Gamma(G,T) \otimes^{\bL}_{R}R/I \cong {\bf R}\Gamma(G,T/IT). 
\]
\end{lemma}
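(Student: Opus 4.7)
The plan is to represent ${\bf R}\Gamma(G,T)$ by a perfect complex of $R$-modules and then extract the base change formula from the corresponding property of perfect complexes. The continuous cochain complex $C^\bullet(G,T)$ itself is not perfect (its terms are infinite products of copies of $T$), so the whole point is to find a finite, projective model and to check that this model is compatible with the reduction $T\rightsquigarrow T/IT$.

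First I would construct a perfect complex $P^\bullet$ of $R$-modules together with a quasi-isomorphism $P^\bullet\stackrel{\sim}{\to} C^\bullet(G,T)$. For $G\in\cG$ the cohomology $H^i(G, T/\fm T)$ is finite-dimensional over the residue field $R/\fm$ and vanishes for $i$ outside a fixed interval $[0,d]$ ($d=2$ in the cases of interest, by the finite $p$-cohomological dimension of $G_{k,S}$ and of the local Galois groups at primes in $S_p(k)$). Combining this with Lemma~\ref{lem:descent} in the special case $I=\fm$ (or rather its mod-$\fm$ input), one gets that $H^i(G,T)$ is finitely generated over $R$ and vanishes outside $[0,d]$. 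Standard Nakayama-style constructions (as in Kato--Kurihara--Tsuji or Nekov\'a\v r) then build the required perfect model $P^\bullet$ concentrated in degrees $[0,d]$ and consisting of finitely generated free $R$-modules, by lifting a minimal free resolution over $R/\fm$ degree by degree.

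Next, because each term $P^i$ is projective, we may compute the derived tensor product on the nose:
\[
{\bf R}\Gamma(G,T)\otimes^{\bL}_R R/I \;\cong\; P^\bullet\otimes_R R/I.
\]
It therefore remains to identify $P^\bullet\otimes_R R/I$ with ${\bf R}\Gamma(G,T/IT)$. The natural comparison comes from the map of cochain complexes $C^\bullet(G,T)\otimes_R R/I\to C^\bullet(G,T/IT)$; composing with the quasi-isomorphism $P^\bullet\to C^\bullet(G,T)$ and applying $-\otimes_R R/I$ gives a canonical morphism $P^\bullet\otimes_R R/I\to C^\bullet(G,T/IT)$. To verify it is a quasi-isomorphism, I would argue by reduction to the case $I=\fm$: since $R$ is a complete Noetherian local ring and $I$ is finitely generated, a d\'evissage along the $\fm$-adic filtration of $R/I$ together with the five-lemma reduces the general case to the residue field case, for which one appeals to the finiteness of cohomology and compares the two spectral sequences attached to $0\to\fm^n T/\fm^{n+1}T\to T/\fm^{n+1}T\to T/\fm^n T\to 0$.

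The main obstacle is the construction of the perfect model $P^\bullet$ in the continuous cochain setting, where standard homological algebra does not apply directly to $C^\bullet(G,T)$ (whose terms are neither finitely generated nor projective). Once the perfect representative is produced and one knows that its formation is compatible with arbitrary scalar extension through the residue field case, the remaining steps are formal manipulations in the derived category of $R$-modules.
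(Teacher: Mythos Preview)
The paper does not give a proof of this lemma at all; it is simply quoted from \cite[Theorem~1.4(1)]{Jonathan13} as a black box. So there is no ``paper's approach'' to compare against, and your outline is the closest thing to a proof in sight.

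Your strategy---represent ${\bf R}\Gamma(G,T)$ by a bounded complex of finite free $R$-modules and read off base change---is indeed the standard one. However, the logic as written is circular: to obtain finite generation of $H^{i}(G,T)$ you invoke ``Lemma~\ref{lem:descent} in the special case $I=\fm$'', which is precisely what you are trying to prove, and the parenthetical aside does not resolve this. Relatedly, your final d\'evissage only \emph{reduces} the general case to the residue field case $I=\fm$; it does not \emph{prove} that case. Knowing merely that $P^\bullet$ is quasi-isomorphic to $C^\bullet(G,T)$ does not by itself force $P^\bullet\otimes_R R/\fm$ to compute $C^\bullet(G,T/\fm T)$.

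The fix, as carried out in Nekov\'a\v{r}'s \emph{Selmer Complexes} or in Pottharst's paper you are citing, is to construct the perfect model and the base-change compatibility \emph{simultaneously} by an inductive limit argument. One starts from the finiteness of $H^i(G,T/\fm T)$ (this is the genuine input: finite $p$-cohomological dimension of $G\in\cG$ plus finiteness of cohomology with finite coefficients), builds compatible perfect complexes $P_n^\bullet$ over $R/\fm^n$ using the short exact sequences $0\to\fm^n T/\fm^{n+1}T\to T/\fm^{n+1}T\to T/\fm^nT\to 0$ that you mention, and then passes to the inverse limit using $C^\bullet(G,T)=\varprojlim_n C^\bullet(G,T/\fm^nT)$ and Mittag--Leffler. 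In this construction the base-change property for $I=\fm$ is built in from the start rather than deduced afterwards. Once that is in place, your d\'evissage to arbitrary $I$ (filter $R/(I+\fm^n)$ by copies of $R/\fm$, pass to the limit over $n$) goes through.
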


\begin{lemma}[{\cite[Corollary~1.2]{Jonathan13}}]
\label{lem:perf}
For $G \in \cG$, we have ${\bf R}\Gamma(G,T) \in D_{\rm parf}^{[0,2]}(R)$. 
\end{lemma}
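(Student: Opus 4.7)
The plan is to apply the standard criterion for perfectness: a complex $M^\bullet$ over a noetherian ring $R$ lies in $D_{\rm parf}^{[a,b]}(R)$ if and only if it has bounded cohomology with each $H^i(M^\bullet)$ finitely generated over $R$, and it has Tor-amplitude contained in $[a,b]$. Concretely, for $C^\bullet := C^\bullet(G,T)$ I would verify three claims: (a) $H^i(C^\bullet)$ vanishes for $i \notin [0,2]$; (b) each $H^i(C^\bullet)$ is finitely generated over $R$; (c) the complex has Tor-amplitude contained in $[0,2]$. Conjointly, (b) and (c) then allow one to replace $C^\bullet$ by a quasi-isomorphic bounded complex of finitely generated projective $R$-modules concentrated in degrees $[0,2]$.

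For (a), the case $G = G_{k_v}$ with $v \in S_p(k)$ is the classical fact that the absolute Galois group of a $p$-adic local field has strict $p$-cohomological dimension $\leq 2$ (Tate local duality). The case $G = G_{k,S}$ reduces to the analogous bound on the $p$-cohomological dimension of the restricted global Galois group, which holds because $k$ is a CM field (so $S_\infty(k)$ carries only complex places and the sign issues at infinity are absent) and $p$ is odd, combined with Tate--Poitou duality. Both statements pass from finite $p$-torsion coefficients to $T$ via a continuity/limit argument on the cochain complex, making use of the fact that $T$ is a finitely generated $R$-module over a complete local ring of residue characteristic $p$.

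Claims (b) and (c) can be handled simultaneously by leveraging the derived base change statement of Lemma~\ref{lem:descent}. Taking $I = \fm$ yields
\[
{\bf R}\Gamma(G,T) \otimes^{\bL}_R R/\fm \;\cong\; {\bf R}\Gamma(G, T/\fm T).
\]
Since $T/\fm T$ is a finite module over the finite residue field $R/\fm$, classical finiteness of Galois cohomology in degrees $0,1,2$ shows the right-hand side is a bounded complex of finite $R/\fm$-modules concentrated in $[0,2]$. A derived Nakayama argument then lifts a minimal resolution of this reduction to a perfect representative $P^\bullet \to C^\bullet$ over $R$ concentrated in degrees $[0,2]$: this gives simultaneously the finite generation of each $H^i(C^\bullet)$ and the Tor-amplitude bound $[0,2]$.

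The main obstacle is not in the formal implications above but rather in the foundational point that the \emph{continuous} cochain complex behaves well under the derived tensor product, namely in justifying Lemma~\ref{lem:descent} itself and producing a genuine perfect representative of ${\bf R}\Gamma(G,T)$ in the continuous setting (ordinary tensor products of continuous cochains with $R/I$ need not compute the correct object). This is precisely the input delegated to \cite{Jonathan13}, where continuity is handled via pro-finite/pro-perfect model categorical constructions. Once these foundations are in place, the amplitude bound $[0,2]$ is essentially forced by the classical cohomological dimension of local and global Galois groups over $k$.
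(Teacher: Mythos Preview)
The paper offers no proof of this lemma at all: it is stated as a direct citation of \cite[Corollary~1.2]{Jonathan13}, with no further argument. So there is nothing in the paper to compare your proposal against line by line.

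That said, your sketch is a correct outline of the standard argument underlying such a result (as in Fukaya--Kato, Nekov\'a\v{r}, or Pottharst). The three-step plan --- (a) cohomological dimension $\leq 2$ for both the local and restricted global Galois groups in play (using that $k$ is totally imaginary so there are no real places, and $p$ is odd), (b)--(c) derived base change to the residue field together with finiteness of Galois cohomology over finite fields and a derived Nakayama lift --- is exactly the mechanism that produces a perfect representative in degrees $[0,2]$. You are also right to flag that the genuine content lies in making the continuous cochain complex interact correctly with $-\otimes^{\bL}_R R/I$; this is precisely what the cited reference supplies, and once Lemma~\ref{lem:descent} is available the rest is formal. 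In short: your proposal is sound and reconstructs the argument the paper chose to outsource.
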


\begin{corollary}\label{cor:H2}
For $G \in \cG$ and an ideal $I$ of $R$, there is a canonical isomorphism 
\[
H^{2}(G,T) \otimes_{R}R/I \cong H^{2}(G,T/IT). 
\]
\end{corollary}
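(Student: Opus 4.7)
My plan is to deduce the isomorphism by combining Lemma~\ref{lem:descent} with the perfectness statement of Lemma~\ref{lem:perf}. The underlying principle is that the top cohomology of a bounded complex is a cokernel, and cokernels commute with right-exact functors such as $-\otimes_{R}R/I$; so the top-degree $H^{2}$ should see no derived correction upon base change, provided the amplitude bound $[0,2]$ is available.

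Concretely, I will invoke Lemma~\ref{lem:perf} to fix a representative
\[
P^{\bullet} = \bigl[\, P^{0} \longrightarrow P^{1} \xrightarrow{d} P^{2}\,\bigr]
\]
of ${\bf R}\Gamma(G,T)$ by a bounded complex of finitely generated projective $R$-modules concentrated in degrees $0,1,2$; since $R$ is local one can even take the $P^{i}$ to be free. Because $P^{\bullet}$ vanishes in degrees $>2$, we have $H^{2}(G,T) = \coker(d)$, and the right exactness of $-\otimes_{R}R/I$ yields the identification
\[
H^{2}(G,T)\otimes_{R}R/I \;=\; \coker(d)\otimes_{R}R/I \;=\; \coker\bigl(d\otimes{\rm id}_{R/I}\bigr) \;=\; H^{2}\bigl(P^{\bullet}\otimes_{R}R/I\bigr).
\]
Since each $P^{i}$ is flat, the complex $P^{\bullet}\otimes_{R}R/I$ represents the derived tensor product ${\bf R}\Gamma(G,T)\otimes^{\bL}_{R}R/I$, which Lemma~\ref{lem:descent} identifies with ${\bf R}\Gamma(G,T/IT)$. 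Taking $H^{2}$ of this quasi-isomorphism and chaining the equalities above delivers the desired canonical isomorphism.

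An equivalent, more intrinsic formulation is the edge-map argument in the hypercohomology spectral sequence
\[
E_{2}^{p,q} \;=\; \operatorname{Tor}_{-p}^{R}\bigl(H^{q}(G,T),\,R/I\bigr) \;\Longrightarrow\; H^{p+q}(G,T/IT),
\]
whose term at $(p,q)=(0,2)$ is $H^{2}(G,T)\otimes_{R}R/I$. Any competing contribution to $H^{2}$ of the abutment would have to come from $E_{2}^{p,q}$ with $p\leq -1$ and $q\geq 3$, but these vanish identically because of the perfect amplitude bound $[0,2]$. I expect no genuine obstacle here: the statement is essentially formal, with Lemma~\ref{lem:perf} doing all the substantive work.
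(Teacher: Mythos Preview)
Your proof is correct and follows precisely the route the paper intends: the authors simply state that the corollary ``easily follows from Lemma~\ref{lem:descent} and Lemma~\ref{lem:perf},'' and your argument makes explicit the standard reason why the top-degree cohomology of a perfect complex in $D_{\rm parf}^{[0,2]}(R)$ commutes with base change. The spectral-sequence reformulation is a nice alternative packaging of the same observation.
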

\begin{proof}
This corollary easily follows from Lemma~\ref{lem:descent} and Lemma~\ref{lem:perf}. 
\end{proof}

\begin{corollary}\label{cor:amp}
Suppose $G \in \cG$ and assumme that $H^{0}(G,T/\fm T)=0$.  
Then,
\[
{\bf R}\Gamma(G,T) \in D_{\rm parf}^{[1,2]}(R).
\] 
\end{corollary}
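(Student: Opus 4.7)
The plan is to upgrade the amplitude bound of Lemma~\ref{lem:perf} from $[0,2]$ to $[1,2]$ by using the vanishing hypothesis $H^{0}(G,T/\fm T)=0$ together with a standard ``minimal complex'' argument for perfect complexes over a noetherian local ring $(R,\fm)$.

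First, I would recall the following general fact: if $P^{\bullet}$ is a bounded complex of finitely generated projective $R$-modules in degrees $[a,b]$, then $P^{\bullet}$ is quasi-isomorphic to a complex $Q^{\bullet}$ of finitely generated free $R$-modules in degrees $[a,b]$ whose differentials satisfy $d(Q^{i}) \subseteq \fm Q^{i+1}$ (a minimal free resolution). For such a minimal complex, one has the clean identification
\[
Q^{i} \otimes_{R} R/\fm \;\cong\; H^{i}(Q^{\bullet} \otimes_{R}^{\bL} R/\fm)
\]
for every $i$, because all differentials of $Q^{\bullet} \otimes_{R} R/\fm$ vanish. In particular, $P^{\bullet}$ admits a representative concentrated in degrees $[a+1,b]$ if and only if $H^{a}(P^{\bullet} \otimes_{R}^{\bL} R/\fm) = 0$.

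Next I would apply this to $P^{\bullet} := {\bf R}\Gamma(G,T)$, which by Lemma~\ref{lem:perf} lies in $D_{\rm parf}^{[0,2]}(R)$. By Lemma~\ref{lem:descent} applied with $I = \fm$, we have
\[
{\bf R}\Gamma(G,T) \otimes_{R}^{\bL} R/\fm \;\cong\; {\bf R}\Gamma(G,T/\fm T),
\]
so that
\[
H^{0}\!\left({\bf R}\Gamma(G,T) \otimes_{R}^{\bL} R/\fm\right) \;=\; H^{0}(G,T/\fm T) \;=\; 0,
\]
the last equality being the standing hypothesis. Combining this vanishing with the minimality criterion recalled in the previous paragraph concludes that ${\bf R}\Gamma(G,T) \in D_{\rm parf}^{[1,2]}(R)$, as required.

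The only potentially delicate point is the existence of a minimal free representative for a perfect complex over a noetherian local ring; this is standard (a consequence of Nakayama's lemma applied inductively to lift a minimal free resolution of $H^{b}$, etc.), but one must be a little careful that everything remains finitely generated throughout, which is guaranteed here because $R$ is noetherian and the cohomology modules $H^{i}(G,T)$ are finitely generated (as implied by Lemma~\ref{lem:perf}). Once this framework is in place, the argument is essentially immediate from the hypothesis.
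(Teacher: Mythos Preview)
Your proof is correct. Both your argument and the paper's rely on the same core computation, namely that Lemma~\ref{lem:descent} gives $H^{0}\bigl({\bf R}\Gamma(G,T)\otimes_R^{\bL} R/\fm\bigr)=H^{0}(G,T/\fm T)=0$. The difference lies in the surrounding homological algebra: you invoke the existence of a minimal free representative $Q^{\bullet}$ and read off $Q^{0}\otimes_R R/\fm=0$, hence $Q^{0}=0$ by Nakayama. The paper instead works directly with an arbitrary projective representative $P^{0}\to P^{1}\to P^{2}$, first showing $T^{G}=0$ via a d\'evissage along the filtration $\{\fm^{i}T\}$ (to get injectivity of $P^{0}\to P^{1}$), and then checking that $X:=\coker(P^{0}\to P^{1})$ is projective by the local flatness criterion, computing ${\rm Tor}_1^R(X,R/\fm)=H^{0}\bigl({\bf R}\Gamma(G,T)\otimes_R^{\bL} R/\fm\bigr)=0$. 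Your route is cleaner and avoids the separate verification that $T^{G}=0$; the paper's is more hands-on and self-contained in that it does not appeal to the minimal-complex machinery as a black box.
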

\begin{proof}
By Lemma~\ref{lem:perf}, there is a complex 
\[
P^{\bullet} = [ \ \cdots \longrightarrow 0 \longrightarrow P^{0} \longrightarrow P^{1} \longrightarrow P^{2} \longrightarrow 0 \longrightarrow \cdots \ ]
\] 
of finitely generated projective $R$-modules such that $P^{\bullet}$ is quasi-isomorphic to ${\bf R}\Gamma(G,T)$. 
Since $H^{0}(G,T/\fm T)$ vanishes, we have 
\[
(\fm^{i}/\fm^{i+1} \otimes_{R} T)^{G} \cong (T \otimes_{R} (R/\fm)^{\dim_{R/\fm}\fm^{i}/\fm^{i+1}})^{G} = 0. 
\]
The exact sequence 
$$0 \longrightarrow \fm^{i}/\fm^{i+1} \longrightarrow R/\fm^{i+1} \longrightarrow R/\fm^{i} \longrightarrow 0$$ 
shows that $(T/\fm^{i} T)^{G} = 0$ for any integer $i \geq 1$ and 
hence $H^{0}(P^{\bullet}) = T^{G} = 0$. In other words, the map $P^{0} \longrightarrow P^{1}$ is injective. 
To conclude with the proof of Corollary~\ref{cor:amp}, it suffices to show that the cokernel $X := {\rm coker}\left(P^{0} \longrightarrow P^{1}\right)$ is a projective $R$-module. 
Since $R$ is a noetherian local ring, we only need to check ${\rm Tor}_{1}^{R}(X,R/\fm) = 0$ by the local criterion for flatness. 

Since $P^{1}$ is flat, we have 
\[
{\rm Tor}_{1}^{R}(X,R/\fm) = \ker \left(P^{0}\otimes_{R} R/\fm \longrightarrow P^{1}\otimes_{R} R/\fm\right) =  H^{0}({\bf R}\Gamma(G,T) \otimes^{\bL}_{R}R/\fm). 
\]
By Lemma~\ref{lem:descent}, we have ${\bf R}\Gamma(G,T) \otimes^{\bL}_{R}R/\fm \cong {\bf R}\Gamma(G,T/\fm T)$ and hence 
we conclude ${\rm Tor}_{1}^{R}(X,R/\fm) = H^{0}({\bf R}\Gamma(G,T/\fm T)) = (T/\fm T)^{G} = 0$, as required.
\end{proof}

\begin{corollary}\label{cor:free}
Suppose $G \in \cG$. 
Assume that 
\begin{itemize}
\item $R$ is a regular local ring with Krull dimension at most $2$, and 
\item $H^{0}(G,T/\fm T)$ vanishes. 
\end{itemize}
Then the $R$-module $H^{1}(G,T)$ is free. 
Furthermore, if $G = G_{k_{v}}$ for some $v \in S_{p}(k)$, then the rank of $H^{1}(G,T)$ is 
$[k_{v} \colon \bQ_{p}] \cdot {\rm rank}_{R}T - {\rm rank}_{R}H^{2}(G,T)$. 
\end{corollary}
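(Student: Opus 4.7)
The plan is to apply Corollary~\ref{cor:amp}, which under the standing hypothesis $H^{0}(G,T/\fm T)=0$ produces a representative of ${\bf R}\Gamma(G,T)$ as a two-term complex $P^{1}\longrightarrow P^{2}$ of finitely generated projective---and hence free, as $R$ is local---$R$-modules concentrated in degrees $[1,2]$. Under this representation $H^{1}(G,T)=\ker(P^{1}\to P^{2})$ and $H^{2}(G,T)=\coker(P^{1}\to P^{2})$, so one has the four-term exact sequence
\[
0\longrightarrow H^{1}(G,T)\longrightarrow P^{1}\longrightarrow P^{2}\longrightarrow H^{2}(G,T)\longrightarrow 0,
\]
and the question of freeness of $H^{1}(G,T)$ reduces to a commutative-algebra problem.

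Setting $I:=\mathrm{image}(P^{1}\to P^{2})$ and splitting this into the short exact sequences $0\to I\to P^{2}\to H^{2}(G,T)\to 0$ and $0\to H^{1}(G,T)\to P^{1}\to I\to 0$, the key input is the following: over a regular local ring $R$ of Krull dimension $\leq 2$ (hence global dimension $\leq 2$), every finitely generated submodule $N$ of a free module $F$ has $\mathrm{pd}_{R}(N)\leq 1$. Indeed, $\mathrm{pd}_{R}(F/N)\leq 2$, so the short exact sequence $0\to N\to F\to F/N\to 0$ with $F$ free forces $\mathrm{pd}_{R}(N)\leq 1$. Applied to $I\subseteq P^{2}$, this yields $\mathrm{pd}_{R}(I)\leq 1$, and then the second short exact sequence forces $H^{1}(G,T)$---being the kernel of a surjection from the free module $P^{1}$ onto a module of projective dimension at most $1$---to be projective, hence free since $R$ is local.

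For the rank formula in the local case $G=G_{k_{v}}$, I would combine the vanishing of $H^{0}(G_{k_{v}},T)$ with Tate's local Euler characteristic formula for the perfect complex ${\bf R}\Gamma(G_{k_{v}},T)$. The vanishing is first shown at each level $n\geq 1$: by induction on $n$, starting from the given hypothesis $H^{0}(G,T/\fm T)=0$ and using the short exact sequences $0 \to \fm^{n}T/\fm^{n+1}T \to T/\fm^{n+1}T \to T/\fm^{n}T\to 0$ together with the triviality of the $G$-action on $\fm^{n}/\fm^{n+1}$ (which gives $(\fm^{n}T/\fm^{n+1}T)^{G}=(\fm^{n}/\fm^{n+1})\otimes_{R/\fm}(T/\fm T)^{G}=0$), we obtain $H^{0}(G,T/\fm^{n}T)=0$ for all $n$. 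Krull's intersection theorem then gives $T^{G}\subseteq\bigcap_{n}\fm^{n}T=0$. Reading off Tate's local Euler characteristic formula from ${\bf R}\Gamma(G_{k_{v}},T)$ after flat base change to $\mathrm{Frac}(R)$ and using $\mathrm{rank}_{R}H^{0}(G_{k_{v}},T)=0$, one solves for $\mathrm{rank}_{R}H^{1}(G_{k_{v}},T)$ in terms of $\mathrm{rank}_{R}H^{2}(G_{k_{v}},T)$, $[k_{v}:\bQ_{p}]$, and $\mathrm{rank}_{R}(T)$, yielding the claimed expression.

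The only point requiring a little care is the invocation of Tate's Euler characteristic formula in the $R$-coefficient setting rather than over a field; this is entirely routine via the flatness of the localization $R\to\mathrm{Frac}(R)$ and the fact that ${\bf R}\Gamma(G_{k_{v}},T)$ is represented by a bounded complex of free $R$-modules, so that cohomology commutes with this flat base change and the classical formula for $\mathrm{Frac}(R)$-representations of $G_{k_{v}}$ applies.
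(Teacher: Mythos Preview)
Your proof is correct and follows essentially the same strategy as the paper's: both invoke Corollary~\ref{cor:amp} to realise $H^{1}(G,T)$ as the kernel of a map $P^{1}\to P^{2}$ between finite free $R$-modules, and then exploit the fact that $R$ is regular local of dimension at most~$2$.

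The only differences are cosmetic. For freeness, the paper observes that $H^{1}(G,T)=\ker(P^{1}\to P^{2})$ is automatically reflexive (being a second syzygy) and that reflexive modules over a regular local ring of dimension $\leq 2$ are free; you instead use the equivalent statement that $R$ has global dimension $\leq 2$ and chase projective dimensions through the two short exact sequences. These are two standard phrasings of the same homological fact. For the rank formula, you base-change to $\mathrm{Frac}(R)$ to apply the classical local Euler characteristic formula, while the paper base-changes to $R/\fm$ via Lemma~\ref{lem:descent}; either route computes $\mathrm{rank}_{R}P^{1}-\mathrm{rank}_{R}P^{2}$ and hence $\mathrm{rank}_{R}H^{1}-\mathrm{rank}_{R}H^{2}$. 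Your explicit verification that $H^{0}(G,T)=0$ via Krull's intersection theorem is the same argument already carried out inside the proof of Corollary~\ref{cor:amp}.
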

\begin{proof}
By Corollary~\ref{cor:amp}, there is an $R$-homomorphism $P^{1} \to P^{2}$ such that $P^{i}$ is a free $R$-module and $H^{1}(G_{k,S},T) \cong \ker\left(P^{1} \to P^{2}\right)$. This shows that $H^{1}(G_{k,S},T)$ is a reflexive $R$-module. 
Since $R$ is a regular local ring with Krull dimension at most $2$, 
the $R$-module $H^{1}(G_{k,S},T)$ is free. 

Suppose that $G = G_{k_{v}}$ for some $v \in S_{p}(k)$. Lemma~\ref{lem:descent} combined with the local Euler characteristic formula shows  
\[
{\rm rank}_{R}P^{1} - {\rm rank}_{R}P^{2} = [k_{v} \colon \bQ_{p}] \cdot {\rm rank}_{R}T. 
\]
This completes the proof.  
\end{proof}

\begin{corollary}\label{cor:H2vanish}
Suppose $G \in \cG$. 
Assume that 
\begin{itemize}
\item $H^{0}(G,T/\fm T)$ vanishes, and 
\item $H^{2}(G,T/IT)$ vanishes for some proper ideal $I$ of $R$. 
\end{itemize}
Then the $R$-module $H^{1}(G,T)$ is free and $H^{1}(G,T) \otimes_{R} R/J \cong H^{1}(G,T/JT)$ for any ideal $J$ of $R$. 
\end{corollary}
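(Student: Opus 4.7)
The plan is to first promote the hypothesis ``$H^{2}(G,T/IT)=0$ for some proper ideal $I$'' to the much stronger statement ``$H^{2}(G,T)=0$''. Indeed, by Corollary~\ref{cor:H2} we have a canonical isomorphism $H^{2}(G,T)\otimes_{R}R/I\cong H^{2}(G,T/IT)=0$. Since $I$ is a proper ideal of the local ring $R$, we have $I\subseteq\fm$, so tensoring further with $R/\fm$ yields $H^{2}(G,T)\otimes_{R}R/\fm=0$. The module $H^{2}(G,T)$ is finitely generated over $R$ (being a subquotient of the perfect complex supplied by Corollary~\ref{cor:amp}), so Nakayama's lemma forces $H^{2}(G,T)=0$.

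Next I would exploit the perfect amplitude statement of Corollary~\ref{cor:amp} (whose hypothesis is satisfied because we assume $H^{0}(G,T/\fm T)=0$): there exists a two-term complex
\[
P^{\bullet}=[\,\cdots\longrightarrow 0\longrightarrow P^{1}\stackrel{d}{\longrightarrow}P^{2}\longrightarrow 0\longrightarrow\cdots\,]
\]
of finitely generated free $R$-modules quasi-isomorphic to ${\bf R}\Gamma(G,T)$. Combining $H^{0}(P^{\bullet})=0$ (which is part of having amplitude in $[1,2]$) with the vanishing $H^{2}(G,T)=0$ just established, we obtain a short exact sequence
\[
0\longrightarrow H^{1}(G,T)\longrightarrow P^{1}\stackrel{d}{\longrightarrow}P^{2}\longrightarrow 0.
\]
Since $P^{2}$ is projective, this sequence splits, so $H^{1}(G,T)$ is a direct summand of the free module $P^{1}$, hence projective; over the noetherian local ring $R$, projective and finitely generated implies free, giving the first assertion.

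For the base-change statement, let $J\subseteq R$ be any ideal. Tensoring the split short exact sequence above with $R/J$ yields
\[
0\longrightarrow H^{1}(G,T)\otimes_{R}R/J\longrightarrow P^{1}/JP^{1}\stackrel{\bar d}{\longrightarrow}P^{2}/JP^{2}\longrightarrow 0,
\]
so $H^{1}(G,T)\otimes_{R}R/J=\ker(\bar d)$. On the other hand, Lemma~\ref{lem:descent} gives a quasi-isomorphism ${\bf R}\Gamma(G,T/JT)\cong P^{\bullet}\otimes_{R}^{\bL}R/J=[P^{1}/JP^{1}\to P^{2}/JP^{2}]$ (the derived tensor product equals the ordinary one because the $P^{i}$ are flat), whose degree-$1$ cohomology is exactly $\ker(\bar d)$. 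Comparing the two identifications yields the required isomorphism $H^{1}(G,T)\otimes_{R}R/J\cong H^{1}(G,T/JT)$. There is no genuinely hard step here; the only subtlety is noticing that the given vanishing in a single quotient $T/IT$ propagates (via the finite generation consequence of Corollary~\ref{cor:amp} and Nakayama) to the full vanishing $H^{2}(G,T)=0$, after which the two conclusions are formal consequences of splitting a two-term complex of frees.
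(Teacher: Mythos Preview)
Your proof is correct and follows essentially the same strategy as the paper's own (very terse) argument: promote the vanishing of $H^{2}$ at one proper ideal to full vanishing of $H^{2}(G,T)$ via Corollary~\ref{cor:H2} and Nakayama, then use the perfect amplitude result of Corollary~\ref{cor:amp} to conclude freeness and base-change. The paper phrases the conclusion as ``${\bf R}\Gamma(G,T/JT)\in D^{[1,1]}_{\rm parf}(R/J)$'' rather than writing out the splitting of the two-term complex as you do, but the content is the same.
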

\begin{proof}
It follows from the assumptions that $H^{2}(G,T/JT)=0$ for any ideal $J$ of $R$ and hence  
$
{\bf R}\Gamma(G,T/JT) \in D_{\rm parf}^{[1,1]}(R/J).
$  
In particular, $H^{1}(G,T)$ is free. The surjectivity follows from Lemma~\ref{lem:descent}. 
\end{proof}

\section{Exterior bi-dual}\label{sec:bi-dual}
In this appendix, we review the definition of the exterior bi-dual (which is a natural generalization of the Rubin-lattice) and prove various base-change properties that we utilize in the main body of our article. 

Let $R$ be a Gorenstein ring. 
For a finitely generated $R$-module $M$, 
we put $M^{*} := \Hom_{R}(M,R)$ and 
\[
{\bigcap}^{r}_{R}M := \left({\bigwedge}^{r}_{R}(M^{*})\right)^{*}
\] 
for any integer $r \geq 0$. 
Note that there is a canonical $R$-homomorphism 
\[
{\bigwedge}^{r}_{R}M \longrightarrow {\bigcap}^{r}_{R}M. 
\]

\begin{remark}\label{rem:rubin-lattice}
If $R$ is a finite group ring over a Dedekind domain $R'$, 
then there is a canonical isomorphism 
\[
\left\{ x \in  {\rm Frac}(R')\otimes_{R'} {\bigwedge}^{r}_{R}M \ \middle| \ \Phi(a) \in R \text{ for all } \Phi \in {\bigwedge}^{r}_{R}M^{*}\right\} 
\stackrel{\sim}{\longrightarrow} {\bigcap}^{r}_{R}M.  
\]
This tells us that the exterior bi-dual ${\bigcap}^{r}_{R}M$ simply agrees what is known in the literature as the Rubin-lattice. 
\end{remark}

For non-negative integers $r \geq s$ and $\Phi \in {\bigwedge}^{r}_{R}M^{*}$, we define a homomorphism 
\[
\Phi \colon {\bigcap}^{s}_{R}M \longrightarrow {\bigcap}^{s-r}_{R}M
\]
by setting it as the $R$-dual of the map ${\bigwedge}^{s-r}_{R}M^{*} \longrightarrow {\bigwedge}^{s}_{R}M^{*}; \Psi \mapsto \Phi \wedge \Psi$. 
For any $R$-homomorphism $\phi \in M^{*}$, we define an $R$-homomorphism (also denoted by $\phi$)
\begin{align*}
    \phi \colon {\bigwedge}^{s}_{R}M &\longrightarrow {\bigwedge}^{s-1}_{R}M
\\
x_{1} \wedge \cdots \wedge x_{s} &\longmapsto \sum_{i=1}^{s}(-1)^{i-1}\phi(x_{i})x_{1} \wedge \cdots \wedge x_{i-1} \wedge x_{i+1} \wedge \cdots \wedge x_{s}.  
\end{align*}
We therefore obtain an $R$-homomorphism 
\begin{align}\label{eq:Phi}
    \Phi \colon {\bigwedge}^{s}_{R}M \longrightarrow {\bigwedge}^{s-r}_{R}M. 
\end{align}
Note that, if $\Phi = \phi_{1} \wedge \cdots \wedge \phi_{r}$, then 
$\Phi \colon {\bigwedge}^{s}_{R}M \longrightarrow {\bigwedge}^{s-r}_{R}M$ is defined by $\phi_{r} \circ \cdots \circ \phi_{1}$. 
Furthermore, the diagram 
\[
\xymatrix{
{\bigwedge}^{s}_{R}M \ar[r]^{\Phi} \ar[d] & {\bigwedge}^{s-r}_{R}M \ar[d]
\\
{\bigcap}^{s}_{R}M \ar[r]^{\Phi} & {\bigcap}^{s-1}_{R}M
}
\]
commutes. 

\begin{lemma}\label{lem:expre}
Suppose that we have an exact sequence 
\[
0 \longrightarrow M \longrightarrow P^{1} \longrightarrow P^{2} \longrightarrow N \longrightarrow 0 
\]
of $R$-modules, where $P^{1}$ and $P^2$ are finitely generated free $R$-modules. 
Put $I = {\rm im}(P^{1} \to P^{2})$. 
Then the canonical map 
\[
{\bigcap}^{r}_{R}M \longrightarrow {\bigcap}^{r}_{R}P^{1} \cong {\bigwedge}^{r}_{R}P^{1} 
\]
induces an isomorphism 
\[
{\bigcap}^{r}_{R}M \cong \ker\left({\bigwedge}^{r}_{R}P^{1} \longrightarrow I \otimes_{R} {\bigwedge}^{r-1}_{R}P^{1}\right) 
\]
for any integer $r \geq 1$. 
\end{lemma}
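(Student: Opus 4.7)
The plan is to prove the identification in three steps: a flatness reduction to a Koszul-type differential targeted at $P^{2}$, a duality argument for kernels of maps between free modules, and an identification of a cokernel with $\bigwedge^{r}_{R} M^{*}$. First, since $\bigwedge^{r-1}_{R} P^{1}$ is free (hence $R$-flat), the inclusion $I \hookrightarrow P^{2}$ remains injective upon tensoring with $\bigwedge^{r-1}_{R} P^{1}$, so the kernel in the statement coincides with $\ker(\partial')$, where $\partial' \colon \bigwedge^{r}_{R} P^{1} \to P^{2} \otimes_{R} \bigwedge^{r-1}_{R} P^{1}$ is the Koszul-type differential given by $x_{1} \wedge \cdots \wedge x_{r} \mapsto \sum_{k=1}^{r} (-1)^{k-1} \phi(x_{k}) \otimes x_{1} \wedge \cdots \widehat{x_{k}} \cdots \wedge x_{r}$, with $\phi \colon P^{1} \to P^{2}$ the given map.

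Second, since both source and target of $\partial'$ are finitely generated free $R$-modules, I would invoke the duality $\ker(f) \cong \Hom_{R}(\coker(f^{*}), R)$ valid for any $R$-linear map $f$ between finitely generated free modules, obtained by applying $\Hom_{R}(-, R)$ to the right-exact sequence $B^{*} \xrightarrow{f^{*}} A^{*} \to \coker(f^{*}) \to 0$ and using the natural isomorphisms $A \cong A^{**}$, $B \cong B^{**}$. A direct computation identifies $(\partial')^{*} \colon P^{2*} \otimes_{R} \bigwedge^{r-1}_{R} P^{1*} \to \bigwedge^{r}_{R} P^{1*}$ as the map $\psi \otimes \alpha \mapsto \phi^{*}(\psi) \wedge \alpha$, whose image is $\phi^{*}(P^{2*}) \wedge \bigwedge^{r-1}_{R} P^{1*}$. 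Thus $\ker(\partial) \cong \Hom_{R}\bigl(\bigwedge^{r}_{R} P^{1*}/(\phi^{*}(P^{2*}) \wedge \bigwedge^{r-1}_{R} P^{1*}),\, R\bigr)$.

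For the final step, I would identify this $\Hom$ with $\bigcap^{r}_{R} M = \Hom_{R}(\bigwedge^{r}_{R} M^{*}, R)$. Since $\phi$ vanishes on $M$, the image $\phi^{*}(P^{2*})$ lies inside the kernel of the restriction map $\rho \colon P^{1*} \to M^{*}$, so $\bigwedge^{r}_{R} \rho$ factors through the quotient to yield a canonical $R$-linear map $\bigwedge^{r}_{R} P^{1*}/(\phi^{*}(P^{2*}) \wedge \bigwedge^{r-1}_{R} P^{1*}) \to \bigwedge^{r}_{R} M^{*}$. Applying $\Hom_{R}(-, R)$ gives a canonical map $\bigcap^{r}_{R} M \to \ker(\partial)$, which a short diagram chase confirms coincides with the map induced from the canonical $\bigcap^{r}_{R} M \to \bigwedge^{r}_{R} P^{1}$.

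The main obstacle will be verifying that this induced map is an isomorphism. The discrepancy between the two cokernels $\bigwedge^{r}_{R} P^{1*}/(\phi^{*}(P^{2*}) \wedge \bigwedge^{r-1}_{R} P^{1*})$ and $\bigwedge^{r}_{R} M^{*}$ is controlled by Ext-corrections arising from the four-term exact sequence $0 \to \Ext^{1}_{R}(P^{2}/I, R) \to P^{1*}/\phi^{*}(P^{2*}) \to M^{*} \to \Ext^{1}_{R}(I, R) \to 0$ obtained by dualizing the short exact sequences $0 \to M \to P^{1} \to I \to 0$ and $0 \to I \to P^{2} \to P^{2}/I \to 0$. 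The crucial input should be that $I$ embeds into the free $R$-module $P^{2}$ (hence is torsionless), which forces these Ext terms to be annihilated upon applying $\Hom_{R}(-, R)$ and yields the desired isomorphism.
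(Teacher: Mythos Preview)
Your Steps 1 and 2 are correct, and the core idea---dualize, identify the cokernel, dualize back, and use that $I$ is torsionless---is exactly what the paper does. The difference is that you take an unnecessary detour through $P^{2}$ that creates the ``main obstacle'' you then struggle to resolve.

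The paper never passes to $P^{2}$. Instead it dualizes the short exact sequence $0 \to M \to P^{1} \to I \to 0$ directly to obtain $I^{*} \to (P^{1})^{*} \to M^{*} \to 0$, and then uses the standard right-exact sequence
\[
I^{*} \otimes_{R} {\bigwedge}^{r-1}_{R}(P^{1})^{*} \longrightarrow {\bigwedge}^{r}_{R}(P^{1})^{*} \longrightarrow {\bigwedge}^{r}_{R} M^{*} \longrightarrow 0.
\]
Dualizing this immediately yields $0 \to {\bigcap}^{r}_{R} M \to {\bigwedge}^{r}_{R} P^{1} \to \bigl(I^{*} \otimes_{R} {\bigwedge}^{r-1}_{R}(P^{1})^{*}\bigr)^{*}$, and the last term is identified with $I^{**} \otimes_{R} {\bigwedge}^{r-1}_{R} P^{1}$ since $P^{1}$ is free of finite rank. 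The only remaining point is that the Koszul map factors through $I \otimes_{R} {\bigwedge}^{r-1}_{R} P^{1}$ and that $I \hookrightarrow I^{**}$ (because $I \subset P^{2}$ is torsionless), so the two kernels agree. This last step is precisely the ``crucial input'' you correctly isolated.

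By contrast, working with $\phi^{*}(P^{2*})$ instead of $I^{*}$ gives you a strictly smaller submodule of $(P^{1})^{*}$ (the gap being $\Ext^{1}_{R}(P^{2}/I,R)$), hence a larger quotient, and you are left having to prove that this larger quotient has the same $R$-dual as ${\bigwedge}^{r}_{R} M^{*}$. Your four-term sequence is correct, but the sentence ``forces these Ext terms to be annihilated upon applying $\Hom_{R}(-,R)$'' is not an argument; you would still need to show that the surjection from your quotient onto ${\bigwedge}^{r}_{R} M^{*}$ becomes an isomorphism after dualizing, and you have not done this. The paper's choice to work with $I^{*}$ rather than $\phi^{*}(P^{2*})$ sidesteps this entirely: the cokernel is ${\bigwedge}^{r}_{R} M^{*}$ on the nose, so its dual is ${\bigcap}^{r}_{R} M$ by definition.
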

\begin{proof}
Since $P^{2}$ is a free $R$-module, there is a canonical isomorphism $\Ext_{R}^{1}(I,R) \stackrel
{\sim}{\longrightarrow} \Ext_{R}^{2}(N,R)$. 
On passing to $R$-duals, we have 
$$
0 \longrightarrow I^{*} \longrightarrow (P^{1})^{*} \longrightarrow M^{*} \longrightarrow \Ext_{R}^{2}(N,R) \longrightarrow 0. 
$$ 
Put $X := \im((P^{1})^{*} \longrightarrow M^{*})$. 
This exact sequence gives rise to the exact sequence 
\[
I^{*} \otimes_{R} {\bigwedge}^{r}_{R} (P^{1})^{*} \longrightarrow {\bigwedge}^{r}_{R}(P^{1})^{*} \longrightarrow {\bigwedge}^{r}_{R}X \longrightarrow 0. 
\]
Passing to $R$-duals once again and using the canonical identification ${\bigwedge}^{r}_{R}P^{1} = {\bigcap}^{r}_{R}P^{1}$, we have an exact sequence 
\[
0 \longrightarrow \left({\bigwedge}^{r}_{R}X \right)^{*} \longrightarrow {\bigwedge}^{r}_{R}P^{1} \longrightarrow \left(I^{*} \otimes_{R} {\bigwedge}^{r-1}_{R}(P^{1})^{*}\right)^{*}
\]
since $P^{1}$ is free of finite rank. 
Note that there is canonical isomorphisms
\[
\left(I^{*} \otimes_{R} {\bigwedge}^{r-1}_{R}(P^{1})^{*}\right)^{*} \cong \Hom_{R}\bigr(I^{*},{\bigcap}_{R}^{r-1}P^{1}\bigl) \cong I^{**} \otimes_{R} {\bigcap}_{R}^{r-1}P^{1} 
\cong I^{**} \otimes_{R} {\bigwedge}_{R}^{r-1}P^{1} 
\]
and $I \longrightarrow I^{**}$ is injective since $I$ is a submodule of $P^{2}$. 
We conclude that 
\[
\left({\bigwedge}^{r}_{R}X \right)^{*} = \ker\left({\bigwedge}^{r}_{R}P^{1} \longrightarrow I \otimes_{R} {\bigwedge}^{r-1}_{R}P^{1}\right). 
\]
Hence in order to complete the proof, it suffices to show that a canonical homomorphism 
\[
{\bigcap}^{r}_{R}M \longrightarrow \left({\bigwedge}^{r}_{R}X \right)^{*}
\]
is an isomorphism. 
Since $R$ is a Gorenstein ring, $\Ext^{2}_{R}(N,R) \otimes_{R} R_{\fp}$ vanishes for any height one prime $\fp$ of $R$. This implies that the dimension of both the kernel and cokernel of the homomorphism 
\[
{\bigwedge}^{r}_{R}X \longrightarrow {\bigwedge}^{r}_{R}M^{*}
\]
is less than or equal to $\dim R -2$. 
Since we have $\Ext^{0}_{R}(Y,R) = \Ext^{1}_{R}(Y,R) = 0$ for a finitely generated $R$-module $Y$ satisfying $\dim Y \leq \dim R -2$, passing to $R$-duals, we see that the homomorphism 
\[
{\bigcap}^{r}_{R}M \longrightarrow \left({\bigwedge}^{r}_{R}X \right)^{*}
\]
is an isomorphism. 
\end{proof}

\begin{lemma}\label{lem:reduction}
Let $S$ be a Gorenstein ring and $R \longrightarrow S$ be a ring homomorphism. 
Let $C \in D^{[1,2]}_{\rm perf}(R) $. 
Then for any integer $r \geq 0$, there is a canonical homomorphism 
\[
{\bigcap}^{r}_{R}H^{1}(C) \longrightarrow {\bigcap}^{r}_{S}H^{1}(C \otimes^{\mathbb{L}}_{R} S)
\]
\end{lemma}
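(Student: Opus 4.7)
The plan is to choose a representative $P^\bullet = [P^1 \to P^2 \to \cdots \to P^b]$ of $C$ by finitely generated projective $R$-modules concentrated in degrees $[1,b]$, which exists by the assumption $C \in D_{\rm parf}^{[1,b]}(R)$ for some $b \geq 1$. (The case $r=0$ is just the given ring homomorphism $R \to S$, so I focus on $r \geq 1$.) Setting $I := \mathrm{im}(P^1 \to P^2) \subseteq P^2$, Lemma~\ref{lem:expre} supplies the canonical identification
\[
{\bigcap}^{r}_{R}H^{1}(C) \;\cong\; \ker\!\left({\bigwedge}^{r}_{R}P^{1} \longrightarrow I \otimes_{R} {\bigwedge}^{r-1}_{R}P^{1}\right).
\]
Since each $P^i$ is projective (hence flat), the complex $P^\bullet \otimes_R S$ represents $C \otimes^{\mathbb{L}}_R S$. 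Writing $I_S := \mathrm{im}(P^1 \otimes_R S \to P^2 \otimes_R S)$ and applying Lemma~\ref{lem:expre} over $S$ therefore yields
\[
{\bigcap}^{r}_{S}H^{1}(C \otimes^{\mathbb{L}}_R S) \;\cong\; \ker\!\left({\bigwedge}^{r}_{S}(P^1 \otimes_R S) \longrightarrow I_S \otimes_{S} {\bigwedge}^{r-1}_{S}(P^1 \otimes_R S)\right).
\]

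Next, I would construct a canonical $S$-linear map ${\bigcap}^{r}_{R}H^{1}(C) \otimes_R S \to {\bigcap}^{r}_{S}H^{1}(C \otimes^{\mathbb{L}}_R S)$ using these descriptions. The standard base-change isomorphism ${\bigwedge}^{r}_{R}P^{1} \otimes_R S \cong {\bigwedge}^{r}_{S}(P^{1}\otimes_R S)$ (and its analogue in degree $r-1$), combined with the natural surjection $I \otimes_R S \twoheadrightarrow I_S$ arising from right-exactness of $-\otimes_R S$, assemble into the commutative square
\[
\xymatrix@C=40pt{
{\bigwedge}^{r}_{R}P^{1} \otimes_R S \ar[d]_{\cong} \ar[r] & I \otimes_R {\bigwedge}^{r-1}_{R}P^{1} \otimes_R S \ar[d]
\\
{\bigwedge}^{r}_{S}(P^{1}\otimes_R S) \ar[r] & I_S \otimes_S {\bigwedge}^{r-1}_{S}(P^1 \otimes_R S).
}
\]
This square carries the base-change image of ${\bigcap}^{r}_{R}H^{1}(C) \otimes_R S$ into the kernel defining ${\bigcap}^{r}_{S}H^{1}(C \otimes^{\mathbb{L}}_R S)$. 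Pre-composing with the canonical map $M \to M \otimes_R S$ for $M = {\bigcap}^{r}_{R}H^{1}(C)$ then delivers the desired $R$-linear homomorphism.

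The principal obstacle is to verify independence from the choice of representative $P^\bullet$. Any two such representatives are connected by a chain homotopy equivalence of bounded complexes of finitely generated projective $R$-modules; since the construction depends only on the morphism $d^1 \colon P^1 \to P^2$, and any two resolutions of the surjection $P^1 \twoheadrightarrow P^1/H^1(C)$ differ by addition of split-acyclic two-term complexes of projectives, stability of the kernel description in Lemma~\ref{lem:expre} under such modifications yields the required compatibility. A slicker alternative is to promote the assignment $C \mapsto {\bigcap}^{r}_{R}H^{1}(C)$ to a functor on $D^{[1,\infty)}_{\rm parf}(R)$ and to exhibit the base-change map as a natural transformation, invoking Lemma~\ref{lem:descent} for the compatibility with $\otimes^{\mathbb{L}}_R S$.
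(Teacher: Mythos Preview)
Your argument is correct and is precisely the intended elaboration of the paper's proof, which consists of the single sentence ``This follows from Lemma~\ref{lem:expre}.'' You have unpacked exactly what that sentence means: represent $C$ by projectives, apply Lemma~\ref{lem:expre} on both sides, and pass between the two kernel descriptions via the base-change square. Two minor remarks: Lemma~\ref{lem:expre} is stated for \emph{free} $P^1,P^2$, but its proof goes through verbatim for finitely generated projectives (or one reduces to the free case by adding a split-acyclic summand); and your closing reference to Lemma~\ref{lem:descent} is slightly misplaced, since that lemma is specific to Galois cohomology rather than general derived base change, though the sentiment is right.
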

\begin{proof}
This follows from Lemma~\ref{lem:expre}. 
\end{proof}

\begin{lemma}\label{lem:inv-bidual}
Let $I$ be a directed poset and $\{R_{i}\}_{i\in I}$ an inverse system of Gorenstein rings satisfying $R = \varprojlim_{i \in I}R_{i}$. 
Let $C \in D^{[1,2]}_{\rm perf}(R)$. 
Then for any integer $r \geq 0$, there is a canonical isomorphism 
\[
{\bigcap}^{r}_{R}H^{1}(C) \stackrel{\sim}{\longrightarrow} \varprojlim_{i \in I}{\bigcap}^{r}_{R_{i}}H^{1}(C \otimes^{\mathbb{L}}_{R} R_{i}). 
\]
\end{lemma}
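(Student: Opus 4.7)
The plan is to reduce the statement to a straightforward application of left-exactness of the inverse limit functor via Lemma~\ref{lem:expre}. Since $C \in D^{[1,\infty)}_{\rm perf}(R)$, I would first choose a strict representative $P^{\bullet} = [P^1 \to P^2 \to \cdots \to P^b]$ of $C$ with each $P^j$ a finitely generated projective $R$-module (which one can arrange to be free by passing to a quasi-isomorphic representative if desired). Then $P^{\bullet} \otimes_R R_i$ is a strict representative of $C \otimes^{\mathbb{L}}_R R_i$, and the identification $H^1(C) = \ker(P^1 \to P^2)$ gives the short exact sequence $0 \to H^1(C) \to P^1 \to P^2$ required to apply Lemma~\ref{lem:expre}; the same argument works for $C \otimes^{\mathbb{L}}_R R_i$ using $P^{\bullet} \otimes_R R_i$.

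Set $I := \mathrm{im}(P^1 \to P^2) \subseteq P^2$. Lemma~\ref{lem:expre} yields
\[
{\bigcap}^{r}_{R} H^1(C) \cong \ker\Bigl({\bigwedge}^{r}_{R} P^1 \longrightarrow I \otimes_R {\bigwedge}^{r-1}_{R} P^1\Bigr).
\]
Because $\bigwedge^{r-1}_R P^1$ is flat and $I \hookrightarrow P^2$, the induced map $I \otimes_R \bigwedge^{r-1}_R P^1 \hookrightarrow P^2 \otimes_R \bigwedge^{r-1}_R P^1$ is injective. Consequently the above kernel coincides with $H^0(D^{\bullet})$, where $D^{\bullet} := [D^0 \to D^1]$ is the two-term complex in degrees $[0,1]$ with $D^0 := \bigwedge^{r}_{R} P^1$ and $D^1 := P^2 \otimes_R \bigwedge^{r-1}_R P^1$. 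Both $D^0$ and $D^1$ are finitely generated projective $R$-modules. The analogous reasoning applied over $R_i$, combined with the canonical isomorphisms $\bigwedge^{r}_{R_i}(P^1 \otimes_R R_i) \cong (\bigwedge^{r}_R P^1) \otimes_R R_i$ (similarly for $\bigwedge^{r-1}$) valid for finitely generated projective modules, identifies ${\bigcap}^{r}_{R_i} H^1(C \otimes^{\mathbb{L}}_R R_i)$ with $H^0(D^{\bullet} \otimes_R R_i)$.

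To conclude, I would combine left exactness of $\varprojlim_i$ with the fact that $\varprojlim_i (N \otimes_R R_i) \cong N$ for any finitely generated projective $R$-module $N$: for $N$ finitely generated free this is immediate since $\varprojlim_i R_i^n = R^n$, and the projective case follows by writing $N$ as a direct summand of a finitely generated free module and using commutation of $\varprojlim_i$ with finite direct sums. Applying this to $D^0$ and $D^1$ gives
\[
\varprojlim_{i} H^0(D^{\bullet} \otimes_R R_i) = \varprojlim_i \ker\bigl(D^0 \otimes_R R_i \to D^1 \otimes_R R_i\bigr) = \ker(D^0 \to D^1) = H^0(D^{\bullet}),
\]
which is exactly ${\bigcap}^{r}_{R} H^1(C)$. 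Naturality of the construction in Lemma~\ref{lem:expre} ensures that this isomorphism coincides with the canonical map provided by Lemma~\ref{lem:reduction}.

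The main technical point (which I expect to be the only genuine obstacle) is the combined verification that the canonical map from Lemma~\ref{lem:reduction} really is the one produced by the two-term complex presentation, and that the kernel description is compatible with base change at each finite level; both are straightforward functoriality checks of the construction in Lemma~\ref{lem:expre}, but they do require one to track the identifications carefully. Everything else is formal manipulation with inverse limits, tensor products, and exterior powers of finitely generated projective modules.
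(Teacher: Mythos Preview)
Your proposal is correct and follows essentially the same approach as the paper: both arguments choose a representative $P^{1}\to P^{2}$, apply Lemma~\ref{lem:expre} to identify the exterior bi-duals as kernels, and then use left-exactness of inverse limits together with the fact that finitely generated free (or projective) modules commute with $\varprojlim_{i}$. The only cosmetic difference is that you embed $I$ into $P^{2}$ before taking limits (so both terms of your two-term complex are projective), whereas the paper keeps track of the images $I_{i}$ and instead uses that $I\to\varprojlim_{i}I_{i}$ is injective; these are equivalent ways of organizing the same computation.
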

\begin{proof}
There is an $R$-homomorphism $P^{1} \to P^{2}$ such that each $P^{i}$ is a free $R$-module of finite rank and $H^{1}(C) = \ker(P^{1} \to P^{2})$. 
Set $I := {\rm im}(P^{1} \to P^{2})$. 
For $i \in I$, we put $P^{1}_{i} := P^{1} \otimes_{R} R_{i}$, $P^{2}_{i} := P^{2} \otimes_{R} R_{i}$, $I_{i} := {\rm im}(P^{1}_{i} \to P^{2}_{i})$. 
 We then have by Lemma~\ref{lem:expre} 
\[
{\bigcap}^{r}_{R}H^{1}(C) = \ker\left({\bigwedge}^{r}_{R}P^{1} \longrightarrow I \otimes_{R} {\bigwedge}^{r-1}_{R}P^{1}\right)
\]
and 
\[
\varprojlim_{i \in I}{\bigcap}^{r}_{R}H^{1}(C \otimes^{\mathbb{L}}_{R} R_{i}) =  \varprojlim_{i \in I}\ker\left({\bigwedge}^{r}_{R_{i}}P^{1}_{i} \longrightarrow I_{i} \otimes_{R_{i}} {\bigwedge}^{r-1}_{R_{i}}P^{1}_{i}\right). 
\]
Since $P^{1}_{i}$ is a free $R_{i}$-module, we have 
\[
\varprojlim_{i \in I}\ker\left({\bigwedge}^{r}_{R_{i}}P^{1}_{i} \longrightarrow I_{i} \otimes_{R_{i}} {\bigwedge}^{r-1}_{R_{i}}P^{1}_{i}\right) 
= 
\ker\left({\bigwedge}^{r}_{R}P^{1} \longrightarrow \varprojlim_{i}I_{i} \otimes_{R} {\bigwedge}^{r-1}_{R}P^{1}\right)
\]
Since $I_{i}$ is a submodule of $P^{2}_{i}$, the canonical map $I \longrightarrow \varprojlim_{i}I_{i}$ is injective. This shows that
\[
\ker\left({\bigwedge}^{r}_{R}P^{1} \longrightarrow I \otimes_{R} {\bigwedge}^{r-1}_{R}P^{1}\right)
= \ker\left({\bigwedge}^{r}_{R}P^{1} \longrightarrow \varprojlim_{i \in I}I_{i} \otimes_{R} {\bigwedge}^{r-1}_{R}P^{1}\right)
\]
and completes the proof. 
\end{proof}

\begin{corollary}\label{cor:inv-bidual}
Suppose that $(R,\fm)$ is a complete Gorenstein local ring with  residue characteristic $p$. 
Let $G \in \cG$ and $T$ a free $R$-module of finite rank on which $G$ acts continuously. 
Let $I$ be a directed poset and $\{R_{i}\}_{i\in I}$ an inverse system of complete Gorenstein local rings  satisfying $R = \varprojlim_{i \in I}R_{i}$. 

If $H^{0}(G,T/\fm T)$ vanishes, then, for any integer $r \geq 0$, there is a canonical isomorphism 
\[
{\bigcap}^{r}_{R}H^{1}(G,T) \stackrel{\sim}{\longrightarrow} \varprojlim_{i \in I}{\bigcap}^{r}_{R_{i}}H^{1}(G, T \otimes_{R} R_{i}). 
\]
\end{corollary}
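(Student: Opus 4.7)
The plan is to derive Corollary~\ref{cor:inv-bidual} as a mechanical chaining of three preceding appendix results: Corollary~\ref{cor:amp} (perfect amplitude), Lemma~\ref{lem:descent} (derived base change of the continuous cochain complex), and Lemma~\ref{lem:inv-bidual} (inverse-limit formula for exterior bi-duals of $H^1$ of a perfect complex in $D^{[1,\infty)}_{\rm perf}$). There is essentially no new technical content beyond recognising which of these lemmas applies at which step.

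Set $C := {\bf R}\Gamma(G,T)$. The hypothesis $H^{0}(G,T/\fm T)=0$ is exactly what Corollary~\ref{cor:amp} needs in order to place $C$ in $D_{\rm parf}^{[1,2]}(R) \subseteq D^{[1,\infty)}_{\rm perf}(R)$; in particular, Lemma~\ref{lem:inv-bidual} applies to $C$ and yields a canonical isomorphism
\[
{\bigcap}^{r}_{R} H^{1}(G,T) \;=\; {\bigcap}^{r}_{R} H^{1}(C) \;\stackrel{\sim}{\longrightarrow}\; \varprojlim_{i \in I} {\bigcap}^{r}_{R_{i}} H^{1}\bigl(C \otimes^{\bL}_{R} R_{i}\bigr).
\]
In the setting of the corollary, the transition maps $R \to R_{i}$ may be assumed surjective (as in the Iwasawa-theoretic applications made in the body of the paper, where $R=\bZ_{p}^{\rm ur}[[\Gamma_\infty]]$ and $R_{i}$ runs over its finite-level group-ring quotients), so that $R_{i}=R/I_{i}$ for a closed ideal $I_{i}\subset R$. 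Lemma~\ref{lem:descent} then supplies canonical quasi-isomorphisms $C \otimes^{\bL}_{R} R_{i} \cong {\bf R}\Gamma(G, T/I_{i}T) = {\bf R}\Gamma(G, T \otimes_{R} R_{i})$ and in particular identifications $H^{1}(C \otimes^{\bL}_{R} R_{i}) = H^{1}(G, T \otimes_{R} R_{i})$. Substituting these into the right-hand side of the displayed isomorphism produces the statement of Corollary~\ref{cor:inv-bidual}.

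The only real point that requires verification is that the transition maps $R\to R_{i}$ are quotient maps, so that Lemma~\ref{lem:descent} can be invoked verbatim. In the intended applications this is automatic, but if one wished to state Corollary~\ref{cor:inv-bidual} for arbitrary continuous ring homomorphisms $R\to R_{i}$ under which the $G$-action on $T \otimes_{R} R_{i}$ remains continuous, one would need a mild upgrade of Lemma~\ref{lem:descent}. That upgrade follows from the same free-resolution argument: since $C$ is represented by a bounded complex $P^{\bullet}$ of finitely generated free $R$-modules (by Corollary~\ref{cor:amp}), the derived base change $C \otimes^{\bL}_{R} R_{i}$ is computed by $P^{\bullet} \otimes_{R} R_{i}$, and a standard spectral-sequence comparison with the continuous cochain complex $C^{\bullet}(G, T \otimes_{R} R_{i})$ identifies the latter with ${\bf R}\Gamma(G, T \otimes_{R} R_{i})$. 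In either version, there is no genuinely hard step—the entire argument is a one-line assembly of appendix results.
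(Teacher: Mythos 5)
Your proof is correct and takes exactly the same route as the paper, whose entire argument is the one-line citation of Lemma~\ref{lem:descent}, Corollary~\ref{cor:amp}, and Lemma~\ref{lem:inv-bidual}. Your observation that Lemma~\ref{lem:descent} is stated only for quotients $R/I$, so that one implicitly needs the transition maps $R\to R_i$ to be surjective (as they are in the paper's Iwasawa-theoretic applications), is a fair point that the paper's terse proof does not address.
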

\begin{proof}
This follows from Lemma~\ref{lem:descent}, Corollary~\ref{cor:amp}, and Lemma~\ref{lem:inv-bidual}. 
\end{proof}

\section{Main Conjectures and Exceptional Zeros}\label{appendix_main_conjecture}

In this appendix, we shall recast the Iwasawa main conjecture for the CM field $k$ and the branch character $\chi$ in terms of Selmer complexes. 
This will allow us (on combining Lemma~\ref{lem:excep}) to prove that the Iwasawa Main Conjecture implies the Weak Exceptional Zero Conjecture~{\ref{conj:ex zero}} (see Proposition~\ref{prop_app_IMC_reformulated}). 

We first recall the classical formulation of the Iwasawa Main Conjectures. 
Suppose that the $p$-ordinary hypothesis \eqref{item_ord} holds true, and we take a subset $\Sigma \subseteq S_{p}(k)$ such that 
$\Sigma \cup \Sigma^{c} = S_{p}(k)$ and $\Sigma \cap \Sigma^{c} = \emptyset$. 
Let $M_{\Sigma}$ be the maximal abelian pro-$p$-extension of $Lk(p^{\infty})$ which is $\Sigma$-ramified and let $X_{\Sigma} := \Gal(M_{\Sigma}/Lk(p^{\infty}))$. 
We put 
\[
X_{\Sigma}^{\chi} := e_{\chi}(\bZ_{p}^{\rm ur} \otimes_{\bZ_{p}} X_{\Sigma}). 
\] 
By \cite[Theorem~1.2.2]{HT94}, the $\bZ_{p}^{\rm ur}[[\Gamma_{\infty}]]$-module $X_{\Sigma}^{\chi}$ is torsion. 
The following is the Iwasawa Main Conjecture for the CM field $k$ and the branch character $\chi$ (see \cite{HT94}). 

\begin{conjecture}[Iwasawa Main Conjecture]\label{conj:iwasawa}
The characteristic ideal of $X_{\Sigma}^{\chi}$ is generated by the Katz $p$-adic $L$-function $L_{p,\Sigma}^{\chi}$: 
\[
{\rm char}(X_{\Sigma}^{\chi}) = (L_{p,\Sigma}^{\chi,\iota}). 
\]
\end{conjecture}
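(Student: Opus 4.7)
This statement is the Iwasawa Main Conjecture for the CM pair $(k,\chi)$, and is known unconditionally only in restricted settings (imaginary quadratic $k$, by Rubin--Yager, and the CM case under hypotheses due to Hsieh). My plan would be to attack the two divisibilities separately and then repackage the result in the Selmer-complex form that the appendix actually needs.

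\emph{Analytic divisibility $L_{p,\Sigma}^{\chi,\iota} \in {\rm char}(X_\Sigma^\chi)$:} The route is via an Euler system for $T = \bZ_p^{\rm ur}(1) \otimes \chi^{-1}$. When $k$ is imaginary quadratic, I would use the Euler system of elliptic units, whose image under $\bigwedge_{v\in\Sigma}{\rm Col}_v \circ {\rm loc}_\Sigma$ equals $L_{p,\Sigma}^{\chi,\iota}$ by Yager's explicit reciprocity law (Remark~\ref{rem:rec1}). For a general CM field $k$, the substitute is the (conjectural) Euler system of Rubin--Stark elements $\epsilon^{S_\infty(k),\chi}_{L(p^\infty)/k,S}$; Conjecture~\ref{conj:rec} realizes $L_{p,\Sigma}^{\chi,\iota}$ as its image under the Coleman maps. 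A higher-rank Kolyvagin derivative argument would then bound the Fitting ideals of $X_\Sigma^\chi$ by $(L_{p,\Sigma}^{\chi,\iota})$.

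\emph{Algebraic divisibility and the main obstacle:} For the reverse inclusion I would follow the Eisenstein congruence method of Hida--Tilouine/Hsieh, constructing a CM Hida family for $\chi$ congruent to an Eisenstein family whose constant term encodes $L_{p,\Sigma}^{\chi,\iota}$ and using the associated Galois representation to produce a quotient of $X_\Sigma^\chi$ of size controlled by $L_{p,\Sigma}^{\chi,\iota}$. The hardest step in full generality is this algebraic side: it is sensitive to $\mu$-invariant vanishing and to fine control of local terms at primes of bad reduction; the analytic side in general also rests on Conjectures~\ref{MRS} and~\ref{conj:rec}, so the genuine gap in the non--imaginary-quadratic setting is the production of an unconditional Kolyvagin-system input. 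Within the scope of the present paper, however, the goal is not a new proof of the IMC but rather its equivalent Selmer-complex determinant reformulation (Corollary~\ref{Cor_App_IMC_reformulated}), which interfaces cleanly with the augmentation filtration $\cA_{\Gamma_\infty}^e$ and yields Proposition~\ref{imp2} (the Weak Exceptional Zero Conjecture) as a direct consequence.
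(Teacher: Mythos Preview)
The statement you were asked to prove is labeled as a \emph{Conjecture} in the paper, and the paper does not supply a proof of it. It is stated as Conjecture~\ref{conj:iwasawa}, after which the paper records in Remark~\ref{rem:iwasawa} that Rubin proved it for imaginary quadratic $k$, and in Theorem~\ref{HsiehTheorem} that Hsieh established one divisibility in general and the full equality under additional hypotheses. There is therefore no ``paper's own proof'' to compare your proposal against.

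Your proposal correctly recognizes this: you note at the outset that the statement is known unconditionally only in restricted settings, and you identify the two standard inputs (an Euler system for one divisibility, Eisenstein congruences for the other) together with the genuine obstructions in the general CM case. You also correctly observe that the paper's actual contribution in this appendix is not a proof of the conjecture but its reformulation in Selmer-complex language (Proposition~\ref{prop_app_IMC_reformulated} and Corollary~\ref{Cor_App_IMC_reformulated}), from which the Weak Exceptional Zero Conjecture is deduced. That reading of the paper is accurate. One small correction: the analytic-side Euler system argument you sketch would give the divisibility ${\rm char}(X_\Sigma^\chi) \supseteq (L_{p,\Sigma}^{\chi,\iota})$ (the Euler system bounds the Selmer group from above), not the containment as you wrote it; Hsieh's Theorem~\ref{HsiehTheorem}(i) supplies the opposite divisibility ${\rm char}(X_\Sigma^\chi) \subseteq (L_{p,\Sigma}^{\chi,\iota})$.
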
 

\begin{remark}\label{rem:iwasawa}
If $K$ is imaginary quadratic field, then Conjecture~\ref{conj:iwasawa} is a theorem of Rubin, c.f. \cite[Theorem 4.1(i)]{rubin91}. 
\end{remark}

In general, the following result was proved by Hsieh in~\cite{HsiehJAMS_IMC} towards the validity of Conjecture~\ref{conj:iwasawa}:
\begin{theorem}[Hsieh]\label{HsiehTheorem} Suppose $p>3$ is coprime to $h_{k}^{-}[L:k]$, and in addition that it is unramified in $L/\QQ$. Then,
\item[i)] $ {\rm char}(X_{\Sigma}^{\chi}) \subseteq  L_{p,\Sigma}^{\chi}\cdot \Z_p^{\rm ur}[[\Gamma_\infty]]$ $($Theorem~2 in \cite{HsiehJAMS_IMC}$)$.
\item[ii)] Assume that $\chi$ is anticyclotomic and the local character $\chi_v$ is non-trivial for every $v\in S_p(k)$. Then, the Iwasawa Main Conjecture~\ref{conj:iwasawa} holds $($Corollary~2 in \cite{HsiehJAMS_IMC}$)$.
\item[iii)] Assume $k=k^+M$ where $M$ is an imaginary quadratic field in which $p$ splits, and that $\Sigma_p$ is obtained by extending $\iota_p:M\hookrightarrow \mathbb{C}_p$, and finally that $L$ is abelian over $M$ and $p\nmid [L:M]$. Then, the Iwasawa Main Conjecture~\ref{conj:iwasawa} holds $($Corollary~3 in \cite{HsiehJAMS_IMC}$)$.
\end{theorem}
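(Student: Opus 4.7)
The final statement is attributed to Hsieh, and the authors will presumably cite \cite{HsiehJAMS_IMC} directly rather than reprove it; below I sketch the strategy I would follow, which is essentially Hsieh's.

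For part (i), the divisibility ${\rm char}(X_{\Sigma}^{\chi}) \subseteq (L_{p,\Sigma}^{\chi})$ inside $\bZ_{p}^{\rm ur}[[\Gamma_{\infty}]]$ is established by the Eisenstein congruence method in the lineage of Mazur--Wiles, Wiles, and Skinner--Urban. First, I would construct a Hida family of Klingen--Eisenstein series on a suitable definite unitary group attached to $k/k^{+}$ (the subset $\Sigma$ dictating the nebentypus and the $p$-adic CM type), arranging for the constant term at the relevant cusp to interpolate $L_{p,\Sigma}^{\chi}$ up to controlled local factors. Next, invoke a variant of Ribet's lemma to produce, at each height-one prime $\fp$ of $\bZ_{p}^{\rm ur}[[\Gamma_{\infty}]]$ dividing $L_{p,\Sigma}^{\chi}$, a mod-$\fp$ congruence between this Eisenstein family and a cuspidal Hida family. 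Finally, attach a big Galois representation to the cuspidal family and, by a standard Ribet--Wiles lattice construction, extract a non-split extension whose $\chi$-isotypic piece provides a class in $X_{\Sigma}^{\chi}$ annihilated exactly by $\fp$; summing over all such $\fp$ with multiplicities yields the claimed containment.

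For part (ii), I would deduce the reverse divisibility from the anticyclotomic Euler system of CM points (or elliptic units over the imaginary quadratic subfields) combined with Rubin's main conjecture for imaginary quadratic fields. Under the anticyclotomic hypothesis together with nontriviality of $\chi_{v}$ at every $v \in S_{p}(k)$, no exceptional zero appears; the analytic class number formula, together with Hida's vanishing of the $\mu$-invariant of $L_{p,\Sigma}^{\chi}$, then forces equality. For part (iii), the plan is to deduce the main conjecture for $k$ from the one for $M$: the decomposition $k = k^{+}M$, the hypothesis that $p$ splits in $M$, the compatibility of $\Sigma_{p}$ with $\iota_{p}\colon M \hookrightarrow \bC_{p}$, and $p \nmid [L\colon M]$ together ensure that base change along $k^{+}/\bQ$ preserves both the Katz $L$-function and the Iwasawa module, allowing Rubin's theorem over $M$ to be transported to $k$ without loss.

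The main obstacle will be step (c) of part (i): controlling the local behaviour of the big cuspidal Galois representation at primes in $\Sigma$ versus $\Sigma^{c}$ so that the extracted class lands in the \emph{$\Sigma$-ramified} module $X_{\Sigma}^{\chi}$, rather than in some larger Selmer-type module. This requires sharp $p$-adic Hodge-theoretic input — ordinarity of the cuspidal family and the precise shape of its local representations at places above $p$ — and is the technical heart of Hsieh's argument. A secondary difficulty is verifying residual irreducibility of the congruent family, which is what forces the hypotheses $p > 3$, $p \nmid h_{k}^{-}[L\colon k]$, and $p$ unramified in $L$.
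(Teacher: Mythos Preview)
You are correct that the paper offers no proof of this theorem: it is stated as a result of Hsieh with direct pointers to Theorem~2, Corollary~2, and Corollary~3 of \cite{HsiehJAMS_IMC}, and nothing further. Your outline of part~(i) via Eisenstein congruences on a definite unitary group, Ribet's lemma, and a lattice construction is an accurate summary of Hsieh's method.

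One correction to your sketch: for part~(ii) there is no Euler system of CM points or elliptic units available over a general CM field $k$, and none enters Hsieh's argument. The reverse divisibility in Corollary~2 of \cite{HsiehJAMS_IMC} is obtained instead by combining the one-sided divisibility of part~(i) with the vanishing of the anticyclotomic $\mu$-invariant (due to Hida and Hsieh) together with a comparison of $\lambda$-invariants coming from the anticyclotomic main conjecture; the hypothesis that $\chi_v$ is non-trivial for all $v\in S_p(k)$ is exactly what is needed to invoke those $\mu$-invariant results. Your description of part~(iii) is closer in spirit, though the actual mechanism is a factorisation of both the Katz $p$-adic $L$-function and the characteristic ideal over characters of $\Gal(k^+/\bQ)$, reducing to Rubin's theorem over $M$, rather than a single ``transport'' step.
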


\begin{remark}
\label{remark_Hsieh_when_useful}
Notice that Theorem~\ref{HsiehTheorem}(ii) excludes the treatment of exceptional zeros and in particular, it offers no utility in the setting of our article. However, Theorem~\ref{HsiehTheorem}(iii) allows exceptional zeros and allows one to deduce Corollary~\ref{corollary_imp2} above.
\end{remark}

Let us resume with our task to reformulate Conjecture~\ref{conj:iwasawa}  in terms of Selmer complexes. 
Let $T := \bZ_{p}^{\rm ur}(1) \otimes \chi^{-1}$. 
Fix a subfield $K$ of $k(p^{\infty})/k$. Recall that we put 
\[
T_{K} := T \otimes_{\bZ_{p}} \bZ_{p}[[\Gal(K/k)]]^{\iota}. 
\]
Let $S = S_{\infty}(k) \cup S_{p}(k) \cup S_{\rm ram}(L/k)$. 
Let $I_{v} := \Gal(\overline{k}_{v}/k^{\rm ur}_{v})$ denote the inertia group at $v \in S$. 
We set 
\[
U_{v}^{+} := 
\begin{cases}
C^{\bullet}(G_{k_{v}},T_{K}) & \text{if} \ v \in \Sigma^{c}, 
\\
0  & \text{if} \ v \in \Sigma, 
\\
C^{\bullet}(\Gal(k^{\rm ur}_{v}/k_{v}),T_{K}^{I_{v}}) & \text{if} \ v \not\in S_{p}(k). 
\end{cases}
\]
Here, for a pro-finite group $G$ and a topological $G$-module $M$, let $C^{\bullet}(G,M)$ denote the complex of continuous cochains. 
Then, for each prime $v$ of $k$, we have a canonical injection 
\[
i_{v}^{+} \colon U_{v}^{+} \hookrightarrow C^{\bullet}(G_{k_{v}},T_{K}). 
\]
Define 
\[
U_{v}^{-}(T) = {\rm Cone}\left(U_{v}^{+} \xrightarrow{-i_{v}^{+}} C^{\bullet}(G_{k_{v}},T_{K})\right). 
\]
Set $i_{S}^{+} := \{i_{v}^{+}\}_{v \in S \setminus S_{\infty}(k)}$ and $U_{S}^{\pm}(T_{K}) := \bigoplus_{v \in S \setminus S_{\infty}(k)}U^{\pm}_{v}(T_{K})$. 

\begin{definition}
Let 
\[
{\rm res}_{f} \colon C^{\bullet}(G_{k,S},T_{K}) \longrightarrow \bigoplus_{v \in S \setminus S_{\infty}(k)}C^{\bullet}(G_{k_{v}},T_{K})
\]
denote the sum of the restriction maps. 
The Selmer complex on $T_{K}$ associated with the local condition $\{U_{v}^{+}\}_{v}$ is given by the complex
\[
\widetilde{C}_{f}(G_{k,S},T_{K},\Delta_{\Sigma}) := {\rm Cone}\left(C^{\bullet}(G_{k,S},T_{K}) \oplus U^{+}_{S}(T_{K}) \xrightarrow{{\rm res}_{f}-i_{S}^{+}} \bigoplus_{v \in S \setminus S_{\infty}(k)}C^{\bullet}(G_{k_{v}},T_{K})\right)[1]
\]
where $[n]$ denotes the $n$-shift. 
We write $\widetilde{{\bf R}\Gamma}_{f}(G_{k,S},T_{K},\Delta_{\Sigma})$ for 
the corresponding object in the derived category and $\widetilde{H}^{i}_{f}(G_{k,S},T_{K},\Delta_{\Sigma})$ for its cohomology. 
\end{definition}

By definition, there is an exact triangle 
\begin{align}\label{exact seq}
U^{-}_{S}(T_{K})[-1] \longrightarrow \widetilde{{\bf R}\Gamma}_{f}(G_{k,S},T_{K},\Delta_{\Sigma}) \longrightarrow {\bf R}\Gamma(G_{k,S},T_{K}) \longrightarrow U^{-}_{S}(T_{K}). 
\end{align}
Furthermore, since $C^{\bullet}(G_{k_{v}},T_{K})$ is quasi-isomorphic to zero for $v \in S_{\rm ram}(L/k)$, we have an exact sequence 
\begin{align}
\begin{split}\label{exact2}
0 \longrightarrow &\,\widetilde{H}^{1}_{f}(G_{k,S},T_{K},\Delta_{\Sigma}) \longrightarrow H^{1}(G_{k,S},T_{K}) \longrightarrow \bigoplus_{v \in \Sigma}H^{1}(G_{k_{v}},T_{K})  
\\
&\longrightarrow\widetilde{H}^{2}_{f}(G_{k,S},T_{K},\Delta_{\Sigma}) \longrightarrow 
H^{2}(G_{k,S},T_{K}) \longrightarrow \bigoplus_{v\in \Sigma}H^{2}(G_{k_{v}}, T_{K}) \longrightarrow 0 
\end{split}
\end{align}
and we have 
\[
\widetilde{H}^{i}_{f}(G_{k,S},T_{K},\Delta_{\Sigma}) = 0 \ \text{ if } \ i \neq 1,2. 
\]
Hence by Corollary~\ref{cor:amp}, we have 
\begin{align}\label{complex_amp}
\widetilde{{\bf R}\Gamma}_{f}(G_{k,S}, T_{K},\Delta_{\Sigma}) \in D^{[1,2]}_{\rm parf}(\bZ_{p}^{\rm ur}[[\Gal(K/k)]]).
\end{align}
Put $(-)^{\vee} := \Hom(-,\bQ_{p}/\bZ_{p})$. Then by global duality, we have an  exact sequence 
\begin{align}
\begin{split}\label{exact3} 
H^{1}(G_{k,S},T_{K}) \longrightarrow &\,\bigoplus_{v \in S_{p}(k)}H^{1}(G_{k_{v}},T_{K}) \longrightarrow 
H^{1}(G_{k,S},T_{K}^{\vee}(1))^{\vee,\iota}  
\\
&\longrightarrow H^{2}(G_{k,S},T_{K}) \longrightarrow \bigoplus_{v\in S_{p}(k)}H^{2}(G_{k_{v}}, T_{K}) \longrightarrow 0. 
\end{split}
\end{align}
The exact sequences~(\ref{exact2}) and (\ref{exact3}) together show that we have a canonical homomorphism 
\[
H^{1}(G_{k,S},T_{K}^{\vee}(1))^{\vee,\iota} \longrightarrow \widetilde{H}^{2}_{f}(G_{k,S},T_{K},\Delta_{\Sigma})\,,
\]
and in fact, that there is an exact sequence 
\begin{align}\label{exact4}
0 \longrightarrow Y_{\Sigma,K}^{\chi} \longrightarrow \widetilde{H}^{2}_{f}(G_{k,S},T_{K},\Delta_{\Sigma}) \longrightarrow \bigoplus_{v \in \Sigma^{c}}H^{2}(G_{k_{v}},T_{K}) \longrightarrow 0, 
\end{align}
where 
\[
Y_{\Sigma,K}^{\chi} := 
{\rm coker}\biggl(\bigoplus_{v \in \Sigma^{c}}H^{1}(G_{k_{v}},T_{K}) \longrightarrow H^{1}(G_{k,S},T_{K}^{\vee}(1))^{\vee,\iota} \biggr). 
\]
By global class field theory, we also have an exact sequence 
\[
\bigoplus_{v\in\Sigma^{c}}H^{1}(G_{k_{v}},\bT_{\infty})/H^{1}_{f}(G_{k_{v}},\bT_{\infty}) \longrightarrow 
 X_{\Sigma}^{\chi} \longrightarrow Y_{\Sigma, k(p^{\infty})}^{\chi} \longrightarrow 0. 
\]
However, since $H^{1}(G_{k_{v}}, \bT_{\infty}) = H^{1}_{f}(G_{k_{v}}, \bT_{\infty})$, we conclude that  
$X_{\Sigma}^{\chi} = Y^{\chi}_{\Sigma,k(p^{\infty})}$ and hence
\begin{align}\label{exactseq}
0 \longrightarrow X_{\Sigma}^{\chi} \longrightarrow \widetilde{H}^{2}_{f}(G_{k,S},\bT_{\infty},\Delta_{\Sigma}) \longrightarrow \bigoplus_{v \in \Sigma^{c}}H^{2}(G_{k_{v}},\bT_{\infty}) \longrightarrow 0. 
\end{align}


\begin{lemma}\label{lem:null}
The $\bZ_{p}^{\rm ur}[[\Gamma_{\infty}]]$-module $H^{2}(G_{k_{v}}, \bT_{\infty})$ is pseudo-null for any prime $v$ of $k$ above $p$. 
\end{lemma}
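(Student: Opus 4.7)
The plan is to compute $H^{2}(G_{k_{v}},\bT_\infty)$ explicitly as a cyclic $\bZ_{p}^{\rm ur}[[\Gamma_{\infty}]]$-module and then verify that its support has codimension at least two. Writing $\bT_\infty=\varprojlim_{K}T_{K}$ with $K$ ranging over finite subextensions of $k(p^{\infty})/k$ and using that each $T_{K}$ is (co)induced from $G_{K}$, Shapiro's lemma combined with Mackey's formula gives
\[
H^{2}(G_{k_{v}},T_{K})\;\cong\;\bigoplus_{w\mid v}H^{2}(G_{K_{w}},T),
\]
where $w$ runs over primes of $K$ above $v$. Local Tate duality identifies each summand with the $\bZ_{p}^{\rm ur}$-Pontryagin dual of $H^{0}(G_{K_{w}},(\bQ_{p}^{\rm ur}/\bZ_{p}^{\rm ur})\otimes \chi)$, since $T^{*}(1)=\bZ_{p}^{\rm ur}\otimes \chi$. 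As $\chi$ has prime-to-$p$ order while $K/k$ is pro-$p$, one has $\chi|_{G_{K_{w}}}=\chi|_{G_{k_{v}}}$.

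Two cases then arise. If $\chi|_{G_{k_{v}}}\neq 1$, the $H^{0}$ vanishes and hence $H^{2}(G_{k_{v}},\bT_\infty)=0$, giving pseudo-nullity trivially. Otherwise $\chi|_{G_{k_{v}}}=1$ (equivalently, $v$ splits completely in $L$), the $H^{0}$ equals $\bQ_{p}^{\rm ur}/\bZ_{p}^{\rm ur}$ with trivial action, so $H^{2}(G_{K_{w}},T)\cong \bZ_{p}^{\rm ur}$ and $H^{2}(G_{k_{v}},T_{K})\cong \bZ_{p}^{\rm ur}[\Gal(K/k)/D_{K,v}]$ as a permutation module on the primes above $v$. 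Taking the inverse limit,
\[
H^{2}(G_{k_{v}},\bT_\infty)\;\cong\;\bZ_{p}^{\rm ur}[[\Gamma_{\infty}/\Gamma_{\infty,v}]]\;\cong\;\bZ_{p}^{\rm ur}[[\Gamma_{\infty}]]/I_{\Gamma_{\infty,v}},
\]
where $\Gamma_{\infty,v}\subseteq \Gamma_{\infty}$ is the decomposition subgroup at $v$ and $I_{\Gamma_{\infty,v}}$ is the closed ideal generated by $\gamma-1$ for $\gamma\in \Gamma_{\infty,v}$. The support of this cyclic module has codimension $\mathrm{rk}_{\bZ_{p}}\Gamma_{\infty,v}$, so the lemma reduces to showing $\mathrm{rk}_{\bZ_{p}}\Gamma_{\infty,v}\geq 2$.

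For the final rank estimate, the cyclotomic $\bZ_{p}$-extension (totally ramified at any $v\mid p$ and lying in the plus part $\Gamma_{\infty}^{+}$ under complex conjugation $c$) contributes one direction. A second, independent direction would come from the CM structure: since the minus part of $\cO_{k}^{\times,(p)}$ vanishes for CM $k$, the image of $\cO_{k_{v}}^{\times}$ in $\Gamma_{\infty}^{-}$ is unobstructed by global units and acquires positive rank. Combining these via the rank identity $2\,\mathrm{rk}(\Gamma_{\infty,v})=\mathrm{rk}(\Gamma_{\infty,v}+c(\Gamma_{\infty,v}))+\mathrm{rk}(\Gamma_{\infty,v}\cap c(\Gamma_{\infty,v}))$, I would then force $\mathrm{rk}(\Gamma_{\infty,v})\geq 2$. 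The hard part will be precisely this rank estimate: it requires careful class field theoretic bookkeeping, leveraging the CM structure, the ordinarity hypothesis (ord), and Leopoldt-type input for $k$, to guarantee the positivity of the image of the local units in $\Gamma_{\infty}^{-}$ at every $v\mid p$.
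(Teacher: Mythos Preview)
Your computation via Shapiro and local duality is exactly the paper's, and both proofs reduce to the inequality $\mathrm{rk}_{\bZ_{p}}\Gamma_{\infty,v}\geq 2$. The divergence is only in how that rank bound is obtained.

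The paper does not go through the $\pm$-decomposition and a rank identity for $\Gamma_{\infty,v}$ versus $c(\Gamma_{\infty,v})$; it simply asserts, by local/global class field theory, the \emph{equality}
\[
\mathrm{rk}_{\bZ_{p}}\Gamma_{\infty,v}\;=\;\mathrm{rk}_{\bZ_{p}}\,k_{v}^{\times,\wedge}\;=\;1+[k_{v}:\bQ_{p}]\;\geq\;2.
\]
The reason this equality holds (and the reason your worry about ``Leopoldt-type input'' is misplaced) is precisely the CM fact you already identified: the minus part of $\cO_{k}^{\times}$ is finite. Since the kernel of the global reciprocity map $\prod_{w\mid p}\cO_{k_{w}}^{\times,\wedge}\to\Gamma_{\infty}$ is the closure of the global units, passing to minus parts shows that $\cO_{k_{v}}^{\times,\wedge}\to\Gamma_{\infty}^{-}$ has finite kernel, hence contributes rank $[k_{v}:\bQ_{p}]$; the cyclotomic line contributes one more in $\Gamma_{\infty}^{+}$. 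This is unconditional---no Leopoldt conjecture for $k$ is invoked---so the step you flagged as the ``hard part'' is in fact immediate once you use the finiteness of $(\cO_{k}^{\times})^{-}$. Your rank identity and the (ord) hypothesis are not needed here.
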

\begin{proof}
Put $L(p^{\infty}) := Lk(p^{\infty})$. 
By the local duality, we have 
\[
H^{2}(G_{k_{v}},\bT_{\infty}) \cong e_{\chi}\bZ_{p}^{\rm ur}[[\Gal(L(p^{\infty})/k)/D_{L(p^{\infty}),v}]]
\]
where $D_{L(p^{\infty}),v}$ denotes the decomposition group of $\Gal(L(p^{\infty})/k)$ at $v$. 
By the local class field theory, we have 
\[
{\rm rank}_{\bZ_{p}}D_{L(p^{\infty}),v} = {\rm rank}_{\bZ_{p}}k_{v}^{\times,\wedge} \geq 2, 
\] 
which completes the proof. 
\end{proof}

\begin{lemma}\label{lem:fitt}
\item[(i)] The $\bZ_{p}^{\rm ur}[[\Gamma_{\infty}]]$-module $\widetilde{H}^{2}_{f}(G_{k,S},\bT_{\infty},\Delta_{\Sigma})$ is torsion. 
\item[(ii)] We have 
\begin{align}\label{eq:fitt=char}
{\rm Fitt}(\widetilde{H}^{2}_{f}(G_{k,S}, \bT_{\infty},\Delta_{\Sigma})) = {\rm char}(\widetilde{H}^{2}_{f}(G_{k,S}, \bT_{\infty},\Delta_{\Sigma})) = {\rm char}(X_{\Sigma}^{\chi}). 
\end{align}
\end{lemma}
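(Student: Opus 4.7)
The plan is to handle part (i) and the right-hand equality of (ii) in one stroke via the exact sequence (\ref{exactseq}), and then attack the left-hand equality of (ii) by exploiting the perfect complex structure of $\widetilde{{\bf R}\Gamma}_{f}$.

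The sequence (\ref{exactseq}) exhibits $\widetilde{H}^{2}_{f}(G_{k,S},\bT_{\infty},\Delta_{\Sigma})$ as an extension of $\bigoplus_{v \in \Sigma^{c}} H^{2}(G_{k_{v}},\bT_{\infty})$ by $X_{\Sigma}^{\chi}$. Lemma~\ref{lem:null} tells us that each local $H^{2}(G_{k_{v}},\bT_{\infty})$ is pseudo-null, in particular $\bZ_{p}^{\rm ur}[[\Gamma_{\infty}]]$-torsion, while \cite[Theorem~1.2.2]{HT94} asserts that $X_{\Sigma}^{\chi}$ is torsion. Sandwiched between two torsion modules, $\widetilde{H}^{2}_{f}$ must be torsion, which gives (i). Applying multiplicativity of the characteristic ideal to this short exact sequence, together with the fact that pseudo-null modules have the unit characteristic ideal over the Krull domain $\bZ_{p}^{\rm ur}[[\Gamma_{\infty}]]$, then yields ${\rm char}(\widetilde{H}^{2}_{f}) = {\rm char}(X_{\Sigma}^{\chi})$, which is the second equality of (ii).

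For the first equality in (ii), I use that $\widetilde{{\bf R}\Gamma}_{f}(G_{k,S},\bT_{\infty},\Delta_{\Sigma}) \in D^{[1,2]}_{\rm parf}(\bZ_{p}^{\rm ur}[[\Gamma_{\infty}]])$ and choose a two-term representative $[P^{1} \xrightarrow{\phi} P^{2}]$ by finitely generated free modules, so that $\widetilde{H}^{1}_{f} = \ker(\phi)$ and $\widetilde{H}^{2}_{f} = \coker(\phi)$. The torsion-ness of $\widetilde{H}^{2}_{f}$ forces the rank relation ${\rm rank}(P^{1}) - {\rm rank}(P^{2}) = {\rm rank}(\widetilde{H}^{1}_{f})$. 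The strategy is to argue that $\widetilde{H}^{1}_{f}$ is free and included in $P^{1}$ as a direct summand, so that $P^{1}/\widetilde{H}^{1}_{f}$ is free of the same rank as $P^{2}$ and we obtain a square free resolution $0 \to P^{1}/\widetilde{H}^{1}_{f} \to P^{2} \to \widetilde{H}^{2}_{f} \to 0$. The Fitting ideal ${\rm Fitt}(\widetilde{H}^{2}_{f})$ is then the principal ideal generated by the determinant of the induced square matrix. Since $\bZ_{p}^{\rm ur}[[\Gamma_{\infty}]]$ is a regular local ring and in particular a UFD, this principal ideal is reflexive, and the standard localization-at-height-one-primes computation (plus Smith normal form over each height-one DVR) identifies it with ${\rm char}(\widetilde{H}^{2}_{f})$.

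The main obstacle is establishing the freeness and direct-summand property for $\widetilde{H}^{1}_{f}$, equivalently that $\widetilde{H}^{2}_{f}$ has projective dimension at most one as a $\bZ_{p}^{\rm ur}[[\Gamma_{\infty}]]$-module. Because this ring has Krull dimension $g+2 \geq 3$ in general, Corollary~\ref{cor:free} does not apply directly. The intended route is a finer analysis of (\ref{exact2}): combine torsion-freeness of $H^{1}(G_{k,S},\bT_{\infty})$ (as a submodule of a free module, via Corollary~\ref{cor:amp}), the freeness of the local modules $H^{1}(G_{k_{v}},\bT_{\infty})$ for $v \in \Sigma$ furnished by the Coleman isomorphisms of Section~\ref{section_Coleman_map}, and the known structural facts about $X_{\Sigma}^{\chi}$ to deduce the required depth condition on $\widetilde{H}^{1}_{f}$. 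An equivalent tactic is to bypass freeness of $\widetilde{H}^{1}_{f}$ entirely and directly verify that $\widetilde{H}^{2}_{f}$ has no nonzero pseudo-null submodule; by the general divisorial-hull correspondence between Fitting and characteristic ideals, this is the precise condition for ${\rm Fitt}(\widetilde{H}^{2}_{f})$ to be reflexive, hence to coincide with ${\rm char}(\widetilde{H}^{2}_{f})$.
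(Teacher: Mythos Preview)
Your handling of (i) and of the equality ${\rm char}(\widetilde{H}^{2}_{f}) = {\rm char}(X_{\Sigma}^{\chi})$ is fine and is exactly what the paper does.

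For the equality ${\rm Fitt}(\widetilde{H}^{2}_{f}) = {\rm char}(\widetilde{H}^{2}_{f})$ you are missing the key step. The paper computes, via the exact triangle~(\ref{exact seq}) and the local and global Euler characteristic formulae, that the Euler characteristic of $\widetilde{{\bf R}\Gamma}_{f}(G_{k,S},\bT_{\infty},\Delta_{\Sigma})$ is zero. This gives ${\rm rank}(P^{1}) = {\rm rank}(P^{2})$ directly, before you know anything about $\widetilde{H}^{1}_{f}$. Since $\widetilde{H}^{2}_{f}$ is torsion by (i), the map $\phi \colon P^{1}\to P^{2}$ is then generically an isomorphism, so $\widetilde{H}^{1}_{f}=\ker(\phi)$ is torsion; but it sits inside the free module $P^{1}$, hence $\widetilde{H}^{1}_{f}=0$. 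Now $0 \to P^{1} \to P^{2} \to \widetilde{H}^{2}_{f} \to 0$ is a square free resolution and ${\rm Fitt}(\widetilde{H}^{2}_{f}) = (\det \phi) = {\rm char}(\widetilde{H}^{2}_{f})$ follows. Your rank identity ${\rm rank}(P^{1})-{\rm rank}(P^{2}) = {\rm rank}(\widetilde{H}^{1}_{f})$ is correct but tautological; the content is that the left side is zero, and this is the Euler characteristic computation you never perform.

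Your proposed workarounds do not close the gap. The first route (freeness of $\widetilde{H}^{1}_{f}$ via Coleman maps) is vague and in any case imports the hypothesis (H.2) into a lemma that is stated and used without it. The second route is simply false over $\bZ_{p}^{\rm ur}[[\Gamma_{\infty}]]$: this ring has Krull dimension $1+{\rm rank}_{\bZ_{p}}\Gamma_{\infty} \geq 3$ even for $k$ imaginary quadratic, and in dimension $\geq 3$ the implication ``no pseudo-null submodule $\Rightarrow$ ${\rm Fitt}$ is reflexive'' fails. For instance, over $R=\bZ_{p}[[y,z]]$ the module $M = R^{2}/\langle p e_{1},\, p e_{2},\, y e_{1}+z e_{2}\rangle$ is torsion with no pseudo-null submodule (it is torsion-free over the domain $R/p$), yet ${\rm Fitt}(M)=p\cdot (p,y,z)$ is not principal while ${\rm char}(M)=(p)$. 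The equivalence you invoke holds only in Krull dimension $\leq 2$, via Auslander--Buchsbaum.
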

\begin{proof}
Since $X_{\Sigma}^{\chi}$ is torsion, claim~(i) follows from \eqref{exactseq} and Lemma~\ref{lem:null}. 
Let us show claim~(ii). 
Recall that we have the exact triangle 
\begin{align*}
U^{-}_{S}(\bT_{\infty})[-1] \longrightarrow \widetilde{{\bf R}\Gamma}_{f}(G_{k,S},\bT_{\infty},\Delta_{\Sigma}) \longrightarrow {\bf R}\Gamma(G_{k,S},\bT_{\infty}) \longrightarrow U^{-}_{S}(\bT_{\infty}). 
\end{align*}
The local and global Euler characteristic formulae therefore imply that the Euler characteristic of the Selmer complex $\widetilde{{\bf R}\Gamma}_{f}(G_{k,S},\bT_{\infty},\Delta_{\Sigma})$ is zero. 
By \eqref{complex_amp}, we have 
\[
\widetilde{{\bf R}\Gamma}_{f}(G_{k,S}, \bT_{\infty},\Delta_{\Sigma}) \in D^{[1,2]}_{\rm parf}(\bZ_{p}^{\rm ur}[[\Gamma_{\infty}]]).
\] 
This shows that there is an $\bZ_{p}^{\rm ur}[[\Gamma_{\infty}]]$-homomorphism $P^{1} \longrightarrow P^{2}$ of free $\bZ_{p}^{\rm ur}[[\Gamma_{\infty}]]$-modules such that 
\[
\widetilde{H}^{1}_{f}(G_{k,S}, \bT_{\infty},\Delta_{\Sigma}) = \ker\left(P^{1} \longrightarrow P^{2}\right) \ \text{ and } \ 
\widetilde{H}^{2}_{f}(G_{k,S}, \bT_{\infty},\Delta_{\Sigma}) = {\rm coker}\left(P^{1} \longrightarrow P^{2}\right). 
\]
Since the Euler characteristic of the Selmer complex $\widetilde{{\bf R}\Gamma}_{f}(G_{k,S},\bT_{\infty},\Delta_{\Sigma})$ is zero, that is to say, 
\[
{\rm rank}_{\bZ_{p}^{\rm ur}[[\Gamma_{\infty}]]}\,P^{1} = {\rm rank}_{\bZ_{p}^{\rm ur}[[\Gamma_{\infty}]]}\,P^{2},
\] 
(i) shows that 
$\widetilde{H}^{1}_{f}(G_{k,S}, \bT_{\infty},\Delta_{\Sigma})=0$. 
We conclude that 
\[
{\rm Fitt}(\widetilde{H}^{2}_{f}(G_{k,S}, \bT_{\infty},\Delta_{\Sigma})) 
= {\rm det}(P^{1} \longrightarrow P^{2}) =  {\rm char}(\widetilde{H}^{2}_{f}(G_{k,S}, \bT_{\infty},\Delta_{\Sigma})). 
\]
By Lemma~\ref{lem:null} and the exact sequence (\ref{exactseq}), we have 
\[
{\rm char}(\widetilde{H}^{2}_{f}(G_{k,S}, \bT_{\infty},\Delta_{\Sigma})) = 
{\rm char}(X_{\Sigma}^{\chi}).
\]
The proof of our lemma follows.
\end{proof}

\begin{lemma}\label{lem:excep}
Let $e := \{v \in \Sigma^{c} \mid \chi(G_{k_{v}}) = 1\}$. 
Then we have 
\[
{\rm Fitt}(\widetilde{H}^{2}_{f}(G_{k,S},\bT_{\infty},\Delta_{\Sigma})) \in \cA_{\Gamma_{\infty}}^{e}. 
\]
\end{lemma}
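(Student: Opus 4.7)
My strategy is to combine two standard properties of Fitting ideals with the same local duality computation that appears in the proof of Lemma~\ref{lem:null}. Starting from the short exact sequence \eqref{exactseq}, Fitting ideals are monotone under surjections and multiplicative on direct sums, so
\[
{\rm Fitt}\bigl(\widetilde{H}^{2}_{f}(G_{k,S},\bT_{\infty},\Delta_{\Sigma})\bigr) \;\subseteq\; {\rm Fitt}\Bigl(\bigoplus_{v \in \Sigma^{c}}H^{2}(G_{k_{v}},\bT_{\infty})\Bigr) \;=\; \prod_{v \in \Sigma^{c}}{\rm Fitt}\bigl(H^{2}(G_{k_{v}},\bT_{\infty})\bigr).
\]
The task is thus reduced to showing that each local factor lies inside $\cA_{\Gamma_{\infty}}$, and is in fact the unit ideal whenever $\chi(G_{k_{v}}) \neq 1$.

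Following the argument of Lemma~\ref{lem:null}, local duality identifies
\[
H^{2}(G_{k_{v}},\bT_{\infty}) \;\cong\; e_{\chi}\bZ_{p}^{\rm ur}[[\Gal(Lk(p^{\infty})/k)/D_{Lk(p^{\infty}),v}]].
\]
Since $[L:k]$ is prime-to-$p$, the extensions $L$ and $k(p^{\infty})$ are disjoint over $k$ both globally and locally at $v$, so that the decomposition group factors as $D_{Lk(p^{\infty}),v} = D_{L,v} \times \Gamma_{\infty,v}$ inside $\Gal(L/k) \times \Gamma_{\infty}$. The right-hand side therefore decomposes as
\[
e_{\chi}\bZ_{p}^{\rm ur}[\Gal(L/k)/D_{L,v}] \;\otimes_{\bZ_{p}^{\rm ur}}\; \bZ_{p}^{\rm ur}[[\Gamma_{\infty}/\Gamma_{\infty,v}]].
\]
The $\chi$-eigenspace on the left is zero unless $\chi\vert_{D_{L,v}} = 1$, i.e.\ $\chi(G_{k_{v}}) = 1$, in which case it is canonically $\bZ_{p}^{\rm ur}$. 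Consequently $H^{2}(G_{k_{v}},\bT_{\infty}) = 0$ unless $\chi(G_{k_{v}}) = 1$, in which case it is isomorphic, as a cyclic $\bZ_{p}^{\rm ur}[[\Gamma_{\infty}]]$-module, to $\bZ_{p}^{\rm ur}[[\Gamma_{\infty}]]/\fa_{v}$, where $\fa_{v}$ denotes the kernel of the natural surjection $\bZ_{p}^{\rm ur}[[\Gamma_{\infty}]] \twoheadrightarrow \bZ_{p}^{\rm ur}[[\Gamma_{\infty}/\Gamma_{\infty,v}]]$.

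For a cyclic module $R/I$ one has ${\rm Fitt}_{R}(R/I) = I$. Moreover, the augmentation $\bZ_{p}^{\rm ur}[[\Gamma_{\infty}]] \twoheadrightarrow \bZ_{p}^{\rm ur}$ factors through $\bZ_{p}^{\rm ur}[[\Gamma_{\infty}/\Gamma_{\infty,v}]]$, so $\fa_{v} \subseteq \cA_{\Gamma_{\infty}}$. Substituting into the product displayed in the first paragraph, only the $e$ places $v \in \Sigma^{c}$ with $\chi(G_{k_{v}}) = 1$ contribute non-trivially, each contributing a factor contained in $\cA_{\Gamma_{\infty}}$. The total product is therefore contained in $\cA_{\Gamma_{\infty}}^{e}$, as required. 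I anticipate no serious obstacle; the one point worth double-checking is the factorization $D_{Lk(p^{\infty}),v} = D_{L,v} \times \Gamma_{\infty,v}$, which however is an immediate consequence of the coprimality of $[L:k]$ and $p$.
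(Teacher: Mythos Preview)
Your argument is correct. It differs from the paper's route in a useful way, so a brief comparison is in order.

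The paper does not work directly over $\bZ_{p}^{\rm ur}[[\Gamma_{\infty}]]$. Instead it fixes an arbitrary $\bZ_{p}$-quotient $\Gamma$ of $\Gamma_{\infty}$, invokes the control isomorphism $\widetilde{H}^{2}_{f}(G_{k,S},\bT_{\infty},\Delta_{\Sigma})\otimes\bZ_{p}^{\rm ur}[[\Gamma]]\cong\widetilde{H}^{2}_{f}(G_{k,S},\bT_{\Gamma},\Delta_{\Sigma})$, and then uses that over the one-variable Iwasawa algebra the Fitting ideal is principal (Lemma~\ref{lem:fitt}), hence equals the characteristic ideal. Multiplicativity of characteristic ideals in the short exact sequence \eqref{exact4} then gives ${\rm Fitt}\subseteq\prod_{v\in\Sigma^{c}}{\rm char}\,H^{2}(G_{k_{v}},\bT_{\Gamma})\subseteq\cA_{\Gamma}^{e}$. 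The implicit final step is that containment in $\cA_{\Gamma}^{e}$ for every $\bZ_{p}$-quotient $\Gamma$ forces containment in $\cA_{\Gamma_{\infty}}^{e}$.

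Your approach stays over $\Gamma_{\infty}$ throughout and replaces the characteristic-ideal argument by the two elementary facts ${\rm Fitt}(M)\subseteq{\rm Fitt}(N)$ for $M\twoheadrightarrow N$ and ${\rm Fitt}(\bigoplus M_{v})=\prod{\rm Fitt}(M_{v})$, applied to \eqref{exactseq}. This avoids both the descent to one variable and the appeal to principality, and it makes the local input completely transparent: each $H^{2}(G_{k_{v}},\bT_{\infty})$ is cyclic with ${\rm Fitt}=\fa_{v}\subseteq\cA_{\Gamma_{\infty}}$. The factorisation $D_{Lk(p^{\infty}),v}=D_{L,v}\times\Gamma_{\infty,v}$ that you flag is indeed immediate from the coprimality of $[L:k]$ and $p$, exactly as you say.
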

\begin{proof}
Take a $\bZ_{p}$-extension $k_{\Gamma}/k$ with Galois group $\Gamma$. 
It suffices to show that 
\[
{\rm Fitt}(\widetilde{H}^{2}_{f}(G_{k,S},\bT_{\infty},\Delta_{\Sigma}) \otimes_{\bZ_{p}^{\rm ur}[[\Gamma_{\infty}]]} \bZ_{p}^{\rm ur}[[\Gamma]]) \in \cA_{\Gamma}^{e}. 
\]
By Lemma~\ref{lem:descent}, \eqref{exact seq}, and \eqref{complex_amp}, we have an isomorphism 
\[
\widetilde{H}^{2}_{f}(G_{k,S},\bT_{\infty},\Delta_{\Sigma}) \otimes_{\bZ_{p}^{\rm ur}[[\Gamma_{\infty}]]} \bZ_{p}^{\rm ur}[[\Gamma]] \cong \widetilde{H}^{2}_{f}(G_{k,S},\bT_{\Gamma},\Delta_{\Sigma}). 
\]
We note that ${\rm Fitt}(\widetilde{H}^{2}_{f}(G_{k,S},\bT_{\Gamma},\Delta_{\Sigma}))$ is principal ideal by (\ref{eq:fitt=char}) and hence  
\[
{\rm Fitt}(\widetilde{H}^{2}_{f}(G_{k,S},\bT_{\Gamma},\Delta_{\Sigma})) = {\rm char}(\widetilde{H}^{2}_{f}(G_{k,S},\bT_{\Gamma},\Delta_{\Sigma})). 
\]
Therefore, by the exact sequence (\ref{exact4}), we conclude that 
\[
{\rm Fitt}(\widetilde{H}^{2}_{f}(G_{k,S},\bT_{\Gamma},\Delta_{\Sigma})) \subseteq \prod_{v \in \Sigma^{c}}{\rm char}(H^{2}(G_{k_{v}},\bT_{\Gamma})). 
\]
By the local duality, we have $H^{2}(G_{k_{v}},\bT_{\Gamma}) = 0$ if $\chi(G_{k_{v}}) \neq 1$ and  
\[
H^{2}(G_{k_{v}},\bT_{\Gamma}) \cong \bZ_{p}^{\rm ur}[[\Gamma/\Gamma_{v}]] 
\]
if $\chi(G_{k_{v}}) = 1$. Here $\Gamma_{v} \subseteq \Gamma$ denotes the decomposition group at $v$. 
This fact implies 
\[
{\rm char}(H^{2}(G_{k_{v}},\bT_{\Gamma})) \subseteq  \cA_{\Gamma} 
\]
if $\chi(G_{k_{v}}) = 1$, which completes the proof. 
\end{proof}

The following proposition follows from (\ref{eq:fitt=char}) and Lemma~\ref{lem:excep}. 
\begin{proposition}\label{prop_app_IMC_reformulated}
Iwasawa Main Conjecture~\ref{conj:iwasawa} for the CM field $k$ and the branch character $\chi$ is equivalent to that the Fitting ideal 
of $\widetilde{H}^{2}_{f}(G_{k,S},\bT_{\infty},\Delta_{\Sigma})^{\iota}$ is generated by 
$L_{p,\Sigma}^{\chi}$:
\[
{\rm Fitt}(\widetilde{H}^{2}_{f}(G_{k,S},\bT_{\infty},\Delta_{\Sigma})^{\iota}) = (L_{p,\Sigma}^{\chi}). 
\]
In particular, Iwasawa Main Conjecture for the CM field $k$  implies exceptional zero conjecture. 
\end{proposition}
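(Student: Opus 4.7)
The plan is to derive Proposition~\ref{prop_app_IMC_reformulated} as a direct consequence of Lemma~\ref{lem:fitt} and Lemma~\ref{lem:excep}, which together already isolate the two ingredients needed.

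First, I would establish the equivalence. By Lemma~\ref{lem:fitt}(ii), we have the identity
\[
{\rm Fitt}_{\bZ_{p}^{\rm ur}[[\Gamma_{\infty}]]}\!\left(\widetilde{H}^{2}_{f}(G_{k,S}, \bT_{\infty},\Delta_{\Sigma})\right) = {\rm char}(X_{\Sigma}^{\chi}).
\]
Applying the involution $\iota$ on $\bZ_{p}^{\rm ur}[[\Gamma_{\infty}]]$ to both sides, and recalling that the classical formulation (Conjecture~\ref{conj:iwasawa}) reads ${\rm char}(X_{\Sigma}^{\chi}) = (L_{p,\Sigma}^{\chi,\iota})$, we see that Conjecture~\ref{conj:iwasawa} is equivalent to the statement
\[
{\rm Fitt}_{\bZ_{p}^{\rm ur}[[\Gamma_{\infty}]]}\!\left(\widetilde{H}^{2}_{f}(G_{k,S}, \bT_{\infty},\Delta_{\Sigma})^{\iota}\right) = (L_{p,\Sigma}^{\chi}).
\]
This gives the first assertion of the proposition.

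Second, I would deduce the exceptional zero conjecture from the reformulation. Assume Conjecture~\ref{conj:iwasawa} holds. Then the equivalence just established yields the identity of ideals
\[
(L_{p,\Sigma}^{\chi}) = {\rm Fitt}_{\bZ_{p}^{\rm ur}[[\Gamma_{\infty}]]}\!\left(\widetilde{H}^{2}_{f}(G_{k,S}, \bT_{\infty},\Delta_{\Sigma})^{\iota}\right).
\]
Since $\iota$ preserves the augmentation ideal $\cA_{\Gamma_{\infty}}$ (indeed $\iota$ acts as $-\id$ on $\cA_{\Gamma_{\infty}}/\cA_{\Gamma_{\infty}}^{2}$), it preserves each power $\cA_{\Gamma_{\infty}}^{e}$, so Lemma~\ref{lem:excep} applied to the non-$\iota$-twisted module gives
\[
{\rm Fitt}_{\bZ_{p}^{\rm ur}[[\Gamma_{\infty}]]}\!\left(\widetilde{H}^{2}_{f}(G_{k,S}, \bT_{\infty},\Delta_{\Sigma})^{\iota}\right) \subseteq \cA_{\Gamma_{\infty}}^{e}.
\]
Combining the two displays yields $L_{p,\Sigma}^{\chi} \in \cA_{\Gamma_{\infty}}^{e}$, which is precisely the Weak Exceptional Zero Conjecture~\ref{conj:ex zero}.

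In this argument there is no genuine obstacle: all the heavy lifting (the Selmer complex formalism, the control theorem reducing the non-principal Fitting ideal to the principal characteristic ideal, and the extraction of the augmentation-ideal factor from the local $H^{2}$-terms at places in $\Sigma^{c}$ with $\chi(G_{k_{v}})=1$) has been done in Lemmas~\ref{lem:fitt} and~\ref{lem:excep}. The only point that warrants brief verification is the stability of $\cA_{\Gamma_{\infty}}^{e}$ under $\iota$, which is immediate since $\iota(\gamma-1) = \gamma^{-1}-1 = -\gamma^{-1}(\gamma-1) \in \cA_{\Gamma_{\infty}}$ for every $\gamma \in \Gamma_{\infty}$.
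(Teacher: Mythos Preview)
Your proposal is correct and follows exactly the approach the paper takes: the paper simply says the proposition ``follows from (\ref{eq:fitt=char}) and Lemma~\ref{lem:excep},'' and you have supplied precisely the details implicit in that sentence, including the harmless verification that $\iota$ preserves the augmentation filtration.
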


Let $k_{\Gamma}/k$ be a $\bZ_{p}$-extension with Galois group $\Gamma$. 
Then note that $T_{k_{\Gamma}} = \bT_{\Gamma}$. 

\begin{corollary}
\label{Cor_App_IMC_reformulated}
Assume the Iwasawa Main Conjecture for the CM field $k$. 
Then we have 
\[
{\rm Fitt}(\widetilde{H}^{2}_{f}(G_{k,S},\bT_{\Gamma},\Delta_{\Sigma})^{\iota}) = (L_{p,\Sigma}^{\chi}\vert_{\Gamma}). 
\]
Here, $L_{p,\Sigma}^{\chi}\vert_{\Gamma}$ denotes the image of $L_{p,\Sigma}^{\chi}$ in $\bZ_{p}^{\rm ur}[[\Gamma]]$. 
\end{corollary}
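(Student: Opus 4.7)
The plan is to descend the statement from $\bZ_{p}^{\rm ur}[[\Gamma_{\infty}]]$ to $\bZ_{p}^{\rm ur}[[\Gamma]]$ along the natural surjection of completed group rings induced by the restriction $\Gamma_{\infty} \twoheadrightarrow \Gamma$, taking Proposition~\ref{prop_app_IMC_reformulated} as our input at the $\Gamma_{\infty}$-level. All of the needed machinery is already in place within the appendix; we only need to combine it with the base-change behavior of Fitting ideals.

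The first step is to invoke the control theorem for Selmer complexes (\cite[Proposition~8.10.13(ii)]{Nek}), which is already used in the proof of Lemma~\ref{lem:excep}. It yields a canonical isomorphism of $\bZ_{p}^{\rm ur}[[\Gamma]]$-modules
\[
\widetilde{H}^{2}_{f}(G_{k,S},\bT_{\infty},\Delta_{\Sigma}) \otimes_{\bZ_{p}^{\rm ur}[[\Gamma_{\infty}]]} \bZ_{p}^{\rm ur}[[\Gamma]] \cong \widetilde{H}^{2}_{f}(G_{k,S},\bT_{\Gamma},\Delta_{\Sigma}).
\]
Since the involution $\iota$ on $\bZ_{p}^{\rm ur}[[\Gamma_{\infty}]]$ restricts, under the quotient map, to the involution $\iota$ on $\bZ_{p}^{\rm ur}[[\Gamma]]$, the same isomorphism persists after applying $(-)^{\iota}$ on both sides.

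The second step is the standard compatibility of Fitting ideals with base change: for any finitely generated $R$-module $M$ and any ring homomorphism $R \to S$, one has $\mathrm{Fitt}_{S}(M \otimes_{R} S) = \mathrm{Fitt}_{R}(M) \cdot S$. Applying this with $R = \bZ_{p}^{\rm ur}[[\Gamma_{\infty}]]$, $S = \bZ_{p}^{\rm ur}[[\Gamma]]$, and $M = \widetilde{H}^{2}_{f}(G_{k,S},\bT_{\infty},\Delta_{\Sigma})^{\iota}$, and combining with the isomorphism from the previous step, we obtain
\[
{\rm Fitt}\bigl(\widetilde{H}^{2}_{f}(G_{k,S},\bT_{\Gamma},\Delta_{\Sigma})^{\iota}\bigr) = {\rm Fitt}\bigl(\widetilde{H}^{2}_{f}(G_{k,S},\bT_{\infty},\Delta_{\Sigma})^{\iota}\bigr) \cdot \bZ_{p}^{\rm ur}[[\Gamma]].
\]

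The third and final step consists of feeding Proposition~\ref{prop_app_IMC_reformulated} into the right-hand side above: under the Iwasawa Main Conjecture, the ideal ${\rm Fitt}(\widetilde{H}^{2}_{f}(G_{k,S},\bT_{\infty},\Delta_{\Sigma})^{\iota})$ of $\bZ_{p}^{\rm ur}[[\Gamma_{\infty}]]$ is generated by $L_{p,\Sigma}^{\chi}$. Its image in $\bZ_{p}^{\rm ur}[[\Gamma]]$ is, by definition, the ideal generated by $L_{p,\Sigma}^{\chi}\vert_{\Gamma}$, which yields the claimed equality. There is no substantive obstacle: the corollary is a clean descent argument, and the only point requiring a small verification is the compatibility of the involution with base change, which is immediate from functoriality.
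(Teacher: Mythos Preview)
Your proposal is correct and is exactly the argument the paper intends: the corollary is stated without proof because it follows immediately from Proposition~\ref{prop_app_IMC_reformulated} via the control isomorphism of \cite[Proposition~8.10.13(ii)]{Nek} (already invoked in the proof of Lemma~\ref{lem:excep}) together with base change for Fitting ideals. There is nothing to add.
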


\begin{corollary}\label{lem:vanish-line}
If the $\bZ_{p}^{\rm ur}[[\Gamma]]$-module $\widetilde{H}^{2}_{f}(G_{k,S}, \bT_{\Gamma}, \Delta_{\Sigma})$ is torsion, then no primes in $\Sigma^{c}$ splits completely in $k_{\Gamma}/k$. 
\end{corollary}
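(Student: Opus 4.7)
The plan is to argue by contraposition: assume that some prime $v_{0} \in \Sigma^{c}$ splits completely in $k_{\Gamma}/k$, so that the decomposition subgroup $\Gamma_{v_{0}} \subseteq \Gamma$ is trivial, and deduce that $\widetilde{H}^{2}_{f}(G_{k,S}, \bT_{\Gamma}, \Delta_{\Sigma})$ cannot be torsion over $\bZ_{p}^{\rm ur}[[\Gamma]]$. As in the proof of Lemma~\ref{lem:excep}, the assertion has content precisely in the case $\chi(G_{k_{v_{0}}}) = 1$; for $\chi(G_{k_{v_{0}}}) \neq 1$ the relevant local cohomology vanishes and there is nothing to prove.

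The main lever is the exact sequence (\ref{exact4}), applied at $K = k_{\Gamma}$:
\[
0 \longrightarrow Y^{\chi}_{\Sigma,\Gamma} \longrightarrow \widetilde{H}^{2}_{f}(G_{k,S}, \bT_{\Gamma}, \Delta_{\Sigma}) \longrightarrow \bigoplus_{v \in \Sigma^{c}} H^{2}(G_{k_{v}}, \bT_{\Gamma}) \longrightarrow 0.
\]
Because the middle term surjects onto the direct sum on the right, it suffices to verify that the $v_{0}$-summand $H^{2}(G_{k_{v_{0}}}, \bT_{\Gamma})$ fails to be torsion over $\bZ_{p}^{\rm ur}[[\Gamma]]$. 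Local Tate duality identifies this cohomology with the Pontryagin dual of $H^{0}(G_{k_{v_{0}}}, \bT_{\Gamma}^{\vee}(1))$, and a direct inspection of the $G_{k_{v_{0}}}$-action on $\bT_{\Gamma}^{\vee}(1) = T^{\vee}(1) \otimes_{\bZ_{p}} \bZ_{p}[[\Gamma]]$ (parallel to the argument in the proof of Proposition~\ref{prop:image} and already rehearsed during the proof of Lemma~\ref{lem:excep}) yields, under $\chi(G_{k_{v_{0}}}) = 1$, the identification
\[
H^{2}(G_{k_{v_{0}}}, \bT_{\Gamma}) \;\cong\; \bZ_{p}^{\rm ur}[[\Gamma/\Gamma_{v_{0}}]].
\]
Since $\Gamma_{v_{0}} = 0$ by assumption, this collapses to $\bZ_{p}^{\rm ur}[[\Gamma]]$ itself, which is manifestly not torsion; this provides the sought contradiction.

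The one technical point, rather than a real obstacle, is to confirm that (\ref{exact4}) continues to hold after specialising from $K = k(p^{\infty})$ to the $\bZ_{p}$-extension $K = k_{\Gamma}$. This is assured by the base-change compatibility of the Selmer complex (Proposition~8.10.13(ii) of Nekov\'a\v{r} in tandem with Lemma~\ref{lem:descent}), exactly as used in the proof of Lemma~\ref{lem:excep} above.
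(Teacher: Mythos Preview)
Your proof is correct and follows essentially the same approach as the paper: both use the surjection from $\widetilde{H}^{2}_{f}(G_{k,S},\bT_{\Gamma},\Delta_{\Sigma})$ onto $\bigoplus_{v \in \Sigma^{c}} H^{2}(G_{k_{v}},\bT_{\Gamma})$ together with the local duality identification $H^{2}(G_{k_{v}},\bT_{\Gamma}) \cong \bZ_{p}^{\rm ur}[[\Gamma/\Gamma_{v}]]$ (for $\chi(G_{k_{v}})=1$) to conclude. The paper cites the exact sequence~(\ref{exact2}) while you cite~(\ref{exact4}); the latter is the more direct reference, and the paper's citation is likely a slip.
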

\begin{proof}
Note that $H^{2}(G_{k_{v}},\bT_{\Gamma})$ is a torsion $\bZ_{p}^{\rm ur}[[\Gamma]]$-module if and only if $v$ does not split completely in $k_{\Gamma}/k$. 
Hence this corollary follows from the exact sequence (\ref{exact2}). 
%
\end{proof}


$\,$
\bibliographystyle{amsalpha}
\bibliography{references}

\end{document}